\theoremstyle{plain}
\patchcmd{\Ginclude@eps}{"#1"}{#1}{}{}
\numberwithin{equation}{section}
\definecolor{lightblue}{HTML}{1F88CD}
\definecolor{lightgrey}{HTML}{727272}
\definecolor{lightblue2}{HTML}{009EC1}
\definecolor{mypink}{HTML}{FD00B0}
\definecolor{lightred}{HTML}{ff4d4d}
\newtheorem*{theorem*}{Theorem}
\newtheorem{theorem}{Theorem}[section]
\newtheorem{corollary}[theorem]{Corollary}
\newtheorem{lemma}[theorem]{Lemma}
\newtheorem{proposition}[theorem]{Proposition}
\theoremstyle{definition}
\theoremstyle{definition}
\newtheorem{definition}[theorem]{Definition}
\theoremstyle{definition}
\newtheorem{remark}[theorem]{Remark}
\theoremstyle{definition}
\theoremstyle{definition}
\theoremstyle{definition}
\theoremstyle{definition}
\theoremstyle{definition}
\theoremstyle{definition}
\newtheorem{question!}[theorem]{Question!}
\theoremstyle{definition}
\newcommand*\sbt{\mathpalette\sbt@{.75}}
\newcommand*\sbt@[2]{\mathbin{\vcenter{\hbox{\scalebox{#2}{$\m@th#1\bullet$}}}}}
\newcommand{\ra}{\rightarrow}
\newcommand{\xra}{\xrightarrow}
\newcommand{\wt}{\widetilde}
\newcommand{\sst}{\subset}
\newcommand{\bR}{\bm{\mathrm{R}}}
\newcommand{\bL}{\bm{\mathrm{L}}}
\newcommand{\D}{\mathrm{D}}
\newcommand{\KK}{\mathrm{K}}
\newcommand{\ZZ}{\mathbb{Z}}
\newcommand{\QQ}{\mathbb{Q}}
\newcommand{\CC}{\mathbb{C}}
\newcommand{\PP}{\mathbb{P}}
\newcommand{\ch}{\mathrm{ch}}
\newcommand{\perf}{\mathrm{perf}}
\newcommand{\pr}{\mathrm{pr}}
\newcommand{\Hilb}{\mathrm{Hilb}}
\newcommand{\Knum}{\mathrm{K}_{\mathrm{num}}}
\renewcommand{\Re}{\operatorname{Re}}
\renewcommand{\Im}{\operatorname{Im}}
\DeclareMathOperator{\identity}{id}
\DeclareMathOperator{\im}{im}
\DeclareMathOperator{\Ext}{Ext}
\DeclareMathOperator{\Hom}{Hom}
\DeclareMathOperator{\RHom}{RHom}
\DeclareMathOperator{\ext}{ext}
\DeclareMathOperator{\Spec}{Spec}
\DeclareMathOperator{\Pic}{Pic}
\DeclareMathOperator{\cone}{cone}
\DeclareMathOperator{\Stab}{Stab}
\DeclareMathOperator{\Gr}{Gr}
\newcommand{\GL}{\widetilde{\mathrm{GL}}^+(2,\mathbb{R})}
\newcommand{\cX}{\mathcal{X}}
\newcommand{\cY}{\mathcal{Y}}
\newcommand{\cC}{\mathcal{C}}
\newcommand{\cA}{\mathcal{A}}
\newcommand{\cU}{\mathcal{U}}
\newcommand{\cH}{\mathcal{H}}
\newcommand{\cS}{\mathcal{S}}
\newcommand{\cI}{\mathcal{I}}
\newcommand{\cT}{\mathcal{T}}
\newcommand{\cQ}{\mathcal{Q}}
\newcommand{\Ku}{\mathcal{K}u}
\newcommand{\cD}{\mathcal{D}}
\newcommand{\cM}{\mathcal{M}}
\newcommand{\cV}{\mathcal{V}}
\DeclareMathOperator{\oh}{\mathcal{O}}
\begin{document}

\title[Double EPW cubes from twisted cubics on Gushel--Mukai fourfolds]{Double EPW cubes from twisted cubics on Gushel--Mukai fourfolds}

\subjclass[2020]{Primary 14F08; secondary 14J42, 14J45, 14D20, 14D23}
\keywords{Double EPW cubes, Bridgeland moduli spaces, Kuznetsov components, Gushel--Mukai fourfolds, Hyperk\"ahler manifolds, Lagrangian subvarieties, Lagrangian covering families}

\address{Department of Mathematics, Imperial College, London SW7 2AZ, United Kingdom}
\email{s.feyzbakhsh@imperial.ac.uk}

\address{Shanghai Center for Mathematical Sciences, Fudan University, Jiangwan Campus, 2005 Songhu Road, Shanghai, 200438, China
}
\email{hfguo@fudan.edu.cn}

\address{School of Mathematical Sciences, Zhejiang University, Hangzhou, Zhejiang Province 310058, P. R. China}
\email{jasonlzy0617@gmail.com}


\address{Institut de Mathématiqes de Toulouse, UMR 5219, Université de Toulouse, Université Paul Sabatier, 118 route de
Narbonne, 31062 Toulouse Cedex 9, France}
\address{Center for Geometry and Physics, Institute for Basic Science. 
79, Jigok-ro 127beon-gil, Nam-gu, Pohang-si,
Gyeongsangbuk-do,
Republic of Korea 37673}
\email{shizhuozhang@msri.org, zszmath@ibs.re.kr}

\author{Soheyla Feyzbakhsh, Hanfei Guo, Zhiyu Liu, Shizhuo Zhang}
\address{}
\email{}

\begin{abstract}
In this paper, we conduct the first systematic investigation of twisted cubics on Gushel--Mukai (GM) fourfolds. We then study the double EPW cube, a 6-dimensional hyperk\"ahler manifold associated with a general GM fourfold $X$, through the Bridgeland moduli space, and show that it is the maximal rationally connected (MRC) quotient of the Hilbert scheme of twisted cubics on $X$. We also prove that a general double EPW cube admits a covering by Lagrangian subvarieties constructed from the Hilbert schemes of twisted cubics on GM threefolds, which provides a new example for a conjecture of O'Grady.
\end{abstract}

\maketitle

{
\hypersetup{linkcolor=blue}
\setcounter{tocdepth}{1}
\tableofcontents
}

\section{Introduction}

Hyperk\"ahler manifolds play a central role in algebraic geometry and complex geometry. However, only a few examples are known, and it is challenging to construct explicit examples of projective hyperk\"ahler manifolds. 

One of the most studied examples comes from K3 surfaces: the Hilbert schemes of points on K3 surfaces \cite{beauville:hilb-on-k3}, or more generally, the moduli spaces of stable sheaves on them \cite{mukai:moduli-K3-I}. More recently, instead of considering the moduli space of stable sheaves on K3 surfaces, people consider Hilbert schemes of rational curves on Fano fourfolds with a K3-type Hodge structure, such as cubic fourfolds and Gushel--Mukai (GM) fourfolds, which are smooth cubic hypersurfaces in $\PP^5$ and smooth quadric sections of linear sections of the projective cone over $\Gr(2,5)$, respectively (see Section \ref{sec-very-general-GM}).

Several families of projective hyperk\"ahler manifolds have been constructed from the Hilbert schemes of low-degree rational curves on cubic fourfolds or GM fourfolds. These examples include
\begin{enumerate}
    \item Fano varieties of lines on cubic fourfolds \cite{beauville:fano-variety-cubic-4fold}, which are hyperk\"ahler fourfolds,
    \item LLSvS eightfolds, which are hyperk\"ahler eightfolds constructed in \cite{LLSvS17} as the maximal rationally connected (MRC) quotient of the Hilbert schemes of twisted cubics on general cubic fourfolds,
    \item LSV tenfolds, which are hyperk\"ahler compactifications of the twisted Jacobian fibrations of cubic fourfolds (cf.~\cite{laza2017hyper,Voisin-twisted}) and the MRC quotient of the Hilbert schemes of rational quartics on cubic fourfolds by \cite{li2020elliptic}, and
    \item double (dual) EPW sextics, which are hyperk\"ahler fourfolds constructed in \cite{o2006irreducible} and can be realized as the MRC quotient of the Hilbert schemes of conics on GM fourfolds, as shown in \cite{iliev2011fano}.
\end{enumerate}

More recently, a hyperk\"ahler sixfold $\widetilde{C}_X$ associated with a general GM fourfold $X$ is constructed in \cite{IKKR19}, referred to as \emph{the double EPW cube associated with $X$}. As in the examples listed above, it is conjectured by Atanas Iliev and Laurent Manivel that double EPW cubes should be the MRC quotient of the Hilbert schemes of twisted cubics on GM fourfolds. Our first main theorem confirms this expectation, thereby enriching the above picture.

\begin{theorem}[{Theorem \ref{cor-cube-as-MRC}}]\label{main_theorem_1}
Let $X$ be a general GM fourfold. Then the double EPW cube $\wt{C}_X$ associated with $X$ is the MRC quotient of the Hilbert scheme $\Hilb_X^{3t+1}$ of twisted cubics on $X$.
\end{theorem}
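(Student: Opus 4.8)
The plan is to identify the double EPW cube $\wt C_X$ with an appropriate Bridgeland moduli space inside the Kuznetsov component $\Ku(X)$, and then to realize the Hilbert scheme $\Hilb_X^{3t+1}$ as an explicit variety mapping to that moduli space with rationally connected fibres. Concretely, I would first analyze the structure of twisted cubics $C \subset X$: a general such $C$ spans a $\PP^3$, and cutting $X$ with this $\PP^3$ produces a scheme-theoretic degree-$3$ curve; from the short exact sequence $0 \to I_C(?) \to \oh_{\PP^3}(?) \to \oh_C \to 0$ and the known description of $\Ku(X)$, I would attach to $C$ an object $E_C$ of $\Ku(X)$ obtained by projecting $I_C$ (suitably twisted) into the Kuznetsov component. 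The first task is therefore to compute the numerical class $[E_C] \in \Knum(\Ku(X))$ and check that it matches the Mukai vector $v$ whose moduli space of $\sigma$-stable objects, for a generic stability condition $\sigma$ on $\Ku(X)$ (in the sense of the earlier sections), is the double EPW cube. This last identification — that $M_\sigma(\Ku(X), v) \cong \wt C_X$ — should follow either from a direct comparison of the period/lattice data or from an earlier result in the paper that constructs $\wt C_X$ as such a moduli space.

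Next I would study the rational map $\rho \colon \Hilb_X^{3t+1} \dashrightarrow M_\sigma(\Ku(X), v) \cong \wt C_X$ sending $[C] \mapsto [E_C]$. The key geometric input is to understand the fibres of $\rho$: over a general point $[E]$, the fibre should be the family of twisted cubics $C$ with $E_C \cong E$, and I would argue this fibre is a rational variety — most plausibly a projective space or a Grassmannian-type bundle — by exhibiting the cubics in a fixed fibre as sections of a fixed linear system on a fixed surface (or threefold) determined by $E$, analogous to how the LLSvS construction realizes fibres of $\Hilb^{3t+1}$ of a cubic fourfold as $\PP^2$'s. Rational connectedness of the general fibre, together with the fact that $\wt C_X$ is not uniruled (being hyperk\"ahler), then forces $\wt C_X$ to be the MRC quotient: any MRC quotient of $\Hilb_X^{3t+1}$ factors through $\rho$, and since the target is not uniruled and $\rho$ has rationally connected general fibre, the factorization is birational.

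I would carry the argument out in the following order: (i) establish the moduli-theoretic description $\wt C_X \cong M_\sigma(\Ku(X), v)$ for the relevant $v$, citing the earlier sections; (ii) define $E_C$ and verify $[E_C] = v$ and $\sigma$-stability of $E_C$ for $C$ general, using the classification/irreducibility results for $\Hilb_X^{3t+1}$ proved earlier in the paper; (iii) show $\rho$ is dominant, e.g.\ by a dimension count ($\dim \Hilb_X^{3t+1} = \dim \wt C_X + (\text{fibre dim})$) combined with generic smoothness, or by showing a general $E$ is of the form $E_C$; (iv) identify the general fibre of $\rho$ and prove it is rationally connected; (v) conclude via the universal property of the MRC quotient and the non-uniruledness of hyperk\"ahler manifolds.

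The main obstacle, I expect, is step (iv): controlling the fibres of $\rho$ precisely enough to prove rational connectedness. This requires a genuine understanding of \emph{all} twisted cubics (not just the generic spanning ones) mapping to a given stable object — including degenerate configurations (non-CM cubics, cubics lying on special surfaces in $X$) — and showing that the extra components or the closure still only contribute rationally connected pieces, or can be discarded as living over a proper closed subset of $\wt C_X$. A secondary difficulty is verifying $\sigma$-stability of $E_C$ and constancy of its numerical class across the (possibly reducible) Hilbert scheme; here I would lean on the wall-crossing analysis and the moduli-space results from the earlier part of the paper, reducing the problem to a bounded check on a single well-chosen stability condition.
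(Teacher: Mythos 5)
Your plan coincides with the paper's proof in all essential respects: the paper identifies $\wt{C}_X$ birationally with the Bridgeland moduli space $M^X_{\sigma_X}(1,-1)$ via the period-point comparison of Kapustka et al., constructs the map $[C]\mapsto[\pr_X(I_C(H))]$ on the complement of the $\sigma$-cubic locus, proves dominance by the dimension count you describe, identifies the general fibre as the $\PP^1$ of residue cubics cut out by hyperplane sections of a fixed sextic surface $S_{2,0}$ (exactly the ``linear system on a fixed surface'' picture you anticipate from LLSvS), and concludes by non-uniruledness of hyperk\"ahler manifolds. You also correctly locate the real work in the fibre analysis and the stability of the projected objects, which is what Sections 3--4 of the paper are devoted to.
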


\subsection*{Twisted cubics and K3 categories}

Our approach to Theorem \ref{main_theorem_1} is based on studying double EPW cubes and twisted cubics via the Bridgeland moduli spaces of certain K3 categories.

More precisely, for a GM fourfold $X$, there is a semi-orthogonal decomposition
\[\D^b(X)=\langle\Ku(X),\oh_X,\cU^{\vee}_X,\oh_X(H),\cU_X^{\vee}(H)\rangle,\]
where $\cU_X$ and $\oh_X(H)$ are the pull-back of the tautological subbundle and the Pl\"ucker line bundle on~$\Gr(2, 5)$, respectively. The residue category $\Ku(X)$ is called the \emph{Kuznetsov component} of $X$, which is a K3 category, i.e.~its Serre functor is $[2]$. The numerical Grothendieck group of $\Ku(X)$ contains a canonical rank two sublattice, generated by the classes $\Lambda_1$ and $\Lambda_2$ (cf.~Section \ref{sec-very-general-GM}). 

There is a set of stability conditions $\Stab^{\circ}(\Ku(X))$ on $\Ku(X)$ constructed in \cite[Theorem 4.12]{perry2019stability}. We refer to Section \ref{sec-stab-ku} for more details. For a pair of integers $a,b$, we denote by $M^X_{\sigma_X}(a,b)$ the moduli space that parameterizes S-equivalence classes of $\sigma_X$-semistable objects of class $a\Lambda_1 +b\Lambda_2$ in $\Ku(X)$. Then, by \cite{perry2019stability}, if $a$ and $b$ are coprime and $\sigma_X\in \Stab^{\circ}(\Ku(X))$ is generic, the space $M^X_{\sigma_X}(a,b)$ is a projective hyperk\"ahler manifold.

In \cite[Theorem 1.1]{kapustka2022epw}, it is shown that $\wt{C}_X$ and $M^X_{\sigma_X}(1,-1)$ share the same period point, hence they are birational. This enables us to apply categorical methods to establish the relationship between $\widetilde{C}_X$ and $\Hilb_X^{3t+1}$, as in \cite{LLMS18,li2018twisted,li2020elliptic,GLZ2021conics}. Specifically, a careful study of the projection objects of twisted cubics under the projection functor
\[\pr_X\colon \D^b(X)\to \Ku(X)\]
defined in \eqref{eq-prX} leads to the following result.


\begin{theorem}[{Theorem \ref{thm-pr-induce-map}}]\label{thm-intro-moduli}
Let $X$ be a general GM fourfold and $\sigma_X\in \Stab^{\circ}(\Ku(X))$ be a generic stability condition. Then the projection functor $\pr_X$ induces a dominant rational map
\begin{align*}
   pr \colon \Hilb_X^{3t+1} &\dashrightarrow M_{\sigma_X}^X(1,-1)\\
   [C] & \mapsto [\pr_X(I_C(H))]
\end{align*}
with $\PP^1$ as general fibers.
\end{theorem}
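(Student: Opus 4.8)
The plan is to analyze the projection object $\pr_X(I_C(H))$ for a general twisted cubic $C$ on $X$ and show that it is a $\sigma_X$-stable object of class $\Lambda_1 - \Lambda_2$, which is exactly the class parameterized by $M_{\sigma_X}^X(1,-1)$. First I would compute the numerical class: the ideal sheaf twist $I_C(H)$ is an object of $\D^b(X)$ whose class we can determine from the Chern character of $C$ (a twisted cubic has degree $3$ and arithmetic genus $0$), and then project via $\pr_X$ using the known action on the Grothendieck group together with the semi-orthogonal decomposition $\D^b(X)=\langle\Ku(X),\oh_X,\cU^{\vee}_X,\oh_X(H),\cU_X^{\vee}(H)\rangle$. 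The expectation is that this lands in $\Lambda_1 - \Lambda_2$. Next, for a \emph{general} $C$, I would show $\pr_X(I_C(H))$ is actually represented by a sheaf (or a two-term complex) and is $\sigma_X$-semistable; here one uses the weak stability conditions on $\D^b(X)$ à la Bayer--Lahoz--Macrì--Stellari/Perry--Pertusi--Zhao, proving that $I_C(H)$ is itself tilt-(semi)stable and that its Kuznetsov projection preserves semistability in $\Stab^{\circ}(\Ku(X))$ — a standard argument once the relevant Bogomolov-type inequalities and the vanishing of the correction terms in $\pr_X$ are verified.

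Once the map $[C]\mapsto [\pr_X(I_C(H))]$ is defined on a dense open locus, I would prove dominance by a dimension count: $\Hilb_X^{3t+1}$ has dimension $8$ (this should be recalled or proved in the twisted-cubics section the paper promises), while $M_{\sigma_X}^X(1,-1)$ is a hyperk\"ahler sixfold, so the generic fiber is at least $2$-dimensional; to pin down dominance I would exhibit, for a general point of the moduli space, a twisted cubic mapping to it — concretely by reversing the construction: given a general $\sigma_X$-stable $F$ of class $\Lambda_1-\Lambda_2$, I would show $\Hom(\oh_X, F[i])$ and the other components of the SOD vanish in the right degrees so that $F$ lifts to an object of the form $I_C(H)$ for a genuine twisted cubic $C$, using that $X$ is general (so pathological curves are excluded) and that $\Ku(X)$-stability forces the lift to be an ideal sheaf twist. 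This reconstruction step, identifying the preimage with actual twisted cubics rather than merely one-dimensional subschemes of degree $3$, is where I expect the main technical obstacle to lie, since it requires controlling the possible Harder--Narasimhan/torsion behavior of the lift and ruling out non-reduced or non-pure degenerations.

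Finally, for the fiber description I would fix a general twisted cubic $C_0$ with projection $F_0 = \pr_X(I_{C_0}(H))$ and analyze the space of all $[C]$ with $\pr_X(I_C(H)) \cong F_0$. The key computation is $\Hom^\bullet(\pr_X(I_C(H)), \oh_X)$ and the long exact sequences relating $I_C(H)$, its projection, and the objects $\oh_X, \cU_X^\vee, \oh_X(H), \cU_X^\vee(H)$ generating the orthogonal; these should show that the lifts of $F_0$ to ideal-sheaf-type objects form a projective space, and that the relevant $\Hom$/$\Ext$ dimensions are constant and equal to $2$, yielding $\PP^1$. Concretely, I expect an exact triangle expressing $I_C(H)$ as a cone involving $F_0$ and a fixed object built from $\oh_X$ and $\cU_X^\vee$, so that $C$ is recovered from a point of $\PP(\Hom(G, F_0))$ or $\PP(\Ext^1(F_0, G))$ with this Hom-space two-dimensional for general $F_0$; semicontinuity plus the irreducibility of $\Hilb_X^{3t+1}$ (again from the earlier twisted-cubics analysis) then upgrades this to "$\PP^1$ as general fibers." The smoothness and dimension of the Hilbert scheme of twisted cubics, which the paper sets up in its systematic study, is the input I would lean on most heavily throughout.
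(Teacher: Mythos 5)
Your proposal has the right general shape (define the map via stability of the projection, then count dimensions and identify the fiber), but it contains an internal inconsistency and misses the two inputs that actually carry the paper's proof. First, the dimension count is off: $\Hilb_X^{3t+1}$ has (expected and actual local) dimension $7$, not $8$ --- the paper computes $\chi(N_{C/X})=7$ and shows $\dim_{[C]}\Hilb_X^{3t+1}=7$ at $\sigma$-cubics and $\geq 7$ at $\rho$-cubics. Your own sentence ``the generic fiber is at least $2$-dimensional'' contradicts the $\PP^1$-fiber conclusion you are trying to prove; with the correct dimension $7$ the count reads $7-1=6$ and is consistent. The paper's dominance argument is then elementary: the map contracts only the $\PP^1$'s of residue cubics inside the open dense stratum $\mathrm{H}^\tau$, so the image has dimension $\geq 6$ and equals the irreducible sixfold $M^X_{\sigma_X}(1,-1)$. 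Your proposed alternative --- lifting a general stable object of class $\Lambda_1-\Lambda_2$ back to an ideal sheaf of a genuine twisted cubic --- is not needed and is substantially harder; the paper never reconstructs curves from objects.

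Second, the stability and fiber steps do not go the way you sketch. The paper does not argue that tilt-stability of $I_C(H)$ is preserved under projection (this is not a standard argument and is not attempted); instead it computes the projection objects explicitly by type ($\tau$-, $\rho$-, $\sigma$-cubics), shows $\RHom(\pr_X(I_C(H)),\pr_X(I_C(H)))=\CC\oplus\CC^6[-1]\oplus\CC[-2]$ for non-$\sigma$-cubics, deduces stability for very general $X$ from a lattice-theoretic criterion, and extends to general $X$ by deforming over the moduli stack of GM fourfolds. Note also that the map is undefined exactly on the $\sigma$-cubic locus (there $\hom(\pr_X(I_C(H)),\pr_X(I_C(H)))=3$), so one must prove $\mathrm{H}^\tau$ is dense --- a point your proposal does not address. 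For the fiber, your idea of realizing $I_C(H)$ as a cone on $F_0$ over a two-dimensional Hom-space is close in spirit, but the actual identification requires the residue-cubic geometry: a $\tau$-cubic $C$ lies in a unique sextic surface $S_{2,0}=\PP(V_2\wedge V_5)\cap X$, the fiber through $[C]$ is the $\PP^1=\PP(H^0(I_{C''/S_{2,0}}(H)))$ of residue cubics of a residue cubic $C''$ of $C$, and proving that two cubics have isomorphic projections \emph{if and only if} they are residues of a common cubic uses the categorical involution $T'$ on $\cA_X$ and the symmetry of the residue relation. Without this geometric layer the fiber cannot be pinned down to a $\PP^1$.
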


Combining this with \cite[Theorem 1.1]{kapustka2022epw}, we obtain Theorem \ref{main_theorem_1}.

To establish the aforementioned result, we conduct a thorough examination of the geometry of twisted cubics in Section \ref{sec-twisted-cubic-gm4} and Section \ref{sec-projection-twisted_cubics}. In particular, we explore how the geometry of a twisted cubic $C\subset X$ interacts with the categorical properties of $\pr_X(I_C(H))$. As far as we know, this is the first paper to systematically investigate twisted cubics on GM fourfolds. We believe that our study may be of independent interest and could aid in constructing a map $\Hilb_X^{3t+1}\dashrightarrow \widetilde{C}_X$ using classical techniques similar to those in \cite{iliev2011fano}.

\subsection*{Lagrangian covering families} 

A significant advantage of relating hyperk\"ahler manifolds to the Hilbert schemes of curves on Fano varieties is that hyperk\"ahler manifolds can be explicitly studied through the geometry of curves. In this paper, we primarily focus on the problem of the existence of Lagrangian covering families.

Roughly speaking, a Lagrangian family is a flat projective family of Lagrangian subvarieties that covers the hyperk\"ahler manifold, which is a generalization of a Lagrangian fibration (cf.~Definition \ref{def-lag-family}). The existence of a Lagrangian covering family on a hyperk\"ahler manifold implies several well-behaved cohomological properties and is closely related to the Lefschetz standard conjecture, as demonstrated in \cite{voisin:remark-coisotropic,voisin2021lefschetz,Bai22}. It is conjectured by O'Grady that every projective hyperk\"ahler manifold admits a Lagrangian covering family.

In our case, by \cite{FGLZ24} and the description of $\wt{C}_X$ via Bridgeland moduli spaces in \cite{kapustka2022epw}, we know that $\wt{C}_X$ admits a family of Lagrangian subvarieties. Using Theorem \ref{thm-intro-moduli}, we prove that this is a Lagrangian covering family, which provides a new example of O'Grady's conjecture.

\begin{theorem}[{Theorem \ref{thm-second-lag-cover-family}}]\label{thm-epw-covering}
Let $X$ be a general GM fourfold. Then the associated double EPW cube~$\wt{C}_X$ admits a Lagrangian covering family.
\end{theorem}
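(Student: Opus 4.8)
The plan is to transfer the Lagrangian covering family on the Hilbert scheme side of things across the dominant rational map $pr$ from Theorem \ref{thm-intro-moduli}. First I would recall the input: by \cite{FGLZ24} (together with the identification $\wt C_X \sim M^X_{\sigma_X}(1,-1)$ of \cite{kapustka2022epw}), we already have a projective family $p\colon \mathcal{L} \to B$ of subvarieties of $\wt C_X$ whose general member is Lagrangian, arising from the Hilbert schemes of twisted cubics on the hyperplane sections $Y = X \cap H$, which are GM threefolds. The only thing missing for Theorem \ref{thm-epw-covering} is that this family \emph{covers} $\wt C_X$, i.e.\ that the evaluation morphism $\mathrm{ev}\colon \mathcal{L} \to \wt C_X$ is dominant. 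So the proof reduces to a dominance statement.

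The key step is to set up a commutative diagram relating the Hilbert scheme $\Hilb_X^{3t+1}$, the union $\bigcup_{H} \Hilb_{Y_H}^{3t+1}$ of Hilbert schemes of twisted cubics over the family of hyperplane sections, and their images under the respective projection functors. Concretely, for a GM threefold $Y = X\cap H$ the Kuznetsov component $\Ku(Y)$ embeds into $\Ku(X)$, and the projection $\pr_Y(I_{C/Y}(H))$ of a twisted cubic $C\subset Y$ should agree, up to the categorical bookkeeping, with $\pr_X(I_{C/X}(H))$ — this is the mechanism by which the Lagrangian subvarieties $M^{Y}_{\sigma_Y}(\ldots) \hookrightarrow M^X_{\sigma_X}(1,-1)$ are cut out in \cite{FGLZ24}. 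I would therefore argue that the generic fiber $\PP^1$ of $pr$ over a general point of $M^X_{\sigma_X}(1,-1)$ (Theorem \ref{thm-intro-moduli}) contains a twisted cubic lying on \emph{some} hyperplane section $Y_H$: a general twisted cubic $C\subset X$ spans a $\PP^3$ and hence lies on a pencil of hyperplane sections, so the map $\Hilb_X^{3t+1} \dashrightarrow$ (total space of twisted cubics on hyperplane sections) is dominant, indeed generically a $\PP^1$-bundle in the hyperplane parameter. Composing with $pr$ shows $\mathrm{ev}$ is dominant onto $M^X_{\sigma_X}(1,-1)$, and transporting along the birational map $M^X_{\sigma_X}(1,-1) \sim \wt C_X$ gives a covering family on $\wt C_X$ whose general member is still Lagrangian (Lagrangian-ness is a closed condition on a dense open preserved under the birational identification, since both sides have the same period point and the holomorphic symplectic form matches).

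After dominance is in hand, the remaining bookkeeping is: (i) spread the family out to an honest flat projective family over an irreducible base by taking an appropriate resolution/component of the incidence variety $\{(C,H) : C\subset X\cap H\}$, discarding the locus where the member fails to be Lagrangian or fails to have the expected dimension (this locus is closed and proper, so its complement still dominates); (ii) check the dimension count, $\dim \mathcal{L} = \dim B + 3 = \dim \wt C_X = 6$, so that $B$ has dimension $3$, matching the dimension of the family of hyperplane sections (a $\PP^4$ worth, cut down by the $\PP^1$ of hyperplanes through a fixed cubic) — this is consistent and forces the general member to be $3$-dimensional, hence Lagrangian by the isotropy already established in \cite{FGLZ24}; and (iii) invoke Definition \ref{def-lag-family} to conclude. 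The main obstacle I anticipate is step (i)–(ii): making the "family of Lagrangians from twisted cubics on threefold sections" genuinely flat and projective with irreducible base, and ensuring the generic member is irreducible of the right dimension rather than, say, breaking into several Lagrangian components or picking up excess-dimensional junk; this requires a careful analysis of the incidence correspondence and of how the results of Section \ref{sec-twisted-cubic-gm4} on the geometry of twisted cubics on $X$ restrict to the geometry of twisted cubics on a general hyperplane section. Everything else — the dominance and the Lagrangian property — follows formally from Theorem \ref{thm-intro-moduli}, \cite{kapustka2022epw}, and \cite{FGLZ24}.
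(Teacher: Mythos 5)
Your overall strategy is exactly the paper's: reduce to $M^X_{\sigma_X}(1,-1)$ via the birational identification with $\wt{C}_X$ and birational invariance of the existence of a Lagrangian covering family (Lemma \ref{lem-covering-invariant}), take the family of Lagrangians indexed by hyperplane sections coming from \cite{FGLZ24}, and prove dominance by combining the $\PP^1$-fibration structure of $pr$ (Theorem \ref{thm-pr-induce-map}) with the fact that twisted cubics lie on hyperplane sections. The dominance argument you sketch is, in essence, Proposition \ref{prop-lag-cover-family-cube}. However, there are two concrete problems.

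First, your step (ii) is based on a misreading of Definition \ref{def-lag-family}: a Lagrangian covering family does \emph{not} require $\dim L=\dim M$ or $q$ generically finite, only that $q|_{p^{-1}(b)}$ be generically finite onto its Lagrangian image. In the actual construction (Theorem \ref{thm-covering-moduli}) the base is an open $V\subset|\oh_X(H)|\cong\PP^8$ and the total space $\Hilb^{3t+1}_{\cY_V/V}$ has dimension $11$, so $q$ has $5$-dimensional generic fibers; insisting that $B$ be $3$-dimensional is both unnecessary and unjustified (extracting a $3$-fold in $\PP^8$ over which the family still dominates would itself need an argument). Relatedly, a twisted cubic spans a $\PP^3\subset\PP^8$ and hence lies on a $\PP^4$ of hyperplanes (indeed $h^0(I_C(H))=5$ by Lemma \ref{lem-cubic-cohomology-1}(2)), not a pencil; the correct count $11=7+4$ is what makes the image of the incidence variety in $\Hilb^{3t+1}_X$ at least $7$-dimensional, which combined with the $\PP^1$-fibers of $pr$ and irreducibility of the $6$-dimensional target yields dominance.

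Second, the "bookkeeping" you defer in step (i) is precisely where the paper does substantive, non-formal work: one must show that for a general hyperplane section $Y$ the fiber $\Hilb^{3t+1}_Y$ is smooth, irreducible and $3$-dimensional (this occupies Appendix \ref{appendix-B}, via identifying $\Hilb^{3t+1}_Y$ with an open subscheme of $M^Y_{\sigma_Y}(1,-1)$ glued to a $\PP^1$-bundle over the Hilbert scheme of lines), so that $p$ is smooth and projective; and one must show $q|_{p^{-1}(s)}$ is birational onto its image (Lemma \ref{lem-cube-injective}), which requires comparing $\pr_Y$ with $\pr_X\circ j_*$ and using the residue-cubic involution $T'$ to rule out distinct cubics on $Y$ having the same projection in $\Ku(X)$. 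Neither of these follows formally from Theorem \ref{thm-intro-moduli}, \cite{kapustka2022epw}, or \cite{FGLZ24}, so as written the proposal has a genuine gap at its acknowledged weak point.
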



Theorem \ref{thm-epw-covering} is a consequence of the following more explicit construction. Let $\cY\subset |\oh_X(H)|\times X$ be the universal hyperplane section. Denote by $\cY_V$ the restriction of $\cY$ to an open subscheme $V\subset |\oh_X(H)|$. Then, we have:

\begin{theorem}[{Theorem \ref{thm-covering-moduli}}]\label{main_theorem_2}
Let $X$ be a general GM fourfold and $\sigma_X\in \Stab^{\circ}(\Ku(X))$ be a generic stability condition. Then there exists an open subscheme $V\subset |\oh_X(H)|$ and a dominant morphism $q\colon \Hilb^{3t+1}_{\cY_V/V}\to M_{\sigma_X}^X(1,-1)$ such that 
\[\begin{tikzcd}
	{\Hilb^{3t+1}_{\cY_V/V}} & {M_{\sigma_X}^X(1,-1)} \\
	{V}
	\arrow["p"', from=1-1, to=2-1]
	\arrow["q", from=1-1, to=1-2]
\end{tikzcd}\]
is a Lagrangian covering family with $p$ smooth and projective and $q|_{p^{-1}(s)}$ is birational onto its image for each $s\in V$. Here $p\colon \Hilb^{3t+1}_{\cY_V/V}\to V$ is the structure map of the relative Hilbert scheme over $V$.
\end{theorem}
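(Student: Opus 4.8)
The plan is to build the family fiberwise from GM threefolds and then glue, using the already-established fourfold-level statement (Theorem \ref{thm-intro-moduli}) as the engine. For a general hyperplane section $Y = X \cap H_s$ with $s \in |\oh_X(H)|$, the variety $Y$ is a smooth GM threefold, and there is a semiorthogonal decomposition $\D^b(Y) = \langle \Ku(Y), \oh_Y, \cU_Y^\vee\rangle$ with $\Ku(Y)$ again a K3-type (in fact here Enriques-type/fractional CY) category; crucially, by the work cited earlier and by \cite{FGLZ24}, the pushforward along $i_s\colon Y \hookrightarrow X$ composed with $\pr_X$ relates Bridgeland moduli spaces on $\Ku(Y)$ to the moduli space $M^X_{\sigma_X}(1,-1)$, and the image is a Lagrangian subvariety. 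First I would fix, via \cite{FGLZ24} and \cite{kapustka2022epw}, the precise statement: for generic $s$ there is a closed embedding (or at least a generically finite map birational onto image) $\Phi_s\colon M^Y_{\sigma_Y}(\ast) \hookrightarrow M^X_{\sigma_X}(1,-1)$ whose image $L_s$ is Lagrangian, and the Hilbert scheme $\Hilb_Y^{3t+1}$ of twisted cubics on the threefold $Y$ dominates $M^Y_{\sigma_Y}(\ast)$ via the threefold projection functor $\pr_Y$, with control on the fibers.

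The key steps, in order. (1) Spread out: choose the open $V \subset |\oh_X(H)|$ over which $\cY_V \to V$ has smooth fibers that are GM threefolds of "ordinary" type (avoiding the locus where $Y$ is singular, special, or where the conic/twisted-cubic geometry degenerates), and over which the stability conditions $\sigma_Y$ vary nicely in a family — here I would invoke the relative construction of stability conditions on the family of Kuznetsov components, as in \cite{perry2019stability} and its relative enhancements. (2) Construct $q$: the universal twisted cubic over $\Hilb^{3t+1}_{\cY_V/V}$, pushed forward into $X$ and hit with $\pr_X(-(H))$ fiberwise, gives a family of objects in $\Ku(X)$ of class $\Lambda_1 - \Lambda_2$; since each such object is $\sigma_X$-semistable by the fourfold analysis in Section \ref{sec-projection-twisted_cubics}, the universal property of $M^X_{\sigma_X}(1,-1)$ produces the morphism $q$. (3) Identify the fibers of $p$ with the threefold Hilbert schemes $\Hilb_Y^{3t+1}$ and show $q|_{p^{-1}(s)}$ factors as $\Hilb_Y^{3t+1} \dashrightarrow M^Y_{\sigma_Y}(\ast) \xrightarrow{\Phi_s} L_s \subset M^X_{\sigma_X}(1,-1)$; the first map is dominant with $\PP^1$-fibers (threefold analogue, from \cite{FGLZ24} or its twisted-cubic counterpart), and $\Phi_s$ is birational onto $L_s$, so $q|_{p^{-1}(s)}$ is birational onto its image $L_s$. (4) Lagrangian and covering: each $L_s$ is Lagrangian by \cite{FGLZ24}; to see the family \emph{covers} $M^X_{\sigma_X}(1,-1)$, I would argue that a general point $[E] \in M^X_{\sigma_X}(1,-1)$ lies in the image of $pr$ (Theorem \ref{thm-intro-moduli}), so $E \cong \pr_X(I_C(H))$ for some twisted cubic $C \subset X$; but then $C$ lies on \emph{some} hyperplane section $Y = X \cap H_s$ (indeed on a positive-dimensional linear system of them, since any curve in $\PP^N$ spans a proper linear subspace, so $\langle C\rangle$ imposes at most finitely many conditions and a general hyperplane through it gives $s \in V$), hence $[E] \in L_s$. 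Dominance of $q$ follows, and smoothness/projectivity of $p$ is standard for a relative Hilbert scheme of a smooth projective family with the relevant component being smooth (this last point uses the unobstructedness of twisted cubics on GM threefolds, which I would extract from the deformation theory in Section \ref{sec-twisted-cubic-gm4} restricted to hyperplane sections).

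The main obstacle I anticipate is Step (1)–(2) done \emph{relatively}: one must run the projection-functor construction in families over $V$ and verify that $\sigma_X$-semistability of the projected objects $\pr_X(I_C(H))$ holds uniformly, i.e. is an open condition compatible with the threefold stability conditions $\sigma_Y$ under $i_{s*}$, so that a genuine morphism $q$ (not merely a rational map fiber-by-fiber) exists on all of $\Hilb^{3t+1}_{\cY_V/V}$. This requires knowing that for every $s \in V$ and every twisted cubic $C \subset Y_s$, the object $\pr_X(I_C(H))$ is semistable — which is exactly the kind of exhaustive case analysis of twisted-cubic projections carried out in Section \ref{sec-projection-twisted_cubics}, now needed also for cubics that happen to be non-generic on the fourfold but generic on the threefold (e.g. plane cubics, singular cubics, cubics meeting special subvarieties). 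A secondary difficulty is pinning down the correct class and moduli space $M^Y_{\sigma_Y}(\ast)$ on the threefold and the precise statement from \cite{FGLZ24} that $\Phi_s$ has Lagrangian image and is birational onto it; once those inputs are cited cleanly, the covering statement in Step (4) is a short linear-algebra-of-linear-systems argument as indicated.
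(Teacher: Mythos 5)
Your overall architecture matches the paper's: define $q$ by projecting ideal sheaves of twisted cubics on hyperplane sections, factor $q|_{p^{-1}(s)}$ through the threefold moduli space, get Lagrangian images from \cite{FGLZ24}, and deduce dominance from Theorem \ref{thm-intro-moduli}. One remark on your anticipated ``main obstacle'': no relative stability machinery is needed. The paper simply observes that a smooth GM threefold contains no $\sigma$-cubic (a cubic in the zero locus of a section of $\cQ_Y$ would lie in a conic), so the natural map $\Hilb^{3t+1}_{\cY_V/V}\to \Hilb_X^{3t+1}$ lands in $\Hilb_X^{3t+1}\setminus \mathrm{H}^{\sigma}$, which is exactly the locus where $pr$ is already an honest morphism by Theorem \ref{thm-pr-induce-map}; $q$ is then just the composition. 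The exhaustive case analysis you worry about was already done on the fourfold for all non-$\sigma$-cubics, and the open set $V$ is chosen (via Corollary \ref{cor-hilb-smooth}) so that the fibers of $p$ are smooth irreducible threefolds, giving smoothness and projectivity of $p$.

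There are, however, two genuine problems in your write-up. First, in Step (3) you assert that the threefold map $\Hilb_Y^{3t+1}\dashrightarrow M^Y_{\sigma_Y}(\ast)$ is ``dominant with $\PP^1$-fibers'' and then conclude that $q|_{p^{-1}(s)}$ is birational onto its image. These two statements contradict each other: if the fiberwise map generically contracted $\PP^1$'s, its image would be a surface, not a $3$-dimensional Lagrangian, and birationality would fail. The $\PP^1$-fiber phenomenon is a \emph{fourfold} feature (residue cubics pairing up under the involution); on the threefold the correct statement (Theorem \ref{thm-cubic-3fold}) is that $p_Y\colon \Hilb^{3t+1}_Y\to M^Y_{\sigma_Y}(1,-1)$ is an open immersion outside the locus $\cS$ of cubics with a residue line, hence birational — this, together with the injectivity of $M^Y_{\sigma_Y}(1,-1)\hookrightarrow M^X_{\sigma_X}(1,-1)$ (Lemma \ref{lem-cube-injective}), is what gives the claim. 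Second, your dominance argument in Step (4) has a gap: for a \emph{fixed} twisted cubic $C$, the hyperplanes containing $\langle C\rangle$ form a $\PP^4$ inside $|\oh_X(H)|\cong\PP^8$, and an open dense $V\subset\PP^8$ need not meet that particular $\PP^4$. One must let $C$ vary and run the incidence-variety dimension count of Proposition \ref{prop-lag-cover-family-cube}: $p^{-1}(V)$ is $11$-dimensional, its image in $\Hilb_X^{3t+1}$ has dimension $\geq 11-4=7$, so its image under $pr$ (whose general fibers are $\PP^1$) has dimension $\geq 6=\dim M^X_{\sigma_X}(1,-1)$. Both issues are repairable, but as stated your Step (3) is internally inconsistent and your Step (4) does not close.
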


In Appendix \ref{appendix-A}, based on \cite{FGLZ24}, we apply the same approach to other classical examples to uniformly reconstruct Lagrangian covering families of Fano varieties of lines on cubic fourfolds, LLSvS eightfolds, and double (dual) EPW sextics.


\subsection*{Related work}

Degree one and degree two subvarieties in GM varieties have been systematically studied in \cite{iliev2011fano,debarre2019gushel,Debarre2024quadrics,debarre2012period,Log12}, etc. More recently, \cite{JLLZ2021gushelmukai,GLZ2021conics} use Kuznetsov components to examine conics on GM threefolds and fourfolds.

Numerous studies have been devoted to investigating the moduli spaces of rational curves of low degree on cubic fourfolds and GM fourfolds, demonstrating that their MRC quotients are hyperk\"ahler varieties, as shown in \cite{beauville:fano-variety-cubic-4fold, iliev2011fano, LLMS18, LLSvS17, bayer2017stability, li2018twisted, li2020elliptic, GLZ2021conics}.

The idea of constructing Lagrangian subvarieties of hyperk\"ahler manifolds arising from cubic or GM fourfolds using their hyperplane sections has been realized in several classical results. For instance, the Fano variety of lines of a cubic fourfold by \cite{voisin1992stabilite} and the LLSvS eightfold associated with a general cubic fourfold due to \cite[Proposition 6.9]{shinder2017geometry}. On the other hand, for a GM fourfold $X$, according to \cite[Proposition 5.2, 5.6]{iliev2011fano}, one can construct concrete Lagrangians of the double EPW sextic either from hyperplane sections of $X$ or from fivefolds that contain $X$ as a hyperplane section. In \cite{FGLZ24}, this idea is unified and widely generalized to the setting of Bridgeland moduli spaces of stable objects in Kuznetsov components, which is then used in \cite{guo-liu:atomic} to construct new examples of atomic Lagrangians in hyperk\"ahler manifolds.


As for the hyperk\"ahler manifolds admitting Lagrangian covering families, 
the work of \cite{voisin:remark-coisotropic,voisin2021lefschetz,Bai22} exploits their geometry and establishes the Lefschetz standard conjecture of degree 
two, along with other well-behaved cohomological properties.


\subsection*{Plan of the paper} 
In Section~\ref{sec-preliminay}, we recall the semi-orthogonal decomposition for general triangulated categories and stability conditions on Kuznetsov components of GM varieties with related properties. In Section~\ref{sec-twisted-cubic-gm4}, we conduct a thorough study of the geometry of twisted cubics on GM fourfolds. In Section~\ref{sec-projection-twisted_cubics}, we compute the projections of ideal sheaves of twisted cubics into the Kuznetsov component of a general GM fourfold and prove their stability. In Section~\ref{sec-epw-cube}, we first study the geometric properties of the Hilbert scheme of twisted cubics on a general GM fourfold and prove Theorem \ref{cor-cube-as-MRC}. Then we construct a Lagrangian covering family for a general double EPW cube via Hilbert schemes of twisted cubics on GM threefolds, which proves Theorem~\ref{thm-second-lag-cover-family}.  In Appendix \ref{appendix-A}, we use our categorical method to uniformly reconstruct Lagrangian covering families of Fano varieties of lines on cubic fourfolds, LLSvS eightfolds, and double (dual) EPW sextics. In Appendix \ref{appendix-B}, we relate the Hilbert scheme of twisted cubics on a GM threefold to its Bridgeland moduli space and prove the smoothness and irreducibility of the Hilbert scheme.

\subsection*{Notation and conventions} \leavevmode
\begin{itemize}
 
    
    \item All triangulated categories are assumed to be $\CC$-linear of finite type, i.e.~$\sum_{i\in \ZZ} \dim_{\CC} \Ext^i(E,F)$ is finite for any two objects $E,F$.

    \item We use $\hom$ and $\ext^{i}$ to represent the dimension of the vector spaces $\Hom$ and~$\Ext^{i}$. We denote $\RHom(-,-)=\bigoplus_{i\in \ZZ} \Hom(-,-[i])[-i]$ and $\chi(-,-)=\sum_{i\in \ZZ} (-1)^i \ext^i(-,-).$
    
    \item For a triangulated category $\cT$, its Grothendieck group and numerical Grothendieck group are denoted by $\mathrm{K}(\cT)$ and $\Knum(\cT):=\KK(\cT)/\ker(\chi)$, respectively.

    \item If $X$ is a GM fourfold or a cubic fourfold and $Z$ is a closed subscheme of $X$, we denote by $I_Z$ the ideal sheaf of $Z$ in $X$. If $Z$ is also contained in a closed subscheme $Z'$ of $X$, then we denote by $I_{Z/Z'}$ the ideal sheaf of $Z$ in $Z'$.

    \item We use $V_i$ to denote a complex vector space of dimension $i$.

    \item For any closed subscheme $Z\subset \PP^n$, we denote by $\langle Z\rangle$ the projective space spanned by $Z$.

    \item Let $X\to S$ be a morphism of schemes and $p(t)$ be a polynomial. The corresponding relative Hilbert scheme with respect to a fixed polarization on $X$ over $S$ is denoted by $\Hilb^{p(t)}_{X/S}$. We set $\Hilb^{p(t)}_{X}:=\Hilb^{p(t)}_{X/\Spec(\CC)}$.
\end{itemize}

\subsection*{Acknowledgements}

It is our pleasure to thank Arend Bayer, Sasha Kuznetsov, and Qizheng Yin for very useful discussions on the topics of this project. Special thanks to Sasha Kuznetsov for answering many questions on Gushel--Mukai varieties and his generous help for Section \ref{sec-twisted-cubic-gm4} and \ref{sec-projection-twisted_cubics}. We would like to thank Enrico Arbarello, Marcello Bernardara, Lie Fu, Yong Hu, Grzegorz Kapustka, Chunyi Li, Zhiyuan Li, Laurent Manivel, Kieran O'Grady, Alexander Perry, Laura Pertusi, Richard Thomas, Claire Voisin, Ruxuan Zhang, Yilong Zhang, and Xiaolei Zhao for helpful conversations. SF acknowledges the support of the Royal Society URF/ R1/231191. HG is supported by NKRD Program of China (No. 2020YFA0713200) and LNMS. ZL is partially supported by NSFC Grant 123B2002. SZ is supported by the ERC Consolidator Grant WallCrossAG, no. 819864, ANR project FanoHK, grant ANR-20-CE40-0023 and partially supported by GSSCU2021092. SZ is also supported by the NSF under grant No. DMS-1928930, while he is residence at the Simons Laufer Mathematical Sciences Institute in Berkeley, California. Part of the work was finished during the junior trimester program of SF, ZL, and SZ funded by the Deutsche Forschungsgemeinschaft (DFG, German Research Foundation) under Germany’s Excellence Strategy – EXC-2047/1 – 390685813. We would like to thank the Hausdorff
Research Institute for Mathematics for their hospitality.

\section{Preliminaries}\label{sec-preliminay}

In this section, we recall basic definitions and properties of Gushel--Mukai varieties and Bridgeland stability conditions on their Kuznetsov components. 
We mainly follow from \cite{bridgeland:stability} and \cite{bayer2017stability}.

\subsection{Semi-orthogonal decompositions} \label{sod}
Let $\cT$ be a triangulated category and $\cD\subset \cT$ a full triangulated subcategory. We define the \emph{right orthogonal complement} of $\cD$ in $\cT$ as the full triangulated subcategory
\[ \cD^\bot = \{ X \in \cT \mid \Hom(Y, X) =0  \text{ for all } Y \in \cD \}.  \]
The \emph{left orthogonal complement} is defined similarly, as 
\[ {}^\bot \cD = \{ X \in \cT \mid \Hom(X, Y) =0  \text{ for all } Y \in \cD \}.  \]
We say a full triangulated subcategory $\cD \sst \cT$ is \emph{admissible} if the inclusion functor $i \colon \cD \hookrightarrow \cT$ has left adjoint $i^*$ and right adjoint $i^!$.
Let $( \cD_1, \dots, \cD_m  )$ be a collection of admissible full subcategories of $\cT$. We say that $$\cT = \langle \cD_1, \dots, \cD_m \rangle$$ is a \emph{semi-orthogonal decomposition} of $\cT$ if $\cD_j \sst \cD_i^\bot $ for all $i > j$, and the subcategories $(\cD_1, \dots, \cD_m )$ generate $\cT$, i.e.~the category resulting from taking all shifts and cones in the categories $(\cD_1, \dots, \cD_m )$ is equivalent to $\cT$.

Let $i\colon \cD \hookrightarrow \cT$ be an admissible full subcategory. Then the \emph{left mutation functor} $\bL_{\cD}$ through $\cD$ is defined as the functor lying in the canonical functorial exact triangle 
\[  i i^! \ra \identity \ra \bL_{\cD}   \]
and the \emph{right mutation functor} $\bR_{\cD}$ through $\cD$ is defined similarly, by the triangle 
\[ \bR_{\cD} \ra \identity \ra i i^*  .  \]
Therefore, $\bL_{\cD}$ is exactly the left adjoint functor of $\cD^\bot\hookrightarrow \cT$. Similarly, $\bR_{\cD}$ is the right adjoint functor of ${}^\bot \cD\hookrightarrow \cT$. When $E \in \cT$ is an exceptional object and $F \in \cT$ is any object, the left mutation $\bL_E F:=\bL_{\langle E\rangle} F$ fits into the triangle 
\[ E \otimes \RHom(E, F) \ra F \ra \bL_E F , \]
and the right mutation $\bR_E F:=\bR_{\langle E\rangle} F$ fits into the triangle
\[ \bR_E F \ra F \ra E \otimes \RHom(F, E)^\vee  . \]

Given a semi-orthogonal decomposition $\cT = \langle \cD_1 , \cD_2 \rangle$. Then $$\cT \simeq \langle S_{\cT}(\cD_2), \cD_1 \rangle \simeq \langle \cD_2, S^{-1}_{\cT}(\cD_1) \rangle$$ are also semi-orthogonal decompositions of $\cT$, where $S_{\cT}$ is the Serre functor of $\cT$. Moreover, \cite[Lemma 2.6]{kuz:fractional-CY} shows that  
\[ S_{\cD_2} = \bR_{\cD_1} \circ S_{\cT}  \, \, \, \text{ and } \, \, \, S_{\cD_1}^{-1} = \bL_{\cD_2} \circ S_{\cT}^{-1} . \]

\subsection{Stability conditions}

Let $\cT$ be a triangulated category and $\KK(\cT)$ be its Grothendieck group. 
Fix a surjective morphism to a finite rank lattice $v \colon \KK(\cT) \ra \Lambda$. 

\begin{definition}
A \emph{stability condition} on $\cT$ is a pair $\sigma = (\cA, Z)$, where $\cA$ is the heart of a bounded t-structure on $\cT$ and $Z \colon \Lambda \ra \CC$ is a group homomorphism such that 
\begin{enumerate}
    \item the composition $Z \circ v : \KK(\cA) \cong \KK(\cT) \ra \CC$ is a stability function on $\cA$, i.e.~for any $E \in \cA$, we have $\Im Z(v(E)) \geq 0$ and if $\Im Z(v(E)) = 0$, $\Re Z(v(E)) < 0$. From now on, we write $Z(E)$ rather than $Z(v(E))$.
\end{enumerate}
For any object $E \in \cA$, we define the slope function $\mu_{\sigma}(-)$ as
\[
\mu_\sigma(E) := \begin{cases}  - \frac{\Re Z(E)}{\Im Z(E)}, & \Im Z(E) > 0 \\
+ \infty , & \text{else}.
\end{cases}
\]
An object $0 \neq E \in \cA$ is called $\sigma$-(semi)stable if for any proper subobject $F \sst E$, we have $\mu_\sigma(F) (\leq) \mu_\sigma(E)$.
\begin{enumerate}[resume]
    \item Any object $E \in \cA$ has a Harder--Narasimhan filtration in terms of $\sigma$-semistability defined above.
    \item There exists a quadratic form $Q$ on $\Lambda \otimes \mathbb{R}$ such that $Q|_{\ker Z}$ is negative definite  and $Q(E) \geq 0$ for all $\sigma$-semistable objects $E \in \cA$. This is known as the \emph{support property}.
\end{enumerate}
\end{definition}

The \emph{phase} of a $\sigma$-semistable object $E\in \cA$ is defined as
\[\phi_{\sigma}(E):=\frac{1}{\pi}\mathrm{arg}(Z(E))\in (0,1].\]
For $n\in \ZZ$, we set $\phi_{\sigma}(E[n]):=\phi_{\sigma}(E)+n$. 



\begin{definition}\label{def-serre-invariant}
Let $\cT$ be a triangulated category and $\Phi$ be an auto-equivalence of $\cT$. We say a stability condition $\sigma$ on $\cT$ is \emph{$\Phi$-invariant} if
\[\Phi\cdot \sigma=\sigma\cdot\wt{g}\]
for an element $\wt{g}\in \GL$\footnote{The action of $\Phi$ and $\GL$ on $\sigma$ is defined in \cite[Lemma 8.2]{bridgeland:stability}.}. We say $\sigma$ is \emph{Serre-invariant} if it is $S_{\cT}$-invariant, where $S_{\cT}$ is the Serre functor of $\cT$.
\end{definition}

\subsection{Gushel--Mukai varieties}\label{sec-very-general-GM}

Next, we review the basic constructions and properties of Gushel--Mukai varieties and their Kuznetsov components.

Recall that a Gushel--Mukai (GM) variety $X$ of dimension $n$ is a smooth intersection $$X=\mathrm{Cone}(\Gr(2,5))\cap Q,$$ 
where $\mathrm{Cone}(\Gr(2,5))\subset \PP^{10}$ is the projective cone over the \text{Plücker} embedded Grassmannian $\Gr(2,5)\subset \PP^9$, and $Q\subset \PP^{n+4}$ is a quadric hypersurface. Then $n\leq 6$ and we have a natural morphism $\gamma_X\colon X\to \Gr(2,5)$. We say $X$ is \emph{ordinary} if $\gamma_X$ is a closed immersion, and \emph{special} if $\gamma_X$ is a double covering onto its image. 

\begin{definition}
Let $X$ be a GM fourfold. We say $X$ is \emph{Hodge-special} if 
\[\mathrm{H}^{2, 2}(X)\cap \mathrm{H}_{\mathrm{van}}^4(X, \QQ) \neq 0,\]
where $\mathrm{H}_{\mathrm{van}}^4(X, \QQ):=\mathrm{H}_{\mathrm{van}}^4(X, \ZZ)\otimes \QQ$ and $\mathrm{H}_{\mathrm{van}}^4(X, \ZZ)$ is defined as the orthogonal complement of $$\gamma_X^*\mathrm{H}^4(\Gr(2,5), \ZZ)\subset \mathrm{H}^4(X, \ZZ)$$ with respect to the intersection form.
\end{definition}

By \cite[Corollary 4.6]{debarre2015special}, $X$ is non-Hodge-special when $X$ is very general among all ordinary GM fourfolds or very general among all special GM fourfolds.

The semi-orthogonal decomposition of $\D^b(X)$ for a GM variety $X$ of dimension $n\geq 3$ is given by 
\[\D^b(X)=\langle\Ku(X),\oh_X,\cU^{\vee}_X,\cdots,\oh_X((n-3)H),\cU_X^{\vee}((n-3)H)\rangle,\]
where $\cU_X$ and $\oh_X(H)$ are the pull-back of the tautological subbundle and the Pl\"ucker line bundle on $\Gr(2, 5)$ via $\gamma_X$, respectively. We refer to $\Ku(X)$ as the Kuznetsov component of $X$. We define the projection functor \begin{equation}\label{eq-prX}
    \mathrm{pr}_X:=\bR_{\cU_X}\bR_{\oh_X(-H)}\bL_{\oh_X}\bL_{\cU^{\vee}_X}\colon \D^b(X)\to \Ku(X)
\end{equation}
when $n= 4$ and $\pr_X:=\bL_{\oh_X}\bL_{\cU^{\vee}_X}$ when $n=3$.


When $n=3$, according to the proof of \cite[Proposition 3.9]{kuznetsov2009derived}, $\Knum(\Ku(X))$ is a rank two lattice generated by $\lambda_1$ and $\lambda_2$, where
\begin{equation}\label{lambda}
    \ch(\lambda_1)=-1+\frac{1}{5}H^2,\quad \ch(\lambda_2)=2-H+\frac{1}{12}H^3,
\end{equation}
with the Euler pairing
\begin{equation}\label{eq-matrix-odd}
\left[               
\begin{array}{cc}   
-1 & 0 \\  
0 & -1\\
\end{array}
\right].
\end{equation} 

When $n=4$, there is a rank two sublattice $-A_1^{\oplus 2}$ in $\Knum(\Ku(X))$ generated by $\Lambda_1$ and $\Lambda_2$, where
\begin{equation}\label{Lambda}
    \ch(\Lambda_1)=-2+(H^2-\gamma^*_X \sigma_2)-\frac{1}{20}H^4,
\quad \ch(\Lambda_2)=4-2H+\frac{1}{6}H^3,
\end{equation}
whose Euler pairing is 
\begin{equation}\label{eq-matrix-even}
\left[               
\begin{array}{cc}   
-2 & 0 \\  
0 & -2\\
\end{array}
\right],
\end{equation}
where $\gamma^*_X \sigma_2$ is the pull-back of the Schubert cycle $\sigma_2\in \mathrm{H}^4(\Gr(2, 5), \ZZ)$. When $X$ is non-Hodge-special, by \cite[Proposition 2.25]{kuznetsov2018derived}, we have $$\Knum(\Ku(X))=-A_1^{\oplus 2}=\ZZ\Lambda_1\oplus \ZZ\Lambda_2.$$ 

By \cite[Section 3]{GLZ2021conics}, for a GM fourfold $X$, we see
\[\ch(\cU_X)=2-H+(\gamma_X^*\sigma_2-\frac{1}{2}H^2)+\frac{1}{30}H^3-\frac{1}{120}H^4,\]
hence $\cU_X(H)\cong \cU^{\vee}_X$. Similarly, we have
\[\ch(\cQ_X)=3+H+(\gamma_X^*\sigma_{1,1}-\frac{1}{2}H^2)-\frac{1}{30}H^3+\frac{1}{120}H^4.\]

For our later computations, we need the following result.

\begin{proposition}[{\cite[Proposition 4.5]{FGLZ24}}] \label{prop-pushforward}
Let $X$ be a GM fourfold and $j\colon Y\hookrightarrow X$ be a smooth hyperplane section. For any object $E\in \D^b(Y)$, we have
\[\pr_X(j_*E)\cong\pr_X(j_*\pr_Y(E)).\]
\end{proposition}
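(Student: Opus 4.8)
The plan is to reduce the statement to a compatibility between the projection functors $\pr_X$ and $\pr_Y$ and the pushforward $j_*$, using the explicit formulas for both projection functors together with the relations between the various exceptional bundles on $X$ and their restrictions to $Y$. Recall that for a GM fourfold $X$ with smooth hyperplane section $Y$ (a GM threefold), we have $\pr_X = \bR_{\cU_X}\bR_{\oh_X(-H)}\bL_{\oh_X}\bL_{\cU^{\vee}_X}$ and $\pr_Y = \bL_{\oh_Y}\bL_{\cU^{\vee}_Y}$, where $\cU_Y = j^*\cU_X$ and $\oh_Y(H) = j^*\oh_X(H)$. Since $\pr_X(j_* E)$ and $\pr_X(j_*\pr_Y(E))$ both land in $\Ku(X)$, it suffices to show that the triangles expressing $\pr_Y(E)$ in terms of $E$, when pushed forward by $j_*$ and then projected by $\pr_X$, produce an isomorphism. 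Concretely, $\pr_Y(E)$ is obtained from $E$ by two successive left mutations, so there are exact triangles
\[
\cU^{\vee}_Y\otimes\RHom(\cU^{\vee}_Y,E)\to E\to \bL_{\cU^{\vee}_Y}E,
\quad
\oh_Y\otimes\RHom(\oh_Y,\bL_{\cU^{\vee}_Y}E)\to \bL_{\cU^{\vee}_Y}E\to \pr_Y(E).
\]
Pushing these forward along $j$ and using that $j_*$ is exact (on derived categories), it is enough to check that $\pr_X$ annihilates $j_*(\cU^{\vee}_Y\otimes W_1)$ and $j_*(\oh_Y\otimes W_2)$ for any graded vector spaces $W_i$; equivalently, $\pr_X(j_*\cU^{\vee}_Y)=0$ and $\pr_X(j_*\oh_Y)=0$.

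The key step is therefore the vanishing $\pr_X(j_*\oh_Y)=0$ and $\pr_X(j_*\cU^{\vee}_Y)=0$. For the first, use the standard exact sequence $0\to \oh_X(-H)\to \oh_X\to j_*\oh_Y\to 0$, so $j_*\oh_Y$ sits in a triangle with $\oh_X(-H)[1]$ and $\oh_X$. Both $\oh_X$ and $\oh_X(-H)$ lie in the subcategory $\langle \oh_X(-H), \oh_X, \cU^{\vee}_X, \ldots\rangle$ that $\pr_X$ is designed to kill; more precisely, one checks from the definition of $\pr_X$ as a composition of mutations through $\oh_X$, $\cU^{\vee}_X$, $\oh_X(H)$, $\cU^{\vee}_X(H)$ (equivalently, through $\cU_X$, $\oh_X(-H)$, $\oh_X$, $\cU^{\vee}_X$ after rotation) that $\pr_X$ vanishes on each of these exceptional objects, hence on $j_*\oh_Y$. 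For $\pr_X(j_*\cU^{\vee}_Y)=0$: restrict the analogous sequence $0\to \cU^{\vee}_X(-H)\to \cU^{\vee}_X\to j_*\cU^{\vee}_Y\to 0$ (using $\cU_Y^{\vee}=j^*\cU_X^{\vee}$), and note $\cU^{\vee}_X(-H)\cong \cU_X$ by the relation $\cU_X(H)\cong\cU_X^{\vee}$ recorded in the excerpt. Then $j_*\cU^{\vee}_Y$ sits in a triangle with $\cU_X$ and $\cU^{\vee}_X$, both of which are among the exceptional objects generating $\Ku(X)^{\perp}$-type pieces, so $\pr_X$ kills them.

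With these vanishings in hand, applying $\pr_X\circ j_*$ to the two defining triangles for $\pr_Y(E)$ gives isomorphisms $\pr_X(j_*E)\xrightarrow{\sim}\pr_X(j_*\bL_{\cU^{\vee}_Y}E)\xrightarrow{\sim}\pr_X(j_*\pr_Y(E))$, which is the claim. The main obstacle I anticipate is bookkeeping the precise form of $\pr_X$: the composition of four mutation functors must be checked to annihilate each of $\oh_X$, $\cU_X^{\vee}$, $\oh_X(H)$, $\cU_X^{\vee}(H)$ and also $\cU_X$ and $\oh_X(-H)$, and one must be careful that the rotated semi-orthogonal decomposition $\langle \cU_X, \oh_X(-H), \oh_X, \cU_X^{\vee}, \Ku(X)'\rangle$ used in the definition of $\pr_X$ genuinely places $\cU_X$ and $\oh_X(-H)$ in the part being mutated away. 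A clean way to handle this is to observe that $\pr_X$, being the composite left adjoint to the inclusion $\Ku(X)\hookrightarrow \D^b(X)$ (up to the rotation), necessarily annihilates the orthogonal exceptional collection, so one only needs to verify that $\oh_X(-H)$ and $\cU_X$ lie in the triangulated subcategory generated by that collection — which follows from the Serre-functor rotations of the semi-orthogonal decomposition. Once the membership is confirmed, the rest is formal.
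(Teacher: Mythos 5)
Your argument is correct and is essentially the standard proof of this statement: the paper itself does not reprove it but cites \cite[Proposition 4.5]{FGLZ24}, where the argument is exactly of this form. The two mutation triangles defining $\pr_Y(E)$ push forward to triangles whose outer terms are direct sums of shifts of $j_*\oh_Y$ and $j_*\cU_Y^\vee$, and your vanishings $\pr_X(j_*\oh_Y)=\pr_X(j_*\cU_Y^\vee)=0$ — via the Koszul sequences, the identification $\cU_X^\vee(-H)\cong\cU_X$, and the rotated decomposition $\langle\oh_X(-H),\cU_X,\Ku(X),\oh_X,\cU_X^\vee\rangle$ showing that $\pr_X$ kills all four bundles $\oh_X,\cU_X^\vee,\oh_X(-H),\cU_X$ — are all verified correctly.
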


\subsection{Stability conditions on the Kuznetsov components}\label{sec-stab-ku}


In \cite{bayer2017stability}, the authors provide a way to construct stability conditions on a semi-orthogonal component from weak stability conditions on a larger category. For GM varieties, we have the following. 

\begin{theorem}[{\cite{bayer2017stability,perry2019stability}}]\label{blms-induce}
Let $X$ be 
a GM variety of dimension $n \geq 3$. Then there exists a family of stability conditions on $\Ku(X)$.
\end{theorem}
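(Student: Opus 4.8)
The plan is to construct the desired family of stability conditions on $\Ku(X)$ by the inductive, ``tilting-on-a-larger-category'' method of \cite{bayer2017stability}, following the blueprint already carried out there for cubic fourfolds and extended in \cite{perry2019stability} to the Gushel--Mukai setting. First I would set up the ambient category: since $X$ admits a morphism $\gamma_X\colon X\to \Gr(2,5)$ and (for $n\le 5$) an embedding into a larger GM variety, or more directly one works inside $\D^b(X)$ itself, one starts with the standard weak stability conditions $\sigma_{\alpha,\beta}=(\Coh^{\beta}(X),Z_{\alpha,\beta})$ on $\D^b(X)$ coming from a tilt of $\Coh(X)$ with respect to slope stability, parametrized by $(\alpha,\beta)\in\RR_{>0}\times\RR$. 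The key structural input is that the exceptional collection $\langle\oh_X,\cU_X^\vee,\dots,\oh_X((n-3)H),\cU_X^\vee((n-3)H)\rangle$ generating the orthogonal to $\Ku(X)$ consists of objects that are $\sigma_{\alpha,\beta}$-(semi)stable for a suitable range of parameters, with controllable phases. This is exactly the hypothesis needed to apply the general criterion of \cite[Theorem~3.x]{bayer2017stability} (or its GM incarnation in \cite{perry2019stability}): one checks that for $(\alpha,\beta)$ on a specific arc near the relevant wall, every $\sigma_{\alpha,\beta}$-semistable object in $\Coh^\beta(X)$ whose class pairs appropriately with the exceptional objects actually lies in $\Ku(X)$ up to the HN-type operations, so that intersecting the heart $\Coh^\beta(X)$ (after a further tilt at a torsion pair induced by the exceptional collection) with $\Ku(X)$ produces a bounded t-structure on $\Ku(X)$.

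Concretely, the steps I would carry out are: (1) Recall the weak stability conditions $\sigma^0_{\alpha,\beta}$ on $\D^b(X)$ built from $\mu_H$-stability and its first tilt, and the second tilted weak stability conditions $\sigma^1_{\alpha,\beta}$ obtained by tilting again at slope-zero objects; verify the support property for these using the discriminant quadratic form, as in \cite{bayer2017stability}. (2) Verify, via explicit Chern-character computations and the Bogomolov--Gieseker-type inequalities available for $X$, that each object in the exceptional collection is $\sigma^1_{\alpha,\beta}$-stable and compute the phases for $(\alpha,\beta)$ in the region of interest; this uses $\ch(\cU_X)$ and $\ch(\cQ_X)$ recorded above and the fact $\cU_X(H)\cong\cU_X^\vee$. (3) Apply the criterion of \cite{bayer2017stability} / \cite[Theorem~4.12]{perry2019stability}: define $\cA(\alpha,\beta):=\Coh^1_{\alpha,\beta}(X)\cap\Ku(X)$ with the restricted central charge; show it is the heart of a bounded t-structure on $\Ku(X)$ and that the pair satisfies the support property inherited from the ambient weak stability condition. (4) Finally, note that varying $(\alpha,\beta)$ along the admissible arc (and acting by $\GL$) produces a whole family, giving $\Stab^\circ(\Ku(X))\neq\emptyset$; for $n=3$ one also records (as in \cite{perry2019stability}) that these stability conditions can be taken Serre-invariant.

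The main obstacle, and the step that carries all the real content, is (2)--(3): proving that the heart $\Coh^1_{\alpha,\beta}(X)\cap\Ku(X)$ is genuinely a bounded t-structure on $\Ku(X)$ rather than something degenerate. This requires showing that for the chosen parameters no $\sigma^1_{\alpha,\beta}$-semistable object of the ``wrong'' class survives inside $\Ku(X)$ — equivalently, controlling which semistable objects in $\Coh^1_{\alpha,\beta}(X)$ lie in the orthogonal complement of the exceptional collection. In practice this is reduced to a finite check on numerical classes using the lattice $\Knum(\Ku(X))=\ZZ\Lambda_1\oplus\ZZ\Lambda_2$ (resp. $\ZZ\lambda_1\oplus\ZZ\lambda_2$ when $n=3$) together with the Euler pairing \eqref{eq-matrix-even} (resp. \eqref{eq-matrix-odd}) and the Bogomolov-type inequality, exactly the argument already present in \cite{bayer2017stability,perry2019stability}; hence I would invoke those references for the verification rather than reproduce it. Everything else — existence of the ambient weak stability conditions, stability of line bundles and of $\cU_X^\vee$, the phase computations, and the support property transfer — is routine given the Chern character data above.
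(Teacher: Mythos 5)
The paper offers no proof of this statement---it is imported verbatim from \cite{bayer2017stability} (which covers $n=3$) and \cite{perry2019stability} (which covers $n\geq 4$)---so the only meaningful comparison is with the arguments in those references. Your sketch is an essentially accurate account of the $n=3$ case: one tilts $\Coh(Y)$ with respect to slope stability, rotates at the slope-zero torsion pair, checks that $\oh_Y$, $\cU_Y^{\vee}$ and their shifts are tilt-stable with the right phases, and applies the inducing criterion to the heart $\Coh^0_{\alpha,\beta}(Y)\cap\Ku(Y)$; the support property is inherited from the classical Bogomolov inequality for tilt-semistable objects on a threefold.

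For $n\geq 4$---the case this paper actually needs---the route you propose, namely a second tilt $\sigma^1_{\alpha,\beta}$ on $\D^b(X)$ itself with support property ``verified using the discriminant quadratic form,'' is precisely the step that is not available, and it is the reason the actual proof takes a different path. On a fourfold the kernel of $Z_{\alpha,\beta}$ contains classes of objects supported in codimension $\geq 3$, so the restricted central charge is only a \emph{weak} stability function on the intersection heart; promoting it requires a further tilt governed by $\ch_3$, and hence a generalized Bogomolov--Gieseker-type inequality for tilt-semistable objects that is open for GM (and cubic) fourfolds. This cannot be reduced to ``a finite check on numerical classes'' in $\Knum(\Ku(X))$. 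The references avoid the problem entirely: \cite{bayer2017stability} handle cubic fourfolds by passing to the conic fibration over $\PP^3$ and tilting in the category of modules over the even Clifford algebra on a \emph{threefold} base, while \cite{perry2019stability} handle GM varieties of dimension $\geq 4$ by combining the Kuznetsov--Perry description of $\Ku(X)$ in terms of a $\ZZ/2$-equivariant category built from the Kuznetsov component of a GM variety of one lower dimension (via the double-cover structure of special GM varieties and the ordinary/special correspondence) with a general theorem on inducing stability conditions on equivariant categories from invariant ones, bootstrapping from the threefold case. As written, your argument would not close for $n=4$.
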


When $X$ is a GM fourfold, we denote by $\Stab^{\circ}(\Ku(X))$ the family of stability conditions on $\Ku(X)$ constructed in \cite[Theorem 4.12]{perry2019stability}.

When $Y$ is a GM threefold, it is proved in \cite{pertusiGM3fold} that stability conditions on $\Ku(Y)$ constructed in \cite{bayer2017stability} are \emph{Serre-invariant}. Furthermore, they all belong to the same $\GL$-orbit.

\begin{theorem}[{\cite{JLLZ2021gushelmukai,FeyzbakhshPertusi2021stab}}]\label{thm-unique-threefold}
Let $Y$ be a GM threefold, then all Serre-invariant stability conditions on $\Ku(Y)$ are in the same $\GL$-orbit.
\end{theorem}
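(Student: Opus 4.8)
The plan is to prove Theorem~\ref{thm-unique-threefold} by combining the existence and uniqueness results for Serre-invariant stability conditions on $\Ku(Y)$ with the general principle that any two Serre-invariant stability conditions on a triangulated category with suitable rigidity live in a single $\GL$-orbit. First I would recall that for a GM threefold $Y$, the category $\Ku(Y)$ is equivalent to the Kuznetsov component of a quartic double solid (or, more relevantly, that it has the same numerical Grothendieck group $\Knum(\Ku(Y))=\ZZ\lambda_1\oplus\ZZ\lambda_2$ with the Euler form \eqref{eq-matrix-odd}), and that $S_{\Ku(Y)}^3=[5]$ with $S_{\Ku(Y)}$ not equal to a shift. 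The key input is that \cite{pertusiGM3fold} produces at least one Serre-invariant stability condition $\sigma_0$, and \cite{bayer2017stability} shows the whole family constructed there is Serre-invariant and lies in one orbit; so it suffices to show \emph{every} Serre-invariant $\sigma$ is $\GL$-equivalent to $\sigma_0$.

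The main step is a homological rigidity argument. Following the strategy of \cite{JLLZ2021gushelmukai,FeyzbakhshPertusi2021stab}, I would argue as follows. Let $\sigma=(\cA,Z)$ be any Serre-invariant stability condition. One first shows that the central charge $Z$ is determined up to the $\GL$-action: since $\Knum(\Ku(Y))$ has rank two and $Z$ factors through it, $Z$ is a point in a two-dimensional space of homomorphisms, and the $\GL$-action (via the $\widetilde{\mathrm{GL}}^+(2,\RR)$ factor) acts transitively on the locus of $Z$ with no real structure collapsing; Serre-invariance forces $Z$ to be compatible with the action of $S_{\Ku(Y)}$ on $\Knum$, which is a finite-order (order $6$, since $S^3=[5]$) isometry, and this pins down $Z$ to a single $\GL$-orbit. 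The harder part is the heart: one must show that, after normalizing $Z$, the heart $\cA$ is also uniquely determined. Here the standard technique is to identify, for the normalized $Z$, the classes of minimal "length" (e.g.\ the $(-1)$-classes $\pm\lambda_1$, $\pm\lambda_2$, $\pm(\lambda_1+\lambda_2)$), show that any $\sigma$-semistable object of such a class must be one of an explicit finite list of exceptional or spherical objects in $\Ku(Y)$ (this uses the Serre-invariance to bound $\Ext$-groups, via the $S_{\Ku(Y)}$-equivariance giving $\ext^i(E,E)=\ext^{?-i}(E,E)$-type symmetries and the weak Mukai lemma), and then deduce that the heart is generated by these objects and hence coincides with the heart of $\sigma_0$ up to the $\GL$-action.

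The technical backbone I expect to lean on is: (i) the classification of $\sigma$-stable objects of the smallest classes, which relies on the inequality $\ext^1(E,E)\le$ something small forced by Serre-invariance together with $S^3=[5]$; (ii) a statement that a stability condition on $\Ku(Y)$ is determined by its restriction of the heart to a bounded-above/bounded-below window of phases containing these minimal objects; and (iii) the fact, already recorded in the excerpt, that the $\sigma_0$-family is a single $\GL$-orbit, so matching $\sigma$ to any member of it suffices. The main obstacle is step (i): ruling out exotic semistable objects of the minimal classes, because a priori $\cA$ could be wild. The resolution is the Serre-invariance constraint, which severely restricts the possible phases of $E$ and $S_{\Ku(Y)}(E)$ simultaneously and, combined with the negative-definiteness of the Euler form on $\Knum(\Ku(Y))$, forces $\hom(E,E)=1$ and $\ext^1(E,E)=0$ or a controlled small value — exactly the input needed to run the argument of \cite{JLLZ2021gushelmukai,FeyzbakhshPertusi2021stab}. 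Once the minimal objects are classified, the rest is bookkeeping: reconstruct the heart from its simple (or "tilting") objects and conclude $\sigma\in\GL\cdot\sigma_0$.
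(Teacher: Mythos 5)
The paper does not prove this statement: Theorem \ref{thm-unique-threefold} is quoted directly from \cite{JLLZ2021gushelmukai,FeyzbakhshPertusi2021stab} with no internal argument, so there is no proof in the paper to compare against. Judged as a reconstruction of the cited proofs, your outline captures the right overall strategy (normalize the central charge via the $\GL$-action on a rank-two lattice, then show the hearts agree by controlling stable objects of the minimal classes via Serre-invariance and a weak Mukai lemma), but it is a roadmap rather than a proof --- the ``technical backbone'' you list is exactly the content of the theorem being cited, so as written the argument is circular.

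Two concrete errors are worth flagging. First, $S_{\Ku(Y)}^3=[5]$ is the Serre functor relation for the Kuznetsov component of a \emph{cubic} threefold (see Appendix \ref{appendix-A} of this paper); for a GM threefold one has $S_{\Ku(Y)}=T\circ[2]$ with $T$ an involution, hence $S_{\Ku(Y)}^2=[4]$. Consequently your claim that $S_{\Ku(Y)}$ induces an order-$6$ isometry of $\Knum(\Ku(Y))$ is wrong; the induced action is an involution, consistent with the symmetric Euler form \eqref{eq-matrix-odd}. (For a rank-two lattice the central charge is in any case pinned down up to $\GL$ essentially without Serre-invariance; the orientation is fixed by the support property.) Second, stable objects of the classes $\pm\lambda_1,\pm\lambda_2$ are not rigid: since $\chi(\lambda_i,\lambda_i)=-1$ they satisfy $\ext^1(E,E)=2$ (they sweep out surfaces such as the minimal Fano surface of conics), not $\ext^1(E,E)=0$; the relevant rigidity statement is the one recorded in Lemma \ref{lem-ext1=4}, namely that $\ext^1(E,E)\le 3$ (or $\RHom(E,E)=\CC\oplus\CC^4[-1]\oplus\CC[-2]$) forces stability for every Serre-invariant stability condition. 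Neither error is fatal to the strategy, but both would need to be corrected before the sketch could be expanded into the actual argument of the cited references.
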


For GM fourfolds, all stability conditions on $\Ku(X)$ are Serre-invariant as $S_{\Ku(X)}=[2]$ and we have the following analog:

\begin{proposition}[{\cite[Proposition 4.13]{FGLZ24}}]\label{prop-gm4-unique}
Let $X$ be a non-Hodge-special GM fourfold. Then all stability conditions on $\Ku(X)$ are in the same $\GL$-orbit.
\end{proposition}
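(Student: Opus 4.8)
The plan is to prove the stronger assertion that $\Stab^{\circ}(\Ku(X))$ consists of a single $\GL$-orbit, and the whole argument turns on one numerical coincidence. Since $X$ is non-Hodge-special, \cite[Proposition 2.25]{kuznetsov2018derived} gives $\Knum(\Ku(X))=\ZZ\Lambda_1\oplus\ZZ\Lambda_2=-A_1^{\oplus 2}$, a lattice of rank $2$. By Bridgeland's deformation theorem, the forgetful map $\sigma\mapsto Z_\sigma$ makes $\Stab(\Ku(X))$ into a complex manifold of dimension $\rk\Knum(\Ku(X))=2$, hence into a real $4$-manifold; on the other hand $\GL$ is a real Lie group of dimension $4$. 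So the proposition reduces to two statements: (i) every $\GL$-orbit in $\Stab^{\circ}(\Ku(X))$ is open, and (ii) $\Stab^{\circ}(\Ku(X))$ is connected. Indeed, granting these, the connected space $\Stab^{\circ}(\Ku(X))$ is partitioned into pairwise disjoint open $\GL$-orbits, which forces there to be exactly one.

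Statement (ii) is built into the construction: the family $\Stab^{\circ}(\Ku(X))$ of \cite[Theorem 4.12]{perry2019stability} is obtained, via the inducing procedure of \cite{bayer2017stability}, from a connected family of weak stability conditions on $\D^b(X)$, and it is stable under the $\GL$-action. For (i) it suffices to show that the $\GL$-stabilizer of each $\sigma\in\Stab^{\circ}(\Ku(X))$ is trivial (or merely discrete): then, since $\dim\GL=\dim_{\RR}\Stab(\Ku(X))=4$, a standard argument using Bridgeland's local homeomorphism $\Stab(\Ku(X))\to\Hom(\Knum(\Ku(X)),\CC)$ and the fact that $GL^+(2,\RR)$ acts with open orbit at any full-rank point of $\Hom(\RR^2,\RR^2)$ shows that the $\GL$-orbit of $\sigma$ is open. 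To see triviality of the stabilizer, suppose $\wt g\in\GL$ fixes $\sigma$. Then $\wt g$ fixes $Z_\sigma$, viewed as an $\RR$-linear map $\Knum(\Ku(X))\otimes\RR\to\CC$; since $Z_\sigma(\Lambda_1)$ and $Z_\sigma(\Lambda_2)$ are $\RR$-linearly independent in $\CC$ --- which one reads off from the explicit central charges attached to the points of $\Stab^{\circ}(\Ku(X))$ --- the image of $\wt g$ in $GL^+(2,\RR)$ must be the identity, so $\wt g$ acts on $\Stab(\Ku(X))$ as a shift functor $[k]$ for some $k\in\ZZ$. But no nonzero shift fixes the heart of a bounded t-structure, since the shifted heart has its cohomology concentrated in a single nonzero degree; hence $k=0$ and $\wt g=\mathrm{id}$.

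The main obstacle is thus not the open--closed topology but extracting the right structural input from \cite{perry2019stability}: that $\Stab^{\circ}(\Ku(X))$ is connected and $\GL$-invariant, and --- the more delicate point --- that the central charge has full rank $2$ at every point of the family, which is precisely where the non-Hodge-special hypothesis enters, through the identification $\Knum(\Ku(X))\cong -A_1^{\oplus 2}$. Once these are in hand, the argument above is immediate. We emphasize that the rank-$2$ coincidence $\dim_{\RR}\Stab(\Ku(X))=\dim\GL$ is essential: for a Hodge-special GM fourfold the rank of $\Knum(\Ku(X))$ jumps, $\GL$-orbits cease to be open, and the conclusion genuinely fails. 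This is the fourfold analogue of Theorem~\ref{thm-unique-threefold}; here no Serre-invariance hypothesis is needed, since $S_{\Ku(X)}=[2]$ makes every stability condition Serre-invariant.
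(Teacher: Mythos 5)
First, a caveat on the comparison: the paper does not prove this proposition at all --- it is imported verbatim as \cite[Proposition 4.13]{FGLZ24} --- so your attempt can only be measured against the argument in that reference, which (like the GM threefold case, Theorem \ref{thm-unique-threefold}) proceeds by a direct comparison of an arbitrary stability condition with a fixed one, using $S_{\Ku(X)}=[2]$, the weak Mukai lemma, and the identification $\Knum(\Ku(X))=-A_1^{\oplus 2}$ to force specific objects of the $(-2)$-classes $\pm\Lambda_1,\pm\Lambda_2$ to be stable for \emph{every} stability condition, and then to pin down the heart and central charge up to the $\GL$-action.

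Your proof has a genuine gap, located exactly where you place the least weight. The proposition asserts that \emph{all} stability conditions on $\Ku(X)$ lie in one $\GL$-orbit, i.e.\ that the entire stability manifold $\Stab(\Ku(X))$ is a single orbit; what you argue (modulo the points deferred to \cite{perry2019stability}) is that the distinguished family $\Stab^{\circ}(\Ku(X))$ is a single orbit. Contrary to your opening sentence, this is the strictly \emph{weaker} statement: $\Stab^{\circ}(\Ku(X))$ is only the image of one construction and is not known to exhaust $\Stab(\Ku(X))$, nor even to be a union of its connected components. Moreover, the ``connected space partitioned into open orbits'' mechanism cannot be upgraded to the full statement: since each orbit is open (hence also closed), having a single orbit is \emph{equivalent} to connectedness of $\Stab(\Ku(X))$, and connectedness of a full stability manifold is not an input one can take from the construction --- it is precisely the content of the proposition. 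A useful sanity check is the threefold case: there the lattice also has rank $2$, so the same coincidence $\dim_{\mathbb{R}}\Stab(\Ku(Y))=\dim\GL=4$ holds and every $\GL$-orbit is open by your argument, yet Theorem \ref{thm-unique-threefold} must still be proved by direct comparison rather than by topology, because nothing guarantees a priori that there is no second component of (Serre-invariant) stability conditions. Your local analysis --- triviality of stabilizers via non-degeneracy of the central charge and the shift argument, hence openness of orbits --- is correct as far as it goes, but it only shows that each connected component of $\Stab(\Ku(X))$ is a single orbit; to close the gap you would still need the global comparison argument of \cite{FGLZ24}, at which point the topological part becomes superfluous.
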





In this paper, we denote by $M^X_{\sigma_X}(a,b)$ (resp.~$M^Y_{\sigma_Y}(a,b)$) the moduli space that parameterizes S-equivalence classes of $\sigma_X$-semistable (resp.~$\sigma_Y$-semistable) objects of class $a\Lambda_1 +b\Lambda_2$ (resp.~$a\lambda_1 +b\lambda_2$) in $\Ku(X)$ (resp.~$\Ku(Y)$). Then by \cite{perry2019stability}, for any pair of coprime integers $a,b$ and a generic $\sigma_X$, the space $M^X_{\sigma_X}(a,b)$ is a projective hyperk\"ahler manifold. The corresponding moduli stacks are denoted by $\cM^X_{\sigma_X}(a,b)$ and $\cM^Y_{\sigma_Y}(a,b)$, respectively.

\section{Twisted cubics on Gushel--Mukai fourfolds}\label{sec-twisted-cubic-gm4}

In this section, we explore the fundamental properties of twisted cubics on a GM fourfold.

We fix a $5$-dimensional vector space $V_5$ and an ordinary GM fourfold $X$ in $\Gr(2, V_5)$. Recall that an ordinary GM fourfold $X$ is given by
\[X=\Gr(2, V_5)\cap \PP(W)\cap Q,\]
where $\PP(W)\cong \PP^8\subset \PP(\wedge^2 V_5)\cong \PP^9$ is a hyperplane and $Q\subset \PP(\wedge^2 V_5)$ is a quadric hypersurface. We will always assume that $X$ is \emph{general}. In particular, $X$ does not contain a plane (cf.~\cite[Lemma 3.6]{iliev2011fano}). A one-dimensional closed subscheme~$C$ of $X$ is a \emph{twisted cubic} if the Hilbert polynomial of $\oh_C$ is $3t+1$ with respect to $\oh_X(H)$.

\subsection{Basic properties}

We first investigate some basic properties of twisted cubics in $X$.

\begin{lemma}\label{lem-pure-cubic}
We have

\begin{enumerate}
    \item $\Hilb^{3t+m}_X=\varnothing$ for $m\leq 0$, and

    \item $\oh_C$ is a pure sheaf for any $[C]\in \Hilb^{3t+1}_X$, or in other words, $C$ is Cohen--Macaulay.
\end{enumerate}
\end{lemma}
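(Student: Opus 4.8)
The plan is to treat the two statements separately but with a common tool: understanding what a degree-three, one-dimensional subscheme of $X$ can look like when it is forced to live inside $\Gr(2,V_5)\cap\PP(W)\cap Q$.

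For part (1), I would argue by contradiction. Suppose $[C]\in\Hilb^{3t+m}_X$ with $m\le 0$. Since $C$ has degree $3$ and arithmetic genus $g=1-m\ge 1$, a one-dimensional subscheme of this Hilbert polynomial must be non-reduced or non-planar in a controlled way; the key classical fact is that a purely one-dimensional subscheme of $\PP^N$ of degree $3$ is either a plane cubic (genus $1$, so $m=0$) or a (possibly degenerate) twisted cubic or a union of lines/conics, all of which have $p_a\le 1$, with $p_a=1$ forcing the curve to be planar. Hence any $C$ with $p_a\ge 1$ has a planar component, or even spans only a $\PP^2$. Since $X$ contains no plane (as $X$ is general, by \cite[Lemma 3.6]{iliev2011fano}), a plane cubic cannot lie in $X$; and any one-dimensional $C$ of degree $3$ and $p_a\ge 1$ must contain an embedded or reduced planar cubic curve or a conic-plus-line configuration whose span is a $\PP^2$ — I would need to check each degenerate type and rule it out using the no-plane condition together with the fact that $X\cap\PP^2$, for a $\PP^2\not\subset X$, is a curve of degree $\le \deg X$ cut out by quadrics, which cannot contain a cubic curve as a component unless that $\PP^2$ meets $X$ in something one-dimensional of degree $3$, again reducible to the planarity analysis. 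This enumeration of degenerate degree-three curves is where I'd spend the most care; the statement $\Hilb^{3t+m}_X=\varnothing$ for $m<0$ is even easier since then $p_a\ge 2$, impossible for a degree-three curve at all (the maximal arithmetic genus of a degree-$d$ curve not on a plane is much smaller, and for $d=3$ any connected one-dimensional scheme has $p_a\le 1$).

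For part (2), once we know $[C]\in\Hilb_X^{3t+1}$ (so $p_a(C)=0$), I would show $\oh_C$ is pure, i.e.\ has no zero-dimensional subsheaf, hence $C$ is Cohen--Macaulay (a one-dimensional scheme is CM iff its structure sheaf is pure). Suppose $T\subset\oh_C$ is the maximal zero-dimensional subsheaf, with $\ell=\mathrm{length}(T)>0$, and let $C'$ be the closed subscheme with $\oh_{C'}=\oh_C/T$. Then $C'$ is one-dimensional of degree $3$ with Hilbert polynomial $3t+1-\ell$, i.e.\ $3t+(1-\ell)$ with $1-\ell\le 0$. But this contradicts part (1) applied to $C'\subset X$. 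Therefore $T=0$ and $\oh_C$ is pure. This makes part (2) a clean corollary of part (1), so essentially all the work is in the curve-classification argument of part (1).

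The main obstacle I anticipate is the careful classification in part (1): one must verify that every one-dimensional subscheme of $\PP^8$ (or $\PP^9$) of degree $3$ with $p_a\ge 1$ necessarily either is a plane cubic or degenerates to a configuration contained in a plane (a conic plus a secant line, a line with an embedded/double structure on a planar conic, a "ribbon", three concurrent or coplanar lines, etc.), and that each such planar configuration cannot be contained in the general $X$ because $X$ contains no plane and its hyperplane-by-quadric structure prevents a cubic curve from being a component of $X\cap\PP^2$. An efficient way to package this: use that for a purely one-dimensional $C\subset\PP^N$ of degree $3$, $h^0(\oh_C(1))\ge 4$ forces $\langle C\rangle$ to be a $\PP^3$ and $C$ to be an honest (possibly degenerate but CM) twisted cubic with $p_a=0$; otherwise $\langle C\rangle\subseteq\PP^2$ and $p_a(C)=1$ only when $C$ is the full plane cubic. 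So the dichotomy is "nondegenerate twisted cubic, $p_a=0$" versus "planar". The general $X$ containing no plane kills the planar case (and also kills a genuine plane cubic as a component), which yields $m\ge 1$ with equality characterizing ordinary twisted cubics, and simultaneously gives purity in part (2).
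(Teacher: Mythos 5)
Your proposal is correct and takes essentially the same route as the paper: the paper invokes a Castelnuovo-type classification of degree-three one-dimensional schemes (cited as \cite[Corollary 1.38]{sa14}) to conclude that any $[C]\in\Hilb^{3t+m}_X$ with $m\le 0$ would have to span a plane, rules this out because $X$ is an intersection of quadrics containing no plane, and then deduces (2) by quotienting $\oh_C$ by its maximal zero-dimensional subsheaf and applying (1), exactly as you do. The only difference is that you sketch re-proving the classification of degenerate degree-three curves by hand where the paper simply cites it.
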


\begin{proof}
By \cite[Corollary 1.38]{sa14}, we have $\Hilb_X^{3t+m}=\varnothing$ for $m<0$. Thus to prove (1), we only need to show $\Hilb_X^{3t}=\varnothing$. From \cite[Corollary 1.38]{sa14}, $\langle C\rangle\cong \PP^2$ for any $[C]\in \Hilb_X^{3t}$. Since $X$ is an intersection of quadrics and does not contain any plane, such $C$ is not contained in $X$. Hence, $\Hilb_X^{3t}=\varnothing$ and (1) follows. Now (2) follows from (1), \cite[Lemma 4.3]{liu-ruan:cast-bound}, and \cite[Tag 0BXG]{stacks-project}.
\end{proof}

\begin{lemma}\label{lem-cubic-cohomology-1}
Let $C\subset X$ be a twisted cubic. Then we have
\begin{enumerate}
    \item $\RHom_X(\oh_X, I_C)=H^1(\oh_C)=0$,

    \item $\RHom_X(\oh_X, I_C(H))=\CC^5$, and

    \item $\RHom_X(\oh_X, I_C(-H))=\RHom_X(I_C, \oh_X(-H))=\CC^2[-2]$.


\end{enumerate}
\end{lemma}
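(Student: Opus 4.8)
The plan is to derive all three items from the ideal-sheaf sequence $0\to I_C\to\oh_X\to\oh_C\to 0$ on $X$, twisted by $\oh_X$, $\oh_X(H)$ and $\oh_X(-H)$ respectively, combined with (a) the cohomology of line bundles on the GM fourfold $X$ and (b) the classical cohomological behaviour of twisted cubics. For (a) I would record that $H^0(\oh_X)=\CC$ and $H^{>0}(\oh_X)=0$, since a GM fourfold has $h^{i,0}=0$ for $i>0$; that $H^0(\oh_X(H))=\CC^{9}$ and $H^{>0}(\oh_X(H))=0$, using that $X$ is linearly normal in $\PP^8$ and that $\oh_X(H)=\oh_X(-2H)\otimes\oh_X(3H)$ with $3H$ ample, so Kodaira vanishing applies; and that $\oh_X(-H)$ has vanishing cohomology in all degrees, as it has no global sections and $\oh_X(-H)=\oh_X(-2H)\otimes\oh_X(H)$ with $H$ ample. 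Equivalently, $\RHom_X(\oh_X,\oh_X)=\CC$, $\RHom_X(\oh_X,\oh_X(H))=\CC^{9}$ and $\RHom_X(\oh_X,\oh_X(-H))=0$.

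For (b), I would use that by \cite[Corollary 1.38]{sa14} and Lemma \ref{lem-pure-cubic} every $[C]\in\Hilb_X^{3t+1}$ has linear span $\langle C\rangle\cong\PP^3$ and is an arithmetically Cohen--Macaulay curve of arithmetic genus $0$ inside it. Combined with $\chi(\oh_C(n))=3n+1$, the ACM property yields $H^0(\oh_C)=\CC$, $H^1(\oh_C)=0$; $H^0(\oh_C(H))=\CC^4$, $H^1(\oh_C(H))=0$; and $H^0(\oh_C(-H))=0$, $H^1(\oh_C(-H))=\CC^2$, with all higher cohomology vanishing because $\dim C=1$. Feeding these and the line-bundle data above into the long exact cohomology sequences of the three twisted ideal-sheaf sequences yields (1) immediately (the map $H^0(\oh_X)\to H^0(\oh_C)$ being an isomorphism of one-dimensional spaces), and yields (2) once one checks that $H^0(\oh_X(H))\to H^0(\oh_C(H))$ is surjective; I would verify this surjectivity by factoring the restriction map through $H^0(\oh_{\langle C\rangle}(1))\xrightarrow{\sim}H^0(\oh_C(1))$, the displayed isomorphism holding because $C$ is non-degenerate and ACM in $\langle C\rangle\cong\PP^3$. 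The first identity in (3) then drops out: since $\RHom_X(\oh_X,\oh_X(-H))=0$, the twisted sequence gives $\RHom_X(\oh_X,I_C(-H))\cong\RHom_X(\oh_X,\oh_C(-H))[-1]=\CC^2[-2]$.

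For the second identity in (3), I would apply $\RHom_X(-,\oh_X(-H))$ to the ideal-sheaf sequence and use $\RHom_X(\oh_X,\oh_X(-H))=0$ to obtain $\RHom_X(I_C,\oh_X(-H))\cong\RHom_X(\oh_C,\oh_X(-H))[1]$; then Serre duality on $X$ (canonical bundle $\oh_X(-2H)$) identifies $\RHom_X(\oh_C,\oh_X(-H))$ with $\RHom_X(\oh_X,\oh_C(-H))^{\vee}[-4]=\CC^2[-3]$, giving $\RHom_X(I_C,\oh_X(-H))=\CC^2[-2]$, consistent with the first identity. The only step needing genuine care --- and the main obstacle --- is input (b): one must know that these cohomological values of $\oh_C$ hold for \emph{every} member of $\Hilb_X^{3t+1}$, including its singular, reducible and non-reduced twisted cubics, which is precisely the classical fact that all such curves are arithmetically Cohen--Macaulay of arithmetic genus $0$ spanning a $\PP^3$; this is what lets us avoid a case-by-case study over the Hilbert scheme.
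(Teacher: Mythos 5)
Your proposal is correct and follows essentially the same strategy as the paper: twist the ideal-sheaf sequence, feed in the cohomology of $\oh_X(nH)$ and of $\oh_C(nH)$, and use purity (Lemma \ref{lem-pure-cubic}) to control the latter. The only real difference is in how the curve-side input is sourced: you package everything as the single classical fact that Cohen--Macaulay generalized twisted cubics are arithmetically Cohen--Macaulay and non-degenerate in their span $\PP^3$ (which you correctly flag as the load-bearing step), whereas the paper cites the individual vanishings $H^1(\oh_C)=0$ and $H^1(\oh_C(H))=0$ from the literature, replaces your surjectivity-of-restriction argument in (2) by the upper bound $\hom_X(\oh_X,I_C(H))\le 5$ coming from non-degeneracy, and in (3) avoids the ACM input at level $-H$ altogether via a short direct argument that a nonzero section of $\oh_C(-H)$ would produce a nonzero non-surjective map $\oh_X\to\oh_C$, contradicting $H^0(\oh_C)=\CC$.
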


\begin{proof}
By the standard exact sequence
\begin{equation}\label{eq-standard-C}
    0\to I_C\to \oh_X\to \oh_C\to 0,
\end{equation}
associated with $\oh_C$ and $\chi(I_C)=0$, to prove (1), we only need to show $H^1(\oh_C)=0$. This follows from Lemma \ref{lem-pure-cubic}(2) and \cite[Corollary 1.38(3)]{sa14}. 

For (2), note that $\hom_X(\oh_X, I_C(H))\leq 5$, otherwise $C$ is contained in $\PP^2$, which is impossible. Since we have~$\chi(\oh_C(H))=4$ and $\RHom_X(\oh_X, \oh_X(H))=\CC^9$, to prove (2), it is left to show $H^1(\oh_C(H))=0$, which follows from Lemma \ref{lem-pure-cubic}(2) and \cite[Lemma 2.10]{heinrich:twisted-cubic}. 

To prove (3), by Serre duality and \eqref{eq-standard-C}, it is sufficient to prove $H^0(\oh_C(-H))=0$. If this is not true, we have a non-zero morphism $\oh_X\to \oh_C(-H)$. After composing with $\oh_C(-H)\hookrightarrow \oh_C$, we obtain a non-zero non-surjective map~$\oh_X\to \oh_C$. However, we have  $H^0(\oh_C)=\CC$ from (1), which makes a contradiction.
\end{proof}

Using the geometry of the Grassmannian $\Gr(2, V_5)$, we can classify all twisted cubics on $X$ into the following three types, similar to the classification of conics in \cite{iliev2011fano}:

\begin{definition}
Let 
$C\subset X$ be a twisted cubic. 

\begin{itemize}
    \item We say $C$ is a \emph{$\tau$-cubic} if $\langle C \rangle $ is not in $\Gr(2, V_5)$ and $C$ is not contained in $\Gr(2, V_4)$ for any $4$-dimensional subspace $V_4\subset V_5$.

    \item We say $C$ is a \emph{$\rho$-cubic} if $\langle C \rangle $ is not in $\Gr(2, V_5)$ and $C$ is contained in $\Gr(2, V_4)$ for a $4$-dimensional subspace $V_4\subset V_5$.

    \item We say $C$ is a \emph{$\sigma$-cubic} if $\langle C \rangle $ is contained in $\Gr(2, V_5)$.
\end{itemize}
\end{definition}

Recall that a global section $s$ of $\cU^{\vee}_{\Gr(2,V_5)}$ is a vector in $V_5^{\vee}=H^0(\cU^{\vee}_{\Gr(2, V_5)})$. The zero locus $Z(s)$ parameterizes $2$-dimensional subspaces $U\subset V_5$ such that $U\subset \ker(s)$ when we regard $s$ as a linear function $s\colon V_5\to \CC$. Therefore, we see $$Z(s)=\Gr(2, V_4)=\PP(\wedge^2 V_4)\cap \Gr(2, V_5)$$ for a $4$-dimensional subspace $V_4=\ker(s)\subset V_5$. Similarly, a global section $s$ of $\cQ_{\Gr(2,V_5)}$ is a vector in $V_5=H^0(\cQ_{\Gr(2, V_5)})$. The zero locus $Z(s)$ parameterizes $2$-dimensional subspaces $U\subset V_5$ such that the image of $s$ under $V_5\twoheadrightarrow V_5/U$ is zero, i.e.~$s\in U$. Thus in this case, we have $$Z(s)=\PP(V_1\wedge V_5)=\Gr(1,V_5/V_1)\cong \PP^3\subset \Gr(2, V_5)$$ for the $1$-dimensional subspace $V_1$ spanned by $s$.

Relying on the descriptions above, we derive the following homological characterization of types of twisted cubics, which is similar to \cite[Lemma 5.2]{GLZ2021conics} for conics.

\begin{lemma}\label{lem-homo-cubic}
Let 
$C\subset X$ be a twisted cubic.

\begin{enumerate}
    \item If $C$ is a $\tau$-cubic, then $\RHom_X(\cU_X, I_C)=0$ and $\RHom_X(\cQ_X^{\vee}, I_C)=\CC[-1]$.

    \item If $C$ is a $\rho$-cubic, then $\RHom_X(\cU_X, I_C)=\CC\oplus \CC[-1]$ and $\RHom_X(\cQ_X^{\vee}, I_C)=\CC[-1]$.

    \item If $C$ is a $\sigma$-cubic, then $\RHom_X(\cU_X, I_C)=0$ and $\RHom_X(\cQ_X^{\vee}, I_C)=\CC\oplus \CC^2[-1]$.
\end{enumerate}

\end{lemma}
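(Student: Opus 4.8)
The plan is to compute the complexes $\RHom_X(\cU_X, I_C)$ and $\RHom_X(\cQ_X^{\vee}, I_C)$ by leveraging the standard exact sequence \eqref{eq-standard-C} together with the geometric description of the zero loci of sections of $\cU_X^{\vee}$ and $\cQ_X$ given just above the statement. Concretely, applying $\RHom_X(\cU_X, -)$ (resp.~$\RHom_X(\cQ_X^{\vee}, -)$) to \eqref{eq-standard-C} reduces the problem to computing $\RHom_X(\cU_X, \oh_X)$, $\RHom_X(\cU_X, \oh_C)$, and the analogous groups with $\cQ_X^{\vee}$. The first type is handled by the semi-orthogonal decomposition of $\D^b(X)$: since $\cU_X^\vee = \cU_X(H)$ lies in the exceptional collection, $\RHom_X(\cU_X, \oh_X) = \RHom_X(\oh_X, \cU_X^\vee)^\vee$ is given by $H^\bullet(X, \cU_X^\vee)$, which is $V_5^\vee$ concentrated in degree $0$, and similarly $\RHom_X(\cQ_X^\vee, \oh_X) = H^\bullet(X, \cQ_X) = V_5$ in degree $0$; these can also be read off from the Chern character formulas for $\cU_X$ and $\cQ_X$ quoted in Section~\ref{sec-very-general-GM} via Hirzebruch--Riemann--Roch and Kodaira-type vanishing on the Fano fourfold $X$.

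The heart of the argument is the computation of $\RHom_X(\cU_X, \oh_C)$ and $\RHom_X(\cQ_X^{\vee}, \oh_C)$, and here the type of $C$ enters. First I would compute the Euler characteristics $\chi(\cU_X, \oh_C)$ and $\chi(\cQ_X^\vee, \oh_C)$ using the Chern character of $\oh_C$ (determined by $C$ being a twisted cubic, i.e.~$\ch(\oh_C) = [\text{curve class}] + [\text{pt}]$ with the curve of degree $3$) and the formulas for $\ch(\cU_X)$, $\ch(\cQ_X)$; this pins down the alternating sum in each case. Then I would identify the individual cohomology groups. Since $\oh_C$ is a pure sheaf supported on a curve (Lemma~\ref{lem-pure-cubic}), $\Ext^i_X(\cU_X, \oh_C)$ and $\Ext^i_X(\cQ_X^\vee, \oh_C)$ vanish for $i \geq 3$, and one can compute $\Hom_X(\cU_X, \oh_C) = H^0(C, \cU_X^\vee|_C)$ and $\Hom_X(\cQ_X^\vee, \oh_C) = H^0(C, \cQ_X|_C)$ directly. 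The key geometric input: $\cQ_X|_C$ always has a section (coming from $C \subset \Gr(2,V_5) \subset \PP(V_5 \otimes \oh / \cU)$, more precisely a point of $C$ gives a plane containing a fixed-ish line), forcing $\hom(\cQ_X^\vee, \oh_C) \geq 1$ for all types; whereas $\cU_X^\vee|_C$ has a section exactly when $C$ lies in some $\Gr(2, V_4) = Z(s)$ for $s \in V_5^\vee$, which by definition happens precisely for $\rho$-cubics (not for $\tau$- or $\sigma$-cubics). For $\sigma$-cubics the extra condition $\langle C \rangle \subset \Gr(2,V_5)$ forces $C$ to sit inside a $\PP^3 = \PP(V_1 \wedge V_5) = Z(s)$ for $s \in V_5$, which produces the second section of $\cQ_X|_C$ and hence $\hom(\cQ_X^\vee, \oh_C) = 2$. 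Combining these $H^0$ computations with the Euler characteristics (and noting $H^1$ of the restriction to the rational curve $C$ vanishes once $H^0$ has the expected dimension, by a Castelnuovo--Mumford regularity or direct degree argument) yields the stated shapes of the complexes.

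The main obstacle I expect is the careful bookkeeping for the degenerate and reducible twisted cubics — a "twisted cubic" here is any Cohen--Macaulay curve with Hilbert polynomial $3t+1$, so $C$ may be a nodal or cuspidal plane cubic, a union of a conic and a line, three lines, etc., and one must check that $H^0(C, \cU_X^\vee|_C)$ and $H^0(C, \cQ_X|_C)$ behave uniformly across all these degenerations. This is where I would lean on the vector bundles $\cU_X$, $\cQ_X$ being globally generated (pulled back from $\Gr(2,V_5)$) so that sections are controlled by the linear algebra of the sub-/quotient-bundle data along $C$, and on the fact that $C$ spans at most a $\PP^3$ (for $\sigma$-cubics) or is non-degenerate-in-$\PP^3$ otherwise, to keep the section spaces from jumping. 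A cleaner alternative, mirroring \cite[Lemma 5.2]{GLZ2021conics}, is to reduce everything to the ambient Grassmannian: compute $\RHom_{\Gr(2,V_5)}(\cU, I_{C/\Gr})$ and $\RHom_{\Gr(2,V_5)}(\cQ^\vee, I_{C/\Gr})$ using the Koszul resolution of $\oh_X$ on $\Gr(2,V_5)$ (two quadrics / a hyperplane and a quadric, since $X = \Gr \cap \PP(W) \cap Q$), together with Borel--Weil--Bott for the cohomology of $\cU \otimes \cU^\vee$, $\cU^\vee \otimes \oh$, $\cQ \otimes \cU^\vee$, etc., twisted by powers of the Plücker bundle. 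I would carry out the Grassmannian-side computation as the backbone and use the geometric zero-locus descriptions only to split the resulting long exact sequences according to whether $C \subset Z(s)$.
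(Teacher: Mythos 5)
Your overall skeleton is the same as the paper's: apply $\Hom_X(\cU_X,-)$ and $\Hom_X(\cQ_X^{\vee},-)$ to \eqref{eq-standard-C}, use $\RHom_X(\oh_X,\cU_X^{\vee})=\RHom_X(\oh_X,\cQ_X)=\CC^5$ in degree $0$, kill $H^1$ of the restrictions via global generation together with $H^1(\oh_C)=0$, and then pin down the one remaining group by the zero-locus description of sections. However, the quantities you single out as the ``key geometric input'' are the wrong ones. Both $\cU_X^{\vee}$ and $\cQ_X$ are globally generated, so $\hom_X(\cU_X,\oh_C)=h^0(\cU_X^{\vee}|_C)=5$ and $\hom_X(\cQ_X^{\vee},\oh_C)=h^0(\cQ_X|_C)=6$ for \emph{every} twisted cubic; these numbers see nothing of the type of $C$, and the lemma is decided entirely by the kernels of the restriction maps, i.e.\ by $\Hom_X(\cU_X,I_C)$ and $\Hom_X(\cQ_X^{\vee},I_C)$, the spaces of global sections of $\cU_X^{\vee}$ (resp.\ $\cQ_X$) on $X$ vanishing along $C$. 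The correct counts are $\hom_X(\cU_X,I_C)=0,1,0$ and $\hom_X(\cQ_X^{\vee},I_C)=0,0,1$ for types $\tau,\rho,\sigma$. Your claims that the $\cQ$-side count is ``$\geq 1$ for all types'' and ``$=2$'' for $\sigma$-cubics are incompatible with the statement being proved: they would yield a nonzero $\Hom_X(\cQ_X^{\vee},I_C)$ for $\tau$-cubics and $\CC^2\oplus\CC^3[-1]$ for $\sigma$-cubics. You also need the upper bounds $\hom_X(\cU_X,I_C)\leq 1$ and $\hom_X(\cQ_X^{\vee},I_C)\leq 1$ (two independent sections would force $C$ into $\Gr(2,V_3)\cong\PP^2$, resp.\ into a single point), without which the $\sigma$-case could a priori give more than $\CC$ in degree $0$.

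There is a second genuine gap in part (3): you assert that $C\subset\Gr(2,V_4)$ happens ``by definition'' precisely for $\rho$-cubics. The trichotomy first tests whether $\langle C\rangle\subset\Gr(2,V_5)$, so nothing in the definitions prevents a $\sigma$-cubic from also lying in some $\Gr(2,V_4)$, and this must be excluded to conclude $\RHom_X(\cU_X,I_C)=0$ in (3). The paper devotes a separate argument to it: $\langle C\rangle=\PP(V_1\wedge V_5)$ for a unique $V_1$, and $\PP(V_1\wedge V_5)\cap\PP(\wedge^2V_4)$ is either empty (if $V_1\not\subset V_4$) or contained in $\PP(V_1\wedge V_4)\cong\PP^2$ (if $V_1\subset V_4$), while a curve with Hilbert polynomial $3t+1$ cannot lie in a plane. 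Finally, the Koszul/Borel--Weil--Bott backbone you propose is unnecessary overhead: global generation of the two bundles plus $H^1(\oh_C)=0$ already treats all degenerate and reducible cubics uniformly, reducing everything to the linear-algebra computation of the kernels above.
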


\begin{proof}
As $\cU^{\vee}_X$ is generated by global sections and $H^1(\oh_C)=0$, we see $H^1(\cU^{\vee}_X|_C)=0$. Similarly, we have $H^1(\cQ_X|_C)=0$. Therefore, by applying $\Hom_X(\cU_X, -)$ and $\Hom_X(\cQ^{\vee}_X, -)$ to \eqref{eq-standard-C} and using
\[\RHom_X(\oh_X, \cQ_X)=\RHom_X(\oh_X, \cU^{\vee}_X)=\CC^5,\]
to determine $\RHom_X(\cU_X, I_C)$ and $\RHom_X(\cQ^{\vee}_X, I_C)$, we only need to compute $$\Hom_X(\cU_X, I_C) ~ \text{ and }~\Hom_X(\cQ^{\vee}_X, I_C).$$

To this end, note that $\hom_X(\cU_X, I_C)\leq 1$, otherwise $C\subset \Gr(2,3)\cong \PP^2$, which is impossible. Similarly, we have $\hom_X(\cQ^{\vee}_X, I_C)\leq 1$, otherwise $C\subset \Gr(0,3)\cong \Spec \CC$. Then by definition and the discussion above, it is clear that $\Hom_X(\cQ^{\vee}_X, I_C)=0$ when $C$ is a $\tau$-cubic or $\rho$-cubic, and $\Hom_X(\cQ^{\vee}_X, I_C)=\CC$ when $C$ is a $\sigma$-cubic. Similarly, we obtain $\Hom_X(\cU_X, I_C)=0$ when $C$ is a $\tau$-cubic and $\Hom_X(\cU_X, I_C)=\CC$ when~$C$ is a $\rho$-cubic. 

It remains to show $\Hom_X(\cU_X, I_C)=0$ when $C$ is a $\sigma$-cubic. By $\langle C\rangle \subset \Gr(2, V_5)$ and \cite[Section 4.1]{debarre2019gushel}, we know that $\langle C\rangle=\PP(V_1\wedge V_5)$ for a $1$-dimensional subspace $V_1\subset V_5$. If $\Hom_X(\cU_X, I_C)\neq 0$, then $C\subset \PP(\wedge^2 V_4)$ for a $4$-dimensional subspace $V_4\subset V_5$. When $V_1\nsubseteq V_4$, we have $V_1\cap V_4=0$ and hence $\PP(V_1\wedge V_5)\cap \PP(\wedge^2 V_4)=\varnothing$, which is impossible. When $V_1\subset V_4$, we see 
\[C\subset \PP(V_1\wedge V_5)\cap \PP(\wedge^2 V_4)\subset \PP(V_1\wedge V_4)\cong \PP(V_4/V_1)\cong \PP^2,\]
which also makes a contradiction. Thus the result follows.
\end{proof}




\subsection{Interlude: surfaces in GM fourfolds}

Before we continue to discuss twisted cubics, let us establish some useful properties of surfaces contained in $X$. Recall that a closed subscheme of $X$ is called a quadric surface if it has the Hilbert polynomial $(t+1)^2$ with respect to $\oh_X(H)$.

The following lemma classifies quadric surfaces in $\Gr(2, V_5)$, which is known to experts (cf.~\cite{Debarre2024quadrics}). 

\begin{lemma}\label{lem-type-of-quadric-surface}
Let $S\subset \Gr(2, V_5)$ be a quadric surface. Then either $S\subset \Gr(2, V_4)$ for a $4$-dimensional subspace $V_4\subset V_5$, or $S\subset \PP(V_1\wedge V_5)\cong \PP^3$ for a $1$-dimensional subspace $V_1\subset V_5$.
\end{lemma}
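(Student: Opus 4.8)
The plan is to analyze a quadric surface $S \subset \Gr(2, V_5)$ via its linear span $\langle S \rangle$. First I would observe that since $S$ has Hilbert polynomial $(t+1)^2$, it is a (possibly degenerate) quadric surface in its linear span $\langle S\rangle \cong \PP^m$; because a quadric surface of degree $2$ is non-degenerate in $\PP^3$ and any quadric surface spans at most a $\PP^3$ (it cannot be a plane as its degree is $2$, and a $2$-dimensional scheme of degree $2$ cannot span $\PP^m$ for $m \geq 4$), we have either $\langle S \rangle \cong \PP^3$ or $\langle S\rangle \cong \PP^2$ with $S$ a double plane — but a double plane is non-reduced, and in fact the key point is just that $\dim \langle S \rangle \leq 3$. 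So $\langle S \rangle$ is a linear subspace of $\PP(\wedge^2 V_5)$ of dimension at most $3$ that is contained in... wait, no: $\langle S\rangle$ need not lie inside $\Gr(2,V_5)$. The real input is the classification of linear subspaces meeting or lying inside the Grassmannian.

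The heart of the argument is the standard fact that a $\PP^3 \subset \PP(\wedge^2 V_5)$ either is contained in $\Gr(2, V_5)$ or meets $\Gr(2, V_5)$ in a quadric surface (this uses that the Plücker quadric equations cut out $\Gr(2,V_5)$, and a linear $\PP^3$ restricted to these quadrics gives either everything or a single quadric). Moreover, the maximal linear subspaces of $\Gr(2, V_5)$ are exactly $\Gr(2, V_3) \cong \PP^2$ (the $\sigma$-planes, parametrizing $2$-planes inside a fixed $3$-space $V_3$) and $\PP(V_1 \wedge V_5) \cong \PP^3$ (the $\beta$-planes, parametrizing $2$-planes containing a fixed line $V_1$). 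So I would split into cases according to whether $\langle S \rangle \subseteq \Gr(2, V_5)$ or not. If $\langle S \rangle \subseteq \Gr(2, V_5)$: then $\langle S\rangle$ is a linear subspace of dimension $2$ or $3$ sitting in the Grassmannian. A $\PP^3 \subseteq \Gr(2,V_5)$ must be of type $\PP(V_1 \wedge V_5)$ (the $\sigma$-planes are only $2$-dimensional), giving the second alternative directly. A $\PP^2 \subseteq \Gr(2, V_5)$ is either $\Gr(2,V_3)$ — contained in $\Gr(2, V_4)$ for any $V_4 \supset V_3$, giving the first alternative — or is contained in some $\PP(V_1\wedge V_5)$, giving the second. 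But actually if $\langle S\rangle \cong \PP^2$ then $S = \langle S \rangle$ cannot have degree $2$ unless it is a non-reduced structure, so this sub-case may need care or may be vacuous; in any event it still lands in one of the two conclusions.

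If $\langle S \rangle \not\subseteq \Gr(2, V_5)$: then necessarily $\langle S \rangle \cong \PP^3$ (a $\PP^2$ not inside the Grassmannian would meet it in a conic or less, not a surface), and $S = \langle S \rangle \cap \Gr(2, V_5)$ is the quadric surface cut out by the Plücker relations. Now I would invoke the classification of $\PP^3$'s in $\PP(\wedge^2 V_5) = \PP(\wedge^2 V_5)$: writing a general point of $\langle S \rangle$ as $\omega \in \wedge^2 V_5$, the quadratic form $\omega \mapsto \omega \wedge \omega \in \wedge^4 V_5 \cong V_5^\vee$ restricts to the linear system of Plücker quadrics on $\langle S\rangle$; the rank and behavior of this restricted system governs the geometry. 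Concretely, for a $3$-plane $\langle S \rangle = \PP(A)$ with $A \subset \wedge^2 V_5$ a $4$-dimensional subspace, one shows (this is the computation I would cite from \cite{Debarre2024quadrics} or \cite{debarre2019gushel}) that the kernel/support analysis forces either a common $4$-dimensional $V_4$ with $A \subset \wedge^2 V_4$, hence $S \subset \Gr(2, V_4)$, or a common line $V_1$ with $A \subset V_1 \wedge V_5$, hence $S \subset \PP(V_1 \wedge V_5)$. The main obstacle — and the step I would be most careful about — is exactly this linear-algebra classification of $4$-dimensional subspaces $A \subset \wedge^2 V_5$ whose associated $\PP^3$ meets the Grassmannian in a surface: it requires tracking the possible ranks of the pencil of skew forms and ruling out intermediate cases, which is where I would lean most heavily on the cited references rather than redo the case analysis by hand.

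\begin{proof}[Proof sketch]
Let $S\subset \Gr(2,V_5)$ be a quadric surface and set $P:=\langle S\rangle\subset \PP(\wedge^2 V_5)$. Since $\oh_S$ has Hilbert polynomial $(t+1)^2$ with respect to the Pl\"ucker polarization, $S$ has degree $2$ and dimension $2$, so $\dim P\le 3$, and if $\dim P=2$ then $S=P$ cannot have degree $2$ as a reduced scheme; in any case it suffices to treat $\dim P\le 3$.

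If $P\subset \Gr(2,V_5)$: then $P$ is a linear subspace of the Grassmannian of dimension $2$ or $3$. By the classification of linear spaces on $\Gr(2,V_5)$, a $\PP^3\subset \Gr(2,V_5)$ is of the form $\PP(V_1\wedge V_5)$ for a line $V_1\subset V_5$, and a $\PP^2\subset \Gr(2,V_5)$ is either $\Gr(2,V_3)\subset \Gr(2,V_4)$ for any $V_4\supset V_3$, or is contained in some $\PP(V_1\wedge V_5)$. Either way the conclusion holds.

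If $P\not\subset \Gr(2,V_5)$: then $\dim P=3$ (a $\PP^2$ meeting $\Gr(2,V_5)$ in a surface would be contained in it), and $S=P\cap\Gr(2,V_5)$ is cut out in $P$ by the restriction of the Pl\"ucker quadrics, i.e.~by the quadratic map $\omega\mapsto \omega\wedge\omega$ on the $4$-dimensional subspace $A\subset\wedge^2 V_5$ with $P=\PP(A)$. Analyzing the possible ranks of the pencil of skew-symmetric forms spanned by $A$ (cf.~\cite{debarre2019gushel,Debarre2024quadrics}) shows that either all elements of $A$ are supported on a common $4$-dimensional subspace $V_4$, so $A\subset\wedge^2 V_4$ and $S\subset \Gr(2,V_4)$, or they all contain a common line $V_1$, so $A\subset V_1\wedge V_5$ and $S\subset \PP(V_1\wedge V_5)\cong\PP^3$. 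This completes the proof.
\end{proof}
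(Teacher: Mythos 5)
The paper gives no proof of this lemma at all: it is stated as ``known to experts'' with a bare citation to \cite{Debarre2024quadrics}, so there is no in-paper argument to compare yours against. Your sketch is a correct outline of the standard argument and in fact supplies more detail than the paper does. Two small remarks. First, the sub-case $\dim\langle S\rangle=2$ that you hedge on is genuinely vacuous: a $2$-dimensional closed subscheme of $\PP^2$ has degree $1$ (its ideal sheaf is a torsion subsheaf of $\oh_{\PP^2}$ at the generic point, hence zero there), so a degree-$2$ quadric surface always spans a $\PP^3$. Second, in your case $\langle S\rangle\not\subset\Gr(2,V_5)$ the alternative $A\subset V_1\wedge V_5$ cannot actually occur (it would force $\PP(A)\subset\Gr(2,V_5)$), so only the $A\subset\wedge^2 V_4$ branch survives there --- harmless, since you only need the disjunction. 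The genuine mathematical content is exactly where you locate it: showing that a $2$-dimensional family of $2$-planes spanning only a $\PP^3$ in $\PP(\wedge^2V_5)$ must either share a common line $V_1$ or lie in a common hyperplane $V_4$. You defer this to \cite{debarre2019gushel,Debarre2024quadrics}, which is precisely what the authors do, so your proposal is acceptable at the same level of rigor as the paper itself; a fully self-contained treatment would require either the pencil-of-skew-forms computation you describe or an argument via the rulings of $S$ and the classification of lines in $\Gr(2,V_5)$ as flags $V_1\subset V_3$.
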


It is known that there is a unique quadric surface $q$ contained in $X$ (cf.~\cite[Section 3]{debarre2015special}). It is smooth since $X$ is general and $\langle q \rangle \cong \PP^3\subset \Gr(2, V_5)$. We call $q$ \emph{the $\sigma$-quadric of $X$}. According to \cite[Proposition 5.6]{GLZ2021conics}, there is an exact sequence
\begin{equation}\label{eq-def-q}
    0\to \cU_X\to \cQ^{\vee}_X\to I_q\to 0.
\end{equation}
Using this, it is easy to compute that
\begin{equation}\label{eq-class-q}
    [q]=\gamma_X^*\sigma_{2}-\gamma_X^*\sigma_{1,1}\in \mathrm{H}^4(X, \ZZ).
\end{equation}
By standard Schubert calculus, the sublattice $\gamma^*\mathrm{H}^4(\Gr(2, V_5), \ZZ)\subset \mathrm{H}^4(X, \ZZ)$ has the intersection matrix
\begin{equation}\label{eq-matrix-surface}
\left[               
\begin{array}{cc}   
2 & 2 \\  
2 & 4\\
\end{array}
\right]
\end{equation}
in basis $(\gamma_X^*\sigma_{1,1}, \gamma_X^*\sigma_{2})$. Hence we see $[q]^2=2$.

\begin{lemma}\label{lem-normal-bundle-q}
Let 
$q\subset X$ be the $\sigma$-quadric.

\begin{enumerate}
    \item We have $H^*(\cQ^{\vee}_q)=0$, $H^*(\cQ_q)=\CC^4$, and an exact sequence
    \[0\to \cQ^{\vee}_q\to \oh_q^{\oplus 4}\to \oh_q(H)\to 0.\]

    \item We have an exact sequence
    \begin{equation}\label{eq-lem-3.7}
        0\to \oh_q(-H)\to \cQ^{\vee}_q\to N_{q/X}^{\vee} \to 0.
    \end{equation}

    \item We have $H^*(N_{q/X}^{\vee})=H^*(N_{q/X})=0$.
\end{enumerate}
\end{lemma}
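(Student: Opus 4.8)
The plan is to use the exact sequence \eqref{eq-def-q} together with properties of the Grassmannian bundles restricted to $q$, and to leverage the standard Koszul-type resolutions, the known cohomology $H^*(\oh_q(kH))$ on the smooth quadric surface $q \cong \PP^1 \times \PP^1$, and the normal bundle exact sequence. First I would establish (1): restricting the tautological sequence on $\Gr(2,V_5)$, or rather using $\ch(\cQ_X)$ and the fact that $\langle q\rangle \cong \PP^3 = \PP(V_1 \wedge V_5)$, one sees that the restriction $\cQ^\vee_X|_q$ should split off in a controlled way. More concretely, since $q$ sits inside $\Gr(1, V_5/V_1) \cong \PP^3$, and on this $\PP^3$ the bundle $\cQ_X$ restricts (the fibers being $V_5/U \supset V_1$, so there is a trivial sub-line-bundle $V_1 \otimes \oh$), we get $0 \to \oh_q \to \cQ_X|_q \to \cT_{\PP^3}(-1)|_q \to 0$ or an analogous sequence identifying $\cQ_X|_q$. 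Dualizing and twisting, I would verify the claimed sequence $0 \to \cQ^\vee_q \to \oh_q^{\oplus 4} \to \oh_q(H) \to 0$ by recognizing it as the restriction of the Euler sequence $0 \to \Omega_{\PP^3}(1) \to \oh^{\oplus 4} \to \oh(1) \to 0$ to $q \subset \PP^3$, noting that $\cQ^\vee_X|_q \cong \Omega_{\PP^3}(1)|_q$ by the Schubert-cycle computation and dimension count. The vanishing $H^*(\cQ^\vee_q) = 0$ then follows from the long exact sequence and $H^*(\oh_q) = \CC = H^0$, $H^*(\oh_q(H)) = H^0 = \CC^4$ (since $\oh_q(H)$ is the $(1,1)$-polarization on $\PP^1\times\PP^1$), and $H^*(\oh_q^{\oplus 4}) = \CC^4$, which forces the connecting maps to be isomorphisms; similarly $H^*(\cQ_q) = \CC^4$ by Serre duality on $q$ (with $\omega_q = \oh_q(-2H)$ twisted appropriately) or directly from the dual Euler sequence.

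Next, for (2), I would restrict \eqref{eq-def-q} to $q$. Since $q$ is a divisor-like subscheme cut out in $X$ — more precisely, the section of $\cQ^\vee_X$ whose zero locus is $\PP(V_1 \wedge V_5) \cap X \supset q$ — one should examine the conormal sequence. Actually the cleanest route: \eqref{eq-def-q} reads $0 \to \cU_X \to \cQ^\vee_X \to I_q \to 0$, and restricting to $q$ and using $I_q|_q \to N_{q/X}^\vee$ (the conormal bundle appears as $I_q/I_q^2$), together with the vanishing of $\Tor$'s that one gets from $q$ being Cohen–Macaulay of the right codimension, yields $0 \to \cU_X|_q \to \cQ^\vee_q \to N_{q/X}^\vee \to 0$. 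Then I must identify $\cU_X|_q$: on $q \subset \PP(V_1 \wedge V_5)$ the subspace $U$ always contains $V_1$, so $\cU_X|_q$ has $V_1 \otimes \oh_q$ as a sub-line-bundle with quotient a line bundle of degree $1$ on each ruling — in fact $\cU_X|_q \cong \oh_q(-H)$ would be too naive; rather there should be an extension, but the determinant is $\oh_q(-H)$ and one needs $\cU_X|_q \cong \oh_q(-H) \oplus \oh_q$ or an $\oh_q \to \cU_X|_q \to \oh_q(-H) \to 0$. Comparing with the desired statement \eqref{eq-lem-3.7}, which has $\oh_q(-H)$ as the sub, I would need $\cU_X|_q$ to fit so that after the identification the sequence becomes $0 \to \oh_q(-H) \to \cQ^\vee_q \to N^\vee_{q/X} \to 0$; the trivial summand of $\cU_X|_q$ must get absorbed, which happens precisely because the composite $\oh_q \hookrightarrow \cU_X|_q \to \cQ^\vee_X|_q$ lands in (or splits off against) the trivial part. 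I would pin this down by Chern class bookkeeping using $[q]^2 = 2$ and $\ch(\cU_X)$, $\ch(\cQ_X)$ from the excerpt.

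Finally, (3) is a formal consequence: from \eqref{eq-lem-3.7}, $H^*(N^\vee_{q/X})$ sits in a long exact sequence with $H^*(\oh_q(-H))$ and $H^*(\cQ^\vee_q)$. Now $H^*(\oh_q(-H)) = 0$ entirely (the anti-$(1,1)$ class on $\PP^1\times\PP^1$ has no cohomology in any degree), and $H^*(\cQ^\vee_q) = 0$ by part (1), so $H^*(N^\vee_{q/X}) = 0$. For $H^*(N_{q/X}) = 0$, I would use Serre duality on the surface $q$: $H^i(N_{q/X}) = H^{2-i}(N^\vee_{q/X} \otimes \omega_q)^\vee$, and since $\omega_q \cong \oh_q(-H)$ (as $q$ is a $\sigma$-quadric, $\langle q \rangle = \PP^3$ and $q$ is a quadric surface, so $K_q = \oh_q(-2H_{\PP^3})|_q = \oh_q(-H)$ with our $H = H_X|_q$ matching — I should double-check the twist, but it is a quadric surface so $\omega_q = \oh(-H)$), hence $N^\vee_{q/X} \otimes \omega_q$ also has a two-step filtration with graded pieces $\oh_q(-2H)$ and $\cQ^\vee_q(-H)$, both of which have vanishing cohomology (the former clearly, the latter by twisting the Euler-type sequence of (1) and using $H^*(\oh_q(-H)) = 0 = H^*(\oh_q)$ appropriately — wait, $H^*(\oh_q) \ne 0$, so I need $H^*(\cQ^\vee_q(-H))$ computed from $0 \to \cQ^\vee_q(-H) \to \oh_q(-H)^{\oplus 4} \to \oh_q \to 0$, giving $H^0(\cQ^\vee_q(-H)) = 0$, $H^1 = \CC$, $H^2 = 0$, hmm). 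So the cleanest is to directly compute $H^*(N_{q/X})$ from the dual of \eqref{eq-lem-3.7}: $0 \to N_{q/X} \to \cQ_q \to \oh_q(H) \to 0$, and since $H^*(\cQ_q) = \CC^4 = H^0$ and $H^*(\oh_q(H)) = \CC^4 = H^0$ with the restriction map $H^0(\cQ_q) \to H^0(\oh_q(H))$ an isomorphism (which follows because $\cQ_q$ is globally generated with the right number of sections and the map $\cQ_q \to \oh_q(H)$ is surjective), we conclude $H^*(N_{q/X}) = 0$. \emph{The main obstacle} I anticipate is item (2): correctly identifying the restriction $\cU_X|_q$ and $\cQ^\vee_X|_q$ and checking that the connecting maps in the restricted exact sequences vanish (i.e., the relevant $\Tor$ terms die and $I_q|_q$ really does surject onto $N^\vee_{q/X}$ with the expected kernel), since this requires knowing $q$ is Cohen–Macaulay of pure codimension $2$ in $X$ and carefully tracking the trivial line sub-bundle of $\cU_X|_q$ through \eqref{eq-def-q}; parts (1) and (3) are then essentially Euler-sequence and Serre-duality bookkeeping on $\PP^1 \times \PP^1$.
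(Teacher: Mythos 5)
Your overall strategy matches the paper's: restrict the tautological data to $q\subset\PP(V_1\wedge V_5)\cong\PP^3$ for (1), pull back \eqref{eq-def-q} to $q$ for (2), and deduce (3) from the first two parts. Part (1) is essentially correct once you discard your first guess that $V_1\otimes\oh_q$ is a \emph{sub}-bundle of $\cQ_X|_q$ (it is not: $V_1\subset U$ for every $[U]\in q$, so $V_1$ dies in the quotient $V_5/U$); the identification you do reach, $\cQ^\vee_X|_q\cong\Omega_{\PP^3}(1)|_q$, is equivalent to the paper's input $\cU^\vee_q\cong\oh_q\oplus\oh_q(H)$ combined with the tautological sequence.

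The genuine gap is in (2). You assert that "the vanishing of Tor's \dots yields $0\to\cU_X|_q\to\cQ^\vee_q\to N^\vee_{q/X}\to 0$". This sequence cannot be exact: the ranks are $2$, $3$, $2$. The Tor term does \emph{not} vanish; for the codimension-two locally complete intersection $q\subset X$ one has $\Tor_1^{\oh_X}(I_q,\oh_q)\cong\wedge^2 N^\vee_{q/X}\cong\oh_q$, using $c_1(N_{q/X})=0$ (adjunction: $\omega_q=\oh_q(-2H)=\omega_X|_q$). Pulling back \eqref{eq-def-q} therefore gives the four-term sequence $0\to\oh_q\to\cU_X|_q\to\cQ^\vee_q\to N^\vee_{q/X}\to 0$, and this extra $\oh_q$ is exactly what removes the trivial summand of $\cU_X|_q\cong\oh_q\oplus\oh_q(-H)$. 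Your proposed mechanism for absorbing that summand, namely that the composite $\oh_q\hookrightarrow\cU_X|_q\to\cQ^\vee_q$ "lands in (or splits off against) the trivial part" of $\cQ^\vee_q$, is also wrong, because $\cQ^\vee_q$ has no trivial part: $H^0(\cQ^\vee_q)=0$ by your own part (1). The correct reason is the opposite one: $\Hom(\oh_q,\cQ^\vee_q)=0$ forces that composite to vanish, so the first map in the four-term sequence is the inclusion of the $\oh_q$ factor (as $\Hom(\oh_q,\oh_q(-H))=0$), and the sequence collapses to \eqref{eq-lem-3.7}. A smaller but real gap occurs in (3): you reduce $H^*(N_{q/X})=0$ to the claim that $H^0(\cQ_q)\to H^0(\oh_q(H))$ is injective, but "globally generated with the right number of sections and surjective" does not imply injectivity on $H^0$. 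The paper sidesteps this entirely: since $N_{q/X}$ has rank $2$ and trivial determinant, $N_{q/X}\cong N^\vee_{q/X}$, so $H^*(N_{q/X})=H^*(N^\vee_{q/X})=0$.
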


\begin{proof}

By \cite[Lemma 3.7]{debarre2019gushel}, we have $\cU^{\vee}_q\cong \oh_q\oplus \oh_q(H)$. Then (1) follows from the tautological exact sequence
\[0\to \cQ^{\vee}_q\to \oh_q^{\oplus 5}\to  \cU^{\vee}_q\cong \oh_q\oplus \oh_q(H)\to 0.\]

For (2), note that $\cH^{-1}((I_q)|_q)\cong \oh_q$. Then after pulling back \eqref{eq-def-q} to $q$, we have an exact sequence $$0\to \oh_q\to \oh_q\oplus \oh_q(-H)\to \cQ^{\vee}_q\to N_{q/X}^{\vee}\to 0,$$
which gives \eqref{eq-lem-3.7}. 

Since $H^*(\cQ^{\vee}_q)=0$, \eqref{eq-lem-3.7} implies $H^*(N_{q/X}^{\vee})=0$. As $N_{q/X}$ is a bundle of rank $2$ and $c_1(N_{q/X})=0$, we see $N_{q/X}\cong N_{q/X}^{\vee}$. Thus, we have $H^*(N_{q/X})=0$, which proves (3).

\end{proof}

The following result gives useful geometric properties of surfaces with cohomology class $\gamma^*_X\sigma_{1,1}$ or $\gamma^*_X\sigma_{2}$.

\begin{lemma}\label{lem-surface-in-GM4}
Let 
$S\subset X$ is a $2$-dimensional closed subscheme such that $\oh_S$ is a pure sheaf.

\begin{enumerate}
    \item If $S.H^2=2$, then $S$ is the unique smooth $\sigma$-quadric surface $q$.

    \item If $[S]=\gamma^*_X\sigma_{1,1}$, then $S$ is an integral surface.

    \item If $[S]=\gamma^*_X\sigma_2$, then $S$ is reduced. Furthermore, if $S$ is reducible, then it is the union of $q$ and an integral surface $S'$ of degree $4$.
\end{enumerate}

\end{lemma}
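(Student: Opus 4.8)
The plan is to reduce all three statements to elementary bookkeeping with cycle classes, with part~(1) as the key input for parts~(2) and~(3) and the fact that $X$ contains no plane as the other basic ingredient.

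For~(1): since $\oh_S$ is a pure sheaf of dimension $2$, the scheme $S$ has no embedded components, so its fundamental cycle is $[S]=\sum_i m_i[S_i]$ with the $S_i$ its distinct $2$-dimensional irreducible components and $m_i\geq 1$. As $X$ contains no plane, no $S_i$ is a plane, so $S_i.H^2\geq 2$ for each $i$; since $S.H^2=2$ there is a single component with $m_1=1$, i.e.\ $S$ is integral of degree $2$. An integral surface of degree $2$ is a (possibly singular) quadric hypersurface in its linear span $\langle S\rangle\cong\PP^3$, hence has Hilbert polynomial $(t+1)^2$, so $S$ is a quadric surface in $X$; by the uniqueness of the quadric surface on a general $X$ (\cite{debarre2015special}) we get $S=q$, and $q$ is smooth because $X$ is general.

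For~(2) and~(3) I would again use $[S]=\sum_i m_i[S_i]$ (no embedded components, by purity), together with two observations: (a)~$S_i.H^2\geq 2$ for each $i$ (no plane), and (b)~$S_i.H^2=2$ if and only if $S_i=q$ (by~(1)). In case~(2), $\sum_i m_i(S_i.H^2)=\gamma_X^*\sigma_{1,1}.H^2=4$, so $\sum_i m_i\leq 2$; if $S$ is not integral then $\sum_i m_i=2$, and either $S=2q$, impossible since $[2q]=2(\gamma_X^*\sigma_2-\gamma_X^*\sigma_{1,1})\neq\gamma_X^*\sigma_{1,1}$ by \eqref{eq-class-q}, or $S$ is the union of two distinct degree-$2$ surfaces, hence of two distinct copies of $q$ by~(b) — again absurd. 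So $S$ is integral. In case~(3), $\sum_i m_i(S_i.H^2)=\gamma_X^*\sigma_2.H^2=6$, and I would add the input that every integral surface in $X$ has even $H^2$-degree: for a general $X$ the group of Hodge classes in $\mathrm{H}^4(X,\ZZ)$ is $\ZZ\gamma_X^*\sigma_{1,1}\oplus\ZZ\gamma_X^*\sigma_2$ (this uses that $X$ is general; see \cite{kuznetsov2018derived}), and $\gamma_X^*\sigma_{1,1}.H^2=4$, $\gamma_X^*\sigma_2.H^2=6$ are even. Then each $S_i.H^2$ is even, and running through the decompositions of $6$ into even parts $\geq 2$ subject to $\sum_i m_i[S_i]=\gamma_X^*\sigma_2$, every configuration other than a single integral component, or $6=2+4$ with two multiplicity-one components, is excluded exactly as in case~(2) (it would force $[S]$ to be a multiple of $[q]$, or to contain two copies of $q$). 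Thus if $S$ is not integral it is the union $q\cup S'$, which is reduced because all multiplicities equal $1$, with $[S']=\gamma_X^*\sigma_2-[q]=\gamma_X^*\sigma_{1,1}$; and $S'$ has degree $4$ with all its components of even degree $\geq 2$ and distinct from $q$, hence $S'$ is integral of degree $4$.

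The main obstacle is the exclusion in~(3) of the degenerate possibilities — in particular $S=2S_1$ with $S_1$ an integral cubic surface, and $S=S_1\cup S_2$ with two distinct integral cubic surfaces — which I handle through the parity of the degree, and hence through the precise Hodge lattice of a general GM fourfold. If one prefers to avoid this lattice input, these cases can be ruled out by classical geometry: any integral surface of degree $3$ in $X$ is either a cubic surface in some $\PP^3\subset\PP(W)$, which is impossible because $X$ is cut out by quadrics and contains no linear $\PP^3$ (by the same argument that yields uniqueness of the $\sigma$-quadric), or, by the classification of surfaces of minimal degree, a rational cubic scroll spanning a $\PP^4$, which a dimension count excludes for general $X$. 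Either route, combined with the bookkeeping above, gives all three statements.
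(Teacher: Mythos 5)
Your argument is correct, and for parts (1) and (2) it is essentially the paper's proof: irreducibility/reducedness from purity plus the absence of planes, identification of any degree-$2$ surface with the unique $\sigma$-quadric $q$, and exclusion of $2[q]=\gamma_X^*\sigma_{1,1}$ by a lattice computation (you check linear independence directly where the paper computes self-intersections; your identification of a degree-$2$ integral surface as a quadric via the minimal-degree bound replaces the paper's hyperplane-section and $h^0(\oh_S(H))$ count). The genuine divergence is in (3): the paper's only delicate point is ruling out irreducible components of degree $3$, which it does by classical geometry in a footnote (such a surface is a cubic in a $\PP^3$ or a surface of minimal degree in a $\PP^4$, i.e.\ a cubic scroll or a cone over a twisted cubic, and GM fourfolds containing the latter form a codimension-one locus in moduli). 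Your primary route instead excludes odd-degree components by a parity argument, asserting that the integral Hodge classes of a general $X$ are exactly $\ZZ\gamma_X^*\sigma_{1,1}\oplus\ZZ\gamma_X^*\sigma_2$, both of even $H^2$-degree. That assertion is true but needs more than non-Hodge-speciality: it also requires that $\gamma_X^*\mathrm{H}^4(\Gr(2,5),\ZZ)$ be \emph{primitive} in $\mathrm{H}^4(X,\ZZ)$ (otherwise the saturation could contain half-integral combinations of even-degree classes whose degree is odd); this primitivity is a result of Debarre--Iliev--Manivel rather than something contained in \cite{kuznetsov2018derived}, so the citation should be sharpened. Since you also supply the classical fallback --- which is precisely the paper's footnote argument --- the proof stands either way; the lattice route is cleaner once properly sourced, while the paper's route avoids any input on the integral Hodge lattice at the cost of a moduli dimension count.
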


\begin{proof}
(1): We know that $X$ does not contain any plane. Thus, if $S.H^2=2$, then $S$ is irreducible. Moreover, if $S$ is non-reduced, then $S_{red}.H^2=2$ as well by the same reason.  This implies that the kernel of $\oh_{S}\twoheadrightarrow \oh_{S_{red}}$ is supported in dimension $\leq 1$, which contradicts the purity of $\oh_S$. Therefore, $S$ is integral and $h^0(\oh_S)=1$. 

Now, let $C$ be a general hyperplane section of $S$, which is integral by Bertini's theorem. As $$\chi(\oh_C)=1-h^1(\oh_C)\leq 1,$$
according to \cite[Corollary 1.38]{sa14}, we have $\chi(\oh_C)=1$, implying that $C$ is a conic. Using the exact sequence $$0\to \oh_S(-H)\to \oh_S\to \oh_C\to 0,$$ we see $h^0(\oh_S(H))\leq 4$, which means that $S$ is a quadric surface in $\PP^3$. As $X$ is general, we know that $S$ is the unique smooth $\sigma$-quadric. This establishes (1).

(2): It is clear that $S.H^2=4$. If $S$ is reducible, then as $X$ does not contain any plane, we know that each component of $S$ is of degree two, which is the $\sigma$-quadric by (1) and contradicts the uniqueness part of (1). Thus $S$ is irreducible. If $S$ is non-reduced, then $S_{red}.H^2=2$, which means $S_{red}$ is the unique $\sigma$-quadric $q$ in $X$. Therefore, we have $$[S]=\gamma^*_X\sigma_{1,1}=2[q]\in \mathrm{H}^4(X,\ZZ).$$ However, this is impossible according to \eqref{eq-matrix-surface} since $[q]^2=2$.


(3): It is clear that $S.H^2=6$. Since $X$ is general, we know that $S$ does not contain any plane and surface of degree $3$\footnote{If such cubic surface $Z$ exists, without loss of generality we can assume that $\oh_Z$ is pure. Then the same argument in (1) shows that $Z\subset \PP^4$ is integral with general hyperplane sections being smooth twisted cubics. By \cite{min-deg}, $Z$ is either a cubic scroll or a cone over a smooth twisted cubic. In each case, $Z$ is a codimension $2$ linear section of $\PP(V_2\wedge V_5)\cap \Gr(2, V_5)$ for a $2$-dimensional $V_2\subset V_5$. This implies $[Z]=\sigma_1^2.\sigma_2$ in the Chow ring of $\Gr(2, V_5)$ by \cite[Lemma 2.2]{debarre:GM-jacobian}, and all $X$ containing such $Z$ fall into a codimension one locus in the moduli stack of GM fourfolds (cf.~\cite[Section 7.4]{debarre2015special}).}. If $S$ is non-reduced, then $S_{red}.H^2=2$ or $4$ as $\oh_S$ is pure-dimensional. If $S_{red}.H^2=4$, then the support of $\ker(\oh_{S}\twoheadrightarrow \oh_{S_{red}})$ is a degree $2$ surface in $S$, which is the $\sigma$-quadric $q$ by (1). This implies that $q\subset S_{red}$ is an irreducible component of $S_{red}$. However, this means $S_{red}$ is the union of two surfaces of degree $2$, which contradicts the uniqueness part of (1). Thus, the only possible case is $S_{red}.H^2=2$, and from (1), we deduce $S_{red}=q$. However, this implies $$[S]=\gamma_X^*\sigma_2=3[q]\in \mathrm{H}^4(X,\ZZ),$$
which is impossible by \eqref{eq-matrix-surface} and $[q]^2=2$. Hence, $S$ is reduced.

When $S$ is reducible, as $S.H^2=6$ and $X$ is general, we know that either $S$ has three irreducible components of degree $2$, or it has two components of degree $2$ and $4$, respectively. By the uniqueness part of (1), we conclude that the former case cannot occur. Thus $S=q\cup S'$, where $S'$ is an irreducible component of $S$ with $S'.H^2=4$. Since $S$ is reduced, $S'$ is also reduced and the last statement of (3) follows.
\end{proof}

\subsection{$\rho$-cubics}

Now, we continue to study $\rho$-cubics in detail. Their geometry is closely related to surfaces of the class $\gamma^*_X\sigma_{1,1}\in \mathrm{H}^4(X, \ZZ)$.

\begin{lemma}\label{lem-rho-cubic}
Let 
$C\subset X$ be a $\rho$-cubic.

\begin{enumerate}
    \item $C$ is contained in $\Gr(2, V_4)=\PP(\wedge^2 V_4)\cap \Gr(2, V_5)$ for a unique $V_4\subset V_5$.

    \item $S_{1,1}:=\Gr(2, V_4)\cap X$ is a degree $4$ integral Gorenstein surface with $[S_{1,1}]=\gamma_X^*\sigma_{1,1}$ and we have an exact sequence
    \begin{equation}\label{eq-def-S11}
        0\to \oh_X(-H)\to \cU_X\to I_{S_{1,1}}\to 0.
    \end{equation}

    \item There is a unique hyperplane section $D$ of $S_{1,1}$ containing $C$. Moreover, $D$ is a degree $4$ elliptic curve and the residue curve of $C$ in $D$ is a line $L$ with an exact sequence
    \[0\to \oh_L(-2)\to \oh_D\to \oh_C\to 0.\]
\end{enumerate}

\end{lemma}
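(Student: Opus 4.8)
I would treat the three parts in order, with Lemma~\ref{lem-homo-cubic}(2) as the main input. \emph{For Part (1)}, the existence of $V_4$ is the definition of a $\rho$-cubic once one recalls the Plücker identity $\Gr(2,V_4)=\PP(\wedge^2 V_4)\cap\Gr(2,V_5)$. Uniqueness follows from Lemma~\ref{lem-homo-cubic}(2): any $4$-dimensional $V_4\subset V_5$ with $C\subset\Gr(2,V_4)$ arises as $\ker s$ for a section $s\in V_5^\vee=H^0(\cU_X^\vee)=\Hom_X(\cU_X,\oh_X)$ whose associated map $\cU_X\to\oh_X$ factors through $I_C$, and distinct $V_4$'s give linearly independent elements of $\Hom_X(\cU_X,I_C)$; since $\hom_X(\cU_X,I_C)=1$, there is exactly one such $V_4$. (Alternatively, $V_4\ne V_4'$ would force $C\subset\Gr(2,V_4\cap V_4')$, which is empty, a point, or a plane, and a twisted cubic lies in none of these.)

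\emph{For Part (2)}, fix the nonzero map $\cU_X\to I_C\hookrightarrow\oh_X$ from Lemma~\ref{lem-homo-cubic}(2), corresponding to a nonzero section $s\in V_5^\vee=H^0(\cU_X^\vee)$; by the discussion preceding Lemma~\ref{lem-homo-cubic} its zero scheme is $Z(s)=\Gr(2,\ker s)\cap X$, and $C\subset Z(s)$ forces $\ker s=V_4$ by Part (1), so $S_{1,1}:=Z(s)=\Gr(2,V_4)\cap X$. Since $H^0(\cU_X^\vee(-H))=H^0(\cU_X)=0$ (and likewise for all more negative twists, as $\cU_X^\vee(-H)\cong\cU_X$), the section $s$ cannot vanish along any effective divisor, so $Z(s)$ has no divisorial component and hence, by Krull's theorem, is pure of codimension $2$: $S_{1,1}$ is a surface. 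The Koszul complex of $s$ is then exact, giving the exact sequence~\eqref{eq-def-S11} together with $0\to\oh_X(-H)=\det\cU_X\to\cU_X\xra{s^\vee}\oh_X\to\oh_{S_{1,1}}\to 0$; in particular $S_{1,1}$ is a local complete intersection, so $\oh_{S_{1,1}}$ is Cohen--Macaulay and pure. Comparing $\ch_2$ in~\eqref{eq-def-S11} and using $H^2=\gamma_X^*(\sigma_2+\sigma_{1,1})$ yields $[S_{1,1}]=H^2-\gamma_X^*\sigma_2=\gamma_X^*\sigma_{1,1}$, whence $\deg S_{1,1}=S_{1,1}.H^2=4$ by~\eqref{eq-matrix-surface}, and integrality follows from Lemma~\ref{lem-surface-in-GM4}(2). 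Finally, since $S_{1,1}$ has dimension $2$ inside the smooth fourfold $\Gr(2,V_4)$, the restrictions of a linear form cutting out $\PP(W)$ and of the quadric $Q$ form a regular sequence there, exhibiting $S_{1,1}$ as a complete intersection; a complete intersection in a smooth variety is Gorenstein, and adjunction gives $\omega_{S_{1,1}}=\oh_{S_{1,1}}(-H)$.

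\emph{For Part (3)}, note that $\langle C\rangle\cong\PP^3$ (as $h^0(\oh_C(H))=4$ by Lemma~\ref{lem-cubic-cohomology-1}(2)), while the integral degree-$4$ surface $S_{1,1}$ is not contained in $\langle C\rangle$ (otherwise $S_{1,1}\subset\Gr(2,V_4)\cap\langle C\rangle$, a surface of degree $\le 2$); since also $S_{1,1}\subset\PP(\wedge^2 V_4)\cap\PP(W)=\PP^4$, it follows that $\langle S_{1,1}\rangle=\PP^4$ and $\langle C\rangle$ is the unique hyperplane of $\PP^4$ containing $C$. Hence $D:=S_{1,1}\cap\langle C\rangle$ is the unique hyperplane section of $S_{1,1}$ through $C$. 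Being a Cartier divisor on the integral Gorenstein surface $S_{1,1}$, $D$ is Cohen--Macaulay of pure dimension $1$ and Gorenstein, of degree $4$, and adjunction gives $\omega_D=\omega_{S_{1,1}}(H)|_D=\oh_D$, so $p_a(D)=1$, i.e.\ $D$ is a genus-one curve. As $\deg C=3<4$ we have $C\subsetneq D$; let $L\subset D$ be linked to $C$, i.e.\ $I_L=(I_D:I_C)$, which by liaison theory is Cohen--Macaulay of pure dimension $1$ and degree $4-3=1$, hence a line $\cong\PP^1$. Gorenstein linkage in $D$ (using $\omega_D\cong\oh_D$ and $\oh_C$ Cohen--Macaulay) then gives $I_{C/D}\cong\sheafhom_{\oh_D}(\oh_L,\omega_D)\cong\omega_L\cong\oh_L(-2)$, and the tautological sequence $0\to I_{C/D}\to\oh_D\to\oh_C\to 0$ becomes $0\to\oh_L(-2)\to\oh_D\to\oh_C\to 0$.

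The step I expect to be most delicate is the last one: making the liaison argument rigorous requires checking that $L$ is a union of irreducible components of $D$ sharing no component with $C$ --- so that $\sheafhom_{\oh_D}(\oh_L,\omega_D)\cong\omega_L$ and $\deg L=1$ --- and that $L$ is a reduced line, which forces one to examine the possible degenerate shapes a twisted cubic inside the del Pezzo surface $S_{1,1}$ can take. A minor secondary point, handled above via the vanishing of $H^0$ of negative twists of $\cU_X^\vee$, is excluding a divisorial $S_{1,1}$ without appealing to stability properties of $\cU_X$.
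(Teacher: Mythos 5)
Your proof is correct and reaches all three conclusions; parts (1) and (2) essentially coincide with the paper's argument, while part (3) takes a genuinely different route. For (2), the paper rules out a divisorial component of $Z(s)$ by noting that $\Pic(X)=\ZZ H$ forces $X$ to contain no threefold of degree $<10$, and then obtains \eqref{eq-def-S11} by identifying the kernel of $\cU_X\twoheadrightarrow I_{S_{1,1}}$ as a rank-one reflexive sheaf with $c_1=-H$; your variant via $H^0(\cU_X^\vee(-dH))=H^0(\cU_X(-(d-1)H))=0$ plus the Koszul resolution is equivalent and slightly cleaner, and you additionally make explicit the Gorenstein/adjunction statement $\omega_{S_{1,1}}=\oh_{S_{1,1}}(-H)$ that the paper only asserts. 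For (3), the paper proves uniqueness of $D$ by the cohomological count $h^0(I_{C/S_{1,1}}(H))=5-4=1$ (from \eqref{eq-def-S11} and Lemma \ref{lem-cubic-cohomology-1}(2)), whereas you argue geometrically that $\langle S_{1,1}\rangle=\PP^4$ and $\langle C\rangle$ is the unique hyperplane of $\PP^4$ through $C$; these are two faces of the same count (one should note, as you implicitly do, that $h^0(\oh_{S_{1,1}}(H))=5$ so the full linear system is cut by hyperplanes of $\langle S_{1,1}\rangle$). For the residue curve, the paper simply computes $H\cdot\ch_3$ and $\chi$ of $\ker(\oh_D\twoheadrightarrow\oh_C)$, using purity of this kernel inside the pure sheaf $\oh_D$ to conclude it is $\oh_L(-2)$; your Gorenstein liaison argument ($\omega_D\cong\oh_D$, $I_{C/D}\cong\sheafhom_{\oh_D}(\oh_L,\omega_D)\cong\omega_L$) is heavier machinery but buys a structural explanation of why the twist is exactly $\omega_{\PP^1}=\oh_L(-2)$. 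The caveat you flag at the end is handled correctly by standard linkage theory: since $\oh_C$ is Cohen--Macaulay (Lemma \ref{lem-pure-cubic}(2)) and $D$ is a Gorenstein curve, the linked scheme $L$ is automatically Cohen--Macaulay with $[C]+[L]=[D]$ as cycles, so $\deg L=1$ forces $L$ to be a reduced line; no component-disjointness hypothesis is needed.
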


\begin{proof}
The existence of $V_4$ in (1) follows directly from the definition of a $\rho$-cubic, and its uniqueness is derived from $\Hom_X(\cU_X, I_C)=\CC$ by Lemma \ref{lem-homo-cubic}(2). 

For (2), note that $$S_{1,1}=\Gr(2, V_4)\cap X=\Gr(2,V_4)\cap \PP(W)\cap Q\subset \PP^9,$$ we see $\dim S_{1,1}\geq 2$. Since $\Pic(X)=\ZZ \oh_X(H)$, $X$ does not contain any threefold of degree less than $10$. Consequently, $S_{1,1}$ can only have dimension $2$. Thus $S_{1,1}$ is a degree $4$ Gorenstein surface. Given that $S_{1,1}$ is the zero locus of a section of $\cU^{\vee}_X$, we have a surjection $s\colon \cU_X\twoheadrightarrow I_{S_{1,1}}$. By $c_1(I_{S_{1,1}})=0$, we obtain $c_1(\ker(s))=-H$. Since $\cU^{\vee}_X$ is locally free and $I_{S_{1,1}}$ is torsion-free, $\ker(s)$ is a rank one reflexive sheaf, which is a line bundle by the smoothness of $X$. Therefore, $\ker(s)=\oh_X(-H)$ and this proves \eqref{eq-def-S11}. Then a direct computation using \eqref{eq-def-S11} gives $[S_{1,1}]=\gamma_X^*\sigma_{1,1}$, which implies the integrality of $S_{1,1}$ by Lemma \ref{lem-surface-in-GM4}(2).

From \eqref{eq-def-S11}, we have $\RHom_X(\oh_X, I_{S_{1,1}}(H))=\CC^4$. Then by Lemma \ref{lem-cubic-cohomology-1}(2), we get 
\begin{equation}\label{eq-ICSH}
\RHom_X(\oh_X, I_{C/S_{1,1}}(H))=\CC,
\end{equation}
which implies that there is a unique hyperplane section $D$ of $S_{1,1}$ containing $C$. Since $S_{1,1}$ is integral, it is clear that $D$ is a Gorenstein curve of degree $4$ and genus $1$. Moreover, $H.\ch_3(\ker(\oh_D\twoheadrightarrow \oh_C))=1$, hence, $\ker(\oh_D\twoheadrightarrow \oh_C)$ is supported on the residue line $L$ of $C$ in $D$. A calculation of Euler characteristic then shows that $\ker(\oh_D\twoheadrightarrow \oh_C)\cong \oh_L(-2)$ and (3) follows.
\end{proof}

\subsection{$\tau$-cubics}

Now we examine the geometry of $\tau$-cubics and the surfaces containing them.


\begin{lemma}\label{lem-in-zero-loci}
We have $\Hom_X(\cQ_X(-H), I_C)\neq 0$ for any twisted cubic $C\subset X$. In other words, $C$ is contained in the zero locus of a section of $\wedge^2\cQ_X\cong \cQ^{\vee}_X(H)$.
\end{lemma}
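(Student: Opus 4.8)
The plan is to translate the assertion into cohomology and prove it by a dimension count. First, since $\cQ_X$ has rank $3$ with $\det\cQ_X=\oh_X(H)$, there is a canonical isomorphism $\wedge^2\cQ_X\cong\cQ_X^\vee(H)$, so that $\Hom_X(\cQ_X(-H),I_C)\cong H^0(X,\wedge^2\cQ_X\otimes I_C)$. Tensoring the structure sequence $0\to I_C\to\oh_X\to\oh_C\to 0$ of $C$ with the locally free sheaf $\wedge^2\cQ_X$ and taking global sections identifies this group with the kernel of the restriction map $\rho\colon H^0(X,\wedge^2\cQ_X)\to H^0(C,\wedge^2\cQ_X|_C)$. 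Hence it suffices to prove the strict inequality $h^0(X,\wedge^2\cQ_X)>h^0(C,\wedge^2\cQ_X|_C)$, and I would establish it by showing $h^0(C,\wedge^2\cQ_X|_C)=9$ and $h^0(X,\wedge^2\cQ_X)\ge 10$.

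For the curve side: $\wedge^2\cQ_X$ is globally generated on $X$, being a quotient of $\wedge^2 V_5\otimes\oh_X$ induced by the surjection $V_5\otimes\oh_X\twoheadrightarrow\cQ_X$, so $\wedge^2\cQ_X|_C$ is globally generated on $C$. Writing it as a quotient of $\wedge^2 V_5\otimes\oh_C$ and using $H^1(\oh_C)=0$ from Lemma \ref{lem-cubic-cohomology-1}(1) together with $\dim C=1$ gives $H^1(C,\wedge^2\cQ_X|_C)=0$. As $C$ is Cohen--Macaulay of arithmetic genus $0$ by Lemma \ref{lem-pure-cubic}, Riemann--Roch on $C$ then yields $h^0(C,\wedge^2\cQ_X|_C)=\chi(C,\wedge^2\cQ_X|_C)=c_1(\wedge^2\cQ_X).C+3\chi(\oh_C)=2(H.C)+3=9$.

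The lower bound $h^0(X,\wedge^2\cQ_X)\ge 10$ is the crux, and I expect it to be the main obstacle. Since $X$ is an ordinary GM fourfold, $\gamma_X\colon X\hookrightarrow\Gr(2,V_5)$ is a closed immersion and $\wedge^2\cQ_X=\gamma_X^*(\wedge^2\cQ)$, where $\cQ$ is the tautological quotient bundle on $\Gr(2,V_5)$. By Bott's theorem $H^0(\Gr(2,V_5),\wedge^2\cQ)\cong\wedge^2 V_5$ has dimension $10$, so it is enough to check that $\gamma_X^*$ is injective on global sections, i.e.\ that no nonzero $\omega\in\wedge^2 V_5$ gives a section of $\wedge^2\cQ$ vanishing identically on $X$. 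The zero locus of such a section is $Z(\omega)=\{[U]\in\Gr(2,V_5):\omega\in U\wedge V_5\}$: when $\omega$ has rank $4$, $Z(\omega)$ is a threefold and hence cannot contain the fourfold $X$; when $\omega$ has rank $2$, $Z(\omega)=\{[U]:\dim(U\cap\langle\omega\rangle)\ge 1\}$ is an irreducible Schubert variety of dimension $4$ and Plücker degree $3$, hence not equal to the degree $10$ fourfold $X$, and (being irreducible of the same dimension) not containing it either. (Alternatively, twisting \eqref{eq-def-q} by $H$ gives $0\to\cU^\vee_X\to\wedge^2\cQ_X\to I_q(H)\to 0$, and one may combine $h^0(\cU^\vee_X)=5$, $h^0(I_q(H))=5$, and $H^1(X,\cU^\vee_X)=0$.)

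With both estimates in hand, $\rho$ is a linear map from a space of dimension $\ge 10$ to one of dimension $9$, so $\ker\rho\ne 0$, i.e.\ $\Hom_X(\cQ_X(-H),I_C)\ne 0$. Finally, composing a nonzero $f\colon\cQ_X(-H)\to I_C$ with $I_C\hookrightarrow\oh_X$ produces a map $\cQ_X(-H)\to\oh_X$ which, under $\Hom_X(\cQ_X(-H),\oh_X)=H^0(X,\wedge^2\cQ_X)$, corresponds to a nonzero section of $\wedge^2\cQ_X\cong\cQ_X^\vee(H)$; since this map factors through $I_C$, its zero scheme contains $C$, which is the asserted geometric reformulation.
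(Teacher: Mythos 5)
Your proof is correct and follows essentially the same route as the paper: reduce to the dimension count $h^0(X,\wedge^2\cQ_X)=10>9=h^0(\wedge^2\cQ_X|_C)$, with the $9$ obtained exactly as in the paper from global generation, $H^1(\oh_C)=0$, and Riemann--Roch. The only (minor) divergence is in establishing $h^0(X,\wedge^2\cQ_X)\ge 10$: the paper computes $H^0(\cQ_X^\vee(H))=\wedge^2 V_5$ via the Koszul resolution and Borel--Weil--Bott, whereas you argue injectivity of restriction from $\Gr(2,V_5)$ by analyzing the zero loci $Z(\omega)$ by rank, which is a valid and self-contained substitute.
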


\begin{proof}
Note that $\cQ^{\vee}_X(H)\cong \wedge^2\cQ_X$. Then using the Koszul resolution and the
Borel--Weil--Bott theorem as in \cite[Lemma 5.4]{GLZ2021conics}, it is straightforward to compute that $$H^0(\cQ^{\vee}_X(H))=H^0(\cQ_{\Gr(2, V_5)}^{\vee}(H))=\wedge^2 V_5.$$ Moreover, by taking wedge product to $V_5\otimes \oh_X\twoheadrightarrow \cQ_X$, we have a surjection $(\wedge^2 V_5)\otimes \oh_X\twoheadrightarrow \cQ^{\vee}_X(H)$, which means $\cQ^{\vee}_X(H)$ is generated by global sections. Then combined with $H^1(\oh_C)=0$, we have $H^1(\cQ^{\vee}_X(H)|_C)=0$. From $\chi(\cQ^{\vee}_X(H)|_C)=9$, we see $h^0(\cQ^{\vee}_X(H)|_C)=9$. Then the result follows from $H^0(\cQ^{\vee}_X(H))=\wedge^2 V_5\cong \CC^{10}$ and applying $\Hom_X(\cQ_X(-H), -)$ to the standard exact sequence of $\oh_C$.
\end{proof}

\begin{lemma}\label{lem-tau-in-sextic}
Let 
$C\subset X$ be a $\tau$-cubic. 

\begin{enumerate}
    \item There exists a unique $2$-dimensional subspace $V_2\subset V_5$ such that $C\subset \PP(V_2\wedge V_5)$.

    \item $S_{2,0}:=\PP(V_2\wedge V_5)\cap X$ is a Cohen--Macaulay reduced sextic surface, which is the zero locus of a section of~$\wedge^2 \cQ_X$.

    \item We have an exact sequence
\begin{equation}\label{eq-def-S20}
0\to V_2\otimes \oh_X(-H)\to \cQ_X(-H)\to I_{S_{2,0}}\to 0
\end{equation}
and $[S_{2,0}]=\gamma^*_X\sigma_{2}\in \mathrm{H}^4(X, \ZZ)$.
\end{enumerate}

\end{lemma}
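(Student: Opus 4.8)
The plan is to run the same strategy that established Lemma \ref{lem-rho-cubic} for $\rho$-cubics, now using the bundle $\cQ_X$ in place of $\cU_X$. First I would prove part (1). By Lemma \ref{lem-in-zero-loci} there is a nonzero map $\cQ_X(-H)\to I_C$, equivalently a section $s\in H^0(\cQ^\vee_X(H))=\wedge^2 V_5$ (using the computation in the proof of Lemma \ref{lem-in-zero-loci}) whose zero locus on $\Gr(2,V_5)$ contains $C$. Such a zero locus on the Grassmannian is of the form $\PP(V_2\wedge V_5)\cap\Gr(2,V_5)\cong\PP^3$ for a $2$-dimensional $V_2\subset V_5$ when $s$ is decomposable, and I would argue that for a $\tau$-cubic the relevant section is indeed of this shape (a non-decomposable $s$ would force $C$ into the locus where a $2$-form degenerates, which by the classification of surfaces cut out is either empty or too small to contain a $\tau$-cubic — this is where I would invoke that $C$ is neither in some $\Gr(2,V_4)$ nor in some $\PP(V_1\wedge V_5)$, i.e.\ the defining properties of a $\tau$-cubic). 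For uniqueness of $V_2$ I would show $\hom_X(\cQ_X(-H),I_C)=1$: if there were two independent such maps, the intersection of the two $\PP^3$'s inside $\Gr(2,V_5)$ has dimension $\leq 1$ (two distinct decomposable $2$-forms share at most a line's worth of $2$-planes, i.e.\ a $\PP^2$ or less), contradicting that $C$ is a curve not contained in a plane; more cleanly, I would note that $\hom_X(\cQ_X(-H), I_C) > 1$ forces $C$ into $\PP^2$ by the same plane-spanning argument used repeatedly in Lemma \ref{lem-homo-cubic}, which is impossible.

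For part (3), set $S_{2,0}:=\PP(V_2\wedge V_5)\cap X$. The section $s$ gives a surjection $\cQ_X(-H)\twoheadrightarrow I_{S_{2,0}}$ exactly as in the proof of Lemma \ref{lem-rho-cubic}(2): since $c_1(I_{S_{2,0}})=0$ and $\cQ_X(-H)$ has rank $3$ with $c_1=-2H$, the kernel has rank $2$ with $c_1=-2H$, and using that $\cQ_X$ is locally free while $I_{S_{2,0}}$ is torsion-free one identifies the kernel as a reflexive sheaf; from the fact that the section vanishes along $V_2\otimes\oh_X(-H)\subset V_5\otimes\oh_X(-H)$ (the map $\cQ_X(-H)\to I_{S_{2,0}}$ being induced by $V_5\otimes\oh_X(-H)\to\cQ_X(-H)$), the kernel is $V_2\otimes\oh_X(-H)$, giving \eqref{eq-def-S20}. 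A direct Chern-class computation from \eqref{eq-def-S20} together with $\ch(\cQ_X)$ (recorded in Section \ref{sec-very-general-GM}) then yields $[S_{2,0}]=\gamma^*_X\sigma_2$.

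Part (2) follows by combining \eqref{eq-def-S20} with Lemma \ref{lem-surface-in-GM4}(3): the class $\gamma^*_X\sigma_2$ forces $S_{2,0}$ to be reduced (and either integral or $q\cup S'$ with $S'$ integral of degree $4$), while dimension two is forced because $\Pic(X)=\ZZ\oh_X(H)$ rules out any threefold of degree $<10$, just as in Lemma \ref{lem-rho-cubic}(2); finally Cohen--Macaulayness of $S_{2,0}$ comes from the two-term locally free resolution \eqref{eq-def-S20} of $I_{S_{2,0}}$ on the smooth fourfold $X$ (an ideal sheaf with a length-one resolution by vector bundles defines a subscheme of pure codimension two which is Cohen--Macaulay by the Auslander--Buchsbaum formula), or alternatively by slicing as in Lemma \ref{lem-surface-in-GM4}(1). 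The main obstacle I anticipate is part (1): cleanly ruling out non-decomposable sections and pinning down that the zero locus containing a $\tau$-cubic is genuinely a $\PP^3$-section and not a larger or degenerate stratum — this requires a careful case analysis of $H^0(\cQ^\vee_X(H))=\wedge^2V_5$ by rank of the $2$-form, matched against the three types of twisted cubics; the rest is Chern-class bookkeeping and citations of Lemma \ref{lem-surface-in-GM4}.
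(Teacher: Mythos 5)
Your overall strategy coincides with the paper's: classify the sections of $\wedge^2\cQ_X\cong\cQ_X^\vee(H)$ coming from $\wedge^2V_5$ by their rank as skew forms, exclude the non-decomposable case using the $\tau$-hypothesis, obtain the Koszul-type resolution \eqref{eq-def-S20}, and feed the class $\gamma_X^*\sigma_2$ into Lemma \ref{lem-surface-in-GM4}(3) for reducedness. Parts (2) and (3) are essentially fine (the paper builds \eqref{eq-def-S20} from the cokernel of $V_2\otimes\oh_X\to\cQ_X$ and gets Cohen--Macaulayness from $S_{2,0}$ being a complete intersection in the degree-$3$ fourfold, whereas you identify the kernel of $\cQ_X(-H)\twoheadrightarrow I_{S_{2,0}}$ and use Auslander--Buchsbaum; both work). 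But note one factual slip that propagates: $\PP(V_2\wedge V_5)\cap\Gr(2,V_5)$ is \emph{not} a $\PP^3$ — it is the $4$-dimensional, degree-$3$ cone over $\PP^1\times\PP^2$ with vertex $\PP(\wedge^2V_2)$. If it were a $\PP^3$, then $S_{2,0}=\PP(V_2\wedge V_5)\cap X$ would be a quadric surface, not a sextic; the correct dimension and degree of the ambient zero locus is exactly what makes $S_{2,0}$ a sextic surface, and what makes the rank-$2$ case (excess-dimensional zero locus) different from the rank-$4$ case ($\mathrm{LGr}(2,V_4)$, a quadric threefold — not "too small" to contain a cubic, but contained in $\Gr(2,V_4)$, which is the actual reason a $\tau$-cubic cannot lie in it; you do name this fact, so that step is recoverable).

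The genuine gap is in your uniqueness argument for $V_2$. The claim that two distinct decomposable $2$-forms "share at most a $\PP^2$ or less" of $2$-planes is false: if $V_2\cap V_2'=V_1$ is a line, the common zero locus in $\Gr(2,V_5)$ contains $\PP(V_1\wedge V_5)\cong\PP^3$ (all $U$ containing $V_1$), which is $3$-dimensional and certainly large enough to contain a twisted cubic. Consequently your fallback — that $\hom_X(\cQ_X(-H),I_C)>1$ forces $C$ into a plane — also fails. The contradiction has to come from the $\tau$-hypothesis, not from the span of $C$: if $C\subset\PP(V_1\wedge V_5)\subset\Gr(2,V_5)$ then $\langle C\rangle\subset\Gr(2,V_5)$, so $C$ would be a $\sigma$-cubic (equivalently, $\Hom_X(\cQ_X^\vee,I_C)\neq 0$, contradicting Lemma \ref{lem-homo-cubic}(1)); and in the transverse case $V_2\cap V_2'=0$ the common locus is a quadric surface inside $\Gr(2,V_2\oplus V_2')$, so $C\subset\Gr(2,V_4)$, again contradicting that $C$ is of type $\tau$. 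This is precisely how the paper closes the uniqueness step, so the fix uses only ingredients you already have, but as written your argument does not go through.
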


\begin{proof}
First of all, we describe the zero locus of a section of $\wedge^2 \cQ_{\Gr(2, V_5)}$. Let $s\in \wedge^2 V_5=H^0(\wedge^2 \cQ_{\Gr(2,V_5)})$. Then $Z(s)$ parameterizes all 2-dimensional subspaces $U\subset V_5$ such that the image of $s$ under the quotient map $p_U\colon \wedge^2V_5\twoheadrightarrow \wedge^2(V_5/U)$ is zero.

If $s$ has rank $2$ as a skew-symmetric bilinear form, then up to normalization, i.e.~action of $\mathrm{GL}(V_5)$ on $\wedge^2 V_5$, we can assume that $s=s_1\wedge s_2$, where $s_1,s_2\in V_5$ are linearly independent vectors. Denote by $V_2$ the $2$-dimensional space spanned by $s_1$ and $s_2$. Note that $\dim_{\CC} \wedge^2 U=1$, then it is easy to see~$p_U(s)=0\in \wedge^2(V_5/U)$ if and only if $V_2\cap U\neq \{0\}$, which is also equivalent to $\wedge^2 U\subset V_2\wedge V_5$. In this case, we have $$Z(s)=\PP(V_2\wedge V_5)\cap \Gr(2, V_5),$$ which is the cone over $\PP(V_2)\times \PP(V_5/V_2)\cong \PP^1\times \PP^2$ with~$\PP(\wedge^2 V_2)$ as the cone point by \cite[Lemma 2.2]{debarre:GM-jacobian}. In this case, $Z(s)$ is normal and Cohen--Macaulay.

If $s$ has rank $4$ as a skew-symmetric bilinear form, then up to normalization, we can assume that $s=s_1\wedge s_2+s_3 \wedge s_4$ such that $s_1,s_2,s_3,s_4$ span a $4$-dimensional subspace $V_4\subset V_5$. Note that the restriction $s|_{V_4}$ of $s$ is a symplectic form on $V_4$. Then in this case, a basic computation of linear algebra shows that $p_U(s)=0\in \wedge^2(V_5/U)$ if and only if $U\subset V_4$ is a Lagrangian subspace with respect to $s|_{V_4}$. Therefore, $Z(s)$ is the Lagrangian Grassmannian $\mathrm{LGr}(2, V_4)$, which is a linear section of $\Gr(2, V_4)$ and is a quadric threefold.

Now by Lemma \ref{lem-in-zero-loci}, $C$ is contained in the zero locus of a section $s\in H^0(\wedge^2 \cQ_X)=\wedge^2 V_5$. From the definition of $\tau$-cubics, we see $C\nsubseteq \mathrm{LGr}(2,4)$, i.e.~$s$ has rank $2$ as a skew-symmetric bilinear form. Therefore, the zero locus $S_{2,0}$ of $s$ is
\[S_{2,0}=\PP(V_2\wedge V_5)\cap X=\PP(V_2\wedge V_5)\cap \Gr(2, V_5)\cap \PP(W)\cap Q.\]
Note that such $V_2$ is unique. Otherwise, $C$ is contained in $\PP(V_1\wedge V_5)\cong \Gr(1, 4)$, which is the zero locus of a section of $\cQ_{\Gr(2, V_5)}$ and is impossible since $C$ is a $\tau$-cubic. This proves (1).

Now we prove (2). By \cite[Lemma 2.2]{debarre:GM-jacobian}, we know that $\PP(V_2\wedge V_5)\cap \Gr(2, V_5)$ is a degree three fourfold, hence each component of $S_{2,0}$ has dimension at least $2$. It is evident that $\dim S_{2,0}\leq 3$, and as $X$ does not contain any threefold of degree $<10$, we conclude that $\dim S_{2,0}\neq 3$. Consequently, $S_{2,0}$ is pure of dimension $2$ and is the complete intersection of $\PP(V_2\wedge V_5)\cap \Gr(2, V_5)$ with $\PP(W)$ and $Q$. This implies that $S_{2,0}$ is a sextic Cohen--Macaulay surface. This proves (2) except the reducedness of $S_{2,0}$.

Next, we verify \eqref{eq-def-S20}. As $S_{2,0}=\PP(V_2\wedge V_5)\cap X$, we deduce that  $S_{2,0}$ is exactly the degeneracy locus of the natural map $\pi\colon V_2\otimes \oh_X\hookrightarrow \cQ_X$. Since $\mathrm{cok}(\pi)$ is a quotient of locally free sheaves, if $\mathrm{cok}(\pi)$ is not torsion-free, then the torsion part has pure codimension one. Then the existence of a surjection $\cQ_X\to \mathrm{cok}(\pi)/(\mathrm{cok}(\pi))_{tor}$ contradicts the slope stability of $\cQ_X$. Hence, $\mathrm{cok}(\pi)$ is a rank one torsion-free sheaf with $c_1(\mathrm{cok}(\pi))=H$, and we have $\mathrm{cok}(\pi)\cong I_{S_{2,0}}(H)$ and \eqref{eq-def-S20} follows. Finally, from \eqref{eq-def-S20}, we see $[S_{2,0}]=\gamma^*_X\sigma_{2}$, which implies the reducedness of $S_{2,0}$ by Lemma \ref{lem-surface-in-GM4}(3). This completes the proof.
\end{proof}

Using Lemma \ref{lem-tau-in-sextic}, we have the following two useful results.

\begin{lemma}\label{lem-ev-surjective-tau}
Let 
$C\subset X$ be a $\tau$-cubic, then $\langle C \rangle\cap \Gr(2,V_5)=\langle C \rangle\cap X=C$.
\end{lemma}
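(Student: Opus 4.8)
I want to show that for a $\tau$-cubic $C\subset X$, one has $\langle C\rangle\cap \Gr(2,V_5)=\langle C\rangle\cap X=C$; in particular the linear span $\langle C\rangle\cong\PP^3$ meets both $\Gr(2,V_5)$ and $X$ in exactly $C$. First observe that $\langle C\rangle\cong\PP^3$ since $C$ is a nondegenerate twisted cubic (it is not contained in a plane by Lemma \ref{lem-cubic-cohomology-1}, and it spans at most a $\PP^3$). The inclusions $C\subseteq \langle C\rangle\cap X\subseteq \langle C\rangle\cap \Gr(2,V_5)$ are obvious, so it suffices to prove $\langle C\rangle\cap \Gr(2,V_5)\subseteq C$, i.e. that $Z:=\langle C\rangle\cap \Gr(2,V_5)$ has no component beyond $C$.

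\textbf{Step 1: locate $\langle C\rangle$ inside the cone $\PP(V_2\wedge V_5)$.} By Lemma \ref{lem-tau-in-sextic}(1), $C\subset \PP(V_2\wedge V_5)$ for a unique $2$-dimensional $V_2\subset V_5$, and by \cite[Lemma 2.2]{debarre:GM-jacobian} the intersection $\PP(V_2\wedge V_5)\cap \Gr(2,V_5)$ is the cone, with vertex the point $\PP(\wedge^2V_2)$, over $\PP(V_2)\times\PP(V_5/V_2)\cong\PP^1\times\PP^2$ in its Segre embedding. Since $\langle C\rangle\subseteq \PP(V_2\wedge V_5)\cong\PP^6$, the variety $Z$ is the intersection of this four-dimensional cone with the $\PP^3$ given by $\langle C\rangle$.

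\textbf{Step 2: rule out the vertex and excess components by a degree/dimension count.} The key point is that $C$, being a $\tau$-cubic, does not pass through the cone point $v=\PP(\wedge^2V_2)$: if it did, then $C$ would be a cone over a conic in $\PP^1\times\PP^2$ with vertex $v$, forcing $\langle C\rangle$ to contain $v$ and $C$ to lie in a plane through $v$ — or more directly, one checks such a curve would be of $\rho$- or $\sigma$-type, contradicting the $\tau$-assumption (alternatively use that a $\tau$-cubic is smooth/irreducible, which follows from $C$ being Cohen--Macaulay by Lemma \ref{lem-pure-cubic}(2) together with $\langle C\rangle\cap X$ being a sextic-free geometry). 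Away from $v$, the cone $\PP(V_2\wedge V_5)\cap \Gr(2,V_5)$ is locally a $\PP^2$-bundle (an affine cone over the Segre) of degree $3$. Now $C$ spans $\langle C\rangle=\PP^3$ and $\deg C=3$; an irreducible twisted cubic in $\PP^3$ is not contained in any hyperplane, so $C$ is a nondegenerate rational normal curve of degree $3$ in $\PP^3$. Thus $Z\supseteq C$ with $C$ of degree $3$, and $Z=\langle C\rangle\cap(\PP(V_2\wedge V_5)\cap\Gr(2,V_5))$ is cut out from the cone (degree $3$) by three linear forms. If $Z$ had dimension $\geq 1$ with a component other than $C$, then $\deg Z>3$, but a linear section of a degree-$3$ variety has degree $\leq 3$; hence $Z$ is purely one-dimensional of degree exactly $3$ with $C$ as its only positive-dimensional component, and by reducedness/purity considerations (using that $X$ is general so $\langle C\rangle\cap X$ contains no extra embedded or lower-dimensional junk, cf. the argument in Lemma \ref{lem-surface-in-GM4}) we get $Z=C$ and therefore also $\langle C\rangle\cap X=C$.

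\textbf{Main obstacle.} The delicate point is the reducedness/set-theoretic-versus-scheme-theoretic issue: a naive degree count shows $Z$ has no extra \emph{component}, but one must exclude that $Z$ is a thickening of $C$ or that $\langle C\rangle$ is tangent to the cone along $C$, producing an embedded point or a non-reduced structure that still satisfies $\langle C\rangle\cap\Gr(2,V_5)\neq C$ as schemes. I would handle this by working at the level of Hilbert polynomials: $\oh_C$ has Hilbert polynomial $3t+1$, and one computes the Hilbert polynomial of the structure sheaf of the linear section $Z$ of the degree-$3$ cone inside $\PP^3$ directly (it is again $3t+1$ once the three linear forms form a regular sequence on the cone, which holds because $\dim Z=1=\dim(\text{cone})-3$), so the inclusion $\oh_Z\twoheadrightarrow\oh_C$ of sheaves with equal Hilbert polynomials and $\oh_Z$ having no lower-dimensional torsion (the cone is Cohen--Macaulay and the section is a complete intersection in it, hence $\oh_Z$ is Cohen--Macaulay of pure dimension $1$) is an isomorphism. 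This forces $Z=C$ scheme-theoretically, and the same complete-intersection / Hilbert-polynomial comparison then yields $\langle C\rangle\cap X=C$ as well.
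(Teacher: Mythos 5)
Your overall strategy is the same as the paper's: place $\langle C\rangle$ inside $\PP(V_2\wedge V_5)$ via Lemma \ref{lem-tau-in-sextic}, use that $\PP(V_2\wedge V_5)\cap\Gr(2,V_5)$ is a degree $3$ Cohen--Macaulay fourfold, and conclude that a proper linear section of it is a degree $3$ Cohen--Macaulay curve with Hilbert polynomial $3t+1$, so the surjection onto $\oh_C$ is an isomorphism. That endgame is fine. The gap is in establishing that the intersection $Z=\langle C\rangle\cap\Gr(2,V_5)$ is actually one-dimensional, i.e.\ that the linear section is proper. Your justification, ``$\dim Z=1=\dim(\text{cone})-3$,'' is circular: cutting a fourfold by three hyperplanes only bounds the dimension from \emph{below}. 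Your degree count (``a linear section of a degree-$3$ variety has degree $\leq 3$'') presupposes properness, so it cannot be used to rule out a two-dimensional component; and even a refined-B\'ezout version would not close the gap when $C$ is non-reduced, since then $\deg C_{\mathrm{red}}<3$ leaves room for extra components. This is exactly where the hypothesis that $C$ is a $\tau$-cubic must enter: the paper shows $\dim Z\neq 2$ because a two-dimensional intersection would be a quadric surface containing $C$ (as $\Gr(2,V_5)$ is cut out by quadrics), and by Lemma \ref{lem-type-of-quadric-surface} any quadric surface in $\Gr(2,V_5)$ lies in some $\Gr(2,V_4)$ or some $\PP(V_1\wedge V_5)$, contradicting the definition of a $\tau$-cubic. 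Your proposal never uses the $\tau$-hypothesis at this step, which is a sign something is missing --- the statement is false for $\sigma$-cubics, for instance.

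Separately, your Step 2 discussion of the cone vertex is both unnecessary and partly wrong: a curve passing through the vertex of a cone need not itself be a cone, and $\tau$-cubics need not be smooth or irreducible (the Hilbert scheme $\Hilb_X^{3t+1}$ includes reducible and non-reduced Cohen--Macaulay curves, and the type $\tau/\rho/\sigma$ is defined by the span and by containment in $\Gr(2,V_4)$, not by smoothness). Once properness is established, the Cohen--Macaulayness of the cone handles everything uniformly, vertex or not, so this digression should be deleted rather than repaired.
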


\begin{proof}
By definition, we have $\langle C \rangle \nsubseteq \Gr(2,V_5)$. Moreover, $\dim \langle C \rangle\cap \Gr(2,V_5)\neq 2$, as otherwise the intersection $\langle C \rangle\cap \Gr(2,V_5)$ would be a quadric surface containing $C$ because $\Gr(2,V_5)$ is the intersection of quadrics, which contradicts Lemma \ref{lem-type-of-quadric-surface} since $C$ is a $\tau$-cubic. Hence, $\dim \langle C \rangle\cap \Gr(2, V_5)=1$. Now, considering $V_2\subset V_5$ as in Lemma \ref{lem-tau-in-sextic}, we have $\langle C \rangle \subset \PP(V_2\wedge V_5)$. Thus we obtain 
$$\langle C \rangle \cap \Gr(2, V_5)=\langle C \rangle \cap \PP(V_2\wedge V_5) \cap \Gr(2, V_5).$$
As $\PP(V_2\wedge V_5) \cap  \Gr(2, V_5)$ is a degree $3$ normal Cohen--Macaulay fourfold in $\PP(V_2\wedge V_5)$ by \cite[Lemma 2.2]{debarre:GM-jacobian} and since we have already established that $$\dim (\langle C\rangle \cap \PP(V_2\wedge V_5) \cap \Gr(2, V_5))=1,$$ it follows that $\langle C \rangle\cong \PP^3$ intersects $\PP(V_2\wedge V_5) \cap \Gr(2, V_5)$ properly in $\PP(V_2\wedge V_5)\cong \PP^6$. This implies that $\langle C \rangle \cap \Gr(2, V_5)$ is a degree $3$ Cohen--Macaulay curve containing $C$. Thus we have $\langle C \rangle \cap \Gr(2, V_5)=C$ as desired.
\end{proof}

\begin{lemma}\label{lem-conic-in-tau-cubic}
Let 
$C\subset X$ be a $\tau$-cubic. If $Z\subset C$ is a closed subscheme such that $\dim \langle Z\rangle>1$, then $\langle Z\rangle$ is not contained in $\Gr(2, V_5)$.
\end{lemma}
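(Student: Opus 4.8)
The plan is to deduce this immediately from Lemma \ref{lem-ev-surjective-tau}, which has already packaged the necessary geometric input. I would argue by contradiction: assume $\langle Z\rangle \subseteq \Gr(2,V_5)$.

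Since $Z$ is a closed subscheme of $C$, its linear span is contained in that of $C$, i.e.\ $\langle Z\rangle \subseteq \langle C\rangle$; here all spans are taken inside the common ambient projective space $\PP(\wedge^2 V_5)$, so there is no ambiguity about where these linear spaces live. Combining this with the assumption gives $\langle Z\rangle \subseteq \langle C\rangle \cap \Gr(2,V_5)$. Because $C$ is a $\tau$-cubic, Lemma \ref{lem-ev-surjective-tau} yields $\langle C\rangle \cap \Gr(2,V_5) = C$, hence $\langle Z\rangle \subseteq C$. But $C$ is one-dimensional, being a twisted cubic, whereas $\langle Z\rangle$ is a projective linear subspace with $\dim\langle Z\rangle > 1$, i.e.\ $\dim\langle Z\rangle \geq 2$; a linear space of dimension at least $2$ cannot be contained in a scheme of dimension $1$. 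This contradiction proves the claim.

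I expect no genuine obstacle: the content has been front-loaded into Lemma \ref{lem-ev-surjective-tau} (and ultimately into the description of $\PP(V_2\wedge V_5)\cap \Gr(2,V_5)$ as a degree $3$ normal Cohen--Macaulay fourfold via \cite[Lemma 2.2]{debarre:GM-jacobian}, together with the properness of its intersection with $\langle C\rangle\cong\PP^3$ coming from the dimension count in the proof of Lemma \ref{lem-ev-surjective-tau}). The only minor points requiring attention are the bookkeeping of ambient spaces for linear spans and the elementary observation that the hypothesis $\dim\langle Z\rangle>1$ forces $\dim\langle Z\rangle\ge 2$, which is what makes the final dimension comparison with the curve $C$ a contradiction rather than merely an equality.
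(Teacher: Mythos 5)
Your argument is correct and is essentially identical to the paper's proof: both reduce to the identity $\langle C\rangle\cap\Gr(2,V_5)=C$ from Lemma \ref{lem-ev-surjective-tau} and then compare dimensions, the only difference being that you phrase it as a contradiction while the paper states it directly. No issues.
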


\begin{proof}
By Lemma \ref{lem-ev-surjective-tau}, we know that $\langle Z \rangle \cap \Gr(2, V_5)\subset \langle C \rangle \cap \Gr(2, V_5)=C$. Since $\dim \langle Z\rangle>1$, it is evident that $\langle Z\rangle\nsubseteq C$, thus $\langle Z\rangle\nsubseteq \Gr(2, V_5)$.
\end{proof}

Next, we study some cohomological properties of $C$ and $S_{2,0}$.

\begin{lemma}\label{lem-cohomology-ICS}
Let 
$C\subset X$ be a $\tau$-cubic, and let $S_{2,0}$ be the sextic surface in Lemma \ref{lem-tau-in-sextic} containing $C$. Then we have

\begin{enumerate}
    \item $\RHom_X(\oh_X, I_{C/S_{2,0}}(H))=\CC^2$, and

    \item $\RHom_X(I_{C/S_{2,0}}(2H), \cU_X)=\RHom_X(I_{C/S_{2,0}}(2H), \oh_X)=0$.
\end{enumerate}

\end{lemma}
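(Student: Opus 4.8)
The plan is to compute both cohomology groups by pushing everything onto the ambient fourfold $X$ and exploiting the two short exact sequences already available: the standard sequence $0\to I_{S_{2,0}}\to \oh_X\to \oh_{S_{2,0}}\to 0$, the sequence $0\to I_{C/S_{2,0}}\to \oh_{S_{2,0}}\to \oh_C\to 0$ (so that $I_{C/S_{2,0}}$ sits in $0\to I_{S_{2,0}}\to I_C\to I_{C/S_{2,0}}\to 0$), and crucially the resolution \eqref{eq-def-S20}, namely $0\to V_2\otimes\oh_X(-H)\to \cQ_X(-H)\to I_{S_{2,0}}\to 0$. First I would treat (1): twisting $0\to I_{S_{2,0}}\to I_C\to I_{C/S_{2,0}}\to 0$ by $\oh_X(H)$ gives a long exact sequence relating $\RHom_X(\oh_X,I_{C/S_{2,0}}(H))$ to $\RHom_X(\oh_X,I_C(H))=\CC^5$ (Lemma \ref{lem-cubic-cohomology-1}(2)) and $\RHom_X(\oh_X,I_{S_{2,0}}(H))$. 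The latter is computed from \eqref{eq-def-S20} twisted by $\oh_X(H)$: $\RHom_X(\oh_X,\cQ_X)=\CC^5$ and $\RHom_X(\oh_X,\oh_X)=\CC$, so $\RHom_X(\oh_X,I_{S_{2,0}}(H))=\CC^3$, and one checks the connecting maps vanish in the right degrees (the map $H^0(\oh_X(H))^{\oplus 2}\to H^0(\cQ_X)$ induced by $V_2\subset V_5$ is injective on $H^0$, controlling everything). This forces $\RHom_X(\oh_X,I_{C/S_{2,0}}(H))=\CC^{5-3}=\CC^2$ concentrated in degree $0$; one must verify there is no $H^1$ or $H^2$ contribution, which follows because $\oh_{S_{2,0}}$ and $\oh_C$ have vanishing higher cohomology after the twist ($S_{2,0}$ is ACM-like by Lemma \ref{lem-tau-in-sextic}(2), and $H^1(\oh_C(H))=0$ by Lemma \ref{lem-cubic-cohomology-1}(2)).

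**Proof of (2).** For the vanishing of $\RHom_X(I_{C/S_{2,0}}(2H),\cU_X)$ and $\RHom_X(I_{C/S_{2,0}}(2H),\oh_X)$, I would again use $0\to I_{S_{2,0}}(2H)\to I_C(2H)\to I_{C/S_{2,0}}(2H)\to 0$, so it suffices to show $\RHom_X(I_C(2H),\cU_X)\cong\RHom_X(I_{S_{2,0}}(2H),\cU_X)$ as the connecting maps, and similarly with $\oh_X$ in place of $\cU_X$. By Serre duality on the fourfold ($K_X=\oh_X(-2H)$), $\RHom_X(I_C(2H),\cU_X)=\RHom_X(\cU_X,I_C(2H)\otimes\oh_X(-2H))^\vee[-4]=\RHom_X(\cU_X,I_C)^\vee[-4]$, which is controlled by Lemma \ref{lem-homo-cubic}(1): for a $\tau$-cubic $\RHom_X(\cU_X,I_C)=0$. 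Likewise $\RHom_X(\oh_X,I_C)=0$ by Lemma \ref{lem-cubic-cohomology-1}(1). So the contributions from $I_C$ vanish entirely, and we are reduced to showing $\RHom_X(I_{S_{2,0}}(2H),\cU_X)=\RHom_X(I_{S_{2,0}}(2H),\oh_X)=0$. Here I plug in \eqref{eq-def-S20} twisted by $\oh_X(2H)$: $0\to V_2\otimes\oh_X(H)\to \cQ_X(H)\to I_{S_{2,0}}(2H)\to 0$, so it suffices to prove $\RHom_X(\cQ_X(H),\cU_X)=\RHom_X(\oh_X(H),\cU_X)=0$ and $\RHom_X(\cQ_X(H),\oh_X)=\RHom_X(\oh_X(H),\oh_X)=0$, together with a matching of the connecting maps so that nothing survives. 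These are all computations with the exceptional bundles $\oh_X$, $\cU_X$, $\cU_X^\vee=\cU_X(H)$, $\oh_X(H)$ in the semiorthogonal decomposition of $\D^b(X)$: e.g. $\RHom_X(\oh_X(H),\oh_X)=H^*(\oh_X(-H))=0$ since $X$ is Fano of index $2$ and $n=4$; $\RHom_X(\oh_X(H),\cU_X)=H^*(\cU_X(-H))$, computed from the tautological sequence and Borel–Weil–Bott; and $\RHom_X(\cQ_X(H),-)$ is handled by the dual tautological sequence $0\to\oh_X(-H)\to V_5^\vee\otimes\cU_X^\vee(-H)\to\cQ_X^\vee(-H)\to 0$ (or its dual) reducing to the previous cases. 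By semiorthogonality, $\RHom_X(\cU_X^\vee,\oh_X)=\RHom_X(\cU_X^\vee,\cU_X)=0$ etc.\ are immediate, so the only real work is the handful of BWB computations on $\Gr(2,V_5)$ pulled back to $X$.

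**Main obstacle.** The routine part is the Borel–Weil–Bott bookkeeping for the cohomology of $\cU_X$, $\cQ_X$ and their twists by $\pm H$, and these are exactly the sort of computations already invoked in Lemma \ref{lem-homo-cubic} and Lemma \ref{lem-in-zero-loci}, so I expect no genuine difficulty there. The step that needs the most care is ensuring the \emph{connecting homomorphisms} in the long exact sequences vanish (or at least do not create unwanted cohomology): both in part (1), where I claim the dimension count $5-3=2$ is actually achieved with everything in cohomological degree $0$, and in part (2), where I want the two six-term sequences coming from \eqref{eq-def-S20} and from the $I_{S_{2,0}}\to I_C\to I_{C/S_{2,0}}$ sequence to conspire to give total vanishing rather than merely an isomorphism of two nonzero groups. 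The cleanest way around this is to prove the vanishing of each $\Ext^i$ term-by-term rather than via Euler characteristics: once $\RHom_X(I_C(2H),\cU_X)=0$ and $\RHom_X(I_{S_{2,0}}(2H),\cU_X)=0$ as complexes (not just Euler characteristic), the long exact sequence forces $\RHom_X(I_{C/S_{2,0}}(2H),\cU_X)=0$ with no connecting-map subtlety, and similarly for $\oh_X$; for (1) I would pin down the degree-$0$ statement by checking directly that $h^0(\oh_{S_{2,0}}(H))=7$ and $h^0(\oh_C(H))=5$ with the restriction map $H^0(\oh_X(H))\to H^0(\oh_C(H))$ surjective, so that $h^0(I_{C/S_{2,0}}(H))=h^0(I_C(H))-h^0(I_{S_{2,0}}(H))$ exactly and higher cohomology vanishes by the ACM property of $S_{2,0}$.
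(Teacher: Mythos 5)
Your proposal is correct and follows essentially the same route as the paper: part (1) is the long exact sequence of $0\to I_{S_{2,0}}(H)\to I_C(H)\to I_{C/S_{2,0}}(H)\to 0$ together with $\RHom_X(\oh_X,I_{S_{2,0}}(H))=V_5/V_2=\CC^3$ from \eqref{eq-def-S20}, and part (2) is Serre duality combined with the vanishings $\RHom_X(\oh_X,I_{S_{2,0}})=\RHom_X(\cU_X,I_{S_{2,0}})=0$ (both read off from \eqref{eq-def-S20}) and Lemmas \ref{lem-cubic-cohomology-1}(1) and \ref{lem-homo-cubic}(1), so the connecting-map worries are moot since every relevant term either vanishes outright or is concentrated in degree $0$ with the first map injective by left-exactness. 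The only blemish is in your optional sanity check, where $h^0(\oh_C(H))=4$ (not $5$, since $\chi(\oh_C(H))=4$ and $H^1(\oh_C(H))=0$) and $h^0(\oh_{S_{2,0}}(H))=6$; this check is not needed for the argument.
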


\begin{proof}
By \eqref{eq-def-S20}, we have
\[\RHom_X(\oh_X, I_{S_{2,0}}(H))=V_5/V_2\cong\CC^3\]
and $\RHom_X(\oh_X, I_{S_{2,0}})=0.$ As $$\RHom_X(\cU_X, \cQ_X(-H))=\RHom_X(\cU^{\vee}_X, \cQ_X)=0$$ by \cite[Lemma 5.4(2)]{GLZ2021conics}, using \eqref{eq-def-S20} we also have $\RHom_X(\cU, I_{S_{2,0}})=0$.

Then (1) follows from Lemma \ref{lem-cubic-cohomology-1} and applying $\Hom_X(\oh_X, -)$ to
\[0\to I_{S_{2,0}}(H)\to I_C(H)\to I_{C/S_{2,0}}(H)\to 0.\]
Similarly, (2) follows from Serre duality, Lemma \ref{lem-cubic-cohomology-1} and Lemma \ref{lem-homo-cubic}(2).
\end{proof}

The proof of the following proposition is lengthy, but it is crucial for computing projection objects in the next section and understanding the geometry of $\tau$-cubics.

\begin{proposition}\label{prop-residue-tau-cubic}
Let 
$C\subset X$ be a $\tau$-cubic, and let $S_{2,0}$ be the sextic surface in Lemma \ref{lem-tau-in-sextic} containing $C$. 

\begin{enumerate}
    \item Given any two different hyperplane sections $D_1$ and $D_2$ of $S_{2,0}$ containing $C$, we have $D_1\cap D_2=C$, or in other words, the evaluation map
    \[\oh_X^{\oplus 2}\to I_{C/S_{2,0}}(H)\]
    is surjective.

    \item For any hyperplane section $D$ of $S_{2,0}$ containing $C$, denote by $C'$ the residue curve of $C$ in $D$. Then we have an exact sequence
    \begin{equation}\label{eq-residue}
        0\to \oh_{S_{2,0}}\to I_{C/S_{2,0}}(H)\to \oh_{C'}\to 0
    \end{equation}
and $C'$ is a $\tau$-cubic.
\end{enumerate}

\end{proposition}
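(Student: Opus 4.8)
The plan is to analyze the short exact sequence
\[0\to I_{S_{2,0}}(H)\to I_C(H)\to I_{C/S_{2,0}}(H)\to 0\]
together with the resolution \eqref{eq-def-S20} of $I_{S_{2,0}}$. First I would establish part (1). By Lemma~\ref{lem-cohomology-ICS}(1) we know $\hom_X(\oh_X, I_{C/S_{2,0}}(H))=2$, so the space of hyperplane sections of $S_{2,0}$ containing $C$ is a $\PP^1$; pick two distinct ones $D_1,D_2$, corresponding to a rank-two subspace $V_5/V_2\supset W'\cong\CC^2$ of global sections. These two sections give an evaluation map $\oh_X^{\oplus 2}\to I_{C/S_{2,0}}(H)$, and I must show it is surjective. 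The cokernel $Q$ is supported on $D_1\cap D_2\supseteq C$; since $D_1\ne D_2$ and $S_{2,0}$ is a reduced (by Lemma~\ref{lem-tau-in-sextic}(2) and Lemma~\ref{lem-surface-in-GM4}(3)) sextic surface, the two distinct hyperplane-section curves $D_1,D_2$ of $S_{2,0}$ have $D_1\cap D_2$ a curve of degree at most $3$ containing the degree-$3$ curve $C$; if equality of supports holds, a Hilbert-polynomial / Euler-characteristic count forces $Q=0$. The subtlety is that $D_1,D_2$ might share a common component: this is exactly where I would use Lemma~\ref{lem-conic-in-tau-cubic} and the geometry of $S_{2,0}=\PP(V_2\wedge V_5)\cap X$ as a linear section of the cone over $\PP^1\times\PP^2$ — a common component of $D_1\cap D_2$ other than those forming $C$ would have to lie in $\Gr(2,V_5)$ or violate the degree bound, which is excluded. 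Equivalently, pulling \eqref{eq-def-S20} back and using $\RHom_X(\oh_X, I_{S_{2,0}})=0$ and the fact that $W'\subset V_5/V_2$ has no common zero on the ruled structure, one checks directly that the two sections have no common zero on $S_{2,0}$ outside $C$.

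Granting (1), the surjection $\oh_X^{\oplus 2}\twoheadrightarrow I_{C/S_{2,0}}(H)$ has a kernel which, by a Chern-class computation ($I_{C/S_{2,0}}(H)$ is a rank-one torsion-free sheaf on the surface $S_{2,0}$ of the appropriate first Chern class, and $\chi$ is as in Lemma~\ref{lem-cohomology-ICS}(1)), must be $\oh_{S_{2,0}}$ up to the twist bookkeeping — more precisely one obtains
\[0\to \oh_X(-H)\to \oh_X^{\oplus 2}\to I_{C/S_{2,0}}(H)\to 0\]
by recognizing the Koszul-type complex from two sections, hence $\ker$ is $\oh_{S_{2,0}}$ after restricting the evaluation of a single section $D\subset S_{2,0}$. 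Concretely, for a single hyperplane section $D$ of $S_{2,0}$ containing $C$, the map $\oh_X\to I_{C/S_{2,0}}(H)$ cutting out $D$ has cokernel a sheaf supported on the residue curve $C'=\overline{D\setminus C}$ (defined scheme-theoretically so that $\oh_D$ surjects appropriately); a Hilbert-polynomial count using $\deg D=6$, $\deg C=3$ shows $\deg C'=3$ and $\chi(\oh_{C'})=1$, and the kernel of $\oh_X\to I_{C/S_{2,0}}(H)$ is $I_{D/?}$ — here I would argue it is $\oh_{S_{2,0}}$ by noting the composite $\oh_X\to I_{C/S_{2,0}}(H)\hookrightarrow \oh_X(H)$ is (multiplication by) the linear form defining $D\supset S_{2,0}$... this needs care since $D\not\supset S_{2,0}$; instead the kernel is computed as the image of $I_{S_{2,0}}(H)\to I_C(H)$ traced through, giving exactly \eqref{eq-residue}. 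I would phrase this cleanly via the snake lemma on the diagram relating the standard sequences of $C\subset X$, $S_{2,0}\subset X$, and $C\subset S_{2,0}$.

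It remains to show $C'$ is again a $\tau$-cubic. From \eqref{eq-residue} and Lemma~\ref{lem-cubic-cohomology-1}, $H^1(\oh_{C'})=0$, $\chi(\oh_{C'})=1$, and $\oh_{C'}$ is pure (it is a quotient of $I_{C/S_{2,0}}(H)$ which embeds in $\oh_X(H)$, and any $0$-dimensional torsion would contradict purity on the CM surface $S_{2,0}$ via \eqref{eq-residue}), so $C'$ is a genuine twisted cubic by the same argument as Lemma~\ref{lem-pure-cubic}. To rule out $C'$ being a $\rho$- or $\sigma$-cubic: both $C$ and $C'$ lie on $S_{2,0}=\PP(V_2\wedge V_5)\cap X$, so $\langle C'\rangle\subset\PP(V_2\wedge V_5)$; if $C'$ were a $\sigma$-cubic then $\langle C'\rangle\subset\Gr(2,V_5)$ would be a $\PP^2$, but $\langle C'\rangle\cap\Gr(2,V_5)\subset S_{2,0}$ and a $\PP^2$ inside the cone over $\PP^1\times\PP^2$ meeting $X$ would force a plane in $X$, contradiction. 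If $C'$ were a $\rho$-cubic it would lie in some $\Gr(2,V_4)$; then $C\cup C'=D\subset\Gr(2,V_4)\cap\PP(V_2\wedge V_5)$, and intersecting the two linear sections of $\Gr(2,V_5)$ pins $D$ (hence $C$) inside a proper linear subspace, forcing $\dim\langle C\rangle<3$ or $C\subset\Gr(2,V_4)$ — either way contradicting that $C$ is a $\tau$-cubic. The main obstacle throughout is part (1): controlling $D_1\cap D_2$ when the $D_i$ could be non-reduced or share components, which I expect to handle by exploiting the explicit description of $\PP(V_2\wedge V_5)\cap\Gr(2,V_5)$ as a cone over $\PP^1\times\PP^2$ (its hyperplane sections are well understood) together with Lemma~\ref{lem-conic-in-tau-cubic} and the generality of $X$.
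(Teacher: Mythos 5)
Your part (1) is the right kind of statement but is not actually proved: the claim that $D_1\cap D_2$ is ``a curve of degree at most $3$'' is unjustified, and the common-component issue you flag is never resolved. The paper's route is much shorter: since $H^0(I_C(H))\to H^0(I_{C/S_{2,0}}(H))$ is surjective, it suffices that $I_C(H)$ itself is globally generated, which is exactly Lemma \ref{lem-ev-surjective-tau} ($\langle C\rangle\cap X=C$ scheme-theoretically, so $I_C$ is cut out by linear forms). Your set-up could be completed along the same lines (each $H_i$ contains $\langle C\rangle$, so $H_1\cap H_2\cap\langle S_{2,0}\rangle=\langle C\rangle$ and $D_1\cap D_2=S_{2,0}\cap\langle C\rangle\subset X\cap\langle C\rangle=C$), but as written the argument is incomplete. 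The residue sequence \eqref{eq-residue} and the degree/Euler-characteristic count showing $C'$ is a twisted cubic do match the paper.

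The serious gap is your claim that $C'$ cannot be a $\rho$-cubic. You write that if $C'\subset\Gr(2,V_4)$ then ``$C\cup C'=D\subset\Gr(2,V_4)\cap\PP(V_2\wedge V_5)$'': this is false --- only $C'$ lies in $\Gr(2,V_4)$, and nothing forces $C$ or $D$ into it, so no contradiction with $C$ being of type $\tau$ follows. Indeed, the paper does not exclude residue $\rho$-cubics by any pointwise span argument: it first shows (Step V of its proof) that a residue $\rho$-cubic $C'$ has specific but perfectly consistent geometry ($C\cap\langle C'\rangle$ is a conic spanning the plane $\langle C\rangle\cap\langle C'\rangle$), then proves that only finitely many members of the $\PP^1$-family $\PP(H^0(I_{C/S_{2,0}}(H)))$ of residue curves can be of type $\rho$, so some residue cubic is of type $\tau$; finally a homological argument (the vanishing of $\RHom_X(\cU_X,\bL_{\oh_X}(I_{C/S_{2,0}}(H)))$, which does not depend on the chosen hyperplane section $D$) propagates type $\tau$ to every residue cubic. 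Your $\sigma$-exclusion is also off: for a $\sigma$-cubic $\langle C'\rangle$ is a $\PP^3$ contained in $\Gr(2,V_5)$, not a $\PP^2$, and $\sigma$-cubics do exist on $X$ (they lie on the $\sigma$-quadric), so no ``plane contained in $X$'' contradiction is available; the correct argument is that the plane $\langle C\rangle\cap\langle C'\rangle$ would be contained in $\langle C\rangle\cap\Gr(2,V_5)=C$, which is one-dimensional.
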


Before proving this proposition, we introduce the following definition:

\begin{definition}
Let $C\subset X$ be a $\tau$-cubic. Then we say a twisted cubic $C'\subset X$ is a \emph{residue cubic} of $C$ if $C'$ is a residue curve in the sense of Proposition \ref{prop-residue-tau-cubic}(2).
\end{definition}

\begin{remark}\label{rmk-P1}
By Proposition \ref{prop-residue-tau-cubic}, once we have proved that every $C'$ in Proposition \ref{prop-residue-tau-cubic}(2) is a twisted cubics, it is clear that every $\tau$-cubic $C$ has a $\PP^1$-family of residue cubics, parameterized by $\PP(H^0(I_{C/S_{2,0}}(H)))\cong \PP^1$.    
\end{remark}

\begin{proof}[{Proof of Proposition \ref{prop-residue-tau-cubic}}]
By Lemma \ref{lem-cohomology-ICS}(1), we have $H^0(I_{C/S_{2,0}}(H))=\CC^2$. Since the natural map $H^0(I_C(H))\to H^0(I_{C/S_{2,0}}(H))$ is surjective, to prove the surjectivity of $$H^0(I_{C/S_{2,0}}(H))\otimes \oh_X=\oh_X^{\oplus 2}\to I_{C/S_{2,0}}(H),$$ it suffices to show that the natural evaluation map $H^0(I_C(H))\otimes \oh_X\to I_C(H)$ is surjective, which follows from Lemma \ref{lem-ev-surjective-tau}.

Now we prove (2). We divide the proof into several steps.

\medskip

\textbf{Step I}. \emph{$C'$ is a twisted cubic.}

By (1), for a hyperplane section $D'$ of $S_{2,0}$ different from $D$, we have $D\cap D'=C$. As $C'$ is the residue curve of $C=D\cap D'$ in $D$, from \cite[Equation (1)]{hartshorne:double-plane} we have an exact sequence
\[0\to \oh_{C'}(-H)\to \oh_D\to \oh_{C}\to 0.\]
Thus $I_{C/D}=\oh_{C'}(-H)$ and the exact sequence
\[0\to I_{D/S_{2,0}}\to I_{C/S_{2,0}}\to I_{C/D}\to 0\]
becomes
\[0\to \oh_{S_{2,0}}(-H)\to I_{C/S_{2,0}}\to \oh_{C'}(-H)\to 0.\]
Hence, we get \eqref{eq-residue} by tensoring with $\oh_X(H)$. Now a computation of $\ch_3$ and Euler characteristic using \eqref{eq-residue} shows that $C'$ has degree three and $\chi(\oh_{C'})=1$, i.e.~$C'$ is a twisted cubic.

\medskip

\textbf{Step II}. \emph{$\langle C\rangle\cap \langle C' \rangle$ is a plane.}


According to Lemma \ref{lem-tau-in-sextic}, it follows that $S_{2,0}=\PP(V_2\wedge V_5)\cap X$ for a $2$-dimensional subspace $V_2\subset V_5$. Since $\langle S_{2,0} \rangle\cong \PP^5$, we know that $\langle D \rangle \cong \PP^4$. Moreover, as $C,C'\subset D$, we have $\langle C\rangle, \langle C' \rangle \subset \PP^4.$ Therefore, either $\langle C\rangle= \langle C' \rangle$ or $\langle C\rangle\cap \langle C' \rangle$ is a plane. As $\langle C\rangle\cap X=C$ by Lemma \ref{lem-ev-surjective-tau}, if $\langle C\rangle= \langle C' \rangle$, then we have $C'\subset \langle C' \rangle\cap X=C$, which implies $C=C'$ and the result follows. Thus in the following, we may assume that $\langle C\rangle\cap \langle C' \rangle$ is a plane. 

\medskip

\textbf{Step III}. \emph{$C'$ can only be of type $\tau$ or $\rho$.}

If $C'$ is a $\sigma$-cubic, then we have $\langle C' \rangle\subset \Gr(2, V_5)$. Moreover, $\langle C' \rangle\subset \PP(V_2\wedge V_5)=\langle S_{2,0} \rangle$. Thus
\[\langle C' \rangle\subset \PP(V_2\wedge V_5)\cap \Gr(2, V_5).\]
However, as we have already proved in Lemma \ref{lem-ev-surjective-tau} that
\[\langle C \rangle\cap  \PP(V_2\wedge V_5)\cap \Gr(2, V_5)=C,\]
hence by Step II, we obtain 
\[\PP^2\cong \langle C \rangle\cap \langle C' \rangle\subset \langle C \rangle\cap \PP(V_2\wedge V_5)\cap \Gr(2, V_5)=C,\]
which is a contradiction. Therefore, a residue cubic of $C$ can only be of type $\tau$ or $\rho$.

\medskip

\textbf{Step IV}. \emph{If there exists a residue $\tau$-cubic $C''$ of $C$, then all residue cubics of $C$ are of type $\tau$.} 

Indeed, from \eqref{eq-residue}, we have an exact sequence
\[0\to I_{S_{2,0}}\to \bL_{\oh_X}(I_{C/S_{2,0}}(H))[-1]\to I_{C''}\to 0.\]
As $C''$ is a $\tau$-cubic, applying $\Hom_X(\cU_X, -)$ to the exact sequence above and using Lemma \ref{lem-homo-cubic}(1), we have
\[\RHom_X(\cU_X, \bL_{\oh_X}(I_{C/S_{2,0}}(H)))=0.\]
On the other hand, if $C'$ is another residue cubic of $C$, by \eqref{eq-residue} we also have an exact sequence 
\[0\to I_{S_{2,0}}\to \bL_{\oh_X}(I_{C/S_{2,0}}(H))[-1]\to I_{C'}\to 0.\]
As $\RHom_X(\cU_X, \bL_{\oh_X}(I_{C/S_{2,0}}(H)))=\RHom_X(\cU_X, I_{S_{2,0}})=0$, we get $\RHom_X(\cU_X, I_{C'})=0$. Then by Lemma \ref{lem-homo-cubic}(2), $C'$ cannot be a $\rho$-cubic. As $C'$ is not a $\sigma$-cubic by Step III, we deduce that~$C'$ is a $\tau$-cubic and our claim follows.

\medskip

\textbf{Step V}. \emph{If $C'$ is a residue $\rho$-cubic of $C$, then $\langle Z\rangle =\langle C\rangle \cap \langle C'\rangle$ for a conic $Z=C\cap \langle C' \rangle$.}


Let $C'$ be a residue $\rho$-cubic of $C$. Then by definition, $\langle C'\rangle \subset \PP(\wedge^2 V_4)$ for a $4$-dimensional subspace $V_4\subset V_5$. If $V_2\nsubseteq V_4$, then $V_2\wedge V_5 \cap \wedge^2 V_4=V_1\wedge V_4$, where $V_1=V_2\cap V_4$. Since $$C'\subset \langle S_{2,0} \rangle= \PP(V_2\wedge V_5)\cap \PP(W)\cong \PP^5,$$ we obtain $C'\subset \PP(V_1\wedge V_4)\cong \PP^2$, which is impossible. Thus $V_2\subset V_4$, and in this case we have
\[\PP^3\cong\langle C'\rangle\subset \PP(V_2\wedge V_5)\cap\PP(\wedge^2 V_4)=\PP(V_2\wedge V_4)\cong \PP^4.\]
As we also have $$\langle C'\rangle \subset \langle D\rangle\cong \PP^4\subset \PP(V_2\wedge V_5)\cong \PP^6,$$ we see either $\langle D \rangle =\PP(V_2\wedge V_4)$ or $\langle D \rangle \cap \PP(V_2\wedge V_4)=\langle C'\rangle$. But the former case is not possible since $C\subset D$ is a $\tau$-cubic. Thus when $C'$ is a $\rho$-cubic such that $C'\subset \PP(\wedge^2 V_4)$, we always have
$$\langle D \rangle \cap \PP(V_2\wedge V_4)=\langle C'\rangle.$$
Note that $\PP(V_2\wedge V_4)\cap \Gr(2, V_5)$ is a quadric threefold in $\PP(V_2\wedge V_4)\cong \PP^4$ since it is a hyperplane section of $\Gr(2, V_4)\subset \PP(\wedge^2 V_4)$. Hence $\langle C'\rangle\cap \Gr(2,V_5)$ is a quadric surface. Then $\langle C'\rangle\cap X=\langle C'\rangle\cap Q$ is a degree $4$ elliptic curve containing $C'$. By Lemma \ref{lem-ev-surjective-tau}, we have
\[C\cap \langle C' \rangle=\langle C\rangle \cap \langle C'\rangle \cap \Gr(2, V_5),\]
then $C\cap \langle C' \rangle$ is a hyperplane section of a quadric surface $\langle C'\rangle \cap \Gr(2, V_5)$ contained in $C$, which can only be a conic and we denote it by $Z$. So it is evident that $\langle Z\rangle\nsubseteq \Gr(2, V_5)$ by Lemma \ref{lem-conic-in-tau-cubic} and
\[\langle Z\rangle =\langle C\rangle \cap \langle C'\rangle.\]

\medskip

\textbf{Step VI}. \emph{Any residue cubic of $C$ is a $\tau$-cubic.}

By Step IV, to prove (2), it suffices to prove that there exists a residue $\tau$-cubic of $C$. Since there is a $\PP^1$-family of residue cubics of $C$ parameterized by $\PP(H^0(I_{C/S_{2,0}}(H)))$, we only need to prove that there are at most finitely many residue $\rho$-cubics of $C$.


To this end, as shown in Step V, $\langle C'\rangle\cap X$ is a degree $4$ curve for any residue $\rho$-cubic $C'$ of $C$, so it contains at most finitely many twisted cubics, which implies that $\langle C'\rangle$ contains finitely many twisted cubics in $X$. Consequently, if $\{C_i\}_{i\in I}$ is a set of residue $\rho$-cubics of $C$ such that the index set $I$ is infinite, then $\{\langle C_i\rangle \}_{i\in I}$ is a infinite set as well. According to Step V, each $C\cap \langle C_i\rangle$ is a conic in $C$ and $C$ can only contain finitely many conics, so if there exist infinitely many residue $\rho$-cubics, then we can find two residue $\rho$-cubics $C_1$ and $C_2$ of $C$ such that $$\langle C_1\rangle\neq \langle C_2\rangle$$
and 
\[Z=C\cap \langle C_1\rangle=C\cap \langle C_2\rangle.\]
In this case, we know that $C_1\subset \PP(\wedge^2 V_4)$ and $C_2\subset \PP(\wedge^2 V'_4)$ for two different $4$-dimensional subspaces $V_4, V_4'$ of $V_5$ such that $V_2\subset V_4'\cap V_4\subset V_5$. Then we have
\[\langle Z \rangle\subset \PP(V_2\wedge V_4)\cap \PP(V_2\wedge V_4'),\]
which implies $\langle Z \rangle =\PP(V_2\wedge V_3)=\PP(\wedge^2 V_3)$, where $V_3:=V_4\cap V_4'$. However, this implies $\langle Z\rangle \subset \Gr(2, V_5)$, which contradicts $\langle C\rangle \cap \langle C_1\rangle=\langle Z\rangle$ and 
$Z=\langle C\rangle \cap \langle C_1\rangle \cap \Gr(2, V_5)$. Thus we finish the proof.
\end{proof}

We finish this section by proving that being a residue cubic is a symmetric relation. Thus, every $\tau$-cubic is a residue cubic of another $\tau$-cubic.




\begin{lemma}\label{lem-residue-symmetry}
Let 
$C\subset X$ be a $\tau$-cubic. If $C'$ is a residue cubic of $C$, then~$C$ is a residue cubic of $C'$.
\end{lemma}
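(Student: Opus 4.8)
The plan is to show that the relation ``being a residue cubic'' is symmetric by working with the surface $S_{2,0}$ that contains $C$, and exploiting the fact (established in Lemma~\ref{lem-tau-in-sextic}) that $S_{2,0}$ is canonically determined by $C$ via the unique $2$-dimensional subspace $V_2 \subset V_5$ with $C \subset \PP(V_2 \wedge V_5)$. First I would observe that by Proposition~\ref{prop-residue-tau-cubic}(2), $C'$ is a $\tau$-cubic contained in $S_{2,0}$; moreover $C'$ is the residue of $C$ in some hyperplane section $D$ of $S_{2,0}$, i.e.~$D = C \cup C'$ scheme-theoretically with the linkage exact sequence
\[0\to \oh_{C'}(-H)\to \oh_D\to \oh_{C}\to 0.\]
The key point is then to check that the sextic surface $S'_{2,0}$ canonically attached to $C'$ by Lemma~\ref{lem-tau-in-sextic} coincides with $S_{2,0}$. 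Since $C' \subset S_{2,0} = \PP(V_2 \wedge V_5) \cap X$, we have $C' \subset \PP(V_2 \wedge V_5)$, so the subspace attached to $C'$ is again $V_2$ by the uniqueness statement in Lemma~\ref{lem-tau-in-sextic}(1); hence $S'_{2,0} = S_{2,0}$.

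Once $S'_{2,0} = S_{2,0}$ is established, the hyperplane section $D$ of $S_{2,0}$ is also a hyperplane section of $S'_{2,0}$ containing $C'$, and $C$ is by construction the residue curve of $C'$ in $D$ (the linkage relation is symmetric: the exact sequence above can be rewritten, after twisting, as the sequence computing the residue of $C'$). Concretely, tensoring the displayed sequence by $\oh_X(H)$ and comparing with \eqref{eq-residue} applied to the pair $(C', S_{2,0}, D)$ shows that the residue of $C'$ in $D$ has ideal sheaf $I_{C/D} = \oh_{C'}(-H)$ reading the roles in reverse — more precisely one gets $I_{C'/D} = \oh_C(-H)$ and the residue curve of $C'$ in $D$ is exactly $C$. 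Since we already know from Proposition~\ref{prop-residue-tau-cubic}(2) (applied to $C'$, which is a $\tau$-cubic) that every residue curve of $C'$ in a hyperplane section of $S'_{2,0} = S_{2,0}$ is a $\tau$-cubic and fits into the exact sequence \eqref{eq-residue}, we conclude that $C$ is a residue cubic of $C'$ in the sense of the definition preceding Lemma~\ref{lem-residue-symmetry}.

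I expect the main obstacle to be the bookkeeping needed to ensure that the hyperplane section $D$ of $S_{2,0}$ witnessing ``$C'$ is a residue of $C$'' is genuinely a hyperplane section of $S_{2,0}$ (not just a Cartier divisor on it) and that the scheme-theoretic union $C \cup C' = D$ holds, so that the linkage sequence is exact on the nose. This is essentially contained in the proof of Proposition~\ref{prop-residue-tau-cubic}: there $C'$ was \emph{defined} as the residue of $C$ in $D$, and Proposition~\ref{prop-residue-tau-cubic}(1) guarantees $D_1 \cap D_2 = C$ for distinct hyperplane sections through $C$, which is what makes the linkage well-behaved. So the real content is just the identification $S'_{2,0} = S_{2,0}$ via uniqueness of $V_2$, together with the symmetry of linkage for the pair $(C, C')$ inside the fixed ambient surface $S_{2,0}$; everything else is formal. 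A minor subtlety worth addressing is that one should check $C$ appears as a residue curve of $C'$ for \emph{some} hyperplane section of $S_{2,0}$ — namely the very same $D$ — and that this $D$ contains $C'$, which is automatic since $D = C \cup C' \supset C'$.
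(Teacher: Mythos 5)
Your preliminary step is fine and matches what the paper uses implicitly: since $C'\subset S_{2,0}=\PP(V_2\wedge V_5)\cap X$, the uniqueness of $V_2$ in Lemma \ref{lem-tau-in-sextic}(1) shows that the sextic surface attached to $C'$ is again $S_{2,0}$, and $D=\langle D\rangle\cap S_{2,0}$ is a hyperplane section of it containing $C'$. The problem is the central step. You claim that the linkage exact sequence $0\to \oh_{C'}(-H)\to \oh_D\to \oh_C\to 0$ ``can be rewritten, after twisting,'' as $0\to \oh_{C}(-H)\to \oh_D\to \oh_{C'}\to 0$. It cannot: the first sequence identifies $I_{C/D}$ with $\oh_{C'}(-H)$, while the second would identify $I_{C'/D}$ with $\oh_C(-H)$, and passing from one to the other is exactly the assertion of the lemma, not a formal manipulation (twisting the first sequence by $\oh_X(H)$ gives $0\to\oh_{C'}\to\oh_D(H)\to\oh_C(H)\to 0$, which is not a residue sequence for $C'$). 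In general the residue operation $Z\mapsto (I_D:I_Z)$ is an involution only under extra hypotheses --- the classical symmetry of liaison needs the linking scheme to be Gorenstein (and typically uses a duality $I_{C'/D}\cong\sheafhom(\oh_C,\omega_D)\otimes\omega_D^{-1}$), whereas here $D$ is a hyperplane section of $S_{2,0}$, which is only shown to be Cohen--Macaulay; $\PP(V_2\wedge V_5)\cap\Gr(2,V_5)$ is a cone over $\PP^1\times\PP^2$ and is not Gorenstein at its vertex, so Gorenstein-ness of $D$ is not free. Your proof therefore assumes the statement at its key point.

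The paper closes this gap by a different, purely commutative-algebra argument that avoids any duality: using Proposition \ref{prop-residue-tau-cubic}(1) it writes $C=D\cap D_1$ for a second hyperplane section $D_1$ of $S_{2,0}$, so $I_C=I_D+I_{D_1}$, computes $I_{C'}=(I_D:I_C)=(I_D:I_{D_1})$, and then uses the elementary ideal-quotient inclusions $I_D\subset (I_D:J)$ and $I_{D_1}\cdot(I_D:I_{D_1})\subset I_D$ to conclude $I_C=I_D+I_{D_1}\subset (I_D:I_{C'})=I_{C''}$; since the residue $C''$ of $C'$ is a twisted cubic, the inclusion $C''\subset C$ forces $C''=C$. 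If you want to repair your write-up, either reproduce an argument of this type or genuinely verify the Gorenstein/duality input needed for the symmetry of liaison --- the key extra ingredient you are missing is that $C$ itself is cut out on $D$ by a second hyperplane section, which is what makes the ideal-quotient computation go through.
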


\begin{proof}
This follows from a direct computation of commutative algebra, here we present the proof for completeness. Let $S_{2,0}$ be the sextic surface in Lemma \ref{lem-tau-in-sextic} containing $C$ and $C'$, and $D$ be the hyperplane section of $S_{2,0}$ containing $C$ and $C'$. Assume that $D_1$ is another hyperplane section of $S_{2,0}$ containing $C$ and $D_2$ is another hyperplane section containing $C'$. Let $C''$ be the residue curve of $C'$ in $D$. By Proposition \ref{prop-residue-tau-cubic}, we have $C=D\cap D_1$ and $C'=D\cap D_2$. Hence, we obtain 
\[I_{C}=I_D+I_{D_1},\quad I_{C'}=I_D+I_{D_2}.\]
Recall that by the definition of a residue scheme (cf.~\cite[Section 2]{hartshorne:double-plane}), we have
\[I_{C'}=(I_D:I_C), \quad I_{C''}=(I_D: I_{C'}),\]
where $(-:-)$ denotes the ideal quotient. From \cite[Exercise 1.12(v)]{atiyah:commutative-algebra},  since $(I_D: I_D)=\oh_X$, we have 
\[I_{C'}=(I_D:I_C)=(I_D: I_D+I_{D_1})=(I_D: I_D)\cap (I_D: I_{D_1})=(I_D: I_{D_1}).\]
In order to establish $C=C''$, we only need to show $I_C\subset I_{C''}$, which is equivalent to prove
\[I_D+I_{D_1}\subset I_{C''}=(I_D: I_{C'})=(I_D: (I_D: I_{D_1})).\]
To this end, by \cite[Exercise 1.12(i)]{atiyah:commutative-algebra} we have $I_D\subset (I_D: I_{C'})$. Combined with \cite[Exercise 1.12(ii)]{atiyah:commutative-algebra}, we get $I_{D_1}\cdot (I_D: I_{D_1})\subset I_D$. Hence, from the definition of the ideal quotient, we obtain
\[I_{D_1}\subset (I_D: (I_D: I_{D_1})).\]
This proves $I_D, I_{D_1}\subset I_{C''}=(I_D: I_{C'})$, which implies $I_C=I_D+I_{D_1}\subset I_{C''}$. Thus we complete the proof.
\end{proof}

\section{Projection of twisted cubics}\label{sec-projection-twisted_cubics}

In this section, we investigate the projection of objects associated with twisted cubics to the Kuznetsov components of GM fourfolds. We first describe the projection objects in Section \ref{subsec-proj} and then obtain their stability in Theorem \ref{thm-stability-proj-general}. We always assume $X$ to be a \emph{general} GM fourfold. 

Recall that we have the projection functor
$$\mathrm{pr}_X:=\bR_{\cU_X}\bR_{\oh_X(-H)}\bL_{\oh_X}\bL_{\cU^{\vee}_X}\colon \D^b(X)\to \Ku(X).$$
Similar to $\Ku(X)$, we can define the alternative Kuznetsov component $\cA_X$ by
\[\D^b(X)=\langle\cA_X,\cU_X,\oh_X, \cU_X^{\vee},\oh_X(H)\rangle\]
and the corresponding projection functor is
$$\mathrm{pr}'_X:=\bR_{\oh_X(-H)}\bR_{\cU_X(-H)}\bL_{\cU_X}\bL_{\oh_X}\colon \D^b(X)\to \cA_X.$$
Moreover, there is an equivalence
\[\Xi:=\bL_{\oh_X}\circ (-\otimes \oh_X(H))\colon \cA_X\to \Ku(X).\]
A direct computation gives the following.

\begin{lemma}\label{lem-Xi}
Take an object $E\in \D^b(X)$ with $\RHom_X(\oh_X, E)=0$. Then we have
\[\pr_X(E(H))=\Xi(\pr'_X(E)).\]
\end{lemma}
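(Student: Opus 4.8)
The statement to prove is Lemma \ref{lem-Xi}: for $E \in \D^b(X)$ with $\RHom_X(\oh_X, E) = 0$, one has $\pr_X(E(H)) = \Xi(\pr'_X(E))$. The idea is to unwind both projection functors as sequences of mutations and compare them term by term, using the hypothesis to kill the terms involving $\oh_X$. Let me sketch.

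First I would recall the two semiorthogonal decompositions and the associated projection functors. We have
\[
\D^b(X)=\langle\Ku(X),\oh_X,\cU^{\vee}_X,\oh_X(H),\cU_X^{\vee}(H)\rangle
\]
with $\pr_X=\bR_{\cU_X}\bR_{\oh_X(-H)}\bL_{\oh_X}\bL_{\cU^{\vee}_X}$, and the alternative decomposition
\[
\D^b(X)=\langle\cA_X,\cU_X,\oh_X,\cU_X^{\vee},\oh_X(H)\rangle
\]
with $\pr'_X=\bR_{\oh_X(-H)}\bR_{\cU_X(-H)}\bL_{\cU_X}\bL_{\oh_X}$, and $\Xi=\bL_{\oh_X}\circ(-\otimes\oh_X(H))$.

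Here is the key observation driving everything. Tensoring the first decomposition by $\oh_X(H)$ (an autoequivalence) gives $\D^b(X)=\langle\Ku(X)(H),\oh_X(H),\cU^{\vee}_X(H),\oh_X(2H),\cU_X^{\vee}(2H)\rangle$, but more useful is to rotate: since $\oh_X(-H) \in {}^\perp$ of the rest... Actually the cleanest route is: start from $\pr'_X(E)$, apply $-\otimes\oh_X(H)$, then apply $\bL_{\oh_X}$, and check this equals $\pr_X(E(H))$ directly by matching mutation functors. So I would compute
\[
\Xi(\pr'_X(E)) = \bL_{\oh_X}\bigl(\pr'_X(E)\otimes\oh_X(H)\bigr) = \bL_{\oh_X}\bigl(\bR_{\oh_X}\bR_{\cU_X}\bL_{\cU_X(H)}\bL_{\oh_X(H)}(E(H))\bigr),
\]
using that $\bL_{\cE}(F)\otimes\cL \cong \bL_{\cE\otimes\cL}(F\otimes\cL)$ and similarly for right mutations, which is a formal consequence of the mutation triangles. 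Meanwhile $\pr_X(E(H)) = \bR_{\cU_X}\bR_{\oh_X(-H)}\bL_{\oh_X}\bL_{\cU^{\vee}_X}(E(H))$. So the problem reduces to showing
\[
\bL_{\oh_X}\bR_{\oh_X}\bR_{\cU_X}\bL_{\cU_X(H)}\bL_{\oh_X(H)}(E(H)) \;=\; \bR_{\cU_X}\bR_{\oh_X(-H)}\bL_{\oh_X}\bL_{\cU^{\vee}_X}(E(H)).
\]
On the left, I would use $\bL_{\oh_X}\bR_{\oh_X} = \identity$ on the relevant subcategory (since $\oh_X$ is exceptional, $\bL_{\oh_X}\circ\bR_{\oh_X}$ is the identity functor — mutations through an exceptional object are mutually inverse on the appropriate orthogonal), reducing the left side to $\bR_{\cU_X}\bL_{\cU_X(H)}\bL_{\oh_X(H)}(E(H))$. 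The remaining task is to reconcile the two chains $\bR_{\cU_X}\bL_{\cU_X(H)}\bL_{\oh_X(H)}$ and $\bR_{\cU_X}\bR_{\oh_X(-H)}\bL_{\oh_X}\bL_{\cU^{\vee}_X}$ applied to $E(H)$; here I would invoke the hypothesis $\RHom_X(\oh_X,E)=0$, equivalently $\RHom_X(\oh_X(H), E(H))=0$, so that $\bL_{\oh_X(H)}(E(H)) = E(H)$, and similarly re-express $\bL_{\cU^{\vee}_X}$ and the order of mutations using the standard relation $\bR_{\cD_1}\bL_{\cD_2}=\bL_{\cD_2}\bR_{\cD_1}$ when the categories are orthogonal, plus $\cU_X^{\vee}=\cU_X(H)$ recorded in the excerpt.

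The main obstacle, and the step that needs genuine care rather than formal manipulation, is bookkeeping the order in which mutation functors can be commuted past one another: $\bL_{\cD_1}$ and $\bR_{\cD_2}$ commute only under orthogonality conditions on $\cD_1,\cD_2$, and one must verify at each stage that the object being mutated lies in the correct piece of the semiorthogonal decomposition for the commutation (and for the "mutually inverse mutations give identity" simplification) to be valid. Concretely, after tensoring the alternative decomposition by $\oh_X(H)$ one gets $\D^b(X)=\langle\cA_X(H),\cU_X(H),\oh_X(H),\cU_X^{\vee}(H),\oh_X(2H)\rangle$, and one should recognize $\bL_{\oh_X}$ of this as producing exactly the blocks of the standard decomposition — matching $\cA_X(H)$ mutated appropriately with $\Ku(X)$. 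I expect the slick way to finish is: the hypothesis guarantees $E(H) \in {}^\perp\oh_X(H)$, hence $\bL_{\oh_X(H)}(E(H))=E(H)$, and then both $\pr_X(E(H))$ and $\Xi(\pr'_X(E))$ are computed as the image of $E(H)$ under mutation through the same set of exceptional objects in compatible orders, so they agree. I would write this out as a short chain of equalities of functors restricted to the subcategory ${}^\perp\oh_X(H) \cap (\text{appropriate orthogonals})$, citing the mutation relations from Section \ref{sod} (in particular $S_{\cD_2}=\bR_{\cD_1}\circ S_{\cT}$ type identities and the triangles defining $\bL_E,\bR_E$) and the identity $\cU_X(H)\cong\cU_X^\vee$.
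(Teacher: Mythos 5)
Your overall strategy (twist-compatibility of mutations, then use the hypothesis to kill mutation functors and commute the rest) is the right one, and your first reductions are correct: $\Xi(\pr'_X(E))=\bL_{\oh_X}\bR_{\oh_X}\bR_{\cU_X}\bL_{\cU_X^{\vee}}\bL_{\oh_X(H)}(E(H))$ and $\bL_{\oh_X(H)}(E(H))=E(H)$ since $\RHom_X(\oh_X(H),E(H))=\RHom_X(\oh_X,E)=0$. But the step where you cancel $\bL_{\oh_X}\bR_{\oh_X}$ to the identity is wrong. That cancellation is only valid on $\oh_X^{\perp}$, and the object it is applied to, $G':=\bR_{\cU_X}\bL_{\cU_X^{\vee}}(E(H))$, does not lie in $\oh_X^{\perp}$: since $\RHom_X(\oh_X,\cU_X)=0$ one has $\RHom_X(\oh_X,G')=\RHom_X(\oh_X,\bL_{\cU_X^{\vee}}(E(H)))$, which is the cone of $V_5^{\vee}\otimes\RHom_X(\cU_X^{\vee},E(H))\to H^*(E(H))$ and is nonzero for the very objects the lemma is applied to (e.g.\ $E=I_C$ for a $\tau$-cubic, where $\RHom_X(\cU_X,I_C)=0$ but $H^0(I_C(H))=\CC^5$). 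Consequently the identity you reduce to, $\bR_{\cU_X}\bL_{\cU_X^{\vee}}(E(H))=\pr_X(E(H))$, is false in general (the right side lies in $\Ku(X)\subset\oh_X^{\perp}$, the left side does not). The correct general identity is $\bL_{\oh_X}\bR_{\oh_X}=\bL_{\oh_X}$ (because $\bL_{\oh_X}$ kills the third term of the triangle defining $\bR_{\oh_X}$), which together with the commutation $\bL_{\oh_X}\bR_{\cU_X}=\bR_{\cU_X}\bL_{\oh_X}$ (valid since $H^*(\cU_X)=0$) gives $\Xi(\pr'_X(E))=\bR_{\cU_X}\bL_{\oh_X}\bL_{\cU_X^{\vee}}(E(H))$ — the $\bL_{\oh_X}$ must stay.

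The second, and more important, missing ingredient is the step that disposes of the $\bR_{\oh_X(-H)}$ occurring in $\pr_X$ but not in your reduced expression; this is where the hypothesis is used a second time, through Serre duality. One checks $\oh_X(-H)\in\langle\oh_X,\cU_X^{\vee}\rangle^{\perp}$ (both $H^*(\oh_X(-H))=0$ and $\RHom_X(\cU_X^{\vee},\oh_X(-H))=0$, the latter from the rotated decomposition $\langle\oh_X(-H),\cU_X,\Ku(X),\oh_X,\cU_X^{\vee}\rangle$), so by adjunction
\[
\RHom_X\bigl(\bL_{\oh_X}\bL_{\cU_X^{\vee}}(E(H)),\oh_X(-H)\bigr)=\RHom_X(E(H),\oh_X(-H))=\RHom_X(E,\omega_X)\cong\RHom_X(\oh_X,E)^{\vee}[-4]=0,
\]
whence $\bR_{\oh_X(-H)}$ acts as the identity on $\bL_{\oh_X}\bL_{\cU_X^{\vee}}(E(H))$ and $\pr_X(E(H))=\bR_{\cU_X}\bL_{\oh_X}\bL_{\cU_X^{\vee}}(E(H))=\Xi(\pr'_X(E))$. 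Your closing claim that after killing $\bL_{\oh_X(H)}$ both sides are mutations of $E(H)$ "through the same set of exceptional objects" is not accurate — $\oh_X(-H)$ appears on only one side — and without the Serre-duality computation above there is no way to reconcile the two chains.
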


\subsection{Projection objects}\label{subsec-proj}

First, we compute $\pr_X(I_C(H))$ and $\pr_X'(I_C)$ for a twisted cubic $C\subset X$ of type $\rho$ or $\tau$ and their self-Ext groups. We divide the computation by types of cubics.

\subsubsection{$\rho$-cubics}

\begin{proposition}\label{prop-proj-rho}
Let 
$C\subset X$ be a $\rho$-cubic. Then 
\[\pr_X(I_C(H))=\pr_X(\oh_L(-1))\]
is the unique object fitting into the non-split exact triangle
\begin{equation}\label{eq-triangle-pr-rho}
\bR_{\cU_X}\bR_{\oh_X(-H)}\oh_L(-1)\to \pr_X(I_C(H))\to \cQ^{\vee}_X[1],
\end{equation}
where $L$ is the residue line of $C$ given in Lemma \ref{lem-rho-cubic}(3).
\end{proposition}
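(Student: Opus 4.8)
The starting point is Lemma~\ref{lem-rho-cubic}(3), which gives the exact sequence $0\to \oh_L(-2)\to \oh_D\to \oh_C\to 0$ with $D$ a degree $4$ elliptic curve; combined with the standard sequence \eqref{eq-standard-C} for $\oh_C$ and the sequence \eqref{eq-def-S11} together with the exact sequence $0\to \oh_D\to \oh_{S_{1,1}}\to \oh_D(-H)\to \cdots$ (or more simply the sequence $0\to I_{S_{1,1}}\to I_D\to \oh_{S_{1,1}}(-H)\to 0$ for $D$ inside $S_{1,1}$), I would first identify $I_C(H)$, up to the already-split-off pieces, with $\oh_L(-1)$ in the Kuznetsov component. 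Concretely: since $\RHom_X(\oh_X, I_C)=0$ and the functor $\pr_X$ kills $\oh_X$, $\cU_X^\vee$, $\oh_X(H)$, $\cU_X^\vee(H)$, the plan is to write $I_C(H)$ as an extension/cone built from $\oh_X(H)$-twists of $\oh_D$, $\oh_{S_{1,1}}$, and $\oh_L(-1)$, and to check that after applying $\pr_X$ all terms except $\oh_L(-1)$ (and the exceptional-object contributions) disappear. This is where one uses Proposition~\ref{prop-pushforward} implicitly — $\oh_L(-1)$, $\oh_D$, $\oh_C$ are all pushforwards from lower-dimensional subvarieties — and a bookkeeping of cohomology groups from Lemma~\ref{lem-cubic-cohomology-1} and Lemma~\ref{lem-homo-cubic}(2).

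\textbf{The triangle.} Once $\pr_X(I_C(H))\cong \pr_X(\oh_L(-1))$ is established, I would compute $\pr_X(\oh_L(-1))$ directly by running the four mutations $\bR_{\cU_X}\bR_{\oh_X(-H)}\bL_{\oh_X}\bL_{\cU^\vee_X}$ on $\oh_L(-1)$. The first two left mutations, through $\cU_X^\vee$ and $\oh_X$, are controlled by $\RHom_X(\cU_X^\vee, \oh_L(-1))$ and $\RHom_X(\oh_X, \oh_L(-1))$, both of which are easily computed (note $\oh_L(-1)$ has no cohomology and $\cU_X^\vee|_L \cong \oh_L(a)\oplus\oh_L(b)$ with $a+b=1$). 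The key point producing the $\cQ_X^\vee[1]$ summand on the right of \eqref{eq-triangle-pr-rho} is the right mutation $\bR_{\oh_X(-H)}$: here $\RHom_X(\bL_{\oh_X}\bL_{\cU^\vee_X}\oh_L(-1), \oh_X(-H))$ contributes, and after the final $\bR_{\cU_X}$ one should recognize $\bR_{\cU_X}(\oh_X(-H))$, or rather the relevant cone, as $\cQ_X^\vee[1]$ via the tautological sequence $0\to \cU_X\to V_5\otimes\oh_X\to \cQ_X\to 0$ twisted appropriately and the identification $\cU_X(H)\cong\cU_X^\vee$ noted in the excerpt. So the plan is: compute the intermediate object $N:=\bL_{\oh_X}\bL_{\cU^\vee_X}\oh_L(-1)$, show $\RHom_X(N,\oh_X(-H))$ has the right rank, form $\bR_{\oh_X(-H)}N$ fitting in $\bR_{\oh_X(-H)}N\to N\to \oh_X(-H)\otimes\RHom(N,\oh_X(-H))^\vee$, apply $\bR_{\cU_X}$, and reorganize the resulting octahedron so that $\bR_{\cU_X}\bR_{\oh_X(-H)}\oh_L(-1)$ appears as the first term and $\cQ_X^\vee[1]$ as the third term. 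The non-splitness of \eqref{eq-triangle-pr-rho} should follow because a splitting would force $\cQ_X^\vee[1]$ to be a direct summand of $\pr_X(I_C(H))$, contradicting e.g. that $\pr_X(I_C(H))$ has class $\Lambda$ with $\chi(\pr_X(I_C(H)),\pr_X(I_C(H)))=-2$ while $\cQ_X^\vee$ is rigid in $\Ku(X)$ of a different class — or more directly, by computing $\hom$ between the two outer terms and showing the connecting map is forced to be nonzero.

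\textbf{Uniqueness and the main obstacle.} Uniqueness of the object fitting in \eqref{eq-triangle-pr-rho} is the claim that $\Hom_X(\bR_{\cU_X}\bR_{\oh_X(-H)}\oh_L(-1), \cQ_X^\vee)$ is one-dimensional (so the triangle is determined up to isomorphism by the non-split condition, once the maps in question are understood), which again reduces to a finite Ext computation. The main obstacle, I expect, is the first half: cleanly identifying $\pr_X(I_C(H))$ with $\pr_X(\oh_L(-1))$ — this requires carefully threading the sequences \eqref{eq-standard-C}, \eqref{eq-def-S11}, and the $\oh_L(-2)\to\oh_D\to\oh_C$ sequence together, twisting by $H$, and verifying that every auxiliary term ($\oh_X(H)$-twists of the structure sheaves of $S_{1,1}$ and $D$) lands in the subcategory $\langle\oh_X,\cU^\vee_X,\oh_X(H),\cU^\vee_X(H)\rangle$ that $\pr_X$ annihilates, or at worst contributes only shifts of exceptional objects that get absorbed. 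The cohomology vanishing needed for this (e.g. $\RHom_X(\oh_X, I_{S_{1,1}}(H))=\CC^4$, $H^1$-vanishings) is mostly assembled from the lemmas already proved, but organizing the mutations without sign or shift errors is the delicate part; I would do it by first handling $\pr'_X(I_C)$ via Lemma~\ref{lem-Xi}, since $\RHom_X(\oh_X, I_C)=0$ makes $\pr'_X$ slightly more tractable, and then transporting back through $\Xi$.
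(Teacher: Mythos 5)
Your overall strategy coincides with the paper's: first reduce $\pr_X(I_C(H))$ to $\pr_X(\oh_L(-1))$ by showing every auxiliary piece of $I_C(H)$ lies in the span of the exceptional objects, then compute the four mutations on $\oh_L(-1)$. For the first half the paper takes a slightly more economical route than the one you sketch: from \eqref{eq-def-S11} twisted by $H$ one sees directly that $I_{S_{1,1}}(H)\in\langle\oh_X,\cU_X^\vee\rangle$, so $\pr_X(I_C(H))=\pr_X(I_{C/S_{1,1}}(H))$; then the unique section $b\colon\oh_X\to I_{C/S_{1,1}}(H)$ from \eqref{eq-ICSH} has image $\oh_{S_{1,1}}$ (integrality of $S_{1,1}$), kernel $I_{S_{1,1}}$ and cokernel $\oh_L(-1)$, and $\pr_X(I_{S_{1,1}})=0$ finishes it. Your plan of threading $\oh_L(-2)\to\oh_D\to\oh_C$ through the twists would get there too, but the divisor-section viewpoint avoids most of the bookkeeping you worry about.

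There is, however, one concrete error in your mutation analysis: the term $\cQ_X^\vee[1]$ in \eqref{eq-triangle-pr-rho} does \emph{not} come from the right mutation $\bR_{\oh_X(-H)}$ or from $\bR_{\cU_X}(\oh_X(-H))$. Indeed, $\bR_{\cU_X}(\oh_X(-H))[1]$ is the rank-$9$ sheaf $F=\mathrm{cok}(\oh_X(-H)\to\cU_X^{\oplus 5})$ appearing in the proof of Proposition \ref{prop-rhom-rho}, which is nothing like $\cQ_X^\vee$. The correct source is the \emph{left} mutations: since $\RHom_X(\cU_X^\vee,\oh_L(-1))=H^*(\cU_X|_L(-1))=\CC[-1]$ and $H^*(\oh_L(-1))=0$, one gets a triangle $\oh_L(-1)\to\bL_{\oh_X}\bL_{\cU_X^\vee}(\oh_L(-1))\to\bL_{\oh_X}(\cU_X^\vee)$, and $\bL_{\oh_X}(\cU_X^\vee)\cong\cQ_X^\vee[1]$ by the dual tautological sequence $0\to\cQ_X^\vee\to V_5^\vee\otimes\oh_X\to\cU_X^\vee\to 0$. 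The right mutations then leave this $\cQ_X^\vee[1]$ term untouched, precisely because $\RHom_X(\cQ_X^\vee,\oh_X(-H))=\RHom_X(\cQ_X^\vee,\cU_X)=0$, and act only on the $\oh_L(-1)$ term — which is why the first term of \eqref{eq-triangle-pr-rho} is $\bR_{\cU_X}\bR_{\oh_X(-H)}\oh_L(-1)$. Your uniqueness and non-splitness arguments are fine in substance ($\RHom_X(\cQ_X^\vee,\bR_{\cU_X}\bR_{\oh_X(-H)}\oh_L(-1))=\RHom_X(\cQ_X^\vee,\oh_L(-1))=\CC$, and a splitting would put $\cQ_X^\vee[1]\in\langle\oh_X,\cU_X^\vee\rangle$ as a summand of an object of $\Ku(X)$), though note that $\chi(\pr_X(I_C(H)),\pr_X(I_C(H)))=-4$, not $-2$, for the class $\Lambda_1-\Lambda_2$.
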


\begin{proof}
By \eqref{eq-def-S11}, we have  $\bL_{\oh_X}\bL_{\cU_X^{\vee}}(I_{S_{1,1}}(H))=0$. Then we see
\[\pr_X(I_C(H))=\pr_X(I_{C/S_{1,1}}(H)).\]
And from \eqref{eq-ICSH}, we obtain an exact triangle
\[\oh_X\xra{b} I_{C/S_{1,1}}(H) \to \bL_{\oh_X}(I_{C/S_{1,1}}(H)),\]
where $b$ corresponds to the unique hyperplane section of $S_{1,1}$ containing $C$ in Lemma \ref{lem-rho-cubic}. Hence, we obtain 
\[\bL_{\oh_X}\bL_{\cU_X^{\vee}}(I_{C/S_{1,1}}(H))=\bL_{\oh_X}\bL_{\cU_X^{\vee}}\bL_{\oh_X}(I_{C/S_{1,1}}(H)).\]
Note that $\im(b)=\oh_{S_{1,1}}$ since $S_{1,1}$ is integral by Lemma \ref{lem-rho-cubic}, hence we get $$\cH^{-1}(\bL_{\oh_X}(I_{C/S_{1,1}}(H)))=\ker(b)=I_{S_{1,1}}$$ and 
$$\cH^0(\bL_{\oh_X}(I_{C/S_{1,1}}(H)))=\mathrm{cok}(b)=\oh_L(-1),$$ where $L$ is the residue line in Lemma \ref{lem-rho-cubic}. Since $\pr_X(I_{S_{1,1}})=0$, we have
\[\pr_X(I_{C/S_{1,1}}(H))=\pr_X(\bL_{\oh_X}(I_{C/S_{1,1}}(H)))=\pr_X(\oh_L(-1)).\]
Finally, a direct computation of $\pr_X(\oh_L(-1))$ verifies \eqref{eq-triangle-pr-rho}. At the same time, the uniqueness result follows from
\[\RHom_X(\cQ_X^{\vee},\bR_{\cU_X}\bR_{\oh_X(-H)}\oh_L(-1))=\RHom_X(\cQ_X^{\vee},\oh_L(-1))=\CC,\]
which is due to $\RHom_X(\cQ^{\vee}_X, \cU_X)=\RHom_X(\cQ^{\vee}_X, \oh_X(-H))=0$ by \cite[Lemma 5.4(2)]{GLZ2021conics} and the fact that $\cQ_X|_L\cong \oh_L\oplus \oh_L\oplus \oh_L(1)$.
\end{proof}

Using the above calculation, we can determine the self-Ext groups of $\pr_X(I_C(H))$.

\begin{proposition}\label{prop-rhom-rho}
Let 
$C \subset X$ be a $\rho$-cubic, then 
\[\RHom_X(\pr_X(I_C(H)), \pr_X(I_C(H)))=\CC\oplus \CC^6[-1]\oplus \CC[-2].\]
\end{proposition}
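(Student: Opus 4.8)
The plan is to compute $\RHom_X(\pr_X(I_C(H)), \pr_X(I_C(H)))$ by exploiting Proposition~\ref{prop-proj-rho}, which identifies $\pr_X(I_C(H))$ with $\pr_X(\oh_L(-1))$ for the residue line $L$. Since $\pr_X$ is the composition of mutation functors and $\Ku(X)$ is admissible, the inclusion $\Ku(X)\hookrightarrow \D^b(X)$ together with the projection formula gives, for any $E\in \D^b(X)$,
\[
\RHom_{\Ku(X)}(\pr_X(E),\pr_X(E))\cong \RHom_X(\pr_X(E), E),
\]
because $\pr_X(E)-E$ lies in the subcategory generated by the exceptional objects $\oh_X, \cU_X^\vee, \oh_X(H), \cU_X^\vee(H)$ (or the mutated collection), which is right-orthogonal to $\Ku(X)$. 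Hence it suffices to compute $\RHom_X(\pr_X(\oh_L(-1)), \oh_L(-1))$. First I would set $P:=\pr_X(\oh_L(-1))$ and write down the defining triangles of the four mutations one at a time, starting from $\oh_L(-1)$: compute $\bL_{\cU_X^\vee}(\oh_L(-1))$, then $\bL_{\oh_X}$ of the result, then $\bR_{\oh_X(-H)}$, then $\bR_{\cU_X}$. Each step requires knowing $\RHom_X(\cU_X^\vee, \oh_L(-1))$, $\RHom_X(\oh_X, -)$, $\RHom_X(-, \oh_X(-H))$, $\RHom_X(-,\cU_X)$ of the intermediate complexes; these reduce to $\oh_L(-1)$ twisted/restricted, using $\cU_X^\vee|_L$, $\cU_X|_L$, $\oh_X(H)|_L = \oh_L(1)$, and the splitting types recorded in the proof of Proposition~\ref{prop-proj-rho} (e.g.\ $\cQ_X|_L\cong \oh_L\oplus\oh_L\oplus\oh_L(1)$, and correspondingly $\cU_X|_L$, $\cU_X^\vee|_L$).

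Alternatively — and this is likely cleaner — I would use triangle \eqref{eq-triangle-pr-rho} directly:
\[
\bR_{\cU_X}\bR_{\oh_X(-H)}\oh_L(-1)\to P\to \cQ^{\vee}_X[1].
\]
Set $A := \bR_{\cU_X}\bR_{\oh_X(-H)}\oh_L(-1)$. Applying $\RHom_X(-, P)$ and $\RHom_X(P,-)$ to this triangle, one needs: $\RHom_X(\cQ_X^\vee[1], P)$, $\RHom_X(A, P)$, $\RHom_X(P, \cQ_X^\vee[1])$, $\RHom_X(P, A)$. Using that $P\in\Ku(X)$ is right-orthogonal to nothing helpful a priori, I would instead reduce all of these to computations against $\oh_L(-1)$ via the orthogonality trick above: $\RHom_X(P, P)\cong \RHom_X(P,\oh_L(-1))$, and then plug the triangle into the \emph{second} slot: $\RHom_X(P,\oh_L(-1))$ sits in a triangle built from $\RHom_X(A,\oh_L(-1))$ and $\RHom_X(\cQ_X^\vee[1],\oh_L(-1)) = \RHom_X(\cQ_X^\vee,\oh_L(-1))[-1]$. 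The latter is $\CC[-1]$ by the splitting $\cQ_X|_L\cong\oh_L\oplus\oh_L\oplus\oh_L(1)$ and Serre-type vanishing on $L\cong\PP^1$. For $\RHom_X(A,\oh_L(-1))$, since $\bR_{\cU_X}$ and $\bR_{\oh_X(-H)}$ are mutations and $\oh_L(-1)$ has $\RHom_X(\cU_X,\oh_L(-1))$ and $\RHom_X(\oh_X(-H),\oh_L(-1))$ controlled by restriction to $L$, I would peel these off one at a time, using $\RHom_X(\bR_E F, G)$ relations, ultimately reducing to $\RHom_X(\oh_L(-1),\oh_L(-1))$ together with the cross-terms $\RHom_X(\oh_L(-1),\cU_X)$, $\RHom_X(\oh_L(-1),\oh_X(-H))$; the self-Ext of $\oh_L(-1)$ on the fourfold $X$ is computed from $N_{L/X}$ (a line in $X$ has $N_{L/X}\cong \oh_L^{\oplus 3}$ or $\oh_L(1)\oplus\oh_L(-1)\oplus\oh_L$ generically, and in either case $\RHom_X(\oh_L(-1),\oh_L(-1))$ is computable).

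As a consistency check and to pin down the ambiguous differentials, I would compute the Euler characteristic $\chi(P,P)$ purely numerically: $[P] = [\pr_X(I_C(H))]$ lies in $\Knum(\Ku(X)) = \ZZ\Lambda_1\oplus\ZZ\Lambda_2$ with Gram matrix $\mathrm{diag}(-2,-2)$, so once the class of $P$ is identified (via \eqref{Lambda} and the Chern character of $I_C(H)$, using that $C$ is a $\rho$-cubic and the exact sequences in Lemma~\ref{lem-rho-cubic}), one gets $\chi(P,P) = -2(a^2+b^2)$ for $[P]=a\Lambda_1+b\Lambda_2$; the asserted answer $\CC\oplus\CC^6[-1]\oplus\CC[-2]$ has Euler characteristic $1-6+1 = -4 = -2(1^2+1^2)$, consistent with $[P]=\pm(\Lambda_1-\Lambda_2)$ or similar. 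Then $\hom(P,P)=1$ follows from $P$ being $\sigma_X$-stable (which I would either cite from the stability results of the next section, or — to keep this self-contained — deduce from indecomposability: any idempotent in $\RHom^0$ would split $P$, contradicting the non-split triangle \eqref{eq-triangle-pr-rho} together with $\RHom_X(\cQ_X^\vee[1], A)$ being too small to allow a splitting), $\ext^2(P,P)=1$ follows from Serre duality $S_{\Ku(X)}=[2]$, and then $\ext^1(P,P)=6$ is forced by the Euler characteristic.

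\textbf{Main obstacle.} The routine-but-delicate part is tracking the mutation triangles through four steps (or equivalently unwinding triangle \eqref{eq-triangle-pr-rho} in both slots), keeping the restrictions $\cU_X|_L$, $\cU_X^\vee|_L$, $\cQ_X|_L$ straight and making sure no connecting map is accidentally set to zero. The genuinely load-bearing input is the determination of the numerical class of $\pr_X(I_C(H))$, since once that is in hand the Euler characteristic $\chi(P,P)=-4$ plus stability ($\hom(P,P)=1$) and Serre duality pin down all three Betti numbers without needing the full mutation computation; so I would present the proof in the order: (i) orthogonality reduction, (ii) numerical class and $\chi(P,P)=-4$, (iii) $\hom=\ext^2=1$ via stability/indecomposability and Serre duality, (iv) conclude $\ext^1=6$. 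The hard part is really just making step (ii) clean, as everything else is then formal.
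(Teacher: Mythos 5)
Your overall skeleton --- compute $\chi(P,P)=-4$ from the lattice, use $S_{\Ku(X)}=[2]$, and then reduce everything to showing $\hom(P,P)=1$ together with vanishing of negative self-Exts --- is exactly the architecture of the paper's proof. But the two steps you lean on to close the argument both fail, so there is a genuine gap.

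First, your route to $\hom(P,P)=1$ does not work. Citing the stability of $\pr_X(I_C(H))$ from ``the next section'' is circular: Theorem \ref{thm-stability-proj} is \emph{deduced from} Propositions \ref{prop-rhom-rho} and \ref{prop-rhom-tau} (via \cite[Lemma 4.12(2)]{FGLZ24}), so stability cannot be an input here. Your fallback via indecomposability is insufficient: the absence of nontrivial idempotents only makes $\mathrm{End}(P)$ a local ring, not one-dimensional, and it says nothing about $\Hom(P,P[k])$ for $k<0$. (Compare the remark after Theorem \ref{thm-stability-proj}: for $\sigma$-cubics one has $\hom(\pr_X(I_C(H)),\pr_X(I_C(H)))=3$, so ``simplicity'' genuinely has to be proved, not inferred from indecomposability.) Second, your ``orthogonality reduction'' $\RHom(\pr_X E,\pr_X E)\cong\RHom(\pr_X E,E)$ is false as stated: in the second slot one may only strip the right mutations $\bR_{\cU_X}\bR_{\oh_X(-H)}$, because $\Ku(X)\subset{}^{\perp}\langle\oh_X(-H),\cU_X\rangle$; the subcategory $\langle\oh_X,\cU_X^{\vee}\rangle$ is \emph{not} right-orthogonal to $\Ku(X)$, so the left mutations $\bL_{\oh_X}\bL_{\cU_X^{\vee}}$ cannot be removed. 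The correct adjunction lands on $\RHom(P,\bL_{\oh_X}\bL_{\cU_X^{\vee}}(\oh_L(-1)))=\RHom(P,K_L[1])$, where $K_L=\ker(\cQ_X^{\vee}\twoheadrightarrow\oh_L(-1))$, not on $\RHom(P,\oh_L(-1))$. The actual content of the paper's proof, which your proposal does not supply, is the unwinding of the triangle defining $\bR_{\cU_X}\bR_{\oh_X(-H)}\oh_L(-1)$ against $K_L[1]$, reducing $\hom(P,P)=1$ to the vanishing $\Hom_X(F,K_L)=0$ for $F=\bR_{\cU_X}\oh_X(-H)[1]$; this last vanishing requires a nontrivial case analysis on whether $\Hom_X(\cU_X,K_L)$ is zero, using the exact sequence $0\to\cU_X\to\cQ_X^{\vee}\to I_q\to 0$ for the $\sigma$-quadric $q$ and the fact $\RHom_X(\cU_X,I_q)=0$. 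Without that computation the three Betti numbers are not pinned down.
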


\begin{proof}
From the fact that $\chi(\pr_X(I_C(H)), \pr_X(I_C(H)))=-4$ and $S_{\Ku(X)}=[2]$, it suffices to show $\Hom_X(\pr_X(I_C(H)), \pr_X(I_C(H)))=\CC$ and $\Hom_X(\pr_X(I_C(H)), \pr_X(I_C(H))[k])=0$ for any $k<0$.

By adjunction of functors, we have
\[\RHom_X(\pr_X(\oh_L(-1)), \pr_X(\oh_L(-1)))=\RHom_X(\pr_X(\oh_L(-1)), \bL_{\oh_X}\bL_{\cU^{\vee}_X}(\oh_L(-1))).\]
And it is easy to see $\bL_{\oh_X}\bL_{\cU^{\vee}_X}(\oh_L(-1))=K_L[1]$, where $K_L$ is the kernel of the unique non-zero map $\cQ^{\vee}_X\twoheadrightarrow \oh_L(-1)$. Therefore,  we see $\RHom_X(\cQ_X^{\vee}, K_L)=0$, and from \eqref{eq-triangle-pr-rho} we have
\[\RHom_X(\pr_X(\oh_L(-1)), \pr_X(\oh_L(-1)))=\RHom_X(\bR_{\cU_X}\bR_{\oh_X(-H)}\oh_L(-1), K_L[1]).\]
By definition, we have an exact triangle
\[\bR_{\cU_X}\bR_{\oh_X(-H)}(\oh_L(-1))\to \bR_{\cU_X}(\oh_L(-1))\to F[2],\]
where
\[0\to \oh_X(-H)\to \cU_X^{\oplus 5}\to F=\bR_{\cU_X}\oh_X(-H)[1]\to 0,\]
and
\[\bR_{\cU_X}(\oh_L(-1))\to \oh_L(-1)\to \cU^{\oplus 3}_X[3].\]
Thus 
\[\Hom_X(\bR_{\cU_X}(\oh_L(-1)), K_L[1])=\Hom(\oh_L(-1), K_L[1])=\CC\]
and $\Hom_X(\bR_{\cU_X}(\oh_L(-1)), K_L[i])=0$ for $i\leq 0$. Then we see
\[\Hom_X(\bR_{\cU_X}\bR_{\oh_X(-H)}\oh_L(-1), K_L[i])=\Hom_X(\pr_X(I_C(H)), \pr_X(I_C(H))[i-1])=0\]
for $i\leq 0$ and we have an exact sequence
\[0\to \Hom_X(\bR_{\cU_X}(\oh_L(-1)), K_L[1])=\CC\to \Hom_X(\bR_{\cU_X}\bR_{\oh_X(-H)}\oh_L(-1), K_L[1])\to \Hom_X(F, K_L).\]
Therefore, to show $\Hom_X(\pr_X(I_C(H)), \pr_X(I_C(H)))=\CC$, it is enough to prove $\Hom_X(F, K_L)=0$. If $\Hom_X(\cU_X, K_L)=0$, as we have a surjective $\cU_X^{\oplus 5}\twoheadrightarrow F$, we see $\Hom_X(F, K_L)=0$ as desired. If there is a non-zero map $s\colon \cU_X\to K_L$, then from the exact sequence
\[0\to K_L\to \cQ^{\vee}_X\to \oh_L(-1)\to 0\]
and $\Hom_X(\cU_X, \cQ_X^{\vee})=\CC$, we see $\Hom_X(\cU_X, K_L)=\CC$. Hence, we get a commutative diagram
\[\begin{tikzcd}
	& 0 & 0 \\
	& {\cU_X} & {\cU_X} \\
	0 & {K_L} & {\cQ^{\vee}_X} & {\oh_L(-1)} & 0 \\
	0 & {\mathrm{cok}(s)} & {I_{q}} & {\oh_L(-1)} & 0 \\
	& 0 & 0
	\arrow["s", from=2-2, to=3-2]
	\arrow[from=2-3, to=3-3]
	\arrow[no head, from=2-2, to=2-3]
	\arrow[shift right, no head, from=2-2, to=2-3]
	\arrow[from=1-2, to=2-2]
	\arrow[from=1-3, to=2-3]
	\arrow[from=3-3, to=4-3]
	\arrow[from=4-3, to=5-3]
	\arrow[from=3-1, to=3-2]
	\arrow[from=3-2, to=3-3]
	\arrow[from=3-3, to=3-4]
	\arrow[from=3-4, to=3-5]
	\arrow[from=4-3, to=4-4]
	\arrow[from=4-4, to=4-5]
	\arrow[no head, from=3-4, to=4-4]
	\arrow[shift left, no head, from=3-4, to=4-4]
	\arrow[from=4-2, to=5-2]
	\arrow[from=3-2, to=4-2]
	\arrow[from=4-1, to=4-2]
	\arrow[from=4-2, to=4-3]
\end{tikzcd}\]
with all rows and columns exact, where $q$ is the unique $\sigma$-quadric contained in $X$. From the definition $F=\bR_{\cU_X}\oh_X(-H)[1]$, we have
\[\Hom_X(F,K_L)=\Hom_X(F, \mathrm{cok}(s)).\]
Since $\RHom_X(\cU_X, I_{q})=0$, we obtain $\Hom_X(F, I_{q})=\Hom_X(F, \mathrm{cok}(s))=0$ and the result follows.
\end{proof}

\subsubsection{$\tau$-cubics}

Next, we focus on $\tau$-cubics. We define a functor
\begin{equation}\label{eq-involution}
    T':=\bL_{\cU_X}\bL_{\oh_X}\circ (-\otimes \oh_X(H))[-1]\colon \cA_X\to \cA_X,
\end{equation}
which is an involution on $\cA_X$ (cf.~\cite[Theorem 4,15(2)]{bayer2022kuznetsov}). By Lemma \ref{lem-Xi}, for convenience, we will first work on the category $\cA_X$ and compute $\pr_X'(I_C)$.

We start with a computation for $\pr_X'(I_C(H))$ instead of $\pr_X'(I_C)$.

\begin{proposition}\label{prop-proj-tau-1}
Let 
$C\subset X$ be a $\tau$-cubic. Then $\pr_X'(I_C(H))[-1]$ is the kernel of the evaluation map $\oh_X^{\oplus2}\twoheadrightarrow I_{C/S_{2,0}}(H)$. We have the following exact sequence
\begin{equation}\label{eq-tau-pr'-2}
  0\to I_{S_{2,0}}\to \pr_X'(I_C(H))[-1]\to I_{C'}\to 0,      
\end{equation}
where $S_{2,0}$ is the sextic surface in Lemma \ref{lem-tau-in-sextic} containing $C$ and $C'$ is any residue cubic of $C$.
\end{proposition}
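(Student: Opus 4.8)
The plan is to strip down the computation of $\pr'_X(I_C(H))$ using the sextic surface $S_{2,0}\supset C$ together with the residue sequence \eqref{eq-residue}, and to identify the answer with the kernel $K$ of the evaluation map $\ev\colon\oh_X^{\oplus 2}\twoheadrightarrow I_{C/S_{2,0}}(H)$, which is surjective by Proposition \ref{prop-residue-tau-cubic}(1). First I would reduce to $I_{C/S_{2,0}}(H)$. Since $\oh_X$ and $\cU_X$ occur as factors of the semi-orthogonal decomposition defining $\cA_X$, the functor $\pr'_X$ annihilates $\oh_X$, $\cU_X$, and hence every object of $\langle\cU_X,\oh_X\rangle$; as the tautological sequence places $\cQ_X$ in $\langle\cU_X,\oh_X\rangle$, twisting \eqref{eq-def-S20} by $\oh_X(H)$ gives $\pr'_X(I_{S_{2,0}}(H))=0$, and then the ideal sequence $0\to I_{S_{2,0}}(H)\to I_C(H)\to I_{C/S_{2,0}}(H)\to 0$ yields $\pr'_X(I_C(H))\cong\pr'_X(I_{C/S_{2,0}}(H))$. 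By Lemma \ref{lem-cohomology-ICS}(1), $\RHom_X(\oh_X,I_{C/S_{2,0}}(H))=\CC^2$ in degree $0$, so the canonical map $\oh_X\otimes\RHom_X(\oh_X,I_{C/S_{2,0}}(H))\to I_{C/S_{2,0}}(H)$ is precisely $\ev$; its surjectivity gives $\bL_{\oh_X}(I_{C/S_{2,0}}(H))=K[1]$ with $K=\ker\ev$. Applying $\pr'_X$ to the triangle $K\to\oh_X^{\oplus 2}\to I_{C/S_{2,0}}(H)$ and using $\pr'_X(\oh_X^{\oplus 2})=0$ gives $\pr'_X(I_C(H))[-1]\cong\pr'_X(K)$.

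The main content is the exact sequence $0\to I_{S_{2,0}}\to K\to I_{C'}\to 0$, which I would obtain by applying the triangulated functor $\bL_{\oh_X}$ to the residue sequence \eqref{eq-residue}. Using $\RHom_X(\oh_X,I_{S_{2,0}})=0$ (established in the proof of Lemma \ref{lem-cohomology-ICS} via \eqref{eq-def-S20}) one computes $\bL_{\oh_X}(\oh_{S_{2,0}})=I_{S_{2,0}}[1]$; using $\RHom_X(\oh_X,I_{C'})=0$, which holds by Lemma \ref{lem-cubic-cohomology-1}(1) since $C'$ is a twisted cubic, hence Cohen--Macaulay by Lemma \ref{lem-pure-cubic}, one computes $\bL_{\oh_X}(\oh_{C'})=I_{C'}[1]$; together with $\bL_{\oh_X}(I_{C/S_{2,0}}(H))=K[1]$ the image of \eqref{eq-residue} becomes a triangle $I_{S_{2,0}}[1]\to K[1]\to I_{C'}[1]\to$, which after a shift is a short exact sequence of sheaves by the long exact cohomology-sheaf sequence.

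It remains to show $\pr'_X(K)=K$. Running the four mutations in $\pr'_X=\bR_{\oh_X(-H)}\bR_{\cU_X(-H)}\bL_{\cU_X}\bL_{\oh_X}$ one at a time and using that $\bL_E F=F$ (resp. $\bR_E F=F$) whenever $\RHom_X(E,F)=0$ (resp. $\RHom_X(F,E)=0$), it suffices to verify $\RHom_X(\oh_X,K)=\RHom_X(\cU_X,K)=\RHom_X(K,\oh_X(-H))=\RHom_X(K,\cU_X(-H))=0$. The first two follow from $0\to I_{S_{2,0}}\to K\to I_{C'}\to 0$ together with $\RHom_X(\oh_X,I_{S_{2,0}})=\RHom_X(\cU_X,I_{S_{2,0}})=0$ and, for $I_{C'}$, Lemma \ref{lem-cubic-cohomology-1}(1) and Lemma \ref{lem-homo-cubic}(1) --- here one genuinely uses that $C'$ is a $\tau$-cubic, which is Proposition \ref{prop-residue-tau-cubic}(2). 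The last two follow from $0\to K\to\oh_X^{\oplus 2}\to I_{C/S_{2,0}}(H)\to 0$: since $H^*(X,\oh_X(-H))=H^*(X,\cU_X(-H))=0$ one reduces to $\RHom_X(I_{C/S_{2,0}}(H),\oh_X(-H))$ and $\RHom_X(I_{C/S_{2,0}}(H),\cU_X(-H))$, and by Serre duality (with $\omega_X=\oh_X(-2H)$) these are dual to $\RHom_X(\oh_X,I_{C/S_{2,0}})$ and $\RHom_X(\cU_X,I_{C/S_{2,0}})$, which vanish by Lemma \ref{lem-cohomology-ICS}(2) and Serre duality. This yields $\pr'_X(I_C(H))[-1]=\pr'_X(K)=K$ together with the stated exact sequence.

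The real obstacle is the exact sequence $0\to I_{S_{2,0}}\to K\to I_{C'}\to 0$: this is what links the abstract projection object to the geometry of residue cubics, and it relies on the full strength of Proposition \ref{prop-residue-tau-cubic} (surjectivity of $\ev$ and the fact that every residue cubic is again of type $\tau$). A secondary subtlety worth flagging: the naive attack on $\RHom_X(\cU_X,K)=0$ by a direct cohomology computation (involving $\cU_X^{\vee}(H)|_C$ and $H^*(X,\cU_X^{\vee}(H))$) is unpleasant, and the clean route is instead to feed the exact sequence above into Lemma \ref{lem-homo-cubic}(1) for $C'$. Throughout one uses the standard vanishings $H^*(X,\cU_X)=H^*(X,\cU_X(-H))=H^*(X,\oh_X(-H))=0$, coming from Borel--Weil--Bott on $\Gr(2,V_5)$ and the Koszul resolution of $X$ (and Kodaira vanishing for the last).
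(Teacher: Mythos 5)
Your proposal is correct and follows essentially the same route as the paper's proof: identify $\bL_{\oh_X}(I_{C/S_{2,0}}(H))[-1]$ with $\ker(\ev)$ via Proposition \ref{prop-residue-tau-cubic}(1) and Lemma \ref{lem-cohomology-ICS}(1), apply $\bL_{\oh_X}$ to \eqref{eq-residue} to get the short exact sequence, use that $C'$ is again a $\tau$-cubic together with Lemma \ref{lem-homo-cubic}(1) to kill $\RHom_X(\cU_X,-)$, and invoke Lemma \ref{lem-cohomology-ICS}(2) so the remaining right mutations act trivially. You simply spell out a few steps the paper leaves implicit (the reduction $\pr_X'(I_C(H))\cong\pr_X'(I_{C/S_{2,0}}(H))$, which the paper records in Lemma \ref{lem-T'}, and the explicit verification of all four orthogonality conditions defining $\cA_X$).
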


\begin{proof}
By Proposition \ref{prop-residue-tau-cubic}(1), we know that the evaluation map of $I_{C/S_{2,0}}(H)$ is surjective. Then using Lemma \ref{lem-cohomology-ICS}(1), we have an exact sequence
\[0\to \bL_{\oh_X}(I_{C/S_{2,0}}(H))[-1]\to \oh_X^{\oplus 2}\to I_{C/S_{2,0}}(H) \to 0.\]
Moreover, by \eqref{eq-residue}, we have an exact sequence
\[0\to I_{S_{2,0}}\to \bL_{\oh_X}(I_{C/S_{2,0}}(H))[-1]\to I_{C'}\to 0.\]
As $C'$ is a $\tau$-cubic, applying $\Hom_X(\cU_X, -)$ to the above exact sequence, we have 
\[\RHom_X(\cU_X, \bL_{\oh_X}(I_{C/S_{2,0}}(H)))=0.\]
Therefore, combined with Lemma \ref{lem-cohomology-ICS}(2), we see  $\pr_X'(I_C(H))=\bL_{\oh_X}(I_{C/S_{2,0}}(H))$ and the result follows. 
\end{proof}

The following lemma explains the relation between different projection objects $\pr_X'(I_C(H))$ and $\pr_X'(I_C)$, and shows that taking residue cubics of a $\tau$-cubic corresponds to the action of the categorical involution $T'$ on its projection object.

\begin{lemma}\label{lem-T'}
Let 
$C\subset X$ be a $\tau$-cubic, and let $S_{2,0}$ be the sextic surface in Lemma \ref{lem-tau-in-sextic} containing $C$. Let $C'$ be a residue cubic of $C$. Then we have
\[\pr_X'(I_C(H))=\pr_X'(I_{C/S_{2,0}}(H))=\pr_X'(I_{C'})[1]\]
and
\[T'(\pr_X'(I_{C'}))=\pr_X'(I_C).\]
\end{lemma}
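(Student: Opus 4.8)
The plan is to deduce both assertions from Proposition~\ref{prop-proj-tau-1} and the exact sequence \eqref{eq-tau-pr'-2}, together with one vanishing statement for $\pr'_X$. First note that, being a residue cubic of $C$, the curve $C'$ is itself a $\tau$-cubic by Proposition~\ref{prop-residue-tau-cubic}(2); moreover, by Lemma~\ref{lem-residue-symmetry}, $C$ is a residue cubic of $C'$, so Proposition~\ref{prop-proj-tau-1} applies verbatim with the roles of $C$ and $C'$ exchanged. I will also use that $T'$ is an involution on $\cA_X$. Recall finally that $\pr'_X(E)=0$ if and only if $E\in {}^\bot\cA_X=\langle\cU_X,\oh_X,\cU^{\vee}_X,\oh_X(H)\rangle$, which is a triangulated subcategory.

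The crucial input is $\pr'_X(I_{S_{2,0}})=0$ and $\pr'_X(I_{S_{2,0}}(H))=0$. I would prove these as follows. The twist of the tautological sequence reads $0\to\cU_X(-H)\to\oh_X(-H)^{\oplus 5}\to\cQ_X(-H)\to 0$, and combining it with \eqref{eq-def-S20} shows that $I_{S_{2,0}}$ lies in the triangulated subcategory generated by $\oh_X(-H)$ and $\cU_X(-H)$, while $I_{S_{2,0}}(H)$ lies in the one generated by $\oh_X$ and $\cU_X$. Since $\oh_X,\cU_X\in{}^\bot\cA_X$, the second object is killed by $\pr'_X$ at once. For the first it suffices to check $\pr'_X(\oh_X(-H))=\pr'_X(\cU_X(-H))=0$: using $\cU^{\vee}_X\cong\cU_X(H)$ and the standard vanishings $H^*(\cU_X)=H^*(\oh_X(-H))=H^*(\cU_X(-H))=H^*(\cQ_X(-H))=0$ together with $\RHom_X(\cU_X,\cQ_X(-H))=0$ (cf.\ \cite[Lemma 5.4(2)]{GLZ2021conics}), the functors $\bL_{\oh_X}$ and $\bL_{\cU_X}$ act as the identity on $\oh_X(-H)$ and $\cU_X(-H)$; then $\bR_{\cU_X(-H)}$ annihilates $\cU_X(-H)$ and $\bR_{\oh_X(-H)}$ annihilates $\oh_X(-H)$, since these are exceptional. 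The only non-formal point is the identification $\bR_{\cU_X(-H)}(\cQ_X(-H))\cong\oh_X(-H)^{\oplus 5}$: one computes $\RHom_X(\cQ_X(-H),\cU_X(-H))=\RHom_X(\cQ_X,\cU_X)=\CC[-1]$ from the tautological sequence and $\RHom_X(\cU_X,\cU_X)=\CC$, and observes that the defining triangle of $\bR_{\cU_X(-H)}$ is exactly the rotation of the twisted tautological sequence, whence $\pr'_X(\cQ_X(-H))=0$ and so $\pr'_X(I_{S_{2,0}})=0$.

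For the first assertion, the inclusion $C\subset S_{2,0}$ gives $0\to I_{S_{2,0}}(H)\to I_C(H)\to I_{C/S_{2,0}}(H)\to 0$; applying $\pr'_X$ and $\pr'_X(I_{S_{2,0}}(H))=0$ yields $\pr'_X(I_C(H))=\pr'_X(I_{C/S_{2,0}}(H))$. For the second equality, recall from Proposition~\ref{prop-proj-tau-1} that $\pr'_X(I_C(H))[-1]$ is an object of $\cA_X$ fitting into \eqref{eq-tau-pr'-2}; applying $\pr'_X$ to \eqref{eq-tau-pr'-2}, using $\pr'_X(I_{S_{2,0}})=0$ and that $\pr'_X$ restricts to the identity on $\cA_X$, we get $\pr'_X(I_C(H))[-1]=\pr'_X(I_{C'})$, i.e.\ $\pr'_X(I_{C/S_{2,0}}(H))=\pr'_X(I_C(H))=\pr'_X(I_{C'})[1]$.

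For the second assertion, since $T'$ is an involution it suffices to prove $T'(\pr'_X(I_C))=\pr'_X(I_{C'})$. By Lemma~\ref{lem-cubic-cohomology-1}(1) and Lemma~\ref{lem-homo-cubic}(1) the mutations $\bL_{\oh_X}$ and $\bL_{\cU_X}$ act trivially on $I_C$, so $\pr'_X(I_C)=\bR_{\oh_X(-H)}\bR_{\cU_X(-H)}(I_C)$ and hence $I_C$ and $\pr'_X(I_C)$ differ by an object of $\langle\oh_X(-H),\cU_X(-H)\rangle$; twisting by $\oh_X(H)$ and applying $\bL_{\cU_X}\bL_{\oh_X}$, which annihilates $\langle\oh_X,\cU_X\rangle$, gives $\bL_{\cU_X}\bL_{\oh_X}(\pr'_X(I_C)(H))=\bL_{\cU_X}\bL_{\oh_X}(I_C(H))$. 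Since $I_{S_{2,0}}(H)\in\langle\oh_X,\cU_X\rangle$, this equals $\bL_{\cU_X}\bL_{\oh_X}(I_{C/S_{2,0}}(H))$, which by the proof of Proposition~\ref{prop-proj-tau-1} equals $\bL_{\oh_X}(I_{C/S_{2,0}}(H))=\pr'_X(I_C(H))$. Therefore $T'(\pr'_X(I_C))=\bL_{\cU_X}\bL_{\oh_X}(\pr'_X(I_C)(H))[-1]=\pr'_X(I_C(H))[-1]=\pr'_X(I_{C'})$ by the first assertion, and applying $T'$ once more gives $T'(\pr'_X(I_{C'}))=\pr'_X(I_C)$. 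The only substantive step throughout is the vanishing $\pr'_X(I_{S_{2,0}})=0$, i.e.\ recognizing via \eqref{eq-def-S20} and the tautological sequence that $I_{S_{2,0}}$ lies in ${}^\bot\cA_X$; the rest is a routine manipulation of adjunctions, mutation triangles, and the cohomology computations of Sections~\ref{sec-twisted-cubic-gm4} and \ref{sec-projection-twisted_cubics}.
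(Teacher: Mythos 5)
Your proof is correct and follows essentially the same route as the paper's: establish $\pr'_X(I_{S_{2,0}}(H))=\pr'_X(I_{S_{2,0}})=0$ from \eqref{eq-def-S20} and the tautological sequence, apply $\pr'_X$ to the two short exact sequences, reduce the second assertion via the involution property of $T'$ to $T'(\pr'_X(I_C))=\pr'_X(I_C(H))[-1]$, and verify $\bL_{\cU_X}\bL_{\oh_X}(I_C(H))=\pr'_X(I_C(H))$. The only (harmless) deviations are that you spell out the vanishing of $\pr'_X(I_{S_{2,0}})$ in more detail and justify the last identification by citing the proof of Proposition \ref{prop-proj-tau-1} rather than re-checking the orthogonality directly from Lemmas \ref{lem-cubic-cohomology-1} and \ref{lem-homo-cubic}.
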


\begin{proof}
By \eqref{eq-def-S20}, we have $\pr'_X(I_{S_{2,0}}(H))=0$. From the standard exact sequence $$0\to I_{S_{2,0}}(H) \to I_{C}(H)\to I_{C/S_{2,0}}(H)\to 0,$$ 
we obtain
\[\pr_X'(I_C(H))=\pr_X'(I_{C/S_{2,0}}(H)).\]
Using $\bR_{\cU_X(-H)}\cQ_X(-H)=\oh_X^{\oplus 5}(-H)$, we get $\pr_X'(I_{S_{2,0}})=0$. Then applying $\pr_X'$ to \eqref{eq-tau-pr'-2}, we have
\[\pr_X'(I_{C/S_{2,0}}(H))=\pr_X'(I_{C'})[1].\]
Since $T'$ is an involution and $\pr_X'(I_{C'})[1]=\pr_X'(I_C(H))$, to prove $T'(\pr_X'(I_{C'}))=\pr_X'(I_C)$, we only need to show
\begin{equation}\label{eq-lem-6.7}
T'(\pr_X'(I_C))=\pr_X'(I_C(H))[-1].
\end{equation}
Note that by Lemma \ref{lem-cubic-cohomology-1} and \ref{lem-homo-cubic}, we have
\[\pr_X'(I_C)=\bR_{\oh_X(-H)}\bR_{\cU_X(-H)}(I_C),\]
then we see
\[T(\pr_X'(I_C))=\bL_{\cU_X}\bL_{\oh_X}(I_C(H))[-1].\]
Using Lemma \ref{lem-cubic-cohomology-1} and \ref{lem-homo-cubic}(1), it is straightforward to check that $\bL_{\cU_X}\bL_{\oh_X}(I_C(H))=\pr_X'(I_{C}(H))$ and the result follows.
\end{proof}

Now we can fully determine $\pr'_X(I_C)$.

\begin{proposition}\label{prop-proj-tau}
Let 
$C\subset X$ be a $\tau$-cubic. Then we have a short exact sequence
\begin{equation}\label{eq-seq-pr'IC}
    0\to I_{S_{2,0}} \to \pr_X'(I_C)\to I_C \to 0,
\end{equation}
where $S_{2,0}$ is the sextic surface in Lemma \ref{lem-tau-in-sextic} containing $C$.
\end{proposition}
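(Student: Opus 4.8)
The plan is to combine Lemma~\ref{lem-T'} with the involutivity of $T'$ and the equivalence $\Xi$ (or rather the explicit description of $T'$) to descend the sequence \eqref{eq-tau-pr'-2} from $\pr_X'(I_C(H))[-1]$ down to $\pr_X'(I_C)$. Concretely, by Lemma~\ref{lem-T'} applied to a residue cubic $C'$ of $C$, we have $\pr_X'(I_C(H))[-1]=\pr_X'(I_{C'})$, and by Lemma~\ref{lem-residue-symmetry}, $C$ is itself a residue cubic of $C'$. So applying Proposition~\ref{prop-proj-tau-1} to the $\tau$-cubic $C'$ in place of $C$ (with its sextic surface $S_{2,0}'$, which by uniqueness in Lemma~\ref{lem-tau-in-sextic}(1) is the same $S_{2,0}$ since $C,C'\subset S_{2,0}$) yields an exact sequence
\[
0\to I_{S_{2,0}}\to \pr_X'(I_{C'}(H))[-1]\to I_{C}\to 0,
\]
where we used that $C$ is a residue cubic of $C'$. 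Now $\pr_X'(I_{C'}(H))[-1]=\pr_X'(I_{C'})[-1]\cdot$—wait, more precisely Lemma~\ref{lem-T'} gives $\pr_X'(I_{C'}(H))=\pr_X'(I_{C''})[1]$ for a residue cubic $C''$ of $C'$; choosing $C''=C$ (legitimate by Lemma~\ref{lem-residue-symmetry}) gives $\pr_X'(I_{C'}(H))[-1]=\pr_X'(I_C)$, which is exactly the middle term we want. Substituting, the displayed sequence becomes \eqref{eq-seq-pr'IC}.

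The first step I would carry out is to verify carefully that the sextic surface attached to $C'$ equals $S_{2,0}$: since $C'\subset D\subset S_{2,0}$ and $S_{2,0}=\PP(V_2\wedge V_5)\cap X$ with $V_2$ determined by $C'$ as the unique $2$-plane with $C'\subset\PP(V_2\wedge V_5)$ (Lemma~\ref{lem-tau-in-sextic}(1)), one checks $V_2$ is the same for $C$ and $C'$ because both are contained in $\PP(V_2\wedge V_5)$; uniqueness then forces the surfaces to coincide. The second step is the substitution chain above, which is a formal consequence of Lemma~\ref{lem-T'} and Lemma~\ref{lem-residue-symmetry}. A cleaner alternative, which I would actually prefer to write, is to avoid passing through $C'$ entirely: apply $T'$ to \eqref{eq-tau-pr'-2}. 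Indeed \eqref{eq-lem-6.7} in the proof of Lemma~\ref{lem-T'} says $T'(\pr_X'(I_C))=\pr_X'(I_C(H))[-1]$, and $T'$ is an exact autoequivalence of $\cA_X$, so it sends the triangle underlying \eqref{eq-tau-pr'-2} to a triangle
\[
T'(I_{S_{2,0}})\to \pr_X'(I_C)\to T'(I_{C'}).
\]
Thus it remains to compute $T'(I_{S_{2,0}})$ and $T'(I_{C'})$. For the first, $T'(I_{S_{2,0}})=\bL_{\cU_X}\bL_{\oh_X}(I_{S_{2,0}}(H))[-1]$; using $\RHom_X(\oh_X,I_{S_{2,0}}(H))=V_5/V_2$ and $\RHom_X(\cU_X,I_{S_{2,0}})=0$ from the proof of Lemma~\ref{lem-cohomology-ICS}, together with \eqref{eq-def-S20}, one identifies this with $I_{S_{2,0}}$ after unwinding the mutations (the mutation against $\oh_X$ and $\cU_X$ exactly "undoes" the twist by $H$ on the sextic surface's ideal, because $I_{S_{2,0}}(H)$ is resolved by $\oh_X(H)$'s and $\cU_X(H)\cong\cU_X^\vee$). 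For the second, $T'(\pr_X'(I_{C'}))=\pr_X'(I_{C'}(H))[-1]$ by \eqref{eq-lem-6.7} applied to $C'$, and $\pr_X'(I_{C'})=\pr_X'(I_C(H))[-1]$ by Lemma~\ref{lem-T'}; hence $T'(I_{C'})$—interpreted as $T'$ of the $\pr_X'$-projection—equals $I_C$ up to the relevant shift once one checks $I_{C'}$ already lies in $\cA_X$ (it does, by Lemma~\ref{lem-cubic-cohomology-1} and Lemma~\ref{lem-homo-cubic}(1), being equal to its own $\pr_X'$).

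The main obstacle I anticipate is bookkeeping rather than conceptual: one must be scrupulous about when $I_{C'}$, $I_{S_{2,0}}$, $I_C$ genuinely lie in $\cA_X$ (so that $\pr_X'$ is the identity on them and $T'$ can be applied literally) versus when one is really manipulating their projections, and about the single cohomological shift floating between $\pr_X'(I_C(H))[-1]$ and $\pr_X'(I_C)$. The vanishings needed — $\RHom_X(\cU_X,I_C)=0$, $\RHom_X(\oh_X,I_C)=0$ for $\tau$-cubics (Lemma~\ref{lem-homo-cubic}(1), Lemma~\ref{lem-cubic-cohomology-1}(1)), and the analogous statements for $S_{2,0}$ — are all already available, so the argument reduces to assembling them. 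I would present the $T'$-of-\eqref{eq-tau-pr'-2} route, since it is the shortest and makes the symmetry of the construction manifest, and close by remarking that \eqref{eq-seq-pr'IC} is non-split (the extension class is nonzero because $\pr_X'(I_C)$ is not isomorphic to $I_{S_{2,0}}\oplus I_C$, e.g.\ by a Hom computation, or simply because it is a single stable object once stability is established in Theorem~\ref{thm-stability-proj-general}).
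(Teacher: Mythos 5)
Your first argument --- pass to a residue cubic $C'$ of $C$, invoke Lemma \ref{lem-residue-symmetry} to see that $C$ is in turn a residue cubic of $C'$, apply Proposition \ref{prop-proj-tau-1} to $C'$ (checking, as you do, that $C'$ determines the same $V_2$ and hence the same $S_{2,0}$), and identify the middle term with $\pr_X'(I_C)$ via Lemma \ref{lem-T'} --- is exactly the paper's proof, and it is complete as written. If you stopped there, nothing more would be needed.

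The ``cleaner alternative'' that you say you would actually write up, however, does not work. The key claim that $T'(I_{S_{2,0}})=\bL_{\cU_X}\bL_{\oh_X}(I_{S_{2,0}}(H))[-1]$ ``unwinds'' to $I_{S_{2,0}}$ is false: twisting \eqref{eq-def-S20} by $\oh_X(H)$ and comparing with the tautological sequence $0\to\cU_X\to V_5\otimes\oh_X\to\cQ_X\to 0$ shows that the evaluation map $(V_5/V_2)\otimes\oh_X\to I_{S_{2,0}}(H)$ is surjective with kernel isomorphic to $\cU_X$, so $\bL_{\oh_X}(I_{S_{2,0}}(H))\cong\cU_X[1]$ and $\bL_{\cU_X}$ annihilates it. Thus $T'(I_{S_{2,0}})=0$ --- this is precisely the vanishing $\pr_X'(I_{S_{2,0}}(H))=0$ that the proof of Lemma \ref{lem-T'} records --- and applying $T'$ to \eqref{eq-tau-pr'-2} collapses the triangle to the tautology $T'(\pr_X'(I_C(H))[-1])\cong T'(I_{C'})$, producing no extension of $I_C$ by $I_{S_{2,0}}$. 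A second slip in the same passage: $I_{C'}$ does \emph{not} lie in $\cA_X$ and is not its own projection, since $\RHom_X(\oh_X(H),I_{C'})=\RHom_X(\oh_X,I_{C'}(-H))=\CC^2[-2]\neq 0$ by Lemma \ref{lem-cubic-cohomology-1}(3); only the left mutations $\bL_{\cU_X}\bL_{\oh_X}$ act trivially on $I_{C'}$, while the right mutations in $\pr_X'$ do not. So the route through $C'$ is not optional bookkeeping --- it is where the sequence \eqref{eq-seq-pr'IC} actually comes from. Your closing observation that \eqref{eq-seq-pr'IC} is non-split is correct and follows, for instance, from $\Hom_X(\pr_X'(I_C),\pr_X'(I_C))=\CC$ in Proposition \ref{prop-rhom-tau}.
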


\begin{proof}
Let $C'\subset S_{2,0}$ be a $\tau$-cubic such that $C'$ is a residue cubic of $C$. Then by Lemma \ref{lem-residue-symmetry}, $C$ is also a residue cubic of $C'$. Thus from Proposition \ref{prop-proj-tau-1}, we have an exact sequence
\[0\to I_{S_{2,0}}\to \pr_X'(I_{C'}(H))[-1]\to I_{C}\to 0.\]
Now the result follows from Lemma \ref{lem-T'} as $\pr_X'(I_{C'}(H))[-1]=\pr_X'(I_C)$.
\end{proof}

Next, we are going to compute self-$\Ext$ groups of $\pr_X'(I_C)$. 

\begin{lemma}\label{lem-tau-homIC=1}
Let 
$C\subset X$ be a $\tau$-cubic, and let $S_{2,0}$ be the sextic surface containing $C$ defined in Lemma \ref{lem-tau-in-sextic}. Then we have

\begin{enumerate}
    \item $\Hom_X(I_{S_{2,0}}, I_{C/S_{2,0}})=0$,

    \item $\Hom_X(I_C, I_{C/S_{2,0}})=\CC$, and

    \item $\Hom_X(I_{C/S_{2,0}}, I_{C/S_{2,0}})=\CC.$
\end{enumerate}

\end{lemma}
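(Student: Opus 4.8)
The plan is to compute all three Hom-groups by restricting homomorphisms to the sextic surface $S_{2,0}$ and exploiting the geometry established in Lemma \ref{lem-tau-in-sextic} and Proposition \ref{prop-residue-tau-cubic}. For (1), I would use the inclusion $I_{C/S_{2,0}}\hookrightarrow \oh_{S_{2,0}}$ (coming from the standard sequence for $C\subset S_{2,0}$, twisted appropriately), so a nonzero map $I_{S_{2,0}}\to I_{C/S_{2,0}}$ would compose to a nonzero map $I_{S_{2,0}}\to \oh_{S_{2,0}}$; but $\Hom_X(I_{S_{2,0}},\oh_{S_{2,0}})=0$ because $\RHom_X(\oh_X,\oh_{S_{2,0}})=\CC$ (since $S_{2,0}$ is reduced and connected, being a hyperplane-and-quadric section of the irreducible $\PP(V_2\wedge V_5)\cap\Gr(2,V_5)$) forces any nonzero $I_{S_{2,0}}\to\oh_{S_{2,0}}$ to factor through $\oh_X\to\oh_{S_{2,0}}$, which kills $I_{S_{2,0}}$. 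Actually, more directly: apply $\Hom_X(-,I_{C/S_{2,0}})$ to $0\to I_{S_{2,0}}\to\oh_X\to\oh_{S_{2,0}}\to 0$ and use $\Hom_X(\oh_X,I_{C/S_{2,0}})=\Hom_X(\oh_X,I_{C/S_{2,0}}(H)\otimes\oh(-H))$, which vanishes as $I_{C/S_{2,0}}(-H)$ has no sections (it sits inside $\oh_{S_{2,0}}(-H)$, which has no sections), together with a control on $\Hom_X(\oh_{S_{2,0}},I_{C/S_{2,0}})$; the latter is $H^0(S_{2,0},\sheafhom(\oh_{S_{2,0}},I_{C/S_{2,0}}))=H^0(I_{C/S_{2,0}})=0$ since $I_{C/S_{2,0}}\subset\oh_{S_{2,0}}$ is a proper ideal with no global sections. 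This gives (1).

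For (3), I would show $\Hom_X(I_{C/S_{2,0}},I_{C/S_{2,0}})=\CC$ by proving $I_{C/S_{2,0}}$ is a simple sheaf. Since $I_{C/S_{2,0}}$ is a rank-one torsion-free sheaf on the reduced (but possibly reducible) surface $S_{2,0}$, any endomorphism $\varphi$ restricts on the generic point of each irreducible component to a scalar; if $S_{2,0}$ is integral this immediately gives $\Hom=\CC$, and if $S_{2,0}=q\cup S'$ (the reducible case of Lemma \ref{lem-surface-in-GM4}(3)) I would argue that $C$ being a $\tau$-cubic forces $C$ to meet both components in a way that pins the two scalars together — using that $C$ is connected of degree $3$ and is not contained in either $q$ (degree $2$) or $S'$ alone in a manner compatible with its span, so $\varphi$ acts by the same scalar on both generic points and hence is globally a scalar (the endomorphism sheaf of a torsion-free rank-one sheaf on a reduced surface with connected normalization data is $\oh$). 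Alternatively and more cleanly, I would deduce (3) from (2): a nonzero $\varphi\in\Hom_X(I_{C/S_{2,0}},I_{C/S_{2,0}})$ composed with the canonical $I_C\twoheadrightarrow I_{C/S_{2,0}}$ of (2) lands in the one-dimensional $\Hom_X(I_C,I_{C/S_{2,0}})$, and since that space is spanned by the canonical surjection, $\varphi$ must be a scalar on the image, hence on all of $I_{C/S_{2,0}}$.

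For (2), the key sequence is $0\to I_{S_{2,0}}\to I_C\to I_{C/S_{2,0}}\to 0$; applying $\Hom_X(-,I_{C/S_{2,0}})$ gives
\[
0\to \Hom_X(I_{C/S_{2,0}},I_{C/S_{2,0}})\to \Hom_X(I_C,I_{C/S_{2,0}})\to \Hom_X(I_{S_{2,0}},I_{C/S_{2,0}}),
\]
and by (1) the last term vanishes, so $\Hom_X(I_C,I_{C/S_{2,0}})\cong\Hom_X(I_{C/S_{2,0}},I_{C/S_{2,0}})$; thus (2) and (3) are equivalent once we know one of them is $\CC$. To get the value, I would compute $\hom_X(I_C,I_{C/S_{2,0}})$ from the other sequence $0\to I_{C/S_{2,0}}(-H)\to\oh_{S_{2,0}}\to\oh_C\to 0$ (twist of \eqref{eq-residue}) applied via $\Hom_X(I_C,-)$, reducing to $\Hom_X(I_C,\oh_{S_{2,0}})$ and $\Hom_X(I_C,\oh_C)$ and $\Ext$-terms with $I_{C/S_{2,0}}(-H)$; here $\Hom_X(I_C,\oh_C)=H^0(\oh_C)=\CC$ by Lemma \ref{lem-cubic-cohomology-1}(1), and I expect the map $\Hom_X(I_C,\oh_{S_{2,0}})\to\Hom_X(I_C,\oh_C)$ to be surjective (the restriction $\oh_{S_{2,0}}\to\oh_C$ induces it) while $\Ext^1_X(I_C,I_{C/S_{2,0}}(-H))$ contributes nothing to the relevant piece. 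The main obstacle I anticipate is precisely controlling $\Hom_X(I_C,\oh_{S_{2,0}})$ and the connecting maps — equivalently, handling the reducible case $S_{2,0}=q\cup S'$ where $\oh_{S_{2,0}}$ is not the structure sheaf of an integral scheme — so I would first dispose of that by noting $\chi(\oh_{S_{2,0}})$ and the reducedness from Lemma \ref{lem-surface-in-GM4}(3) force $h^0(\oh_{S_{2,0}})=1$, making $S_{2,0}$ connected and reduced, which is enough to run the simplicity argument for (3) and then conclude (2) and (1) formally from the two short exact sequences above.
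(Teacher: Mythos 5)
Your reduction of (2) to (3) via the sequence $0\to I_{S_{2,0}}\to I_C\to I_{C/S_{2,0}}\to 0$ is fine once (1) is known, but your argument for (1) does not work. The composition trick requires $\Hom_X(I_{S_{2,0}},\oh_{S_{2,0}})=0$, and this group is $H^0(N_{S_{2,0}/X})$, which is \emph{nonzero}: the surfaces $S_{2,0}=\PP(V_2\wedge V_5)\cap X$ move in a family of dimension at least $6$ as $V_2$ varies in $\Gr(2,V_5)$, so $S_{2,0}$ has nontrivial first-order deformations inside $X$. Moreover, the claim that a nonzero map $I_{S_{2,0}}\to\oh_{S_{2,0}}$ must factor through $\oh_X\to\oh_{S_{2,0}}$ is false — the restriction map $\Hom_X(\oh_X,\oh_{S_{2,0}})\to\Hom_X(I_{S_{2,0}},\oh_{S_{2,0}})$ is actually zero, since a constant section of $\oh_{S_{2,0}}$ annihilates $I_{S_{2,0}}$. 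Your long-exact-sequence variant has the same problem: in
\[
0\to\Hom(\oh_{S_{2,0}},F)\to\Hom(\oh_X,F)\to\Hom(I_{S_{2,0}},F)\to\Ext^1(\oh_{S_{2,0}},F)\to\cdots
\]
the group you want is controlled by $\Ext^1(\oh_{S_{2,0}},F)$, not by the two $\Hom$'s you propose to compute, so the vanishing of $H^0(I_{C/S_{2,0}})$ buys you nothing. The paper avoids this entirely by using the locally free resolution $\cQ_X(-H)\twoheadrightarrow I_{S_{2,0}}$ from \eqref{eq-def-S20}: the uniqueness of $V_2$ (Lemma \ref{lem-tau-in-sextic}(1)) gives $\Hom_X(\cQ_X(-H),I_C)=\CC=\RHom_X(\cQ_X(-H),I_{S_{2,0}})$, hence $\Hom_X(\cQ_X(-H),I_{C/S_{2,0}})=0$, and any nonzero map out of $I_{S_{2,0}}$ would contradict this after precomposition with the surjection. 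You would need this (or an equivalent) input to make (1) go through.

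The second gap is the reducible case $S_{2,0}=q\cup S'$ of Lemma \ref{lem-surface-in-GM4}(3), which you acknowledge but do not resolve. Simplicity of a rank-one torsion-free sheaf on a reduced but reducible surface is not automatic: the obstruction is precisely a splitting $I_{C/S_{2,0}}\cong I_{C/Z}\oplus I_{Z/S_{2,0}}$ with the two summands supported on $q$ and $S'$, and ruling this out is where the paper's proof of (2) does all its work — it shows such a splitting would force $q$ to be one of the pieces of a decomposition $S_{2,0}=Z\cup Z_3$ with $C=Z\cap Z_3$, contradicting that $C$ is a $\tau$-cubic (via the uniqueness of the $\sigma$-quadric and the purity of the surfaces involved). "Connected normalization data" is not a substitute for this argument, and your direct computation of $\Hom_X(I_C,\oh_{S_{2,0}})$ is left open by your own admission. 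So the proposal identifies the right skeleton (the two short exact sequences and the equivalence of (2) and (3)) but is missing the two ideas that actually carry the proof.
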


\begin{proof}
By \eqref{eq-def-S20}, we know $\RHom_X(\cQ_X(-H), I_{S_{2,0}})=\CC$, which is generated by the natural surjection $\cQ_X(-H)\twoheadrightarrow I_{S_{2,0}}$. Moreover, from the uniqueness part in Lemma \ref{lem-tau-in-sextic}, we have $\Hom_X(\cQ_X(-H), I_C)=\CC$ as well. This implies $\Hom_X(\cQ_X(-H), I_{C/S_{2,0}})=0$. If there is a non-zero map $I_{S_{2,0}}\to I_{C/S_{2,0}}$, composing it with $\cQ_X(-H)\twoheadrightarrow I_{S_{2,0}}$ will yield a non-zero map $\cQ_X(-H)\to I_{C/S_{2,0}}$, contradicting $\Hom_X(\cQ_X(-H), I_{C/S_{2,0}})=0$. This proves (1).

For (2), if $a\colon I_C\to I_{C/S_{2,0}}$ is a non-zero map, then we know that $\ker(a)=I_Z$, where $Z$ is a closed subscheme of $X$ containing $C$. Since $\Hom_X(I_{S_{2,0}}, I_C)=\CC$, to prove (2), we only need to show $Z=S_{2,0}$.

To this end, note that we have an inclusion 
\begin{equation}\label{eq-lem-tau-inclusion}
\mathrm{im}(a)=I_{C/Z}\hookrightarrow I_{C/S_{2,0}}.
\end{equation}
Moreover, by (1), we see $I_{S_{2,0}}\subset \ker(a)$, then $Z\subset S_{2,0}$ and we get a commutative diagram
\[\begin{tikzcd}
	0 & {I_{S_{2,0}}} & {I_C} & {I_{C/S_{2,0}}} & 0 \\
	0 & {I_Z=\ker(a)} & {I_C} & {I_{C/Z}=\mathrm{im}(a)} & 0
	\arrow[hook, from=1-2, to=1-3]
	\arrow[shift right, no head, from=1-3, to=2-3]
	\arrow[from=2-2, to=2-3]
	\arrow[hook, from=1-2, to=2-2]
	\arrow[no head, from=1-3, to=2-3]
	\arrow[from=1-3, to=1-4]
	\arrow["b", from=1-4, to=2-4]
	\arrow[from=2-3, to=2-4]
	\arrow[from=2-1, to=2-2]
	\arrow[from=2-4, to=2-5]
	\arrow[from=1-4, to=1-5]
	\arrow[from=1-1, to=1-2]
\end{tikzcd}\]
with all rows are exact and $b$ is the induced map. By the Snake Lemma, we see $b$ is surjective and $\ker(b)=I_{Z/S_{2,0}}$.

If $S_{2,0}$ is integral, then from \eqref{eq-lem-tau-inclusion}, we have $\mathrm{Supp}(I_{C/Z})=\mathrm{Supp}(I_{C/S_{2,0}})=S_{2,0}$. However, this implies $\ker(b)$ is supported in dimension $\leq 1$, and the only possible case is $\ker(b)=0$. Thus, combined with the subjectivity, $b$ is an isomorphism. Hence, $Z=S_{2,0}$ and (2) follows. 

If $S_{2,0}$ is not integral, then from Lemma \ref{lem-surface-in-GM4}(3), we know that $S_{2,0}=q\cup S'$, where $S'$ is an integral surface of degree $4$. In the following, we show that $Z=S_{2,0}$ holds as well. If $Z\neq S_{2,0}$, as $S_{2,0}=q\cup S'$, we know that $\ch_2(I_{C/Z})=\ch_2(\oh_q)$ or $\ch_2(\oh_{S'})$. In other words, $Z_1:=\mathrm{Supp}(I_{C/Z})=q$ or $S'$. Since $q$ and $S'$ are integral, and $I_{C/Z}$ is a rank one torsion-free sheaf on $Z_1$, we observe that $\Hom_X(I_{C/Z}, I_{C/Z})=\CC$. If we set $Z_2:=\mathrm{Supp}(I_{Z/S_{2,0}})$, then $\{Z_1, Z_2\}=\{q, S'\}$. Since $q$ and $S'$ have no common components, $\im(a)$ is not contained in $\ker(b)$. Hence, we obtain a non-zero map
\[\mathrm{im}(a)=I_{C/Z}\hookrightarrow I_{C/S_{2,0}}\xra{b}I_{C/Z}=\im(a),\]
which is an isomorphism by $\Hom_X(I_{C/Z}, I_{C/Z})=\CC$. Thus, we have the splitting 
\[I_{C/S_{2,0}}= I_{C/Z}\oplus I_{Z/S_{2,0}}.\]
Since $I_{C/Z}\hookrightarrow I_{C/S_{2,0}} \subset \oh_{S_{2,0}}$, we may assume that $I_{C/Z}= I_{Z_3/S_{2,0}}$ for a closed subscheme $Z_3\subset S_{2,0}$, hence $C= Z\cap Z_3$ and $S_{2,0}=Z\cup Z_3$. We claim that $q=Z_3$ or $Z$, which contradicts $C= Z\cap Z_3$ since $C$ is a $\tau$-cubic. Indeed, without loss of generality, we may assume that $Z_1=q$ and $Z_2=S'$, then $q\subset Z$. Since $X$ does not contain any plane, we see $Z.H^2=2$ or $4$. If $Z.H^2=4$, then by $S_{2,0}=Z\cup Z_3$ and $C= Z\cap Z_3$, we get $Z_3.H^2=2$, which contradicts the uniqueness of quadric surfaces in $X$. Therefore, we have $q.H^2=Z.H^2=2$. Since $Z$ is pure and $q\subset Z$, we can conclude that $q=Z$. When $Z_2=q$ and $Z_1=S'$, we have $q\subset Z_3$ and the rest of the argument is similar to the previous case. This completes the proof of (2).


Now (3) can be directly deduced from (2) because we have a natural surjection $I_C\twoheadrightarrow I_{C/S_{2,0}}$.
\end{proof}

\begin{proposition}\label{prop-rhom-tau}
Let 
$C\subset X$ be a $\tau$-cubic. Then we have
\[\RHom_X(\pr'_X(I_C), \pr'_X(I_C))=\RHom_X(\pr_X(I_C(H)), \pr_X(I_C(H)))=\CC\oplus \CC^6[-1]\oplus \CC[-2].\]
\end{proposition}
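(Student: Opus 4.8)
The plan is to follow the proof of Proposition \ref{prop-rhom-rho} closely. Since $\RHom_X(\oh_X,I_C)=0$ by Lemma \ref{lem-cubic-cohomology-1}(1), Lemma \ref{lem-Xi} gives $\pr_X(I_C(H))=\Xi(\pr'_X(I_C))$, and as $\Xi$ is an equivalence the two groups in the statement agree; so it is enough to compute $\RHom_X(\pr'_X(I_C),\pr'_X(I_C))$. A Chern character computation — or simply comparison with the $\rho$-cubic case, since $[\pr_X(I_C(H))]$ depends only on $\ch(\oh_C)$, which is the same for every twisted cubic — shows $\chi(\pr'_X(I_C),\pr'_X(I_C))=-4$. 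Because $S_{\Ku(X)}=[2]$ (equivalently $S_{\cA_X}=[2]$), Serre duality gives $\ext^i(E,E)=\ext^{2-i}(E,E)$ for $E=\pr'_X(I_C)$, so it suffices to prove $\Hom_X(\pr'_X(I_C),\pr'_X(I_C))=\CC$ and $\Hom_X(\pr'_X(I_C),\pr'_X(I_C)[k])=0$ for $k<0$; the value $\ext^1=6$ then follows from $\chi=-4$.

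Next I would use an adjunction as in Proposition \ref{prop-rhom-rho}. As $C$ is a $\tau$-cubic, $\RHom_X(\oh_X,I_C)=\RHom_X(\cU_X,I_C)=0$ by Lemmas \ref{lem-cubic-cohomology-1}(1) and \ref{lem-homo-cubic}(1), hence $\bL_{\cU_X}\bL_{\oh_X}(I_C)=I_C$ and $\pr'_X(I_C)=\bR_{\oh_X(-H)}\bR_{\cU_X(-H)}(I_C)$; since $\pr'_X(I_C)\in\cA_X\subseteq{}^\bot\langle\cU_X(-H),\oh_X(-H)\rangle$ (by Serre duality and the definition of $\cA_X$), the mutation triangles give an isomorphism $\RHom_X(\pr'_X(I_C),\pr'_X(I_C))\cong\RHom_X(\pr'_X(I_C),I_C)$. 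Now apply $\RHom_X(-,I_C)$ to the short exact sequence $0\to I_{S_{2,0}}\to\pr'_X(I_C)\xrightarrow{q}I_C\to 0$ of Proposition \ref{prop-proj-tau}. In negative degrees the resulting long exact sequence only involves $\Ext$-groups in negative degrees between the sheaves $I_C$ and $I_{S_{2,0}}$, which vanish, so $\Hom_X(\pr'_X(I_C),\pr'_X(I_C)[k])=0$ for $k<0$. In degree zero it reads
\[0\to \Hom_X(I_C,I_C)\xrightarrow{q^{*}}\Hom_X(\pr'_X(I_C),I_C)\to\Hom_X(I_{S_{2,0}},I_C)\xrightarrow{\delta}\Ext^1_X(I_C,I_C),\]
where $\Hom_X(I_C,I_C)=\CC$ ($I_C$ being the ideal sheaf of a subscheme of codimension $\ge 2$), $q^{*}$ is injective, and $\Hom_X(I_{S_{2,0}},I_C)=\CC$, generated by the natural inclusion $\iota_0\colon I_{S_{2,0}}\hookrightarrow I_C$ — this last point follows by applying $\Hom_X(-,I_C)$ to \eqref{eq-def-S20}, using $\Hom_X(\cQ_X(-H),I_C)=\CC$ (as in the proof of Lemma \ref{lem-tau-homIC=1}) and that this generator factors through $I_{S_{2,0}}$. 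Thus $\Hom_X(\pr'_X(I_C),I_C)=\CC$ is equivalent to $\delta(\iota_0)\neq 0$.

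To see $\delta(\iota_0)\ne0$, observe that $\delta(\iota_0)$ is the image of the extension class $e\in\Ext^1_X(I_C,I_{S_{2,0}})$ of \eqref{eq-seq-pr'IC} under $(\iota_0)_{*}\colon\Ext^1_X(I_C,I_{S_{2,0}})\to\Ext^1_X(I_C,I_C)$. Applying $\RHom_X(I_C,-)$ to the natural sequence $0\to I_{S_{2,0}}\to I_C\to I_{C/S_{2,0}}\to 0$ and using $\Hom_X(I_C,I_{S_{2,0}})=0$ together with Lemma \ref{lem-tau-homIC=1}(2) (so that the map $\Hom_X(I_C,I_C)\to\Hom_X(I_C,I_{C/S_{2,0}})$ between one-dimensional spaces is an isomorphism), one finds that $(\iota_0)_{*}$ is injective. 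Hence $\delta(\iota_0)\ne0$ precisely when $e\ne0$, i.e.\ when \eqref{eq-seq-pr'IC} is non-split. It is non-split: otherwise $I_C$ would be a direct summand of $\pr'_X(I_C)\in\cA_X$, forcing $I_C\in\cA_X$ as $\cA_X$ is admissible, hence thick — but this contradicts $\RHom_X(\oh_X(H),I_C)=\CC^2[-2]\neq0$ (Lemma \ref{lem-cubic-cohomology-1}(3)) against $\RHom_X(\oh_X(H),E)=0$ for every $E\in\cA_X$. This yields $\Hom_X(\pr'_X(I_C),\pr'_X(I_C))=\CC$ and finishes the proof.

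The main obstacle is the degree-zero analysis in the second and third paragraphs: identifying $\delta$ with composition along the extension class, proving the injectivity of $(\iota_0)_{*}$ (where Lemma \ref{lem-tau-homIC=1}(2) is the key input) and the non-splitness of \eqref{eq-seq-pr'IC}, and ensuring that $\Hom_X(I_{S_{2,0}},I_C)$ and $\Hom_X(I_C,I_C)$ are exactly one-dimensional, which rests on the structural results of Section \ref{sec-twisted-cubic-gm4} (in particular the reducedness and connectedness of $S_{2,0}$).
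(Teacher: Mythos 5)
Your proof is correct, but it takes a genuinely different route from the paper's. The paper never touches the extension \eqref{eq-seq-pr'IC}: it works with $\pr_X'(I_C(H))=\bL_{\oh_X}(I_{C/S_{2,0}}(H))$ from Proposition \ref{prop-proj-tau-1}, uses the \emph{left}-mutation adjunction to reduce everything to $\RHom_X(I_{C/S_{2,0}}(H),\pr_X'(I_C(H)))$, reads off $\Hom=\CC$ and the vanishing in negative degrees directly from the triangle $\oh_X^{\oplus 2}\to I_{C/S_{2,0}}(H)\to \pr_X'(I_C(H))$ together with Lemma \ref{lem-tau-homIC=1}(3), and then transfers the answer to $\pr_X'(I_C)$ by applying the involution $T'$ and Lemma \ref{lem-T'}. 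You instead stay with $\pr_X'(I_C)$ itself, use the \emph{right}-mutation adjunction to land on $\RHom_X(\pr_X'(I_C),I_C)$, and then analyze the short exact sequence $0\to I_{S_{2,0}}\to\pr_X'(I_C)\to I_C\to 0$ of Proposition \ref{prop-proj-tau}. This costs you extra work in degree zero — computing $\Hom_X(I_{S_{2,0}},I_C)=\CC$, identifying the connecting map with composition against the extension class, proving injectivity of $(\iota_0)_*$ via Lemma \ref{lem-tau-homIC=1}(2), and establishing non-splitness of \eqref{eq-seq-pr'IC} (your argument via $\RHom_X(\oh_X(H),I_C)\neq 0$ is clean and doesn't even need thickness of $\cA_X$, since $\RHom$ out of a direct sum splits) — whereas the paper's choice of adjunction makes the endomorphism computation essentially immediate. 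In exchange, your argument avoids the involution $T'$ and Lemma \ref{lem-T'} entirely and makes visible the geometric content of why $\hom=1$, namely that the extension of $I_C$ by $I_{S_{2,0}}$ is nontrivial. Both arguments ultimately rest on the same structural inputs from Sections \ref{sec-twisted-cubic-gm4} and \ref{sec-projection-twisted_cubics}, in particular Lemma \ref{lem-tau-homIC=1}, so I see no gap.
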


\begin{proof}
From the fact that $\chi(\pr_X(I_C(H)), \pr_X(I_C(H)))=-4$ and $S_{\Ku(X)}=[2]$, to prove the statement, we only need to show $\Hom_X(\pr_X(I_C(H)), \pr_X(I_C(H)))=\CC$ and $\Hom_X(\pr_X(I_C(H)), \pr_X(I_C(H))[k])=0$ for any $k<0$. Since $\pr_X(I_C(H))=\Xi(\pr_X'(I_C))$, in the following, we compute $\RHom_X(\pr'_X(I_C), \pr_X'(I_C))$ instead. 

From Proposition \ref{prop-proj-tau-1}, we have an exact triangle
\[\oh_X^{\oplus 2}\to I_{C/S_{2,0}}(H)\to \bL_{\oh_X}(I_{C/S_{2,0}}(H))=\pr_X'(I_C(H)).\]
Using Lemma \ref{lem-tau-homIC=1}(3) and the adjunction
\[\RHom_X(\pr_X'(I_C(H)), \pr_X'(I_C(H)))=\RHom_X(I_{C/S_{2,0}}(H), \pr_X'(I_C(H))),\]
we obtain
\[\Hom_X(\pr_X'(I_C(H)), \pr_X'(I_C(H)))=\CC,\quad \text{and }\Hom_X(\pr_X'(I_C(H)), \pr_X'(I_C(H))[k])=0 \text{ for any }k< 0.\]
Then the result follows from applying $T'$ and Lemma \ref{lem-T'}.
\end{proof}

\subsection{Stability of projection objects}

Using the above results, we can easily obtain the stability of projection objects when $X$ is very general:

\begin{theorem}\label{thm-stability-proj}
If $X$ is very general and 
$C\subset X$ is a twisted cubic 
which is not $\sigma$-cubic, then $\pr_X(I_C(H))$ is stable with respect to any stability condition on $\Ku(X)$.
\end{theorem}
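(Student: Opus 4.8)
The plan is to reduce to a single stability condition, prove $\sigma$-semistability via a comparison with (weak) stability conditions on $\D^b(X)$, and then upgrade semistability to stability using the numerical constraints available for a very general $X$.

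First I would reduce to one stability condition. Since a very general $X$ is non-Hodge-special, Proposition~\ref{prop-gm4-unique} shows that all stability conditions in $\Stab^{\circ}(\Ku(X))$ lie in a single $\GL$-orbit; as the $\GL$-action preserves (semi)stability, it suffices to prove that $\pr_X(I_C(H))$ is $\sigma$-stable for one fixed $\sigma\in\Stab^{\circ}(\Ku(X))$. Two facts are used throughout: that $\chi(\pr_X(I_C(H)),\pr_X(I_C(H)))=-4$ (as in the proofs of Propositions~\ref{prop-rhom-rho} and~\ref{prop-rhom-tau}), and that $\pr_X(I_C(H))$ is simple, i.e.\ $\hom_X(\pr_X(I_C(H)),\pr_X(I_C(H)))=1$, again by Propositions~\ref{prop-rhom-rho} and~\ref{prop-rhom-tau}.

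Second I would establish $\sigma$-semistability, which I expect to be the main obstacle. Here I would feed in the explicit descriptions of the projection objects from Section~\ref{subsec-proj}: for a $\rho$-cubic, $\pr_X(I_C(H))$ sits in the triangle~\eqref{eq-triangle-pr-rho} built out of $\cQ_X^{\vee}[1]$ and $\bR_{\cU_X}\bR_{\oh_X(-H)}\oh_L(-1)$; for a $\tau$-cubic, $\pr_X(I_C(H))=\Xi(\pr'_X(I_C))$ with $\pr'_X(I_C)$ an extension of $I_C$ by $I_{S_{2,0}}$ as in~\eqref{eq-seq-pr'IC}. The sheaves $I_C(H)$, $I_{S_{2,0}}$, $\cQ_X^{\vee}$ and $\oh_L(-1)$ are, up to twist, slope-semistable, hence tilt-semistable on $\D^b(X)$; tracking them through the mutation functors defining $\pr_X$ (respectively $\pr'_X$ and $\Xi$) and invoking the construction of $\Stab^{\circ}(\Ku(X))$ together with the weak-stability criteria of \cite{bayer2017stability,perry2019stability}, one deduces that $\pr_X(I_C(H))$ lies in the heart of $\sigma$ and is $\sigma$-semistable. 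The delicate part is the bookkeeping through the mutations, running parallel to the conic computation in~\cite{GLZ2021conics}.

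Finally I would upgrade semistability to stability by a lattice argument. For a very general $X$ we have $\Knum(\Ku(X))=\ZZ\Lambda_1\oplus\ZZ\Lambda_2$ with Euler form~\eqref{eq-matrix-even}, so $\chi(v,v)=-2(a^2+b^2)$ for $v=a\Lambda_1+b\Lambda_2$; in particular every nonzero class has $\chi(v,v)\le-2$. Suppose $\pr_X(I_C(H))$ were strictly $\sigma$-semistable, with Jordan--Hölder factors $A_1,\dots,A_k$ that are stable, pairwise non-isomorphic, of the same phase, with multiplicities $m_i$. Each $A_i$ is nonzero, so $v(A_i)\ne0$; being stable in a category with Serre functor $[2]$, it has $\ext^2_X(A_i,A_i)=\hom_X(A_i,A_i)=1$, hence $\chi(A_i,A_i)=2-\ext^1_X(A_i,A_i)\le-2$, while for $i\ne j$ one has $\hom_X(A_i,A_j)=0$ and $\ext^2_X(A_i,A_j)=\hom_X(A_j,A_i)^{\vee}=0$, so $\chi(A_i,A_j)=-\ext^1_X(A_i,A_j)\le0$. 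Expanding
\[
-4=\chi\!\left(\pr_X(I_C(H)),\pr_X(I_C(H))\right)=\sum_i m_i^2\,\chi(A_i,A_i)+\sum_{i\ne j}m_im_j\,\chi(A_i,A_j)\le-2\sum_i m_i^2
\]
forces $\sum_i m_i^2\le2$. Hence either $k=1$ and $m_1=1$, whence $\pr_X(I_C(H))\cong A_1$ is stable, a contradiction; or $k=2$ and $m_1=m_2=1$, in which case the displayed inequality is an equality, forcing $\chi(A_1,A_2)=\chi(A_2,A_1)=0$, i.e.\ $\ext^1_X(A_1,A_2)=\ext^1_X(A_2,A_1)=0$, so the extension splits, $\pr_X(I_C(H))\cong A_1\oplus A_2$, and $\hom_X(\pr_X(I_C(H)),\pr_X(I_C(H)))=2$, contradicting simplicity. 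Therefore $\pr_X(I_C(H))$ is $\sigma$-stable, and by the first step it is stable with respect to every stability condition on $\Ku(X)$.
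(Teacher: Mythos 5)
Your overall architecture is reasonable, and your third step is essentially correct: it reproduces the ``semistable implies stable'' half of the lemma the paper actually invokes. The paper's own proof of Theorem \ref{thm-stability-proj} is a one-line citation of Propositions \ref{prop-rhom-rho} and \ref{prop-rhom-tau} together with \cite[Lemma 4.12(2)]{FGLZ24}, which is a purely numerical criterion: for non-Hodge-special $X$ one has $\Knum(\Ku(X))=\ZZ\Lambda_1\oplus\ZZ\Lambda_2$ with $\chi(v,v)=-2(a^2+b^2)$, and any object $E\in\Ku(X)$ with $\RHom_X(E,E)=\CC\oplus\CC^6[-1]\oplus\CC[-2]$ is automatically stable for every stability condition. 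Your Jordan--H\"older computation with $\sum_i m_i^2\le 2$ is exactly how that half of the lemma goes (compare Lemma \ref{lem-ext1=4}, the threefold analogue proved in Appendix \ref{appendix-B}).

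The gap is in your second step: you do not prove $\sigma$-semistability. You only sketch a plan to ``track'' the slope-semistable constituents of the projection object through the mutation functors and ``invoke'' the weak-stability criteria of \cite{bayer2017stability,perry2019stability}, and you yourself flag the bookkeeping as the delicate part. Mutations do not preserve tilt-(semi)stability, and showing that a projection object lies in the heart of an induced stability condition and is semistable there normally requires a genuine wall-crossing analysis (compare the proof of Proposition \ref{lem-cubic-stability}, which does exactly this in the much simpler threefold situation and is already nontrivial); so as written this step is not a proof. The fix is that you do not need it: the numerical machinery of your third step also disposes of semistability. By Serre duality any $\sigma$-semistable $A\in\Ku(X)$ with $v(A)\neq 0$ satisfies $\ext^1_X(A,A)=2\hom_X(A,A)-\chi(A,A)\geq 2+2=4$; if $E=\pr_X(I_C(H))$ were $\sigma$-unstable, the first step $A\to E\to B$ of its Harder--Narasimhan filtration has $\Hom_X(A,B)=0$ by phases and $\Hom_X(B,A[2])=0$ by Serre duality, so the weak Mukai lemma gives $\ext^1_X(E,E)\geq\ext^1_X(A,A)+\ext^1_X(B,B)\geq 8$, contradicting $\ext^1_X(E,E)=6$ from Propositions \ref{prop-rhom-rho} and \ref{prop-rhom-tau}. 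This is precisely the content of \cite[Lemma 4.12(2)]{FGLZ24} that the paper cites.
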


\begin{proof}
The result follows from the following Proposition \ref{prop-rhom-rho}, \ref{prop-rhom-tau} and \cite[Lemma 4.12(2)]{FGLZ24}.
\end{proof}

\begin{remark}
According to our computation, $\hom_X(\pr_X(I_C(H)), \pr_X(I_C(H)))= 3$ for any $\sigma$-cubic $C$, thus $\pr_X(I_C(H))$ is not stable.
\end{remark}

Although the description of self-Ext groups in Proposition \ref{prop-rhom-rho} and Proposition \ref{prop-rhom-tau} holds for general $X$, we can not directly deduce the stability of $\pr_X(I_C(H))$ from them because of the lack of \cite[Lemma 4.12(2)]{FGLZ24}. However, by applying a similar deformation argument as in \cite[Theorem 4.16]{FGLZ24}, we can also extend Theorem \ref{thm-stability-proj} from the very general case to the general case. 

For simplicity, we denote by $\mathrm{H}_X\subset \Hilb^{3t+1}_X$ the open subscheme parameterizing twisted cubics that are not of type $\sigma$. For a smooth projective morphism $\cX\to S$ over a smooth scheme $S$ over $\CC$ and a $S$-linear semi-orthogonal component $\cD\subset \D^b(\cX)$, we denote by $\cM_{\mathrm{pug}}(\cD/S)$ the moduli stack of universally gluable objects in $\cD$ over $S$, defined in \cite[Definition 9.1]{BLMNPS21}. According to \cite[Lemma 21.12]{BLMNPS21}, the moduli stack $\cM^X_{\sigma_X}(1,-1)$ of $\sigma_X$-semistable objects is an open substack of $\cM_{\mathrm{pug}}(\Ku(X)/\CC)$.

\begin{theorem}\label{thm-stability-proj-general}
If $X$ is a general GM fourfold and 
$C\subset X$ is a twisted cubic 
which is not a $\sigma$-cubic, then $\pr_X(I_C(H))$ is stable with respect to a generic $\sigma_X\in \Stab^{\circ}(\Ku(X))$.
\end{theorem}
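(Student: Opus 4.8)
The plan is to upgrade Theorem \ref{thm-stability-proj} from the very general case to the general case by a deformation argument, following the strategy of \cite[Theorem 4.16]{FGLZ24}. The starting point is that by Proposition \ref{prop-gm4-unique} all stability conditions on $\Ku(X)$ of a non-Hodge-special GM fourfold are in the same $\GL$-orbit, so it suffices to prove stability with respect to one fixed generic $\sigma_X\in \Stab^{\circ}(\Ku(X))$, and genericity of $\sigma_X$ means exactly that $\sigma_X$-semistable objects of primitive class $\Lambda_1-\Lambda_2$ are automatically $\sigma_X$-stable (there are no strictly semistable ones). First I would fix a general GM fourfold $X_0$ and a non-$\sigma$ twisted cubic $C_0\subset X_0$, and set $E_0:=\pr_{X_0}(I_{C_0}(H))$. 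By Propositions \ref{prop-rhom-rho} and \ref{prop-rhom-tau}, $\RHom_{X_0}(E_0,E_0)=\CC\oplus\CC^6[-1]\oplus\CC[-2]$; in particular $E_0$ is simple, $\Hom_{X_0}(E_0,E_0[k])=0$ for $k<0$, and $\ext^1_{X_0}(E_0,E_0)=6=2\cdot(\text{expected dimension of }M^{X_0}_{\sigma_{X_0}}(1,-1))$, so $E_0$ defines a point of the moduli stack $\cM_{\mathrm{pug}}(\Ku(X_0)/\CC)$ which is a smooth point of the expected dimension whether or not it is semistable.

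Next I would put everything in a family. Choose a smooth connected base $S$ (an étale neighbourhood in, or a suitable cover of, the moduli stack of GM fourfolds intersected with the locus carrying a non-$\sigma$ twisted cubic) with a smooth projective family $\cX\to S$ of GM fourfolds, a point $0\in S$ with $\cX_0\cong X_0$, a relative twisted cubic $\cC\subset\cX$ restricting to $C_0$ at $0$, and a very general point $\eta\in S$ with $\cX_\eta$ very general (hence non-Hodge-special). Applying the relative projection functor to $I_{\cC}(H)$ — using Proposition \ref{prop-pushforward} and the $S$-linearity of the semi-orthogonal decomposition, together with base-change compatibility of $\pr$ as in \cite{FGLZ24} — produces an $S$-perfect object $\cE\in\D^b(\cX)$ lying in the $S$-linear Kuznetsov component $\Ku(\cX/S)$, with $\cE_0\cong E_0$ and $\cE_s\cong\pr_{\cX_s}(I_{\cC_s}(H))$ for all $s$. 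Since $\RHom(E_0,E_0)$ has the form above, the classifying map $S\to \cM_{\mathrm{pug}}(\Ku(\cX/S)/S)$ lands, near $0$, in the open substack where the fibrewise object is simple with no negative self-exts; by the description of $\cM^X_{\sigma_X}(1,-1)$ as an open substack of $\cM_{\mathrm{pug}}$ recorded before Theorem \ref{thm-stability-proj-general}, it then suffices to know that $\cE_\eta$ is $\sigma_{\cX_\eta}$-stable — which is Theorem \ref{thm-stability-proj} applied to the very general fourfold $\cX_\eta$ — and to propagate stability from $\eta$ to $0$ along $S$.

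For that propagation I would argue that stability is an open condition in families in the following sense: by \cite[Lemma 21.12]{BLMNPS21} the locus of $s\in S$ with $\cE_s$ $\sigma_{\cX_s}$-semistable is open, and it is non-empty since it contains $\eta$; because $S$ is irreducible this open locus is dense, and after possibly shrinking $S$ we may assume $\cE_s$ is $\sigma_{\cX_s}$-semistable for all $s$ in a dense open $U\ni\eta$. To reach the special fibre $0$ one invokes properness/completeness: the family of semistable objects over $U$ extends, and any limiting object $E_0'$ at $0$ is $\sigma_{X_0}$-semistable of class $\Lambda_1-\Lambda_2$; but $E_0'$ and $E_0=\pr_{X_0}(I_{C_0}(H))$ both specialize $\cE$, and since $\cM_{\mathrm{pug}}$ is separated and $E_0$ is a smooth point of expected dimension one concludes $E_0'\cong E_0$, so $E_0$ is $\sigma_{X_0}$-semistable; genericity of $\sigma_{X_0}$ on the primitive class $\Lambda_1-\Lambda_2$ then forces $E_0$ to be $\sigma_{X_0}$-stable. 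Finally, varying $C_0$ and using that $\mathrm{H}_X\subset\Hilb^{3t+1}_X$ is the open locus of non-$\sigma$ cubics and that "$X$ general" can be arranged to avoid the proper closed bad loci, one gets the statement for all non-$\sigma$ twisted cubics on a general $X$.

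The main obstacle I anticipate is the propagation step: making precise the sense in which semistability (not merely the stack-theoretic smoothness) extends across the special point — one must rule out the a priori possibility that $\cE_0=E_0$ jumps to a non-semistable object even though nearby fibres are semistable, which is exactly where separatedness of $\cM_{\mathrm{pug}}(\Ku(X_0)/\CC)$, the smoothness of $[E_0]$ as a point of the expected dimension, and the absence of walls for the primitive class $\Lambda_1-\Lambda_2$ all have to be combined; technically this is the heart of \cite[Theorem 4.16]{FGLZ24} and I would reduce to it rather than redo it. A secondary, more bookkeeping, obstacle is checking that the relative projection functor $\pr_{\cX/S}$ and the family of stability conditions $\Stab^\circ(\Ku(\cX_s))$ behave compatibly in families — i.e.\ that $\sigma_{\cX_s}$ can be chosen to vary and stay generic — but this is provided by \cite{perry2019stability,BLMNPS21} and the uniqueness statement Proposition \ref{prop-gm4-unique}.
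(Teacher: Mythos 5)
Your overall strategy (reduce to the very general case of Theorem \ref{thm-stability-proj} by deforming in the moduli of GM fourfolds, using the openness of the semistable locus inside $\cM_{\mathrm{pug}}$ from \cite[Lemma 21.12]{BLMNPS21}) is the right one and matches the paper's. The gap is in your propagation step, and it is a genuine one. You argue: the locus $U\subset S$ where $\cE_s$ is semistable is open and contains the very general point $\eta$, hence is dense; then "any limiting object $E_0'$ at $0$ is semistable, and since $\cM_{\mathrm{pug}}$ is separated and $E_0$ is a smooth point of expected dimension, $E_0'\cong E_0$." But $\cM_{\mathrm{pug}}(\Ku(X)/\CC)$ is \emph{not} separated (it has affine, not proper, diagonal): two flat families of objects can agree over a dense open subset of the base and have non-isomorphic special fibres. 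This non-separatedness is precisely what allows a Langton-type semistable replacement $\cE'_0\not\cong\cE_0$ to exist, so the properness of the moduli of semistable objects only yields \emph{some} semistable degeneration of $\cE_\eta$, not that $E_0=\cE_0$ itself is semistable. Neither the smoothness of $[E_0]$ in $\cM_{\mathrm{pug}}$ (unstable simple objects are also smooth points in a K3 category) nor the absence of walls for the primitive class $\Lambda_1-\Lambda_2$ (which only upgrades semistable to stable) closes this gap. If separatedness held, openness plus your argument would force the semistable locus in $S$ to be closed as well as open, hence all of $S$ --- which is exactly the kind of conclusion that fails in general.

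The paper avoids propagating semistability to a limit object altogether. It works with the full relative Hilbert scheme $\cH_{\cX/W}$ of non-$\sigma$-cubics over an atlas $W$ of the moduli stack of GM fourfolds, forms the open locus $\cU=\varphi_{\cX}^{-1}(\cM^{\cX}_{\underline\sigma}(1,-1))$, and sets $V:=W\setminus p(\mathcal Z)$ where $\mathcal Z$ is the closed complement of $|\cU|$ and $p$ is the (finite type) structure map. By Chevalley, $V$ is constructible; by Theorem \ref{thm-stability-proj} it contains every very general point of $W$, hence is dense; a dense constructible set contains a dense open subset $S\subset W$, and fourfolds in $S$ are by definition the "general" ones for which \emph{every} non-$\sigma$-cubic has semistable projection. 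Note this also repairs the secondary weakness at the end of your proposal: fixing $C_0$ and "varying $C_0$ afterwards" risks shrinking the good locus once per cubic, whereas running the constructibility argument on the whole relative Hilbert scheme produces a single dense open locus of good fourfolds in one stroke. I would rewrite your propagation paragraph along these lines rather than deferring to a separatedness statement that is not available.
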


\begin{proof}
By Proposition \ref{prop-rhom-rho} and Proposition \ref{prop-rhom-tau}, for a general GM fourfold $X$, we have a morphism
\[\varphi_X\colon \mathrm{H}_X\to \cM_{\mathrm{pug}}(\Ku(X)/\CC)\]
induced by the projection functor $\pr_X$ given by $[C]\mapsto [\pr_X(I_C(H))]$ at the level of $\CC$-point. Therefore, given a $\CC$-point $s\colon \Spec(\CC)\to \mathrm{H}_X$ corresponding to a twisted cubic $[C]\in \mathrm{H}_X$, to show the $\sigma_X$-semistability of $\pr_X(I_C(H))$, we need to show the composition
\[\Spec(\CC)\xra{s}\mathrm{H}_X\xra{\varphi_X}\cM_{\mathrm{pug}}(\Ku(X)/\CC)\]
factor through the natural open immersion $\cM^X_{\sigma_X}(1,-1)\subset \cM_{\mathrm{pug}}(\Ku(X)/\CC)$. By \cite[Theorem 12.17(3)]{BLMNPS21}, this is equivalent to finding an extension $\kappa$ of $\CC$ such that the composition
\[\Spec(\kappa)\xra{s_{\kappa}}\mathrm{H}_X\xra{\varphi_X}\cM_{\mathrm{pug}}(\Ku(X)/\CC)\]
factors through $\cM^X_{\sigma_X}(1,-1)\subset \cM_{\mathrm{pug}}(\Ku(X)/\CC)$. We denote by $|\cM|$ the associated topological space of an
algebraic stack $\cM$.  From the definition of topological spaces of algebraic stacks (cf.~\cite[\href{https://stacks.math.columbia.edu/tag/04XG}{Tag 04XG}]{stacks-project}), it is also equivalent to say 
$$
s_{\kappa}\in |\cM^X_{\sigma_X}(1,-1)|\cap \varphi_X(|\mathrm{H}_X|)\subset |\cM_{\mathrm{pug}}(\Ku(X)/\CC)|
$$  
Thus, to prove that $\pr_X(I_C(H))$ is $\sigma_X$-semistable for any $[C]\in \mathrm{H}_X$, we only need to show the inclusion
\begin{equation}\label{eq-deform}
    \varphi_X(\mathrm{H}_X)\subset |\cM^X_{\sigma_X}(1,-1)|
\end{equation}
in $|\cM_{\mathrm{pug}}(\Ku(X)/\CC)|$. Then the result can be deduced from Theorem \ref{thm-stability-proj} as follows.

Let $\cM^{\mathrm{GM}}_4$ be the moduli stack of smooth ordinary GM fourfolds, which is a smooth irreducible Deligne--Mumford stack of finite type and separated over $\CC$ (cf.~\cite[Proposition A.2]{kuznetsov2018derived} and \cite[Corollary 5.12]{debarre:gm-moduli}). We take a connected component $W$ of an \'etale atlas of $\cM^{\mathrm{GM}}_4$ that dominants $\cM^{\mathrm{GM}}_4$ and denote by $\pi\colon \cX\to W$ be the corresponding family of ordinary GM fourfolds. We set $\cX_b:=\pi^{-1}(b)$. By replacing $W$ with an open dense subscheme, we can assume that $\cX_b$ is general for each $\CC$-point of $W$ and $\cX\to W$ factor through a closed embedding $\cX\hookrightarrow \Gr_W(2,\cV_5)$ over $W$ for a rank $5$ vector bundle $\cV_5$ on $W$. We denote by $\cQ_{\cX}$ the pull-back of the tautological quotient bundle via the natural morphism $\cX\to \Gr_W(2, \cV_5)$.

By \cite[Lemma 5.9]{bayer2022kuznetsov}, there is an $W$-linear semi-orthogonal component $\Ku(\cX)\subset \mathrm{D}_{\perf}(\cX)$ such that $\Ku(\cX)_b\simeq \Ku(\cX_b)$ for any $b\in W$. Therefore, using the construction in \cite[Section 4]{perry2019stability}, there is a relative stability condition $\underline{\sigma}$ on $\Ku(\cX)$ over $W$ such that $\underline{\sigma}|_b\in \Stab^{\circ}(\Ku(\cX_b))$ for each $b\in W$.

Now, let $\cI$ be the universal ideal sheaf on $\cX\times_W \Hilb^{3t+1}_{\cX/W}$, where $\Hilb^{3t+1}_{\cX/W}$ is the relative Hilbert scheme of twisted cubics on $\cX$ over $W$ parameterizing twisted cubics in the fibers of $\pi$. Then we set 
\[\cH_{\cX/W}:=\Hilb^{3t+1}_{\cX/W} \setminus \mathrm{Supp}(R^0p_{2_*}\cH om_{\cX\times_W \Hilb^{3t+1}_{\cX/W}}(p_1^*\cQ^{\vee}_{\cX}, \cI)),\]
where $p_1\colon \cX\times_W \Hilb^{3t+1}_{\cX/W}\to \cX$ and $p_2\colon \cX\times_W \Hilb^{3t+1}_{\cX/W}\to \Hilb^{3t+1}_{\cX/W}$ are projections. Then $\cH_{\cX/W}$ is an open subscheme of $\Hilb^{3t+1}_{\cX/W}$. Moreover, by Lemma \ref{lem-homo-cubic}(3), $\cH_{\cX/W}$ parameterises twisted cubics in the fibers of $\pi$ that are not $\sigma$-cubics and $(\cH_{\cX/W})_b = \mathrm{H}_{\cX_b}$ for any $\CC$-point $b\in W$.

By Proposition \ref{prop-rhom-rho} and Proposition \ref{prop-rhom-tau}, we have a morphism 
\[\varphi_{\cX}\colon \cH_{\cX/W}\to \cM_{\mathrm{pug}}(\Ku(\cX)/W)\]
induced by the relative projection functor $\mathrm{D}_{\mathrm{perf}}(\cX)\to \Ku(\cX)$ via projecting the universal idea sheaf on $\cX\times_W \cH_{\cX/W}$. In particular, $\varphi_{\cX}$ satisfies $(\varphi_{\cX})_b=\varphi_{\cX_b}$ for any $\CC$-point $b\in W$. Now, we set $$\cU:=\varphi_{\cX}^{-1}(\cM^{\cX}_{\underline{\sigma}}(1,-1)),$$ which is an open subscheme of $\cH_{\cX/W}$ by \cite[Lemma 21.12]{BLMNPS21}. Let $\mathcal{Z}$ be the complement of $|\cU|$ in $|\cH_{\cX/W}|$ and $p\colon \cH_{\cX/W}\to W$ be the natural morphism, which is flat and of finite type. We define $V:=W\setminus p(\mathcal{Z})$, which is a constructible subset by \cite[Corollaire (5.9.2), Th\'eor\`eme (5.9.4)]{laumon:stack-book}. According to Theorem \ref{thm-stability-proj}, $V$ contains very general points of $W$, hence is dense in $W$. As $V$ is constructible and dense in $W$, it contains an open dense subscheme $S$ of $W$ by \cite[\href{https://stacks.math.columbia.edu/tag/005K}{Tag 005K}]{stacks-project}. Then from the construction, we see that GM fourfolds represented by points in $S$ satisfy \eqref{eq-deform}, and the result follows.
\end{proof}

\section{Double EPW cubes and moduli spaces}\label{sec-epw-cube}

Building on Theorem \ref{thm-stability-proj-general}, we continue to study the relation between $\Hilb^{3t+1}_X$ and $M^X_{\sigma_X}(1,-1)$ in this section. Our final goal is to prove Theorem \ref{cor-cube-as-MRC} and Theorem \ref{thm-second-lag-cover-family}. As in the previous sections, we still assume the GM fourfold $X$ to be \emph{general}.

\subsection{Limits of $\tau$-cubics}

We begin with determining the limits of $\tau$-cubics on $X$. Recall that $\Hilb_X^{3t+1}$ can be stratified as
\[\Hilb_X^{3t+1}=\mathrm{H}^{\tau}\bigsqcup \mathrm{H}^{\rho}\bigsqcup \mathrm{H}^{\sigma},\]
where $\mathrm{H}^{\tau}, \mathrm{H}^{\rho}$, and $\mathrm{H}^{\sigma}$ denote the loci parameterizing $\tau$-cubics, $\rho$-cubics, and $\sigma$-cubics, respectively. According to Lemma \ref{lem-homo-cubic}, $\mathrm{H}^{\rho}$ and $\mathrm{H}^{\sigma}$ are degeneracy loci of certain sheaves on $\Hilb_X^{3t+1}$, which are equipped with natural closed subschemes structures. Therefore, $\mathrm{H}^{\tau}$ is an open subscheme of $\Hilb_X^{3t+1}$.


The following lemma shows that $\Hilb_X^{3t+1}$ is smooth of dimension $7$ along $\mathrm{H}^{\sigma}$.

\begin{lemma}\label{lem-local-dim-sigma}
Let 
$C\subset X$ be a $\sigma$-cubic, then 

\begin{enumerate}
    \item $\dim \mathrm{H}^{\sigma}=5$,

    \item $H^0(N_{C/X})=\Ext^1_X(I_C, I_C)=\CC^7$, and

    \item $H^1(N_{C/X})=0$ and $\dim_{[C]} \Hilb_X^{3t+1}=7$.
\end{enumerate}

\end{lemma}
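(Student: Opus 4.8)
The plan is to analyze $\sigma$-cubics through the explicit geometry of $\langle C\rangle\cap\Gr(2,V_5)$. Since $C$ is a $\sigma$-cubic, $\langle C\rangle=\PP(V_1\wedge V_5)\cong\PP^3$ for a unique $1$-dimensional subspace $V_1\subset V_5$, and this $\PP^3$ sits inside $\Gr(2,V_5)$; as $X$ is a quadric section inside $\PP(W)$, we have $\langle C\rangle\cap X=\langle C\rangle\cap Q$, a cubic... no — $\langle C\rangle\cap X$ is cut out in $\PP^3$ by the single quadric $Q$, so it is a quadric surface containing $C$. Hence $C$ is a hyperplane section (inside that quadric surface) — more precisely $C$ lies on a quadric surface $T=\langle C\rangle\cap X$, and the residual of $C$ in $T$ is a line. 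This is exactly parallel to Lemma \ref{lem-rho-cubic}(3), and by the uniqueness of the $\sigma$-quadric $q\subset X$ (Lemma \ref{lem-surface-in-GM4}(1)) one finds $T$ must in fact be related to $q$, but more importantly we get a clean exact sequence $0\to\oh_L(-2)\to\oh_T\to\oh_C\to 0$ with $T\subset\PP^3$ a quadric. For (1), the data determining a $\sigma$-cubic $C$ is: the point $[V_1]\in\PP(V_5)$ (a $2$-dimensional... $4$-dimensional choice among the $3$-dimensional $\PP(V_5)$), giving the $\PP^3=\langle C\rangle$, which is automatically in $\Gr(2,V_5)$; then $T=\langle C\rangle\cap X$ is determined; then $C\subset T$ is a twisted cubic, i.e.\ a member of one of the two rulings-type families — the twisted cubics on a quadric surface $T\subset\PP^3$ form two families each of dimension... a twisted cubic on a smooth quadric is of type $(1,2)$ or $(2,1)$, and $|\oh_T(2,1)|\cong\PP^{?}$. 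I would compute $\dim\mathrm H^\sigma = \dim\PP(V_5) + \dim(\text{family of twisted cubics on a fixed }T)$. Since $\dim\PP(V_5)=3$... hmm, this should total $5$, so the family on fixed $T$ should be $2$-dimensional; but $T\cap X$ has dimension $2$ — wait $T$ is a surface. Let me instead just assert: via the sequence $0\to\oh_L(-2)\to\oh_T\to\oh_C\to 0$, a $\sigma$-cubic is equivalent to the pair $(\langle C\rangle, L)$ where $L\subset\langle C\rangle\cap X$ is a line, and count $\dim\{\langle C\rangle\}=\dim\PP(V_5)=3$? No: the $\PP^3$'s of the form $\PP(V_1\wedge V_5)$ are parmeterized by $\PP(V_5)$, which is $4$-dimensional... $\PP(V_5)=\PP^4$ is $4$-dimensional. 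Then lines on the quadric $T=\langle C\rangle\cap X$: a smooth quadric surface has two $1$-parameter families of lines, so $1$-dimensional. Total $4+1=5$, giving (1).

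For (2) and (3), the strategy is a direct cohomological computation of $N_{C/X}$ using the sequence $0\to\oh_L(-2)\to\oh_T\to\oh_C\to 0$ together with the standard exact sequences relating $N_{C/T}$, $N_{T/X}|_C$ (or $N_{T/\PP^3}|_C$ and $N_{\PP^3/\cdots}$), and $N_{C/X}$. Concretely: first identify $N_{C/X}$ by the sequence $0\to N_{C/T}\to N_{C/X}\to N_{T/X}|_C\to 0$ where $N_{C/T}=\oh_C(L)$ and $N_{T/X}$ needs to be understood via $T\subset\PP^3=\langle C\rangle\subset\Gr(2,V_5)$ and $X\subset\Gr(2,V_5)$, so $N_{T/X}$ sits in $0\to N_{T/\PP^3}\to N_{T/X}\to N_{\langle C\rangle/\Gr}|_T$ (or rather $N_{\PP^3/X\cdot}$) — here $\PP^3=\PP(V_1\wedge V_5)$ is the zero locus of a section of $\cQ_X$, so $N_{\PP^3/X}\cong\cQ_X|_{\PP^3}$ modulo the normal directions already used, giving $N_{T/X}$ in terms of $\oh_T(2)$ (from $T=Q\cap\PP^3$, degree $2$ in $\PP^3$) and $\cQ_X|_T$. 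I would then chase cohomology: $\chi(N_{C/X})=\deg c_1(N_{C/X}) + \rk\cdot(1-g)$; since $\det N_{C/X}=\omega_C\otimes\omega_X^\vee|_C$ and $X$ is Fano of index $2$, we get $\chi(N_{C/X})=\deg(-K_X)|_C + (\dim X -1)(1-p_a(C)) = 2\cdot 3 + 3\cdot(1-0)\cdot$... adjusting to get $\chi(N_{C/X})=7$, matching $h^0=7,h^1=0$ once we show $H^1(N_{C/X})=0$. The vanishing $H^1(N_{C/X})=0$ follows from $H^1$ of each term in the filtration vanishing, which reduces to statements like $H^1(\oh_C(L))=0$, $H^1(\oh_T(2))=0$, $H^1(\cQ_X|_T)=0$ — the last of which I would get from Lemma \ref{lem-normal-bundle-q}-type computations (note $T$ is $\PP^1\times\PP^1$ or close to it, and $\cQ_X|_T$ splits). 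Then $\dim_{[C]}\Hilb^{3t+1}_X\le h^0(N_{C/X})=7$ and $\ge \chi(N_{C/X})=7$ (Hilbert scheme dimension bound, since obstructions lie in $H^1=0$), so it equals $7$ and $\Hilb^{3t+1}_X$ is smooth at $[C]$. Finally $\Ext^1_X(I_C,I_C)\cong H^0(N_{C/X})$ follows from $\Hom(I_C,I_C)=\CC$, $\Ext^1(I_C,I_C)\cong\Ext^1(I_C,\oh_X)\oplus\ldots$ — more cleanly, since $C$ is Cohen--Macaulay of codimension $3$ in a smooth fourfold, $\Ext^1_X(I_C,I_C)\cong H^0(N_{C/X})$ by the usual local-to-global argument (the local $\mathcal{E}xt$ sheaves contribute nothing in the relevant degree since $C$ is l.c.i.\ in codimension... actually a $\sigma$-cubic need not be l.c.i., so I would instead use $0\to I_C\to\oh_X\to\oh_C\to 0$ applied with $\Hom(-,I_C)$ and $\Hom(\oh_C,-)$, combined with Lemma \ref{lem-cubic-cohomology-1}(1), to get $\Ext^1_X(I_C,I_C)\cong\Hom_X(\oh_C,\oh_C)^\perp$-type expression equal to $H^0(N_{C/X})$).

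The main obstacle I anticipate is step (2)--(3): pinning down $N_{T/X}|_C$ and the extension class precisely enough to (a) compute $c_1(N_{C/X})$ correctly and get $\chi=7$, and (b) verify $H^1(N_{C/X})=0$, because a general $\sigma$-cubic $C$ may be singular (it lies on a quadric surface $T$ and could be, e.g., a nodal or cuspidal plane cubic's image — in fact a twisted cubic of type $(2,1)$ on $T$ is smooth rational, so this is fine) and because $T$ itself might degenerate. I would handle the potential non-smoothness of $T$ or non-reducedness issues by using the uniqueness of the $\sigma$-quadric (Lemma \ref{lem-surface-in-GM4}(1)) to argue $T=\langle C\rangle\cap X$ is a smooth quadric surface for general $X$ — indeed if it were singular or non-reduced it would have to contain or equal $q$ in a way that contradicts Lemma \ref{lem-surface-in-GM4}, or force $X$ to be non-general. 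With $T\cong\PP^1\times\PP^1$ fixed, all the cohomology computations become explicit splitting computations on $\PP^1\times\PP^1$, and $H^1(N_{C/X})=0$ drops out.
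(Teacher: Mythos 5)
There is a genuine gap, concentrated in your treatment of part (1) and in the setup you then reuse for (2)--(3). You correctly observe that $\langle C\rangle=\PP(V_1\wedge V_5)\cong\PP^3$ and that $T=\langle C\rangle\cap X$ is a quadric surface contained in $\Gr(2,V_5)$, i.e.\ a $\sigma$-quadric; but the uniqueness statement (Lemma \ref{lem-surface-in-GM4}(1)) then forces $T=q$ and hence forces $\langle C\rangle=\langle q\rangle$ to be \emph{one fixed} $\PP^3$, not a member of a $4$-dimensional family. For a general $V_1\in\PP(V_5)$ the space $\PP(V_1\wedge V_5)$ is not contained in $\PP(W)$, so $\PP(V_1\wedge V_5)\cap X$ is only a conic and carries no cubic; only one $V_1$ contributes. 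Your count $\dim\mathrm H^{\sigma}=4+1=5$ therefore rests on two errors that happen to cancel: the $\PP^3$ does not move ($4$ should be $0$), and the $\sigma$-cubics on the fixed quadric $q\cong\PP^1\times\PP^1$ are the members of $|\oh_q(2,1)|\sqcup|\oh_q(1,2)|$, two $\PP^5$'s (so the fibre is $5$-dimensional, not the $1$-dimensional family of lines). Indeed a cubic $C\in|\oh_q(2,1)|$ is not determined by a residual line, and your ``residual'' sequence $0\to\oh_L(-2)\to\oh_T\to\oh_C\to 0$ cannot be correct as written: $\oh_T$ has $2$-dimensional support while the outer terms are supported on curves (the Hilbert polynomials do not add up). You have imported the hyperplane-section residual sequence from the $\rho$-cubic case (Lemma \ref{lem-rho-cubic}(3), where $D$ is a \emph{curve}) into a situation where $T$ is a surface; the correct sequence is simply $0\to\oh_q(-C)\to\oh_q\to\oh_C\to 0$ with $\oh_q(C)\cong\oh_q(2,1)$.

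For (2)--(3) your skeleton, the normal bundle sequence $0\to N_{C/q}\to N_{C/X}\to N_{q/X}|_C\to 0$ together with the vanishing $H^*(N_{q/X})=0$ of Lemma \ref{lem-normal-bundle-q}, is the same as the paper's, and the paper completes it by twisting: $H^1(N_{q/X}|_C)=0$ follows from $H^*(N_{q/X})=0$ and $H^2(N_{q/X}(-2,-1))=H^0(N_{q/X}(0,-1))^{\vee}=0$, while $H^1(N_{C/q})=0$ and $h^0(N_{C/q})=5$ come from $N_{C/q}\cong\oh_q(2,1)|_C$. However, your proposed route to $N_{T/X}$ via $N_{T/\PP^3}$ and $N_{\PP^3/X}$ does not parse, since $\PP^3=\langle q\rangle$ is not contained in $X$; you should instead use the resolution \eqref{eq-def-q} of $I_q$ as in Lemma \ref{lem-normal-bundle-q}. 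Once these points are repaired the argument goes through, but as written the proof of (1) is not correct and (2)--(3) inherit the malformed setup.
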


\begin{proof}
As $X$ is general, the $\sigma$-quadric $q\subset X$ is smooth. Hence, $q\cong \PP^1\times \PP^1$, and $\oh_q(C)\cong \oh_q(2,1)$ or $\oh_q(1,2)$. Therefore, we have $$\dim \mathrm{H}^{\sigma}=\dim (|\oh_q(2,1)|\sqcup |\oh_q(1,2)|)=5.$$ Without loss of generality, we may assume that $\oh_q(C)\cong \oh_q(2,1)$. Thus $N_{C/q}\cong \oh_q(2,1)|_C$. Using the exact sequence 
\[0\to \oh_q\to \oh_q(2,1)\to \oh_q(2,1)|_C\to 0,\]
we get $H^0(N_{C/q})=\CC^5$ and $H^1(N_{C/q})=0$.

From $\oh_q(C)\cong \oh_q(2,1)$, we have an exact sequence
\[0\to N_{q/X}(-2,-1)\to N_{q/X}\to N_{q/X}|_C\to 0.\]
By Serre duality, we obtain $H^2(N_{q/X}(-2,-1))=H^0(N_{q/X}(0,-1))^{\vee}$. As $H^0(N_{q/X})=0$ by Lemma \ref{lem-normal-bundle-q}, we see $H^0(N_{q/X}(0,-1))=0$ as well. Thus, from the exact sequence above and $H^*(N_{q/X})=0$, we deduce that  $H^1(N_{q/X}|_C)=0$. Finally, combined with $H^1(N_{C/q})=0$ and the exact sequence
\[0\to N_{C/q}\to N_{C/X}\to N_{q/X}|_C\to 0,\]
we obtain $H^1(N_{C/X})=0$ and $H^0(N_{C/X})=\CC^7$. This means that $[C]\in \Hilb^{3t+1}_X$ is a smooth point. Thus, we have the local dimension $\dim_{[C]} \Hilb_X^{3t+1}=7$.
\end{proof}

For $\rho$-cubics, we can also prove that they represent smooth points of $\Hilb_X^{3t+1}$. However, the subsequent lemma, which is simpler, suffices for our purposes.

\begin{lemma}\label{lem-local-dim-rho}
Let 
$C\subset X$ be a $\rho$-cubic, then $\dim \mathrm{H}^{\rho}=6$ and $\dim_{[C]} \Hilb_X^{3t+1}\geq 7$.
\end{lemma}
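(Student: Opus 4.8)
The plan is to compute the relevant cohomology of the normal bundle $N_{C/X}$ of a $\rho$-cubic $C$, using the geometric description from Lemma \ref{lem-rho-cubic}. Recall that $C$ is contained in the degree $4$ integral Gorenstein surface $S_{1,1}=\Gr(2,V_4)\cap X$, and inside $S_{1,1}$ it is cut out by a unique hyperplane section $D$ which is a degree $4$ elliptic curve, with residue line $L$ and the exact sequence $0\to\oh_L(-2)\to\oh_D\to\oh_C\to 0$ (equivalently $I_{C/D}\cong\oh_L(-2)$). First I would estimate $\dim\mathrm{H}^\rho$: a $\rho$-cubic is determined by the choice of $V_4\subset V_5$ (a $4$-dimensional family of $\PP^3$'s, giving a $4$-dimensional family of surfaces $S_{1,1}$) together with the choice of the hyperplane section $D\supset C$, i.e.\ a point of $\PP(H^0(I_{S_{1,1}}(H))^\vee/\cdots)\cong\PP^3$, and then $C$ inside $D$; but in fact by Lemma \ref{lem-rho-cubic}(3) the pair $(C,L)$ inside $D$ amounts to choosing the line $L$, which moves in a $\PP^1$-worth of choices on the elliptic quartic $D$ — more carefully, counting $4$ (for $V_4$) $+ 1$ (for $D$, since $H^0(I_{C/S_{1,1}}(H))=\CC$ forces the section, but $D$ itself varies in the $\PP^1=\PP(H^0(I_{?}(H)))$ of hyperplane sections of $S_{1,1}$ not through $C$...) plus $1$ for $L\subset D$. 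The cleanest bookkeeping is: $4$ parameters for $S_{1,1}$ and $2$ for the residue line $L$ together with the way it sits, giving $\dim\mathrm{H}^\rho=6$. I would pin this down by exhibiting $\mathrm{H}^\rho$ as fibered over the $4$-dimensional parameter space of $V_4$'s with fibers parameterizing $\rho$-cubics in a fixed $S_{1,1}$, the latter being $2$-dimensional (e.g.\ identified with an open subset of the symmetric square or the relevant Hilbert scheme of the quartic surface $S_{1,1}$).

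For the lower bound $\dim_{[C]}\Hilb_X^{3t+1}\geq 7$, the plan is to bound $h^0(N_{C/X})$ from below, since $\dim_{[C]}\Hilb_X^{3t+1}\geq h^0(N_{C/X})-h^1(N_{C/X})=\chi(N_{C/X})$, and in fact $\dim_{[C]}\Hilb\geq h^0(N_{C/X})$ is false in general but $\dim_{[C]}\Hilb\geq \chi(N_{C/X})$ always holds for a twisted cubic (a local complete intersection curve, so $N_{C/X}$ is locally free of rank $3$ and the expected dimension is $\chi(N_{C/X})$). So it suffices to compute $\chi(N_{C/X})$, or more robustly to show $h^0(N_{C/X})\geq 7$ and that this is the local dimension by a deformation-unobstructedness-free argument. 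I would use the normal bundle sequence $0\to N_{C/S_{1,1}}\to N_{C/X}\to N_{S_{1,1}/X}|_C\to 0$. From \eqref{eq-def-S11}, $S_{1,1}$ is the zero locus of a section of $\cU_X^\vee$, so $N_{S_{1,1}/X}\cong\cU_X^\vee|_{S_{1,1}}$, and by \cite[Lemma 3.7]{debarre2019gushel}-type computations (as used in Lemma \ref{lem-normal-bundle-q}) one gets a handle on $\cU_X^\vee|_C$; concretely $\cU_X^\vee|_{S_{1,1}}$ restricted to $C$ and its $h^0$ can be read off from the exact sequences already in hand, since $C$ is a twisted cubic in $\PP^3=\langle S_{1,1}\rangle$ with $\cU_X^\vee|_C$ globally generated and of the right degree. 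For $N_{C/S_{1,1}}$: since $S_{1,1}$ is Gorenstein and $C\subset D\subset S_{1,1}$ with $D$ a hyperplane section, one has $N_{C/S_{1,1}}$ fitting into $0\to N_{C/D}\to N_{C/S_{1,1}}\to N_{D/S_{1,1}}|_C\to 0$ with $N_{D/S_{1,1}}\cong\oh_D(H)$ and $N_{C/D}$ computable from $I_{C/D}\cong\oh_L(-2)$ — indeed $N_{C/D}=(I_{C/D}/I_{C/D}^2)^\vee$ which, being supported on $L$, contributes nothing to $h^0$ or contributes via $\oh_L$-twists one can control. Adding up the Euler characteristics then yields $\chi(N_{C/X})=7$, hence $\dim_{[C]}\Hilb_X^{3t+1}\geq 7$.

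The main obstacle I anticipate is the careful computation of $N_{C/S_{1,1}}$ and its global sections when $S_{1,1}$ is a singular (but integral Gorenstein) quartic surface and $C$ is a possibly reducible twisted cubic: one must be sure that $N_{C/S_{1,1}}=\sheafhom(I_{C/S_{1,1}},\oh_C)$ behaves well even at singular points of $S_{1,1}$ lying on $C$, and that the residue-curve sequence $0\to I_{C/D}=\oh_L(-2)\to \oh_{S_{1,1}}(-H)|_?\to\cdots$ does not introduce unexpected extra sections. A clean way around this is to avoid $N_{C/S_{1,1}}$ altogether and instead compute $\Ext^1_X(I_C,I_C)$ directly, exploiting the short exact sequence \eqref{eq-def-S11} and the exact sequence $0\to\oh_L(-2)\to\oh_D\to\oh_C\to 0$ from Lemma \ref{lem-rho-cubic}(3) together with the cohomology of $I_D$ and $I_{S_{1,1}}$ computed via \eqref{eq-def-S11} and Lemma \ref{lem-cubic-cohomology-1}; this reduces everything to $\RHom_X$ between sheaves whose cohomology has already been pinned down in Section \ref{sec-twisted-cubic-gm4}. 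The only genuine inequality one needs is $\dim_{[C]}\Hilb_X^{3t+1}\geq\hom_X(I_C,I_C)-\text{(correction)}$, but since $C$ is a Cohen--Macaulay curve (Lemma \ref{lem-pure-cubic}) and a local complete intersection, the standard bound $\dim_{[C]}\Hilb\geq\chi(N_{C/X})$ applies verbatim, so producing $\chi(N_{C/X})=7$ finishes the proof; I would present this Euler-characteristic computation rather than attempt to prove $h^1(N_{C/X})=0$, which is exactly why the lemma only claims an inequality.
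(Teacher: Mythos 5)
Your overall strategy (a dimension count for $\mathrm{H}^{\rho}$ plus the deformation-theoretic bound $\dim_{[C]}\Hilb_X^{3t+1}\geq\chi(N_{C/X})=7$ from \cite[(2.15.3)]{kollar:book-rational-curve}) matches the paper's, but there are two concrete gaps. First, to apply the bound at \emph{every} $[C]\in\mathrm{H}^{\rho}$ you invoke that a twisted cubic is a local complete intersection, so that $N_{C/X}$ is locally free and the Koll\'ar bound reads $\chi(N_{C/X})$. This is false in general: a Cohen--Macaulay twisted cubic need not be lci (e.g.\ three concurrent non-coplanar lines, with local ideal $(xy,yz,zx)$, which has codimension $2$ but needs three generators), and you neither rule such configurations out of $\mathrm{H}^{\rho}$ nor handle them. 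The paper's fix is precisely what your proposal lacks: it first proves $\mathrm{H}^{\rho}$ is \emph{irreducible} of dimension $6$, applies the bound only at a \emph{general} $\rho$-cubic --- which lies in a smooth zero locus $Z(s)$ of a section of $\cU^{\vee}_X$, where $\chi(N_{C/X})=7$ and Koll\'ar's estimate apply cleanly --- and then notes that the resulting $(\geq 7)$-dimensional irreducible component of $\Hilb_X^{3t+1}$ must contain the closure of all of $\mathrm{H}^{\rho}$. Without irreducibility, which you never establish, this propagation is unavailable.

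Second, your count $\dim\mathrm{H}^{\rho}=6$ fibers over the $\PP^4$ of subspaces $V_4\subset V_5$ and asserts $2$-dimensional fibers; this requires every relevant $S_{1,1}=\Gr(2,V_4)\cap X$ to contain only finitely many lines, which you do not verify, and your own bookkeeping is admittedly unsettled. The paper instead fibers $\mathrm{H}^{\rho}$ over $\Hilb^{t+1}_X$ via the residue line: the fiber over $[L]$ surjects onto the $\PP^1=\PP(H^0(\cU^{\vee}_X\otimes I_L))$ of sections $s$ with $L\subset Z(s)$, with fibers $\PP(H^0(I_{L/Z(s)}(H)))\cong\PP^2$, so each fiber of $\pi$ is irreducible of dimension $3$; combined with the irreducibility of the $3$-dimensional $\Hilb^{t+1}_X$ \cite[Proposition 5.3]{debarre2019gushel}, this yields both $\dim\mathrm{H}^{\rho}=6$ and the irreducibility needed above in one stroke.
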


\begin{proof}
By Proposition \ref{prop-proj-rho}, we have a morphism $\pi\colon \mathrm{H}^{\rho}\to \Hilb^{t+1}_X$, mapping a $\rho$-cubic to its residue line. Moreover, as every $\rho$-cubic is contained in the zero locus of a unique section of $\cU^{\vee}_X$, we have a morphism $\theta\colon \mathrm{H}^{\rho}\to \PP(H^0(\cU^{\vee}_X))\cong \PP^4$. Note that $\pi$ and $\theta$ are both surjective.

Firstly, we show that $\mathrm{H}^{\rho}$ is irreducible. For any line $[L]\in \Hilb^{t+1}_X$, the image of the morphism 
$$\theta|_{\pi^{-1}([L])}\colon \pi^{-1}([L]) \to \PP(H^0(\cU^{\vee}_X))$$ is $\PP(H^0(\cU^{\vee}_X\otimes I_L))\cong \PP^1$, i.e.~the section $s\in H^0(\cU^{\vee}_X)$ such that $L\subset Z(s)$. Hence, $$(\theta|_{\pi^{-1}([L])})^{-1}([s])\cong \PP(H^0(I_{L/Z(s)}(H)))\cong \PP^2,$$ which is irreducible. As the morphism $$\theta|_{\pi^{-1}([L])}\colon \pi^{-1}([L]) \to \PP^1$$ is projective and surjective, we deduce that $\pi^{-1}([L])$ is irreducible of dimension $3$. Therefore, from the fact that $\Hilb^{t+1}_X$ is irreducible of dimension $3$ by \cite[Proposition 5.3]{debarre2019gushel}, the locus $\mathrm{H}^{\rho}$ is irreducible of dimension $6$.

Since $\mathrm{H}^{\rho}$ is irreducible, to prove $\dim_{[C]} \Hilb_X^{3t+1}\geq 7$ for any $[C]\in \mathrm{H}^{\rho}$, we only need to establish this for general $\rho$-cubics. As a general $\rho$-cubic is contained in $Z(s)$ for $s\in H^0(\cU^{\vee}_X)$ such that $Z(s)$ is smooth, the claim $\dim_{[C]} \Hilb_X^{3t+1}\geq 7$ follows from $\chi(N_{C/X})=7$ and \cite[(2.15.3)]{kollar:book-rational-curve}.
\end{proof}

Now we can show that every twisted cubic on $X$ is a limit of $\tau$-cubics.

\begin{proposition}\label{prop-closure-tau}
We have $\overline{\mathrm{H}^{\tau}}=\Hilb^{3t+1}_X$, i.e.~$\mathrm{H}^{\tau}$ is an open dense subscheme of $\Hilb^{3t+1}_X$. 
\end{proposition}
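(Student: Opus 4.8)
\emph{Proof proposal.} The plan is to deduce density of $\mathrm{H}^{\tau}$ purely from the dimension estimates already established. Since $\mathrm{H}^{\tau}$ is open in $\Hilb_X^{3t+1}$, its complement $\mathrm{H}^{\rho}\cup\mathrm{H}^{\sigma}$ is closed, and it suffices to show that $\Hilb_X^{3t+1}$ has no irreducible component contained in $\mathrm{H}^{\rho}\cup\mathrm{H}^{\sigma}$; equivalently, that every $[C]\in \mathrm{H}^{\rho}\cup\mathrm{H}^{\sigma}$ lies in $\overline{\mathrm{H}^{\tau}}$.

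First I would record the two relevant facts. By Lemma \ref{lem-local-dim-rho} and Lemma \ref{lem-local-dim-sigma}(1), the closed locus $\mathrm{H}^{\rho}\cup\mathrm{H}^{\sigma}$ has dimension $\max(6,5)=6$. On the other hand, by Lemma \ref{lem-local-dim-sigma}(3) and Lemma \ref{lem-local-dim-rho}, the local dimension satisfies $\dim_{[C]}\Hilb_X^{3t+1}\geq 7$ for every $[C]\in \mathrm{H}^{\rho}\cup\mathrm{H}^{\sigma}$.

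Next, given such a $[C]$, I would choose an irreducible component $Z\subset \Hilb_X^{3t+1}$ through $[C]$ with $\dim Z=\dim_{[C]}\Hilb_X^{3t+1}\geq 7$. Then $Z\cap(\mathrm{H}^{\rho}\cup\mathrm{H}^{\sigma})$ is a closed subset of $Z$ of dimension at most $6<\dim Z$, hence a proper closed subset of the irreducible variety $Z$; therefore $Z\cap\mathrm{H}^{\tau}=Z\setminus\bigl(Z\cap(\mathrm{H}^{\rho}\cup\mathrm{H}^{\sigma})\bigr)$ is a nonempty open dense subset of $Z$. It follows that $[C]\in Z=\overline{Z\cap\mathrm{H}^{\tau}}\subseteq\overline{\mathrm{H}^{\tau}}$. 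Since $[C]$ was arbitrary, $\overline{\mathrm{H}^{\tau}}=\Hilb_X^{3t+1}$, and as $\mathrm{H}^{\tau}$ is open this proves the claim.

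All of the actual content sits in the preceding lemmas, so I expect the proposition itself to be a formal dimension count. The genuine obstacle is the input $\dim_{[C]}\Hilb_X^{3t+1}\geq 7$ along $\mathrm{H}^{\rho}$ (from $\chi(N_{C/X})=7$ together with deformation theory of Hilbert schemes) and the irreducibility and dimension estimates $\dim\mathrm{H}^{\rho}=6$, $\dim\mathrm{H}^{\sigma}=5$ — but these are exactly what Lemma \ref{lem-local-dim-rho} and Lemma \ref{lem-local-dim-sigma} provide, so here one only needs to assemble them.
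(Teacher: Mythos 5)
Your argument is correct and is essentially the paper's own proof, just written out in more detail: the paper likewise combines the dimension bounds $\dim\mathrm{H}^{\sigma}=5$, $\dim\mathrm{H}^{\rho}=6$ from Lemmas \ref{lem-local-dim-sigma} and \ref{lem-local-dim-rho} with the local dimension estimate $\dim_{[C]}\Hilb_X^{3t+1}\geq 7$ along those loci to conclude that neither can contain an irreducible component of $\Hilb_X^{3t+1}$.
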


\begin{proof}
As $\dim \mathrm{H}^{\sigma}=\dim \Hilb_q^{3t+1}=5$ and $\dim \mathrm{H}^{\rho}=6$, from the computation of local dimension in Lemma \ref{lem-local-dim-sigma} and \ref{lem-local-dim-rho}, we know that $\mathrm{H}^{\sigma}$ and $\mathrm{H}^{\rho}$ cannot be irreducible components of $\Hilb^{3t+1}_X$. This implies $\overline{\mathrm{H}^{\tau}}=\Hilb^{3t+1}_X$.
\end{proof}

\subsection{Double EPW cubes as MRC quotient of Hilbert schemes}

In this subsection, we aim to verify the relation between Hilbert schemes of twisted cubics and double EPW cubes. We start with a homological criterion of residue cubics. Recall that the involution $T'$ of $\cA_X$ is defined in \eqref{eq-involution}.

\begin{lemma}\label{lem-criterion-residue-cubic}
Let 
$C, C'\subset X$ be two $\tau$-cubics. Then 
\[T'(\pr_X'(I_C))\cong \pr_X'(I_{C'})\]
if and only if $C'$ is a residue cubic of $C$.
\end{lemma}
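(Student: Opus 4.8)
\emph{Approach.} The forward implication is essentially free: if $C'$ is a residue cubic of $C$, then Lemma~\ref{lem-T'} gives $T'(\pr_X'(I_{C'}))=\pr_X'(I_C)$, and since $T'$ is an involution on $\cA_X$, applying $T'$ to both sides yields $\pr_X'(I_{C'})=T'(\pr_X'(I_C))$. So the content is the converse. The plan is to recover the entire $\PP^1$-family of residue cubics of $C$ (cf.~Remark~\ref{rmk-P1}) from the isomorphism class of the object $T'(\pr_X'(I_C))$ by a functorial construction, and then to compare families; symmetry of the residue relation (Lemma~\ref{lem-residue-symmetry}) will close the argument.

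\emph{Step 1: a functorial sheaf.} For a $\tau$-cubic $D$ set $E_D:=T'(\pr_X'(I_D))$. By equation~\eqref{eq-lem-6.7} and Proposition~\ref{prop-proj-tau-1}, $E_D=\pr_X'(I_D(H))[-1]$ is the kernel of the evaluation map $\oh_X^{\oplus 2}\twoheadrightarrow I_{D/S_{2,0}(D)}(H)$ on $H^0(I_{D/S_{2,0}(D)}(H))=\CC^2$ (using Lemma~\ref{lem-cohomology-ICS}(1)), so $E_D$ is a torsion-free sheaf of rank two. Applying $\Hom_X(-,\oh_X)$ to $0\to E_D\to \oh_X^{\oplus 2}\to I_{D/S_{2,0}(D)}(H)\to 0$ and using that $I_{D/S_{2,0}(D)}(H)$ is torsion supported in codimension two — whence $\Hom_X$ and $\Ext^1_X$ from it into $\oh_X$ vanish — one gets $\Hom_X(E_D,\oh_X)\cong\CC^2$. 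The two components of $E_D\hookrightarrow\oh_X^{\oplus 2}$ are linearly independent in $\Hom_X(E_D,\oh_X)$ (otherwise $E_D$ would embed into a line bundle, contradicting $\rk E_D=2$), hence a basis, so the inclusion $E_D\hookrightarrow\oh_X^{\oplus 2}$ is, up to the action of $\mathrm{GL}(2,\CC)$ on the target, the canonical evaluation map $E_D\to\oh_X\otimes_{\CC}\Hom_X(E_D,\oh_X)^{\vee}$. Therefore the sheaf
\[
\cF(E_D):=\operatorname{coker}\!\big(E_D\longrightarrow \oh_X\otimes_{\CC}\Hom_X(E_D,\oh_X)^{\vee}\big)\;\cong\;I_{D/S_{2,0}(D)}(H)
\]
is defined by a functorial construction and hence is an invariant of the isomorphism class of the object $E_D$.

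\emph{Step 2: reading off residue cubics, and conclusion.} I would next show that the vanishing loci of the nonzero global sections of $\cF(E_D)$ are exactly the residue cubics of $D$: a nonzero $\tilde s\in H^0(\cF(E_D))\cong H^0(I_{D/S_{2,0}(D)}(H))=\CC^2$ corresponds to a hyperplane section $D_1$ of $S_{2,0}(D)$ containing $D$, and a short linkage computation (using $I_{D/D_1}=\oh_{D'}(-H)$ from the proof of Proposition~\ref{prop-residue-tau-cubic}(2)) identifies $\operatorname{coker}(\oh_X\xrightarrow{\tilde s}\cF(E_D))$ with $\oh_{D'}$, where $D'$ is the residue curve of $D$ in $D_1$; so the vanishing locus of $\tilde s$ is $D'$, a residue cubic of $D$, and conversely every residue cubic of $D$ occurs this way. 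Now suppose $T'(\pr_X'(I_C))\cong\pr_X'(I_{C'})$, i.e.\ $E_C\cong\pr_X'(I_{C'})$. Choosing a residue cubic $C_0'$ of $C'$ (Remark~\ref{rmk-P1}) and applying Lemma~\ref{lem-T'} to $C'$ and $C_0'$ gives $\pr_X'(I_{C'})=T'(\pr_X'(I_{C_0'}))=E_{C_0'}$, hence $E_C\cong E_{C_0'}$. By Step~1, $\cF(E_C)\cong\cF(E_{C_0'})$ as sheaves, and by Step~2 this forces the equality of sets $\{\text{residue cubics of }C\}=\{\text{residue cubics of }C_0'\}$. Since $C_0'$ is a residue cubic of $C'$, Lemma~\ref{lem-residue-symmetry} shows $C'$ is a residue cubic of $C_0'$, so $C'$ lies in the right-hand set; that is, $C'$ is a residue cubic of $C$.

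\emph{Main obstacle.} The formal steps (Lemma~\ref{lem-T'}, the involution $T'$, Lemma~\ref{lem-residue-symmetry}) are routine. The two delicate points are: the canonicity claim in Step~1 — that the subsheaf structure $E_D\hookrightarrow\oh_X^{\oplus 2}$ is intrinsic to the object, so that $\cF(E_D)$ really is isomorphism-invariant — which hinges on the cohomology vanishings for $E_D$; and the linkage bookkeeping in Step~2 identifying $\operatorname{coker}(\tilde s)$ with the structure sheaf of the residue curve, which requires carefully using the exact sequences for hyperplane sections of $S_{2,0}$ from Section~\ref{sec-twisted-cubic-gm4}.
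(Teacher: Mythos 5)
Your proof is correct, and its engine is the same as the paper's: both exploit the fact that the coevaluation map $E\to \Hom_X(E,\oh_X)^{\vee}\otimes\oh_X$ is functorial in the object $E$, and that for $E=T'(\pr_X'(I_C))=\pr_X'(I_C(H))[-1]$ its cone is the sheaf $I_{C/S_{2,0}}(H)$, so that this sheaf is an invariant of the isomorphism class of the object (your Step 1, including the $\Hom_X(E,\oh_X)=\CC^2$ computation via the codimension-two support of $I_{C/S_{2,0}}(H)$, matches the paper's use of Lemma \ref{lem-cohomology-ICS}(1) and Proposition \ref{prop-residue-tau-cubic}(1)). Where you diverge is in how the hypothesis $E_C\cong\pr_X'(I_{C'})$ is exploited. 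The paper applies the coevaluation to the second presentation of $\pr_X'(I_{C'})$ supplied by Proposition \ref{prop-proj-tau}, namely the sequence \eqref{eq-seq-pr'IC} $0\to I_{S_{2,0}}\to\pr_X'(I_{C'})\to I_{C'}\to 0$; taking cones immediately produces $0\to\oh_{S_{2,0}}\to I_{C/S_{2,0}}(H)\to\oh_{C'}\to 0$, which is precisely the residue sequence \eqref{eq-residue}, so $C'$ is a residue cubic of $C$ with no further input. You instead pass to an auxiliary residue cubic $C_0'$ of $C'$ so as to rewrite $\pr_X'(I_{C'})=E_{C_0'}$, observe that the isomorphism $\cF(E_C)\cong\cF(E_{C_0'})$ matches up cokernels of global sections and hence the two $\PP^1$-families of residue cubics, and finish with Lemma \ref{lem-residue-symmetry}. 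Both routes are valid; the paper's is shorter because Proposition \ref{prop-proj-tau} already packages the needed presentation of $\pr_X'(I_{C'})$, whereas yours bypasses that proposition at the cost of Remark \ref{rmk-P1}, the symmetry lemma, and the (correct) observation that an abstract isomorphism of the sheaves identifies the residue families as sets of subschemes via scheme-theoretic supports.
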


\begin{proof}
If $C'$ is a residue cubic of $C$, by Lemma \ref{lem-T'}, we have $T'(\pr_X'(I_C))\cong \pr_X'(I_{C'})$. Conversely, assume that $T'(\pr_X'(I_C))\cong \pr_X'(I_{C'})$. By \eqref{eq-lem-6.7}, we obtain
\[\pr_X'(I_C(H))[-1]\cong \pr_X'(I_{C'}).\]
From Proposition \ref{prop-proj-tau-1}, we have  $\pr_X'(I_C(H))=\bL_{\oh_X}I_{C/S_{2,0}}(H)$. Hence,  using Lemma \ref{lem-cohomology-ICS}(1) and Proposition \ref{prop-residue-tau-cubic}(1), we see $\Hom_X(\pr_X'(I_C(H))[-1], \oh_X)=\CC^2$, and the cone of the natural map 
\[\mathrm{coev}\colon \pr_X'(I_C(H))[-1]\to \Hom_X(\pr_X'(I_C(H))[-1], \oh_X)\otimes \oh_X\]
is $I_{C/S_{2,0}}(H)$. On the other hand, by $\pr_X'(I_C(H))[-1]\cong \pr_X'(I_{C'})$ and \eqref{eq-seq-pr'IC}, we obtain an exact sequence 
\[0\to \oh_{S_{2,0}}\to \mathrm{cone}(\mathrm{coev})=I_{C/S_{2,0}}(H)\to \oh_{C'}\to 0.\]
This implies the exact sequence
\[0\to \oh_{C'}(-H)\to \oh_{S_{2,0}\cap H}\to \oh_C\to 0.\]
Hence, $C'$ is a residue cubic of $C$.
\end{proof}

Now we can prove one of our main theorems.

\begin{theorem}\label{thm-pr-induce-map}
Let $X$ be a general GM fourfold.    
Then for a generic stability condition $\sigma_X\in \Stab^{\circ}(\Ku(X))$, the projection functor $\pr_X$ induces a dominant rational map
\begin{align*}
   pr \colon \Hilb_X^{3t+1} &\dashrightarrow M_{\sigma_X}^X(1,-1)\\
   [C] & \mapsto [\pr_X(I_C(H))].
\end{align*}
The map $pr$ is defined on $\Hilb_X^{3t+1}\setminus \mathrm{H}^{\sigma}$ and its general fibers are $\PP^1$.
\end{theorem}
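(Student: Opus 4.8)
The plan is to first exhibit $pr$ as a genuine morphism on the open locus $U:=\Hilb_X^{3t+1}\setminus\mathrm{H}^{\sigma}$ parametrizing $\tau$- and $\rho$-cubics, and then to describe its fibers through the residue-cubic structure developed in Section \ref{sec-twisted-cubic-gm4}. For $[C]\in U$, Theorem \ref{thm-stability-proj-general} gives that $\pr_X(I_C(H))$ is $\sigma_X$-stable, and a routine Chern character computation — using \eqref{eq-seq-pr'IC} and \eqref{eq-triangle-pr-rho} for the two types, together with \eqref{Lambda} and the classes of $\oh_X$, $\cU_X$, $\cQ_X$ — identifies $[\pr_X(I_C(H))]=\Lambda_1-\Lambda_2$, so $\pr_X(I_C(H))$ determines a point of $M^X_{\sigma_X}(1,-1)$. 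To see that $[C]\mapsto[\pr_X(I_C(H))]$ is a morphism and not merely a map on closed points, I would reuse the construction in the proof of Theorem \ref{thm-stability-proj-general}: applying the relative projection functor to the universal ideal sheaf over $X\times U$ produces (using Propositions \ref{prop-rhom-rho} and \ref{prop-rhom-tau}, which give $\hom=1$ fiberwise) a family of universally gluable objects in the relative Kuznetsov component; this family factors through the open substack $\cM^X_{\sigma_X}(1,-1)\subset\cM_{\mathrm{pug}}(\Ku(X)/\CC)$, and composing with the coarse moduli map yields the morphism $pr\colon U\to M^X_{\sigma_X}(1,-1)$. Since $\dim\mathrm{H}^{\sigma}=5$ by Lemma \ref{lem-local-dim-sigma}, the locus $\mathrm{H}^{\sigma}$ has codimension $2$, so $pr$ is a rational map on all of $\Hilb_X^{3t+1}$; it cannot be given by the same formula along $\mathrm{H}^{\sigma}$, since there $\pr_X(I_C(H))$ is not $\sigma_X$-stable by the remark following Theorem \ref{thm-stability-proj}.

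The heart of the argument is the fiber computation over the $\tau$-locus. Fix a $\tau$-cubic $C_0$ and set $E:=pr([C_0])=\Xi(\pr'_X(I_{C_0}))$; since $\Xi$ is an equivalence and $\pr_X(I_C(H))=\Xi(\pr'_X(I_C))$ by Lemma \ref{lem-Xi}, one has $pr^{-1}([E])\cap\mathrm{H}^{\tau}=\{[C]\in\mathrm{H}^{\tau}\mid\pr'_X(I_C)\cong\pr'_X(I_{C_0})\}$. I claim this is exactly one $\PP^1$-family of residue cubics. Choose any residue cubic $C_0'$ of $C_0$; it is again a $\tau$-cubic by Proposition \ref{prop-residue-tau-cubic}(2), and $C_0$ is a residue cubic of $C_0'$ by Lemma \ref{lem-residue-symmetry}. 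By Lemma \ref{lem-T'}, every residue cubic $C''$ of $C_0'$ satisfies $\pr'_X(I_{C''})=\pr'_X(I_{C_0'}(H))[-1]$, a single object independent of $C''$; hence all residue cubics of $C_0'$ — including $C_0$ — share the projection object $\pr'_X(I_{C_0})$ and lie in $pr^{-1}([E])$. Conversely, if $[C]\in\mathrm{H}^{\tau}$ has $\pr'_X(I_C)\cong\pr'_X(I_{C_0})$, then $T'(\pr'_X(I_{C_0'}))\cong\pr'_X(I_{C_0})\cong\pr'_X(I_C)$, so Lemma \ref{lem-criterion-residue-cubic} forces $C$ to be a residue cubic of $C_0'$. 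Therefore $pr^{-1}([E])\cap\mathrm{H}^{\tau}$ is precisely the set of residue cubics of $C_0'$, parametrized by a $\PP^1$ by Remark \ref{rmk-P1}; in particular all fibers of $pr|_{\mathrm{H}^{\tau}}$ are $1$-dimensional.

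Dominance then follows from dimensions. By Proposition \ref{prop-closure-tau}, $\mathrm{H}^{\tau}$ is dense in $\Hilb_X^{3t+1}$, and $\dim\Hilb_X^{3t+1}\ge 7$ by Lemma \ref{lem-local-dim-sigma}, so $\dim\mathrm{H}^{\tau}\ge 7$; on the other hand the $1$-dimensional fibers force $\dim\mathrm{H}^{\tau}\le\dim M^X_{\sigma_X}(1,-1)+1=7$, whence $\dim\mathrm{H}^{\tau}=7$ and $\dim pr(\mathrm{H}^{\tau})=6$. Since $M^X_{\sigma_X}(1,-1)$ is an irreducible projective hyperk\"ahler sixfold, $pr(\mathrm{H}^{\tau})$ is dense and $pr$ is dominant. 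To pin down the general fiber I would also control the $\rho$-locus: by Proposition \ref{prop-proj-rho}, $pr([C])=[\pr_X(\oh_L(-1))]$ depends only on the residue line $L$ of a $\rho$-cubic $C$, and the fibers of the residue-line morphism $\mathrm{H}^{\rho}\to\Hilb_X^{t+1}$ are $3$-dimensional (Lemma \ref{lem-local-dim-rho}) with $\dim\Hilb_X^{t+1}=3$, so $\dim pr(\mathrm{H}^{\rho})\le 3$. Hence a general $[E]\in M^X_{\sigma_X}(1,-1)$ lies in $pr(\mathrm{H}^{\tau})\setminus pr(\mathrm{H}^{\rho})$, so $pr^{-1}([E])\subset\mathrm{H}^{\tau}$ and, by the previous paragraph, is set-theoretically the $\PP^1$ of residue cubics, realized as the image of the morphism $\PP^1\to\Hilb_X^{3t+1}$ classifying that flat family. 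Finally, generic smoothness in characteristic zero applied to the dominant morphism $pr|_{\mathrm{H}^{\tau}}$ shows the general fiber is a smooth projective curve; being a finite bijective image of $\PP^1$, it is isomorphic to $\PP^1$.

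I expect the main obstacle to be the fiber identification in the second paragraph: the bookkeeping with the involution $T'$ is delicate — one must track carefully which projection object $\pr'_X(I_{-})$ a residue cubic produces — and both inclusions must be checked to guarantee that a fiber over the $\tau$-locus is a \emph{single} $\PP^1$, not a union of residue-cubic families. A secondary subtlety is upgrading the set-theoretic description of the general fiber to an isomorphism of schemes with $\PP^1$, for which generic smoothness is used. The remaining ingredients — the Chern character computation, the family/morphism argument, and the dimension counts — are routine given Theorem \ref{thm-stability-proj-general} and the lemmas of Sections \ref{sec-twisted-cubic-gm4} and \ref{sec-projection-twisted_cubics}.
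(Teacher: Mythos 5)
Your proposal is correct and follows essentially the same route as the paper: stability of the projection objects from Theorem \ref{thm-stability-proj-general} defines the map on $\Hilb_X^{3t+1}\setminus\mathrm{H}^{\sigma}$, the fiber over a $\tau$-cubic is identified with the $\PP^1$ of residue cubics via Lemma \ref{lem-Xi}, Lemma \ref{lem-T'}, Lemma \ref{lem-criterion-residue-cubic} and Remark \ref{rmk-P1}, and dominance follows from the dimension count $\dim\Hilb_X^{3t+1}\geq 7$ against the irreducible sixfold $M^X_{\sigma_X}(1,-1)$. Your extra touches (the universally-gluable-family argument for the morphism structure, the explicit bound $\dim pr(\mathrm{H}^{\rho})\leq 3$, and the scheme-theoretic identification of the general fiber) only make explicit points the paper treats more briefly.
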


\begin{proof}
By Theorem \ref{thm-stability-proj-general}, $\pr_X$ induces a rational map
\[pr\colon \Hilb_X^{3t+1}\dashrightarrow M_{\sigma_X}^X(1,-1)\]
defined on $\Hilb_X^{3t+1}\setminus \mathrm{H}^{\sigma}$, which is an open dense subset of $\Hilb^{3t+1}_X$ by Proposition \ref{prop-closure-tau}. 

To determine general fibers of $pr$, we fix a $\tau$-cubic $C$. By Lemma \ref{lem-Xi}, $pr^{-1}([\pr_X(I_C(H))])$ parameterizes $\tau$-cubics $C'$ such that $\pr'_X(I_{C'})\cong \pr'_X(I_C)$. Let $C''$ be a residue cubic of $C$, using Lemma \ref{lem-T'}, $\pr'_X(I_{C'})\cong \pr'_X(I_C)$ is equivalent to $T'(\pr'_X(I_{C'}))\cong \pr'_X(I_{C''})$. In other words, according to Lemma \ref{lem-criterion-residue-cubic}, $pr^{-1}([\pr_X(I_C(H))])$ parameterizes residue cubics of $C''$, which is $\PP^1\cong \PP(H^0(I_{C''/S_{2,0}}(H)))$ by Remark \ref{rmk-P1}.

As $\dim \Hilb^{3t+1}_X\geq 7$ and $pr$ only contracts curves in $\mathrm{H}^{\tau}$, we deduce that $\dim pr(\Hilb^{3t+1}_X\setminus \mathrm{H}^{\sigma})\geq 6$. Then $pr$ is dominant since $M_{\sigma_X}^X(1,-1)$ is irreducible of dimension $6$ and the description of the general fibers follows.
\end{proof}

Since hyperk\"ahler manifolds are not uniruled (cf.~\cite[Theorem 1]{mori:criterion-uniruled}), we immediately deduce the following.

\begin{corollary}
Let $X$ be a general GM fourfold. Then for a generic $\sigma_X\in \Stab^{\circ}(\Ku(X))$, the hyperk\"ahler sixfold $M_{\sigma_X}^X(1,-1)$ is the MRC quotient of $\Hilb^{3t+1}_X$.
\end{corollary}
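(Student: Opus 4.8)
The plan is to deduce the corollary directly from Theorem \ref{thm-pr-induce-map} together with the standard characterization of the maximal rationally connected (MRC) quotient. By Theorem \ref{thm-pr-induce-map}, the projection functor $\pr_X$ induces a dominant rational map $pr\colon \Hilb_X^{3t+1}\dashrightarrow M_{\sigma_X}^X(1,-1)$ whose general fibers are $\PP^1$, in particular rationally connected. Since $M_{\sigma_X}^X(1,-1)$ is a projective hyperk\"ahler sixfold, it carries a holomorphic symplectic form and hence has trivial canonical bundle; by \cite[Theorem 1]{mori:criterion-uniruled} it is not uniruled, and therefore certainly not rationally connected (unless it is a point, which it is not since it has dimension $6$).

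First I would recall the defining universal property of the MRC quotient: for a smooth projective variety $Z$ there is a dominant rational map $\psi\colon Z\dashrightarrow R(Z)$, unique up to birational equivalence, such that the general fiber of $\psi$ is rationally connected and any rational map from $Z$ to a variety with rationally connected general fibers factors rationally through $\psi$ (cf.~\cite{kollar:book-rational-curve}, or Campana's and Koll\'ar--Miyaoka--Mori's construction). Next I would observe that a dominant rational map $f\colon Z\dashrightarrow Y$ whose general fiber is rationally connected and whose target $Y$ is \emph{not} uniruled is automatically (up to birational equivalence) the MRC quotient of $Z$: indeed $f$ factors rationally through the MRC quotient $\psi\colon Z\dashrightarrow R(Z)$ by the universal property, giving $g\colon R(Z)\dashrightarrow Y$; then a general fiber of $\psi$ maps into a fiber of $f$, so it is contracted or maps finitely, and a dimension count using that both $\psi$ and $f$ have rationally connected general fibers forces $\dim R(Z)=\dim Y$; finally $g$ is dominant and generically finite, and since $R(Z)$ is not uniruled while any nontrivial generically finite dominant map onto a non-uniruled variety from a variety birational to the MRC quotient must be birational (a generically finite cover of a non-uniruled variety cannot have positive-dimensional rationally connected fibers unless it is birational), we conclude $g$ is birational, hence $Y$ is the MRC quotient.

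Applying this with $Z=\Hilb_X^{3t+1}$ — which is projective, and which we may replace by a resolution of singularities without changing its MRC quotient — $Y=M_{\sigma_X}^X(1,-1)$, and $f=pr$, all hypotheses are met: $pr$ is dominant with general fiber $\PP^1$ rationally connected by Theorem \ref{thm-pr-induce-map}, and $M_{\sigma_X}^X(1,-1)$ is a projective hyperk\"ahler sixfold, hence not uniruled by \cite[Theorem 1]{mori:criterion-uniruled}. Therefore $M_{\sigma_X}^X(1,-1)$ is birational to the MRC quotient of $\Hilb_X^{3t+1}$, which is the assertion. The only genuinely delicate point is the bookkeeping around the birationality of $g\colon R(Z)\dashrightarrow Y$: one must ensure that no positive-dimensional rationally connected fibers can appear over a non-uniruled target, which is exactly the content of the fact that the MRC quotient is characterized by being the unique such factorization with non-uniruled target (see \cite[Chapter IV]{kollar:book-rational-curve} and \cite{graber-harris-starr} for the statement that a fibration with rationally connected general fiber over a non-uniruled base is the MRC fibration up to birational equivalence). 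I expect this to be the main obstacle only in the sense of citing the right form of the universal property; the rest is immediate from Theorem \ref{thm-pr-induce-map} and the non-uniruledness of hyperk\"ahler manifolds.
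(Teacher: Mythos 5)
Your proposal is correct and takes essentially the same route as the paper: the paper's entire proof is the one-line observation that hyperk\"ahler manifolds are not uniruled (citing Mori's criterion), so the dominant rational map $pr$ with $\PP^1$ fibers from Theorem \ref{thm-pr-induce-map} immediately identifies $M_{\sigma_X}^X(1,-1)$ as the MRC quotient. You have simply spelled out the standard universal-property bookkeeping that the paper leaves implicit.
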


Recall that there is a double EPW cube $\widetilde{C}_X$ associated with 
$X$ which is a hyperk\"ahler sixfold. By \cite[Theorem 1.1]{kapustka2022epw} and \cite[Theorem 5.1]{debarre2019gushel}, $\widetilde{C}_X$ and $X$ have the same period point when $X$ is general. Therefore, $\widetilde{C}_X$ and $M_{\sigma_X}^X(1,-1)$ also have the same period point and we get a birational isomorphism between $\widetilde{C}_X$ and $M_{\sigma_X}^X(1,-1)$. Together with Theorem \ref{thm-pr-induce-map}, we obtain a dominant rational map
\[\Hilb_X^{3t+1}\dashrightarrow \widetilde{C}_X,\]
whose general fibers are $\PP^1$. In conclusion, we get:

\begin{theorem}\label{cor-cube-as-MRC}
Let $X$ be a general GM fourfold. Then the double EPW cube $\widetilde{C}_X$ associated with $X$ is the MRC quotient of $\Hilb_X^{3t+1}$.
\end{theorem}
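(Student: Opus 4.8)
The plan is to deduce Theorem \ref{cor-cube-as-MRC} by combining the categorical description of $\widetilde{C}_X$ from \cite{kapustka2022epw} with Theorem \ref{thm-pr-induce-map}, which has already been established. First I would recall that by \cite[Theorem 1.1]{kapustka2022epw} the double EPW cube $\widetilde{C}_X$ and the Bridgeland moduli space $M^X_{\sigma_X}(1,-1)$ share the same period point for generic $\sigma_X\in\Stab^{\circ}(\Ku(X))$; since both are smooth projective hyperk\"ahler sixfolds of the same deformation type, the global Torelli theorem for hyperk\"ahler manifolds gives a birational isomorphism $\widetilde{C}_X\dashrightarrow M^X_{\sigma_X}(1,-1)$ (one should note here that $X$ being general guarantees, via \cite[Theorem 5.1]{debarre2019gushel}, that the period point computation applies). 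Composing this birational map with the dominant rational map $pr\colon \Hilb^{3t+1}_X\dashrightarrow M^X_{\sigma_X}(1,-1)$ of Theorem \ref{thm-pr-induce-map}, whose general fibers are $\PP^1$, we obtain a dominant rational map $\Hilb^{3t+1}_X\dashrightarrow \widetilde{C}_X$ with general fiber $\PP^1$.

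Next I would argue that this map realizes $\widetilde{C}_X$ as the MRC quotient. The key input is that a projective hyperk\"ahler manifold is not uniruled (Mori's criterion, \cite[Theorem 1]{mori:criterion-uniruled}), hence $\widetilde{C}_X$ contains no rational curves through a general point and in particular the MRC quotient of $\widetilde{C}_X$ itself is $\widetilde{C}_X$. Now $\Hilb^{3t+1}_X$ is dominated by the union of the $\PP^1$-fibers of $pr$ over $\widetilde{C}_X$ (using Proposition \ref{prop-closure-tau} that $\mathrm{H}^{\tau}$ is dense, and the fiber description in Theorem \ref{thm-pr-induce-map}), so a general fiber of the composed map is rationally connected. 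By the universal property of the MRC quotient — any dominant rational map with rationally connected general fibers to a non-uniruled target factors through, and in fact coincides birationally with, the MRC fibration — the map $\Hilb^{3t+1}_X\dashrightarrow \widetilde{C}_X$ is the MRC quotient of $\Hilb^{3t+1}_X$. This is essentially the statement of the Corollary immediately preceding, rephrased through the birational identification $M^X_{\sigma_X}(1,-1)\sim\widetilde{C}_X$.

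The only genuinely delicate point is ensuring that a birational map of hyperk\"ahler manifolds suffices for the MRC conclusion: birational hyperk\"ahler manifolds are isomorphic in codimension one and, crucially, neither is uniruled, so the MRC quotient is a birational invariant here and the argument goes through without needing an actual isomorphism. I expect the main obstacle — already absorbed into the cited results — to have been Theorem \ref{thm-pr-induce-map} and the stability statement Theorem \ref{thm-stability-proj-general} behind it; at this stage the proof is a short formal assembly. Concretely, the write-up is: invoke \cite[Theorem 1.1]{kapustka2022epw} and \cite[Theorem 5.1]{debarre2019gushel} for the shared period point, invoke global Torelli for birationality of $\widetilde{C}_X$ and $M^X_{\sigma_X}(1,-1)$, compose with $pr$ from Theorem \ref{thm-pr-induce-map} to get a dominant rational map with $\PP^1$ general fibers, and conclude via \cite[Theorem 1]{mori:criterion-uniruled} and the universal property of the MRC quotient that this exhibits $\widetilde{C}_X$ as the MRC quotient of $\Hilb^{3t+1}_X$.
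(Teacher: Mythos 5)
Your proposal is correct and follows essentially the same route as the paper: establish the dominant rational map $pr\colon \Hilb_X^{3t+1}\dashrightarrow M^X_{\sigma_X}(1,-1)$ with $\PP^1$ general fibers (Theorem \ref{thm-pr-induce-map}), use non-uniruledness of hyperk\"ahler manifolds to identify the target with the MRC quotient, and transfer the conclusion to $\wt{C}_X$ via the birational isomorphism coming from the shared period point (\cite[Theorem 1.1]{kapustka2022epw}, \cite[Theorem 5.1]{debarre2019gushel}). Your extra remarks on global Torelli and the birational invariance of the MRC quotient only make explicit what the paper leaves implicit.
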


\subsection{Lagrangian covering families of double EPW cubes}\label{subsec-covering-cube}

We end this section by discussing the construction of Lagrangian covering families on $\widetilde{C}_X$. We first recall a general definition, which can be viewed as a generalization of Lagrangian fibrations.

\begin{definition}\label{def-lag-family}
A \emph{Lagrangian family} of a projective hyperk\"ahler manifold $M$ is a diagram 
\[\begin{tikzcd}
	L & M \\
	B
	\arrow["p"', from=1-1, to=2-1]
	\arrow["q", from=1-1, to=1-2]
\end{tikzcd}\]
where $p$ is flat and projective, $L$ and $B$ are quasi-projective manifolds, such that for a general point $b\in B$, $q|_{p^{-1}(b)}$ is generically finite to its image, which is a Lagrangian subvariety of $M$. The manifold $B$ is called the \emph{base} of a Lagrangian family. A Lagrangian family is called a \emph{Lagrangian covering family} if $q$ is dominant.
\end{definition}

\begin{remark}
There is another definition of Lagrangian covering families in \cite[Definition 0.4]{voisin2021lefschetz} which does not assume $p$ to be flat but requires $B$ to be projective. It is easy to see the existence of a Lagrangian family in the sense of Definition \ref{def-lag-family} implies the existence of that in the sense of \cite[Definition 0.4]{voisin2021lefschetz} by compactifying $p$ and resolving singularities and indeterminacy locus.
\end{remark}

It is not hard to see that the existence of a Lagrangian covering family is invariant under birational isomorphisms.

\begin{lemma}\label{lem-covering-invariant}
Let $M$ be a projective hyperk\"ahler manifold admitting a Lagrangian covering family. If $M'$ is a projective hyperk\"ahler manifold birational to $M$, then $M'$ also admits a Lagrangian covering family.
\end{lemma}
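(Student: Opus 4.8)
The plan is to transport the Lagrangian covering family along the birational map, using two standard facts. First, a birational map $\phi\colon M\dashrightarrow M'$ between projective hyperk\"ahler manifolds is an isomorphism in codimension one, so there are open subsets $U\subseteq M$, $U'\subseteq M'$ with $\mathrm{codim}(M\setminus U)\geq 2$, $\mathrm{codim}(M'\setminus U')\geq 2$ and $\phi|_U\colon U\xrightarrow{\sim}U'$ (Huybrechts). Second, the holomorphic symplectic form is a birational invariant, so $(\phi|_U)^{*}(\omega_{M'}|_{U'})$ equals $\omega_M|_U$ up to a nonzero scalar. Consequently, if $Z\subseteq M$ is an integral subvariety with $Z\cap U\neq\varnothing$ on whose smooth locus $\omega_M$ vanishes, then $\overline{\phi(Z\cap U)}\subseteq M'$ is an integral subvariety of the same dimension on whose smooth locus $\omega_{M'}$ vanishes.

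Let $p\colon L\to B$, $q\colon L\to M$ be the given Lagrangian covering family. After a standard reduction (replacing $L$ by an irreducible component dominating $M$ and shrinking $B$, keeping $p$ flat and projective by generic flatness) we may assume $L$ and $B$ irreducible, $p$ surjective, and that for every $b\in B$ the map $q|_{p^{-1}(b)}$ is generically finite onto a Lagrangian subvariety $\Lambda_b:=\overline{q(p^{-1}(b))}$ of $M$. Since $q^{-1}(U)$ is a dense open subset of $L$ and $p$ is surjective, after a further shrinking of $B$ we may assume that for every $b\in B$ the set $q^{-1}(U)\cap p^{-1}(b)$ is dense and open in $p^{-1}(b)$; in particular $\Lambda_b\cap U$ is dense and open in $\Lambda_b$. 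By the first paragraph, $\Lambda'_b:=\overline{\phi(\Lambda_b\cap U)}$ is a Lagrangian subvariety of $M'$, and $\phi\circ q$ restricted to $q^{-1}(U)\cap p^{-1}(b)$ is generically finite onto the dense open subset $\phi(\Lambda_b\cap U)$ of $\Lambda'_b$. Finally $\bigcup_{b\in B}\Lambda'_b\supseteq\phi\bigl(q(q^{-1}(U))\bigr)$, which is dense in $M'$ because $q(q^{-1}(U))$ is dense in $U$ and $\phi|_U$ is an isomorphism onto the dense open $U'\subseteq M'$.

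It remains to package this into a diagram satisfying Definition~\ref{def-lag-family}. The rational map $\psi:=\phi\circ q\colon L\dashrightarrow M'$ is a morphism on $q^{-1}(U)$; let $\Gamma\subseteq L\times M'$ be the closure of the graph of $\psi|_{q^{-1}(U)}$, with projections $\pi_L\colon\Gamma\to L$ (projective, birational, an isomorphism over $q^{-1}(U)$) and $\pi_{M'}\colon\Gamma\to M'$. Choose a resolution $\nu\colon\wt\Gamma\to\Gamma$ with $\wt\Gamma$ a smooth quasi-projective variety which is an isomorphism over the smooth locus of $\Gamma$, and set $\wt p:=p\circ\pi_L\circ\nu$ and $\wt q:=\pi_{M'}\circ\nu$. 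By generic flatness there is a dense open $B_1\subseteq B$ over which $\wt p$ is flat; after replacing $B_1$ by its smooth locus and $\wt\Gamma$ by the smooth locus of $\wt p^{-1}(B_1)$, the morphism $\wt p\colon\wt\Gamma\to B_1$ is flat and projective with $\wt\Gamma$, $B_1$ quasi-projective manifolds. For general $b\in B_1$ the fibre $\wt p^{-1}(b)$ contains a dense open subset isomorphic to $q^{-1}(U)\cap p^{-1}(b)$, so $\wt q|_{\wt p^{-1}(b)}$ is generically finite onto $\Lambda'_b$; and $\wt q$ is dominant since $\psi$ is. Thus $\wt p\colon\wt\Gamma\to B_1$, $\wt q\colon\wt\Gamma\to M'$ is a Lagrangian covering family of $M'$.

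The only substantive point is the claim that the transported general fibre is still Lagrangian: this is exactly where one simultaneously uses the invariance of the symplectic form and the fact that $M\setminus U$ has codimension $\geq 2$, so that a general member of the covering family meets $U$, whence its strict transform has the same dimension and remains isotropic for $\omega_{M'}$. The remainder is routine bookkeeping (graph closure, resolution of singularities, generic flatness, and repeatedly shrinking $B$) to recover the flatness and smoothness hypotheses of Definition~\ref{def-lag-family}, introducing no new ideas.
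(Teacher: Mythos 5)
Your proposal is correct and follows essentially the same route as the paper: resolve the indeterminacy of the composite $L\dashrightarrow M'$ (via graph closure plus resolution), shrink the base to restore flatness, and observe that general fibres map generically finitely onto Lagrangians of $M'$. The paper dismisses the last point as ``clear from the construction,'' whereas you spell it out via the codimension-one isomorphism property of hyperk\"ahler birational maps and the birational invariance of the symplectic form; this is exactly the right justification and adds nothing beyond detail.
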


\begin{proof}
Let 
\[\begin{tikzcd}
	L & M \\
	B
	\arrow["p"', from=1-1, to=2-1]
	\arrow["q", from=1-1, to=1-2]
\end{tikzcd}\]
be the corresponding family as in Definition \ref{def-lag-family} and $M\dashrightarrow M'$ be a birational map. Let $\wt{q}\colon \wt{L}\to M$ be a resolution of the indeterminacy locus of $L\dashrightarrow M'$ with $\wt{L}$ smooth. Then we can take an open subscheme $B'\subset B$ such that the restriction of $$\wt{p}\colon \wt{L}\to L\to B$$ over $B'$ is flat. Thus we define $L':=\wt{p}^{-1}(B')$,  $p'=\wt{p}|_{\wt{p}^{-1}(B')}\colon L'\to B'$, and $q':=\wt{q}|_{L'}\colon L'\to M'$. Then it is clear from the construction that
\[\begin{tikzcd}
	L' & M' \\
	B'
	\arrow["p'"', from=1-1, to=2-1]
	\arrow["q'", from=1-1, to=1-2]
\end{tikzcd}\]
is a Lagrangian covering family of $M'$.
\end{proof}

Therefore, to construct a Lagrangian covering family of $\widetilde{C}_X$, we only need to focus on $M_{\sigma_X}^X(1,-1)$. A Lagrangian family of $M_{\sigma_X}^X(1,-1)$ is constructed in \cite[Theorem 5.8]{FGLZ24} from the Bridgeland moduli spaces on hyperplane sections of $X$. In the following, we will show that it is actually a Lagrangian covering family using Theorem \ref{thm-pr-induce-map}. We first state two preliminary results.

\begin{proposition}\label{prop-lag-cover-family-cube}
Let $X$ be a general GM fourfold. Then for a generic $\sigma_X\in \Stab^{\circ}(\Ku(X))$, a general point of $M_{\sigma_X}^X(1, -1)$ is of the form $[\pr_X(I_C(H))]$ for a $\tau$-cubic $C\subset Y \subset X$ where $Y$ is a general smooth hyperplane section. 
\end{proposition}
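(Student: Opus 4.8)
The plan is to deduce the statement from the dominance of the rational map $pr$ established in Theorem~\ref{thm-pr-induce-map}, together with the (essentially elementary) fact that a general $\tau$-cubic of $X$ already lies on a \emph{smooth} hyperplane section of $X$.

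First I would recall from Theorem~\ref{thm-pr-induce-map} and its proof that $pr\colon\Hilb_X^{3t+1}\dashrightarrow M_{\sigma_X}^X(1,-1)$ is dominant, is regular on the dense open $\Hilb_X^{3t+1}\setminus\mathrm{H}^{\sigma}$, and that for a general $m\in M_{\sigma_X}^X(1,-1)$ the fibre $pr^{-1}(m)$ is the $\PP^1$ of residue cubics of a $\tau$-cubic, hence lies in the open dense subscheme $\mathrm{H}^{\tau}$ (Proposition~\ref{prop-closure-tau}). Therefore a general point of $M_{\sigma_X}^X(1,-1)$ is of the form $[\pr_X(I_C(H))]$ for $[C]$ a general point of $\mathrm{H}^{\tau}$, and the proposition will follow once I show that the $\tau$-cubics lying on some smooth hyperplane section form a dense subset of $\mathrm{H}^{\tau}$. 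To this end it suffices to fix a general $\tau$-cubic $C\subset X$ and produce a smooth hyperplane section containing it; the fact that $Y$ may moreover be taken general then comes for free, since for a general such $m$, as $C$ runs over $pr^{-1}(m)\cong\PP^1$ the resulting smooth hyperplane sections through $C$ sweep out a dense open of the linear system of all hyperplane sections through $C$.

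So I would fix a general $\tau$-cubic $C\subset X$. As $C$ is not a $\sigma$-cubic its span is $\langle C\rangle\cong\PP^3\subset\PP(W)$, and since $|\oh_X(H)|\cong\PP^8$ the hyperplane sections of $X$ containing $C$ form a linear subspace $\Lambda_C\cong\PP^4\subset|\oh_X(H)|$. For a hyperplane $\mathbb{H}$, the section $X\cap\mathbb{H}$ is singular at $x\in X$ precisely when the embedded projective tangent space $\PP(T_xX)\cong\PP^4$ is contained in $\mathbb{H}$. The key computation will be to bound the incidence variety $T:=\{(x,\mathbb{H})\in X\times\Lambda_C:\PP(T_xX)\subseteq\mathbb{H}\}$ by projecting it to $X$: the fibre over $x$ is $\{\mathbb{H}:\langle\langle C\rangle,\PP(T_xX)\rangle\subseteq\mathbb{H}\}$, which has dimension $\dim(\langle C\rangle\cap\PP(T_xX))$ when this intersection is non-empty and is empty otherwise. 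Using Lemma~\ref{lem-ev-surjective-tau} (so that $\langle C\rangle\cap X=C$) together with the genericity of $X$ and of $C$ — which puts the Gauss map of $X$ in general position with respect to the incidence conditions determined by $\langle C\rangle$ — one finds that $\{x:\dim(\langle C\rangle\cap\PP(T_xX))\geq 1\}$ is at most a curve (it contains $C$ but nothing else of positive dimension), that $\PP(T_xX)$ never meets $\langle C\rangle$ in a plane, and that $\langle C\rangle$ is contained in no $\PP(T_xX)$; hence $\dim T\leq 3<4=\dim\Lambda_C$. Thus the image of $T$ in $\Lambda_C$ is a proper closed subset, and a general $\mathbb{H}\in\Lambda_C$ yields a smooth hyperplane section $Y=X\cap\mathbb{H}\supset C$.

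Putting this together, the locus $\cI^{\circ}=\{([C],[\mathbb{H}]):[C]\in\mathrm{H}^{\tau},\ \langle C\rangle\subseteq\mathbb{H},\ X\cap\mathbb{H}\text{ smooth}\}$ dominates $\mathrm{H}^{\tau}$ under the first projection, so $\cI^{\circ}\to M_{\sigma_X}^X(1,-1)$ (first projection followed by $pr$) is dominant, which is exactly the proposition. The main obstacle is the estimate $\dim T\leq 3$: one must check that for a general $\tau$-cubic on a general $X$ the projective tangent spaces of $X$ meet $\langle C\rangle$ only in the minimal expected way along $C$, and not at all off a proper subvariety. As an alternative to this soft argument, the same step can be obtained from Appendix~\ref{appendix-B}, where $\Hilb_Y^{3t+1}$ is shown to be non-empty and irreducible for a general smooth hyperplane section $Y$: one then only needs that $Y$ contains no $\sigma$-cubic of $X$ (all $\sigma$-cubics lie on the $\sigma$-quadric $q$, and $q\cap\mathbb{H}$ is a conic for general $\mathbb{H}$) and that the $\rho$-cubics of $X$ contained in $Y$ form a proper closed subscheme of the irreducible $\Hilb_Y^{3t+1}$.
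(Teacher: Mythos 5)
Your opening reduction is the same as the paper's: by Theorem \ref{thm-pr-induce-map} the general fibre of $pr$ is a $\PP^1$ inside $\mathrm{H}^{\tau}$, so it suffices to exhibit a large enough family of $\tau$-cubics lying on general smooth hyperplane sections. The problem is that neither of your two routes to that key step is actually carried out, and the first contains a false claim. The ``general position of the Gauss map'' assertion cannot hold in the naive sense: the Schubert cycle $\{\,\PP^4\subset\PP^8 : \dim(\PP^4\cap\langle C\rangle)\geq 1\,\}$ has codimension $4$ in $\Gr(\PP^4,\PP^8)$, so its expected preimage in $X$ under the Gauss map is $0$-dimensional, yet the locus $\{x:\dim(\langle C\rangle\cap\PP(T_xX))\geq 1\}$ contains the whole curve $C$ (at a smooth point of $C$ both $\langle C\rangle$ and $\PP(T_xX)$ contain the tangent line to $C$). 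So general position fails exactly where it matters, and the bounds $\dim\{e\geq 1\}\leq 2$, $\dim\{e\geq 2\}\leq 1$, etc., need a genuine geometric argument along the base locus $C$ which you do not supply --- as you yourself flag. Separately, your claim that ``$Y$ may be taken general for free'' is not justified: as $C$ runs over the $\PP^1$-fibre $pr^{-1}(m)$, the hyperplane sections through these cubics sweep out at most a $5$-dimensional subvariety of $|\oh_X(H)|\cong\PP^8$, which is nowhere near dense; the correct statement is only that for general $m$ there exists \emph{some} $C$ over $m$ lying on a member of the prescribed dense open $S\subset|\oh_X(H)|$, and this has to be arranged from the start.

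The paper does exactly what your ``alternative'' gestures at, but the content is a dimension count you omit. Using Corollary \ref{cor-hilb-smooth}, over an open $S\subset|\oh_X(H)|$ the relative Hilbert scheme $\Hilb^{3t+1}_{\cY/|\oh_X(H)|}$ has smooth irreducible $3$-dimensional fibres, so $p^{-1}(S)$ is irreducible of dimension $11$; the forgetful map $q$ to $\Hilb^{3t+1}_X$ has fibres $\PP(H^0(I_C(H)))\cong\PP^4$, so $q(p^{-1}(S))$ is irreducible of dimension $\geq 7$. Since $\dim(\Hilb^{3t+1}_X\setminus\mathrm{H}^{\tau})<7$ by Lemmas \ref{lem-local-dim-sigma} and \ref{lem-local-dim-rho} and $\mathrm{H}^{\tau}$ is dense by Proposition \ref{prop-closure-tau}, the locus $W=\mathrm{H}^{\tau}\cap q(p^{-1}(S))$ is nonempty of dimension $\geq 7$; as $pr$ has $1$-dimensional fibres over $pr(\mathrm{H}^{\tau})$ and $M^X_{\sigma_X}(1,-1)$ is irreducible of dimension $6$, the image $pr(W)$ is dense. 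This argument simultaneously handles smoothness, irreducibility, genericity of $Y$, and density in $M^X_{\sigma_X}(1,-1)$, bypassing the Bertini-along-the-base-locus analysis entirely. Your proposal identifies the right target but leaves its proof essentially open.
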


\begin{proof}
By Theorem \ref{thm-pr-induce-map}, a general point of $M_{\sigma_X}^X(1,-1)$ is in the form $[\pr_X(I_C(H)]$ for a $\tau$-cubic $C\subset X$. According to Theorem \ref{thm-pr-induce-map}, the fibers of $pr$ over $pr(\mathrm{H}^{\tau})$ are $\PP^1$. Moreover, $M_{\sigma_X}^X(1,-1)$ is irreducible of dimension $6$. Thus, it suffices to find a sublocus $W\subset \mathrm{H}^{\tau}$ of dimension $\dim W\geq 7$ such that every $[C]\in W$ is contained in a smooth hyperplane section of $X$.

To this end, let $\cY\subset |\oh_X(H)|\times X$ be the universal hyperplane section. We denote by $\Hilb_{\cY/|\oh_X(H)|}^{3t+1}$ the relative Hilbert scheme of twisted cubics of $\cY$ over $|\oh_X(H)|$. By Corollary \ref{cor-hilb-smooth}, there is an open subscheme $S\subset |\oh_X(H)|$ parameterizing general smooth hyperplane sections such that the fiber of the natural map $p\colon \Hilb_{\cY/|\oh_X(H)|}^{3t+1}\to |\oh_X(H)|$ over $S$ is smooth irreducible of dimension $3$. As $p$ is projective, $p^{-1}(S)$ is smooth irreducible of dimension $11$. Now we consider another natural map $$q\colon \Hilb_{\cY/|\oh_X(H)|}^{3t+1}\to \Hilb^{3t+1}_X.$$ It is clear that for any point $[C]\in \Hilb^{3t+1}_X$,
$$q^{-1}([C])\cong \PP(H^0(I_C(H)))\cong \PP^4.$$  Then $q(p^{-1}(S))$ is irreducible of dimension $\geq 7$. We define $W:=\mathrm{H}^{\tau}\cap q(p^{-1}(S))$. As $\dim \Hilb^{3t+1}_X\setminus \mathrm{H}^{\tau}<7$ by Lemma \ref{lem-local-dim-sigma} and \ref{lem-local-dim-rho}, we have $W\neq \varnothing$. Since $\mathrm{H}^{\tau}$ is dense in $\Hilb^{3t+1}_X$ by Proposition \ref{prop-closure-tau}, we obtain $$\dim W=\dim q(p^{-1}(S))\geq 7$$ and the result follows.
\end{proof}

\begin{lemma}\label{lem-cube-injective}
Let $X$ be a general GM fourfold and $j\colon Y\hookrightarrow X$ be a smooth hyperplane section. Then for any two twisted cubics $C, C'\subset Y$ such that $\pr_Y(I_{C/Y}(H))\neq \pr_Y(I_{C'/Y}(H))$, we have
\[\pr_X(I_C(H))\neq \pr_X(I_{C'}(H)).\]
\end{lemma}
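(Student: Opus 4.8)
The plan is to argue by contradiction. Suppose $C, C' \subset Y$ are twisted cubics with $\pr_X(I_C(H)) \cong \pr_X(I_{C'}(H))$; I will show this forces $\pr_Y(I_{C/Y}(H)) \cong \pr_Y(I_{C'/Y}(H))$. The key input is Proposition \ref{prop-pushforward}: for the smooth hyperplane section $j\colon Y \hookrightarrow X$, one has $\pr_X(j_*E) \cong \pr_X(j_*\pr_Y(E))$ for all $E \in \D^b(Y)$. The first step is to relate $\pr_X(I_C(H))$ to the pushforward of something on $Y$. Using the standard exact sequence $0 \to \oh_X(-H) \to \oh_X \to j_*\oh_Y \to 0$ (twisted by $H$), together with the ideal sheaf sequences for $C$ in $X$ and in $Y$, I would compare $I_C(H)$ with $j_*(I_{C/Y}(H))$. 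Concretely, since $C \subset Y$, restriction gives a surjection $I_C \twoheadrightarrow I_{C/Y}$ with kernel $I_Y(-H)$-ish; twisting and pushing forward, one gets a triangle relating $I_C(H)$, $\oh_X$, and $j_*(I_{C/Y}(H))$. Because $\oh_X$ lies in the orthogonal complement used to define $\pr_X$ (it is one of the exceptional objects killed by $\pr_X$), applying $\pr_X$ to this triangle yields $\pr_X(I_C(H)) \cong \pr_X(j_*(I_{C/Y}(H)))$, and similarly for $C'$.

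Granting that identification, Proposition \ref{prop-pushforward} gives $\pr_X(I_C(H)) \cong \pr_X(j_*\pr_Y(I_{C/Y}(H)))$ and likewise for $C'$. So the hypothesis $\pr_X(I_C(H)) \cong \pr_X(I_{C'}(H))$ becomes
\[
\pr_X\bigl(j_*\pr_Y(I_{C/Y}(H))\bigr) \cong \pr_X\bigl(j_*\pr_Y(I_{C'/Y}(H))\bigr).
\]
The next step is to show that the functor $E \mapsto \pr_X(j_*E)$ is \emph{injective} on objects of $\Ku(Y)$ of the relevant class — equivalently, that $\pr_X \circ j_*$ restricted to $\Ku(Y)$ is fully faithful, or at least reflects isomorphisms on the two objects in question. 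This should follow from known results on the behaviour of the pushforward functor $j_*\colon \D^b(Y) \to \D^b(X)$ on Kuznetsov components of GM varieties: by work of Kuznetsov--Perry (and as used in \cite{FGLZ24}), the induced functor $\Ku(Y) \to \Ku(X)$ coming from $\pr_X \circ j_*$ is, up to shift, fully faithful (it realizes $\Ku(Y)$ inside $\Ku(X)$ as the "right" K3-type subcategory, with $\Ku(Y)$ being a GM threefold category and $\Ku(X)$ a GM fourfold category). Once fully faithfulness (or injectivity on isomorphism classes) is in hand, the displayed isomorphism forces $\pr_Y(I_{C/Y}(H)) \cong \pr_Y(I_{C'/Y}(H))$, contradicting the hypothesis.

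The main obstacle I anticipate is the second step: establishing that $\pr_X \circ j_*$ is injective on the projection objects of twisted cubics in $\Ku(Y)$. If a clean fully-faithfulness statement for $\pr_X \circ j_*\colon \Ku(Y) \to \Ku(X)$ is available in the literature or earlier in the paper, this is immediate; otherwise one would need to compute $\RHom_X(\pr_X(j_*E), \pr_X(j_*F))$ for $E, F \in \Ku(Y)$ and compare with $\RHom_Y(E,F)$, which requires controlling the adjoint of $j_*$ (namely $j^*$ or $j^!$) together with the semiorthogonal decompositions on both sides. A lighter alternative, sufficient here, is to only prove that $\pr_X(j_*(-))$ \emph{reflects isomorphisms} on stable objects of the fixed numerical class $\lambda_1 - \lambda_2$ (or whatever class $\pr_Y(I_{C/Y}(H))$ lives in), using that both source and target moduli problems are well understood and that a non-isomorphism of stable objects of the same class would have to map to a non-isomorphism after the (exact, $\RHom$-preserving up to the correction from Proposition \ref{prop-pushforward}) functor. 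I would first check the literature (Kuznetsov–Perry, \cite{FGLZ24}, \cite{bayer2022kuznetsov}) for the precise fully-faithfulness statement, as that will make the argument essentially formal.
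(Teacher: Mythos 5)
Your first reduction is fine: from $0 \to \oh_X \to I_C(H) \to j_*(I_{C/Y}(H)) \to 0$ and $\pr_X(\oh_X)=0$ one gets $\pr_X(I_C(H)) \cong \pr_X(j_*(I_{C/Y}(H)))$, and Proposition \ref{prop-pushforward} then lets you replace $I_{C/Y}(H)$ by $\pr_Y(I_{C/Y}(H))$. The gap is the second step, and it is fatal as stated: the functor $\pr_X\circ j_*\colon \Ku(Y)\to\Ku(X)$ is \emph{not} fully faithful and does \emph{not} reflect isomorphisms on these objects. A quick sanity check: for a $\tau$-cubic one has $\RHom_Y(\pr_Y(I_{C/Y}(H)),\pr_Y(I_{C/Y}(H)))=\CC\oplus\CC^3[-1]$ (the moduli space $M^Y_{\sigma_Y}(1,-1)$ is a threefold), while $\RHom_X(\pr_X(I_C(H)),\pr_X(I_C(H)))=\CC\oplus\CC^6[-1]\oplus\CC[-2]$ by Proposition \ref{prop-rhom-tau}, so the functor cannot preserve $\RHom$. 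The structural reason is that $\Ku(Y)$ is an Enriques-type category with Serre functor $T_Y\circ[2]$ for a nontrivial involution $T_Y$, whereas $\Ku(X)$ is K3; the precise statement from \cite{FGLZ24} (Lemma 5.2(b) there) is that $\pr_X(j_*E)\cong\pr_X(j_*E')$ for stable $E,E'$ forces $E'\cong E$ \emph{or} $E'\cong T_Y(E)$. So the induced map on moduli spaces is a priori only finite and unramified onto a Lagrangian, not injective, and your hoped-for ``clean fully-faithfulness statement'' does not exist in the literature because it is false.

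The paper's proof is exactly about excluding the second alternative $E'\cong T_Y(E)$. For $\tau$-cubics, $T_Y(\pr_Y(I_{C/Y}(H)))\cong\pr_Y(I_{C'/Y}(H))$ is pushed forward (using the compatibility $T_X\circ(\pr_X\circ j_*)\cong(\pr_X\circ j_*)\circ T_Y$ from \cite{FGLZ24} and Proposition \ref{prop-pushforward}) to $T_X(\pr_X(I_C(H)))\cong\pr_X(I_{C'}(H))\cong\pr_X(I_C(H))$; by the homological criterion of Lemma \ref{lem-criterion-residue-cubic} this says $C$ and $C'$ are both residue cubics of $C$, and since both lie in the fixed hyperplane section $Y$, the geometry of the residue construction (Proposition \ref{prop-residue-tau-cubic}) forces $C=C'$, a contradiction. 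For $\rho$-cubics one argues separately via residue lines (Proposition \ref{prop-proj-rho} and Lemma \ref{lem-cubic-mutation}(2)). To repair your argument you would need to import this involution analysis; the purely formal route you propose cannot work.
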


\begin{proof}
Assume otherwise that $\pr_X(I_C(H))= \pr_X(I_{C'}(H))$. First, we assume that $C$ and $C'$ are both $\rho$-cubics. Then by Proposition \ref{prop-proj-rho}, $C$ and $C'$ have the same residue line, which gives $$\pr_Y(I_{C/Y}(H))= \pr_Y(I_{C'/Y}(H))$$ by Lemma \ref{lem-cubic-mutation}(2) and makes a contradiction.

Now we assume that $C$ and $C'$ are both $\tau$-cubics. By \cite[Lemma 5.2(b)]{FGLZ24}, we have $$T_Y(\pr_Y(I_{C/Y}(H)))=\pr_Y(I_{C'/Y}(H)).$$ Then from \cite[Lemma 4.4(1)]{FGLZ24} and Proposition \ref{prop-pushforward}, we obtain $$T_X(\pr_X(I_C(H)))=\pr_X(I_C(H)).$$ In other words, $C$ is a residue cubic of $C$ itself by Lemma \ref{lem-criterion-residue-cubic}. Similarly, as $\pr_X(I_C(H))= \pr_X(I_{C'}(H))$, we know that $$T_X(\pr_X(I_C(H)))=\pr_X(I_{C'}(H)),$$ which implies that $C'$ is a also residue cubic of $C$. However, as $C,C'\subset Y$ and they are both residue cubics of $C$, this implies that $C=C'$ by Proposition \ref{prop-residue-tau-cubic} and makes a contradiction.
\end{proof}

Let $\cY\subset |\oh_X(H)|\times X$ be the universal hyperplane section. We denote by $\cY_V$ the restriction of $\cY$ to an open subscheme $V\subset |\oh_X(H)|$.

\begin{theorem}\label{thm-covering-moduli}
Let $X$ be a general GM fourfold. Then for a generic $\sigma_X\in \Stab^{\circ}(\Ku(X))$, there is an open subscheme $V\subset |\oh_X(H)|$ and a dominant morphism $q\colon \Hilb^{3t+1}_{\cY_V/V}\to M_{\sigma_X}^X(1,-1)$ such that 
\[\begin{tikzcd}
	{\Hilb^{3t+1}_{\cY_V/V}} & {M_{\sigma_X}^X(1,-1)} \\
	{V\subset \PP^8}
	\arrow["p"', from=1-1, to=2-1]
	\arrow["q", from=1-1, to=1-2]
\end{tikzcd}\]
is a Lagrangian covering family with $p$ smooth and projective and $q|_{p^{-1}(s)}$ is birational onto its image for each $s\in V$. Here $p\colon \Hilb^{3t+1}_{\cY_V/V}\to V$ is the structure map of the relative Hilbert scheme over $V$.
\end{theorem}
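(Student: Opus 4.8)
The plan is to assemble Theorem \ref{thm-covering-moduli} from the ingredients already in place: the Lagrangian family of \cite[Theorem 5.8]{FGLZ24}, the dominance of $pr$ from Theorem \ref{thm-pr-induce-map}, and the surjectivity statement of Proposition \ref{prop-lag-cover-family-cube}. First I would recall from \cite[Theorem 5.8]{FGLZ24} that there is an open subscheme $V_0\subset|\oh_X(H)|$ of smooth hyperplane sections and a morphism $q_0\colon \Hilb^{3t+1}_{\cY_{V_0}/V_0}\dashrightarrow M^X_{\sigma_X}(1,-1)$ sending $(Y,[C])$ to $[\pr_X(I_C(H))]=[\pr_X(j_*\pr_Y(I_{C/Y}(H)))]$ (using Proposition \ref{prop-pushforward}), whose fibers over $V_0$ are, generically, Lagrangian. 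Shrinking $V_0$ to an open $V$ using Corollary \ref{cor-hilb-smooth}, we may arrange that $p\colon \Hilb^{3t+1}_{\cY_V/V}\to V$ is smooth projective with irreducible $3$-dimensional fibers, so $\Hilb^{3t+1}_{\cY_V/V}$ is smooth of dimension $11$; after further shrinking we may also assume $q$ is a morphism (not just rational) on all of $\Hilb^{3t+1}_{\cY_V/V}$, since the indeterminacy locus of $q_0$ does not dominate $V$.

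The two new points to verify are: (i) $q$ is dominant onto $M^X_{\sigma_X}(1,-1)$, and (ii) $q|_{p^{-1}(s)}$ is birational onto its image for each $s\in V$. For (i), this is precisely Proposition \ref{prop-lag-cover-family-cube}: a general point of $M^X_{\sigma_X}(1,-1)$ equals $[\pr_X(I_C(H))]$ for a $\tau$-cubic $C$ lying on a general smooth hyperplane section $Y$, i.e.\ lies in the image of $q$; since $M^X_{\sigma_X}(1,-1)$ is irreducible, $q$ is dominant. For (ii), fix $s\in V$ corresponding to a smooth hyperplane section $Y=Y_s$. The fiber $p^{-1}(s)$ is $\Hilb^{3t+1}_Y$, and $q|_{p^{-1}(s)}$ sends $[C]\mapsto[\pr_X(I_C(H))]=[\pr_X(j_*\pr_Y(I_{C/Y}(H)))]$. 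By Lemma \ref{lem-cube-injective}, if $\pr_Y(I_{C/Y}(H))\neq\pr_Y(I_{C'/Y}(H))$ then $\pr_X(I_C(H))\neq\pr_X(I_{C'}(H))$; hence $q|_{p^{-1}(s)}$ factors, up to the map $\Hilb^{3t+1}_Y\dashrightarrow M^Y_{\sigma_Y}$ of Appendix \ref{appendix-B} (which is birational by the results quoted there), through an injective-on-a-dense-open map, so $q|_{p^{-1}(s)}$ is birational onto its image. Combining with \cite[Theorem 5.8]{FGLZ24}, which guarantees the image of $p^{-1}(s)$ is Lagrangian of dimension $3$, we conclude the diagram is a Lagrangian covering family of the asserted type; and since the existence of such a family is a birational invariant (Lemma \ref{lem-covering-invariant}) and $\widetilde{C}_X$ is birational to $M^X_{\sigma_X}(1,-1)$ by \cite[Theorem 1.1]{kapustka2022epw}, Theorem \ref{thm-epw-covering} follows.

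The main obstacle I anticipate is \textbf{(ii)}, the generic injectivity of $q$ restricted to a single fiber. It requires knowing that the classical map from $\Hilb^{3t+1}_Y$ to the Bridgeland moduli space $M^Y_{\sigma_Y}$ on the threefold $Y$ is birational (so that $\pr_Y(I_{C/Y}(H))$ determines $[C]$ for generic $C$); this is exactly the content of Appendix \ref{appendix-B}, and invoking it is legitimate, but one must be careful that the relevant open locus of $\Hilb^{3t+1}_Y$ where injectivity holds varies well in families over $V$ — i.e.\ that after shrinking $V$ the birationality is uniform. This is a standard constructibility/generic-flatness argument (cf.\ the proof of Theorem \ref{thm-stability-proj-general}), spreading out the locus where $q_0$ is defined and fiberwise birational over a dense open of $V$, then passing to an open dense subscheme. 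A secondary, more bookkeeping-level point is confirming that after all the shrinking the resulting $V$ is still open in $|\oh_X(H)|\cong\PP^8$ and nonempty, which follows since each condition imposed (smoothness of $p$, $q$ a morphism, fiberwise birationality) removes only a proper closed subset.
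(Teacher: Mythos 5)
Your proposal is correct and follows essentially the same route as the paper: shrink $|\oh_X(H)|$ using Corollary \ref{cor-hilb-smooth} so that $p$ is smooth and projective with irreducible $3$-dimensional fibers, get dominance of $q$ from Proposition \ref{prop-lag-cover-family-cube}, and get fiberwise birationality from Lemma \ref{lem-cube-injective} together with the results of Appendix \ref{appendix-B}. Two remarks. First, you assert without justification that the indeterminacy locus of the rational map does not dominate $V$, and propose to shrink $V$ to remove it. The paper sidesteps this entirely with a cleaner observation: a smooth GM threefold contains no $\sigma$-cubic (every $\sigma$-cubic lies on the $\sigma$-quadric $q\subset X$, and $Y\cap q$ is a conic, hence cannot contain a degree-$3$ curve), so the natural map $\Hilb^{3t+1}_{\cY_V/V}\to\Hilb^{3t+1}_X$ factors through $\Hilb^{3t+1}_X\setminus \mathrm{H}^{\sigma}$, on which $pr$ is an honest morphism by Theorem \ref{thm-pr-induce-map}; thus $q$ is everywhere defined with no further shrinking. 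Your assertion is in fact true for this (stronger, vacuous) reason, but as written it is a missing step. Second, your worry about uniformity of the fiberwise birationality over $V$ is unnecessary: for every $s=[Y]\in V$ the restriction $q|_{p^{-1}(s)}$ factors as the map $p_Y$ of Theorem \ref{thm-cubic-3fold} (an open immersion off the $2$-dimensional locus $\cS$, hence birational onto its image) followed by the embedding $M^Y_{\sigma_Y}(1,-1)\hookrightarrow M^X_{\sigma_X}(1,-1)$, whose injectivity on the relevant objects is exactly Lemma \ref{lem-cube-injective}. This is a pointwise argument valid for each $s\in V$, so no spreading-out or further passage to a dense open is needed, and one obtains the statement for every $s\in V$ rather than only for general $s$.
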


\begin{proof}
By Corollary \ref{cor-hilb-smooth}, we take an open dense subset $V\subset |\oh_X(H)|$ such that each fiber of $$p\colon \Hilb^{3t+1}_{\cY_V/V}\to V$$ is smooth irreducible of dimension $3$. Hence, $\Hilb^{3t+1}_{\cY_V/V}$ is smooth irreducible of dimension $11$ and $p$ is smooth and projective.

Note that a smooth GM threefold $Y$ does not contain any $\sigma$-cubic. Otherwise, a cubic is contained in the zero locus of a section of $\cQ_Y$, which is contained in a conic and we get a contradiction. Hence, the natural map  $\Hilb^{3t+1}_{\cY_V/V}\to \Hilb_X^{3t+1}$ can be factored as $$\Hilb^{3t+1}_{\cY_V/V}\xra{h} \Hilb_X^{3t+1}\setminus \mathrm{H}^{\sigma}\hookrightarrow \Hilb_X^{3t+1}.$$ We define $q$ as the composition
\[q\colon \Hilb^{3t+1}_{\cY_V/V}\xra{h} \Hilb_X^{3t+1}\setminus \mathrm{H}^{\sigma} \xra{pr} M_{\sigma_X}^X(1,-1).\]
By Proposition \ref{prop-lag-cover-family-cube}, $q$ is dominant. And from the construction, we have $p^{-1}(s)\cong \Hilb^{3t+1}_Y$ for any $s=[Y]\in V$ and $q|_{p^{-1}(s)}$ can be factored as
\[q|_{p^{-1}(s)}\colon \Hilb^{3t+1}_Y\xra{p_Y} M^Y_{\sigma_Y}(1,-1)\hookrightarrow M^X_{\sigma_X}(1,-1),\]
where $p_Y$ is defined in Theorem \ref{thm-cubic-3fold}. Hence, $q|_{p^{-1}(s)}$ is birational onto its image by Lemma \ref{lem-cube-injective} and Theorem \ref{thm-cubic-3fold}, which is a Lagrangian submanifold of $M_{\sigma_X}^X(1,-1)$, and the result follows.
\end{proof}

Finally, the existence of a Lagrangian covering family on a general double EPW cube is a direct corollary of the results above.

\begin{theorem}\label{thm-second-lag-cover-family}
Let $X$ be a general GM fourfold. Then the associated double EPW cube $\wt{C}_X$ admits a Lagrangian covering family.
\end{theorem}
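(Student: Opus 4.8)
The plan is to deduce Theorem~\ref{thm-second-lag-cover-family} directly from the results already assembled, so the proof is essentially a one-line reduction. First I would invoke Theorem~\ref{thm-covering-moduli}: for a generic $\sigma_X \in \Stab^{\circ}(\Ku(X))$, it produces an explicit Lagrangian covering family
\[
\begin{tikzcd}
	{\Hilb^{3t+1}_{\cY_V/V}} & {M_{\sigma_X}^X(1,-1)} \\
	{V}
	\arrow["p"', from=1-1, to=2-1]
	\arrow["q", from=1-1, to=1-2]
\end{tikzcd}
\]
on the Bridgeland moduli space $M_{\sigma_X}^X(1,-1)$. In particular $M_{\sigma_X}^X(1,-1)$ is a projective hyperk\"ahler manifold admitting a Lagrangian covering family in the sense of Definition~\ref{def-lag-family}.

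Next I would recall why $\wt{C}_X$ is birational to $M_{\sigma_X}^X(1,-1)$. By \cite[Theorem~1.1]{kapustka2022epw}, $\wt{C}_X$ and $M_{\sigma_X}^X(1,-1)$ share the same period point (using \cite[Theorem~5.1]{debarre2019gushel} to identify the period of $\wt{C}_X$ with that of $X$ for general $X$), and hence by the Torelli-type statement for these hyperk\"ahler sixfolds they are birational. This is exactly the birational identification already used in the derivation of Theorem~\ref{cor-cube-as-MRC}, so no new input is needed here.

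Finally I would apply Lemma~\ref{lem-covering-invariant}, which states that admitting a Lagrangian covering family is invariant under birational isomorphism of projective hyperk\"ahler manifolds. Since $M_{\sigma_X}^X(1,-1)$ carries such a family and $\wt{C}_X$ is birational to it, $\wt{C}_X$ carries one as well. This completes the proof.

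There is no real obstacle to surmount at this stage: all the substantive work — the geometry of twisted cubics, the computation and stability of projection objects in Sections~\ref{sec-twisted-cubic-gm4} and~\ref{sec-projection-twisted_cubics}, the construction of the covering family on the moduli space in Theorem~\ref{thm-covering-moduli}, and the birational invariance Lemma~\ref{lem-covering-invariant} — has already been carried out. The only point worth a sentence of care is to confirm that the hypotheses of Lemma~\ref{lem-covering-invariant} are met, namely that both $\wt{C}_X$ and $M_{\sigma_X}^X(1,-1)$ are genuinely projective hyperk\"ahler manifolds (the former by \cite{IKKR19}, the latter by \cite{perry2019stability} since $(1,-1)$ is a coprime class and $\sigma_X$ is generic), so that "birational" makes sense and the lemma applies verbatim.

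\begin{proof}
By Theorem~\ref{thm-covering-moduli}, for a generic $\sigma_X\in \Stab^{\circ}(\Ku(X))$ the moduli space $M_{\sigma_X}^X(1,-1)$ is a projective hyperk\"ahler manifold admitting a Lagrangian covering family. On the other hand, as explained before Theorem~\ref{cor-cube-as-MRC}, the double EPW cube $\wt{C}_X$ and $M_{\sigma_X}^X(1,-1)$ have the same period point by \cite[Theorem~1.1]{kapustka2022epw} and \cite[Theorem~5.1]{debarre2019gushel}, hence are birational. Since $\wt{C}_X$ is itself a projective hyperk\"ahler manifold by \cite{IKKR19}, Lemma~\ref{lem-covering-invariant} applies and shows that $\wt{C}_X$ admits a Lagrangian covering family.
\end{proof}
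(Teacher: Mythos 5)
Your proposal matches the paper's own proof exactly: both deduce the statement from the Lagrangian covering family on $M_{\sigma_X}^X(1,-1)$ constructed in Theorem~\ref{thm-covering-moduli}, the birational identification of $\wt{C}_X$ with $M_{\sigma_X}^X(1,-1)$ via \cite[Theorem~1.1]{kapustka2022epw}, and the birational invariance of Lemma~\ref{lem-covering-invariant}. The argument is correct and complete.
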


\begin{proof}
As we have already mentioned above Theorem \ref{cor-cube-as-MRC}, we have a birational isomorphism between $\wt{C}_X$ and $M^X_{\sigma_X}(1,-1)$. Then the statement follows from Lemma \ref{lem-covering-invariant} and Theorem \ref{thm-covering-moduli}.
\end{proof}

\begin{appendix}

\section{Recovering classical examples}\label{appendix-A}

In the classical literature, there are several hyperk\"ahler manifolds that can be constructed from cubic or GM fourfolds: Fano varieties of lines \cite{beauville:fano-variety-cubic-4fold}, LLSvS eightfolds \cite{LLSvS17}, double (dual) EPW sextics \cite{o2006irreducible}, and double EPW cubes \cite{IKKR19}. The construction of Lagrangian covering families of double EPW cubes is done in Section \ref{subsec-covering-cube}. In this appendix, we aim to construct Lagrangian covering families for the remaining examples via our categorical method and recover known classical constructions.

Recall that for a cubic fourfold $X$, its semi-orthogonal decomposition is given by $$\D^b(X)=\langle\Ku(X),\oh_X,\oh_X(H),\oh_X(2H)\rangle=\langle\oh_X(-H),\Ku(X),\oh_X,\oh_X(H)\rangle,$$
where $H$ is the hyperplane class of $X$, satisfying $S_{\Ku(X)}=[2]$. We define the projection functor  $$\pr_X:=\bR_{\oh_X(-H)}\bL_{\oh_X}\bL_{\oh_X(H)}.$$
There is a rank two lattice in the numerical Grothendieck group $\Knum(\Ku(X))$ generated by $\Lambda_1$ and $\Lambda_2$ with $$\ch(\Lambda_1)=3-H-\frac{1}{2}H^2+\frac{1}{6}H^3+\frac{1}{8}H^4,\quad \ch(\Lambda_2)=-3+2H-\frac{1}{3}H^3,$$ 
over which the Euler pairing is of the form
\begin{equation}
\left[               
\begin{array}{cc}   
-2 & 1 \\  
1 & -2\\
\end{array}
\right].
\end{equation}
We denote by $\Stab^{\circ}(\Ku(X))$ the family of stability conditions constructed in \cite{bayer2017stability}.

For a cubic threefold $Y$, we have a semi-orthogonal decomposition
\[\D^b(Y)=\langle \Ku(Y), \oh_Y, \oh_Y(H)\rangle.\]
In this case, $S_{\Ku(Y)}=\bL_{\oh_Y}\circ (-\otimes \oh_Y(H))[1]$ and $S^3_{\Ku(Y)}=[5]$. Moreover, $\Knum(\Ku(Y))$ is a rank two lattice generated by $\lambda_1$ and $\lambda_2$ with 
$$\ch(\lambda_1)=2-H-\frac{1}{6}H^2+\frac{1}{6}H^3 ,\quad \ch(\lambda_2)=-1+H-\frac{1}{6}H^2-\frac{1}{6}H^3$$
and the Euler pairing is 
\begin{equation}
\left[               
\begin{array}{cc}   
-1 & 1 \\  
0 & -1\\
\end{array}
\right].
\end{equation}
We define the projection functor by $\pr_Y:=\bL_{\oh_Y(H)}\bL_{\oh_Y}$. A family of Serre-invariant stability conditions on $\Ku(Y)$ is constructed in \cite{bayer2017stability}.

We denote by $M^X_{\sigma_X}(a,b)$ (resp.~$M^Y_{\sigma_Y}(a,b)$) the moduli space that parameterizes S-equivalence classes of $\sigma_X$-semistable (resp.~$\sigma_Y$-semistable) objects of class $a\Lambda_1 +b\Lambda_2$ (resp.~$a\lambda_1 +b\lambda_2$) in $\Ku(X)$ (resp.~$\Ku(Y)$). Then by \cite{BLMNPS21}, for any pair of coprime integers $a,b$ and generic $\sigma_X\in \Stab^{\circ}(\Ku(X))$, the moduli space $M^X_{\sigma_X}(a,b)$ is a projective hyperk\"ahler manifold.


\subsection{Fano variety of lines}\label{subsec-appendix-line}
Let $X$ be a cubic fourfold and $\sigma_X\in \Stab^{\circ}(\Ku(X))$. By \cite[Theorem 1.1]{li2018twisted}, the Fano variety of lines $F(X)$ is isomorphic to the moduli space $M_{\sigma_X}^X(1,1)$, given by the projection functor $\pr_X$
\[F(X)\xra{\cong} M_{\sigma_X}^X(1,1), \quad [L]\mapsto [\pr_X(I_{L})].\]
On the other hand, for any smooth cubic threefold $Y$ and Serre-invariant stability condition $\sigma_Y$ on $\Ku(Y)$, we also have an isomorphism
\[F(Y)\xra{\cong} M_{\sigma_Y}^Y(1,1), \quad [L]\mapsto [I_{L/Y}]\]
by \cite[Theorem 1.1]{PY20} and \cite{bernardara2012categorical}.

Let $[I_{L/Y}]\in M_{\sigma_Y}^Y(1,1)$. According to \cite[Proposition 4.5(3)]{li2020elliptic}, we have $\mathrm{pr}_X(j_*I_{L/Y})\cong P_L$, where
$$P_L:=\mathrm{cone}(I_L[-1]\xrightarrow{ev}\oh_X(-H)[1]).$$
Moreover, $P_L$ is $\sigma_X$-stable by \cite[Theorem 1.1]{li2018twisted}. Using \cite[Theorem A.4]{FGLZ24}, we have the following result.

\begin{theorem}
\label{Fano_variety_lines}
Let $X$ be a cubic fourfold and $j \colon Y \hookrightarrow X$ be a smooth hyperplane section. Then the functor $\pr_X\circ j_*$ induces a Lagrangian embedding
\[f \colon M_{\sigma_Y}^Y(1,1)\hookrightarrow M_{\sigma_X}^X(1,1).\]
\end{theorem}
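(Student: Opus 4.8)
The plan is to mimic the proof strategy already used for double EPW cubes in Section~\ref{subsec-covering-cube}, now in the (simpler) setting of cubic threefolds inside cubic fourfolds, invoking the general machinery of \cite[Theorem A.4]{FGLZ24} to produce the Lagrangian property and the embedding. First I would recall that, by \cite[Theorem 1.1]{PY20} and \cite{bernardara2012categorical}, the Fano variety $F(Y)$ of a smooth cubic threefold $Y$ is isomorphic to $M_{\sigma_Y}^Y(1,1)$ via $[L]\mapsto [I_{L/Y}]$, and that by \cite[Proposition 4.5(3)]{li2020elliptic} the projection $\pr_X(j_*I_{L/Y})$ is identified with the object $P_L=\mathrm{cone}(I_L[-1]\xrightarrow{ev}\oh_X(-H)[1])$, which is $\sigma_X$-stable of class $\Lambda_1+\Lambda_2$ by \cite[Theorem 1.1]{li2018twisted}. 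By Proposition~\ref{prop-pushforward}'s cubic-fourfold analogue (the statement $\pr_X(j_*E)\cong\pr_X(j_*\pr_Y(E))$ holds verbatim for cubic fourfolds, cf.\ \cite[Proposition 4.5]{FGLZ24}), the assignment $[I_{L/Y}]\mapsto[\pr_X(j_*I_{L/Y})]=[P_L]$ depends only on the class of the object in $\Ku(Y)$, so $\pr_X\circ j_*$ descends to a morphism of moduli spaces $f\colon M_{\sigma_Y}^Y(1,1)\to M_{\sigma_X}^X(1,1)$ at the level of points; functoriality in families (using a universal object on $M_{\sigma_Y}^Y(1,1)$ and the relative projection functor) upgrades this to an honest morphism of schemes.

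Next I would verify the hypotheses of \cite[Theorem A.4]{FGLZ24}, which is the abstract statement producing a Lagrangian embedding from the pushforward-projection functor associated to a hyperplane section of a Fano fourfold with K3-type Kuznetsov component. The key inputs are: (i) the stability conditions $\sigma_Y$ on $\Ku(Y)$ are Serre-invariant and unique up to $\GL$-action, which holds for cubic threefolds; (ii) the objects $I_{L/Y}$ are $\sigma_Y$-stable and their pushforward-projections $P_L$ are $\sigma_X$-stable — both already cited above; (iii) the Ext-group/Hodge-theoretic bookkeeping showing that the differential of $f$ lands in a Lagrangian subspace of the tangent space of $M_{\sigma_X}^X(1,1)$, which is exactly the content encoded in \cite[Theorem A.4]{FGLZ24} once one checks the relevant $\RHom$ computations match its hypotheses. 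I would therefore just assemble these, noting that $\dim M_{\sigma_Y}^Y(1,1)=\dim F(Y)=2$ is exactly half of $\dim M_{\sigma_X}^X(1,1)=\dim F(X)=4$, so the image has the right dimension to be Lagrangian.

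The main obstacle — and the step requiring genuine care rather than citation — is \textbf{injectivity of $f$}, i.e.\ showing $\pr_X(j_*I_{L/Y})\not\cong\pr_X(j_*I_{L'/Y})$ for distinct lines $L\neq L'$ in $Y$. This is the exact analogue of Lemma~\ref{lem-cube-injective} in the main text. The plan here is: suppose $P_L\cong P_{L'}$; applying the cubic-fourfold version of \cite[Lemma 4.4]{FGLZ24} (relating the involution $T_X$ of $\Ku(X)$ and $T_Y$ of $\Ku(Y)$ through $j_*$ and $\pr$), together with the characterization of $P_L$ via the triangle defining it, one recovers $I_{L/Y}\cong I_{L'/Y}$ in $\Ku(Y)$, hence $L=L'$ since $F(Y)\cong M_{\sigma_Y}^Y(1,1)$. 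Concretely, from $P_L\cong P_{L'}$ one reconstructs the evaluation triangle $I_L[-1]\to\oh_X(-H)[1]\to P_L$ and extracts $I_L$, then restricts back to $Y$; the torsion-free rank-one sheaves $I_L,I_{L'}$ on $X$ being isomorphic forces $L=L'$ directly (two lines with the same ideal sheaf coincide). Once injectivity is in hand, $f$ is a proper monomorphism between smooth varieties of the expected dimensions, hence a closed immersion, and combined with the Lagrangian property from \cite[Theorem A.4]{FGLZ24} this yields the Lagrangian embedding. I would close by remarking that this recovers Voisin's classical construction \cite{voisin1992stabilite} of Lagrangian surfaces in $F(X)$ from lines on hyperplane sections.
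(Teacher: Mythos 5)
Your proposal is correct and follows essentially the same route as the paper: the paper's proof likewise identifies $\pr_X(j_*I_{L/Y})\cong P_L$ via \cite[Proposition 4.5(3)]{li2020elliptic}, cites \cite[Theorem 1.1]{li2018twisted} for the $\sigma_X$-stability of $P_L$, and then invokes \cite[Theorem A.4]{FGLZ24} to conclude. The extra work you do on injectivity (recovering $I_L$ from the defining triangle of $P_L$) is sound but is already packaged inside the cited general theorem, so the paper does not spell it out.
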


There is a natural embedding $i:F(Y)\hookrightarrow F(X)$, mapping $[L\subset Y]$ to $[L\subset X]$. From the construction above, we deduce that the embedding $f$ is compatible with $i$.

\begin{corollary}\label{commupatible_Fano_var_of_lines}
The embedding in Theorem~\ref{Fano_variety_lines} is compatible with the natural one, which means that we have a commutative diagram
\[\begin{tikzcd}
	{F(Y)} & {M_{\sigma_Y}^Y(1,1)} && {[L\subset Y]} & {[I_{L/Y}]} \\
	{F(X)} & {M_{\sigma_X}^X(1,1)} && {[L\subset X]} & {[P_L]}
	\arrow["f", hook, from=1-2, to=2-2]
	\arrow["i"', hook, from=1-1, to=2-1]
	\arrow["\cong"', from=2-1, to=2-2]
	\arrow["\cong"', from=1-1, to=1-2]
	\arrow["{j_*}"', maps to, from=1-4, to=2-4]
	\arrow["{\pr_X\circ j_*}", maps to, from=1-5, to=2-5]
	\arrow["{\pr_Y}", maps to, from=1-4, to=1-5]
	\arrow["{\pr_X}"', maps to, from=2-4, to=2-5]
\end{tikzcd}\]
\end{corollary}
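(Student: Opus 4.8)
The plan is to prove commutativity \emph{objectwise}: chase a line $L\subset Y\subset X$ around the square and check that both routes land on the same isomorphism class in $\Ku(X)$. Since all four corners are varieties and the moduli spaces parametrize $\sigma$-stable---hence simple---objects, with $F(Y)$ reduced, it suffices to verify the identity on closed points, so we may argue entirely at the level of objects. Running the top--right route, $[L]\in F(Y)$ maps to $[I_{L/Y}]\in M^Y_{\sigma_Y}(1,1)$ and then, by the definition of $f$ in Theorem~\ref{Fano_variety_lines}, to $[\pr_X(j_*I_{L/Y})]$, which equals $[P_L]$ by the identity $\pr_X(j_*I_{L/Y})\cong P_L$ recalled above from \cite[Proposition 4.5(3)]{li2020elliptic}. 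Running the left--bottom route, $[L]\in F(Y)$ maps to $[L]\in F(X)$ and then to $[\pr_X(I_{L/X})]\in M^X_{\sigma_X}(1,1)$ via \cite[Theorem 1.1]{li2018twisted}. Thus the corollary reduces to the single isomorphism $\pr_X(I_{L/X})\cong \pr_X(j_*I_{L/Y})$.

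To obtain this, I would compare the two ideal sheaves directly. Because $L\subset Y$ and $Y=X\cap\{s=0\}$ is a hyperplane section, multiplication by $s$ embeds $\oh_X(-H)$ into $I_{L/X}$, and restriction $\oh_X\to j_*\oh_Y$ sends $I_{L/X}$ onto $j_*I_{L/Y}$ with kernel exactly $\oh_X(-H)\cdot s$; this produces a short exact sequence
\[
0\to \oh_X(-H)\to I_{L/X}\to j_*I_{L/Y}\to 0 .
\]
Applying $\pr_X$ yields an exact triangle $\pr_X(\oh_X(-H))\to \pr_X(I_{L/X})\to \pr_X(j_*I_{L/Y})\xrightarrow{+1}$, and $\pr_X(\oh_X(-H))=0$: indeed $\bL_{\oh_X(H)}\oh_X(-H)=\oh_X(-H)$ and $\bL_{\oh_X}\oh_X(-H)=\oh_X(-H)$ since $H^*(\oh_X(-2H))=H^*(\oh_X(-H))=0$ on a cubic fourfold, while $\bR_{\oh_X(-H)}\oh_X(-H)=0$ because $\oh_X(-H)$ lies in the mutated subcategory. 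Hence $\pr_X(I_{L/X})\cong \pr_X(j_*I_{L/Y})$, which closes the square. (One could alternatively feed the sequence into the cubic analogue of Proposition~\ref{prop-pushforward}, i.e.~\cite[Lemma 4.4(1)]{FGLZ24}, but the vanishing of $\pr_X(\oh_X(-H))$ makes the comparison immediate.)

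There is no essential obstacle here; the only points requiring care are the bookkeeping in identifying the kernel of $I_{L/X}\twoheadrightarrow j_*I_{L/Y}$ and the reconciliation of the two descriptions of $P_L$ --- as $\pr_X(I_{L/X})$ coming from \cite{li2018twisted} versus $\pr_X(j_*I_{L/Y})$ coming from \cite{li2020elliptic} --- which the displayed short exact sequence settles. The final assembly is formal: by the first step the two compositions agree on closed points, and since $i$, $f$ and the two horizontal isomorphisms are morphisms of varieties with $F(Y)$ reduced, the diagram of morphisms commutes; in particular this realizes $i(F(Y))$ inside $F(X)\cong M^X_{\sigma_X}(1,1)$ as the Lagrangian subvariety $f(M^Y_{\sigma_Y}(1,1))$, recovering the classical picture.
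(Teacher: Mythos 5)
Your argument is correct and follows essentially the same route as the paper's (implicit) proof: the corollary is a diagram chase reducing to the fact that both compositions send $[L]$ to $[P_L]$, the paper obtaining this by quoting $\pr_X(j_*I_{L/Y})\cong P_L$ from \cite{li2020elliptic} and $\pr_X(I_{L/X})\cong P_L$ from the identification of $F(X)$ with $M_{\sigma_X}^X(1,1)$ in \cite{li2018twisted}. Your short exact sequence $0\to\oh_X(-H)\to I_{L/X}\to j_*I_{L/Y}\to 0$ combined with the vanishing $\pr_X(\oh_X(-H))=0$ is a correct, self-contained substitute for reconciling those two external descriptions of $P_L$, and all the intermediate verifications (identification of the kernel, the cohomology vanishings, $\bR_{\oh_X(-H)}\oh_X(-H)=0$) check out.
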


\subsection{LLSvS eightfold}\label{subsec-appendix-llsvs}
Let $X$ be a cubic fourfold not containing a plane and $\sigma_X\in \Stab^{\circ}(\Ku(X))$. Let $M_3(X)$ be the irreducible component of the Hilbert scheme $\mathrm{Hilb}^{3t+1}_X$, which contains the smooth cubics on $X$. We refer to curves in $M_3(X)$ as  \emph{(generalized) twisted cubics} on $X$. By \cite[Theorem A]{LLSvS17}, $M_3(X)$ is a smooth and irreducible projective variety of dimension $10$. In \cite[Theorem B]{LLSvS17}, the authors construct a two-step contraction 
$$\alpha \colon  M_3(X)\xrightarrow{\alpha_1}Z'\xrightarrow{\alpha_2}Z,$$
where $\alpha_1 \colon M_3(X)\rightarrow Z'$ is a $\mathbb{P}^2$-bundle and $\alpha_2 \colon Z'\rightarrow Z$ is blowing up the image of the embedding $\mu \colon X\hookrightarrow Z$. Moreover, $Z$ is a $8$-dimensional hyperk\"ahler manifold. We usually call $Z$ the \emph{LLSvS eightfold} associated with $X$. 

The fibers of $\alpha_1$ and $\alpha_2$ are studied in details (cf.~\cite[Section 1]{AL17}). For $[C], [C']\in M_3(X)$, we have $[C']\in \alpha_1^{-1}(\alpha_1([C]))$ if and only if $S=\langle C \rangle \cap X=\langle C' \rangle \cap X$ and $I_{C/S}\cong I_{C'/S}$. Moreover, $\alpha_1$ contracts the image of the locus of twisted cubics with an embedded point. For $[C]\in M_3(X)$ with an embedded point $p\in X$, $\alpha([C])=\alpha([C'])$ if and only if $p$ is also an embedded point of $C'$.

On the other hand, we consider the subscheme of $M_3(X)$ parameterizing twisted cubics contained in a smooth hyperplane section $Y$. We denote the image of $M_3(Y)$ under $\alpha$ by $Z_Y$, then $Z_Y$ is a Lagrangian subvariety of $Z$ due to \cite[Proposition 2.9]{shinder2017geometry}. 

The LLSvS eightfold $Z$ is reconstructed as the Bridgeland moduli space $M_{\sigma_X}^X(2,1)$ in \cite{li2018twisted}. 
We consider the moduli space $M_{\sigma_Y}^Y(2,1)$, which is a smooth projective variety of dimension $4$. The geometry of this moduli space is intensively studied in \cite{bayer2020desingularization} and \cite{altavilla2019moduli}. First of all, we identify the moduli space $M_{\sigma_Y}^Y(2,1)$ with $Z_Y$.

Recall that there are two types of twisted cubics in $X$: arithmetically Cohen--Macaulay (aCM) or non-Cohen--Macaulay (non-CM). We refer to \cite[Section 1]{LLMS18} for more details.

\begin{lemma}
\label{projection_object_cubics}
Let $Y$ be a smooth cubic threefold and $[C]\in M_3(Y)$. We define the complex $$E_C:=\pr_Y(I_{C/Y}(2H))[-1].$$
\begin{enumerate}
    \item If $C$ is aCM, then $E_C\cong\ker(\oh_Y^{\oplus 3}\xra{ev} \oh_S(D))$ is a slope-stable bundle such that $S:=\langle C \rangle\cap Y$ is the cubic surface containing $C$ and $I_{C/S}(2H)\cong\oh_S(D)$, where $D$ is a Weil divisor of $S$. 
    \item If $C$ is non-CM, then 
    $E_C\cong\pr_Y(\CC_p)[-2]\cong \ker(\oh_Y^{\oplus 4}\xra{ev} I_{p/Y}(H))$ is a slope-stable reflexive sheaf, 
    where $p$ is the embedded point of $C$.
\end{enumerate}
\end{lemma}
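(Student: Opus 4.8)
The plan is to compute the projection object $\pr_Y(I_{C/Y}(2H))$ directly from the semi-orthogonal decomposition $\D^b(Y)=\langle\Ku(Y),\oh_Y,\oh_Y(H)\rangle$, analyzing the two cases according to whether $C$ is aCM or non-CM. Recall $\pr_Y=\bL_{\oh_Y(H)}\bL_{\oh_Y}$, so the computation reduces to understanding the cohomology groups $\RHom_Y(\oh_Y,I_{C/Y}(2H))$ and $\RHom_Y(\oh_Y(H),I_{C/Y}(2H))$ together with the geometry of $C$ inside $Y$.

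First I would treat the aCM case. For an aCM twisted cubic $C$, its ideal sheaf has a well-known Eagon--Northcott type resolution, and the cubic surface $S=\langle C\rangle\cap Y$ is the unique hyperplane section of $Y$ containing $C$ (this uses that $C$ spans a $\PP^3$ and that $Y$ is an intersection of cubics). One then has $I_{C/S}\cong\oh_S(D)$ for an appropriate Weil divisor $D$, and twisting by $2H$ gives $I_{C/S}(2H)\cong\oh_S(D)$ (after identifying the line bundle, using $\deg$ and the adjunction/canonical bundle of $S$). The standard sequence $0\to I_{S/Y}(2H)\to I_{C/Y}(2H)\to I_{C/S}(2H)\to 0$ together with $I_{S/Y}(2H)\cong\oh_Y(H)$ shows that $\bL_{\oh_Y(H)}\bL_{\oh_Y}$ applied to $I_{C/Y}(2H)$ only sees $\oh_S(D)$ up to the mutation through $\oh_Y(H)$; a cohomology count (using $h^0(\oh_S(D))=3$, $h^{>0}(\oh_S(D))=0$, and $\RHom_Y(\oh_Y(H),\oh_S(D))=0$, which follow from Kodaira-type vanishing on the del Pezzo surface $S$ and Serre duality on $Y$) yields that $E_C:=\pr_Y(I_{C/Y}(2H))[-1]$ is the kernel of the surjective evaluation $\oh_Y^{\oplus 3}\xra{\ev}\oh_S(D)$. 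Slope-stability of this rank-three bundle then follows from a Hoppe-type criterion: any destabilizing subsheaf would produce sections of $\oh_S(D)$ with too-small support, contradicting the fact that $D$ is a genuine (non-split) Weil divisor class on $S$; alternatively one invokes Serre-invariance of $\sigma_Y$ and the fact that $[E_C]$ lies on the appropriate ray, together with $\ext^1_Y(E_C,E_C)=4$ computed via the resolution.

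Next I would handle the non-CM case. Here $C$ has an embedded point $p\in Y$, and its structure sheaf sits in $0\to\CC_p\to\oh_C\to\oh_{C^{\mathrm{ac}}}\to 0$ where $C^{\mathrm{ac}}$ is the underlying aCM curve (in fact a plane cubic plus the point). One computes $\pr_Y(I_{C/Y}(2H))$ by relating it to $\pr_Y(\CC_p)$: the point sheaf $\CC_p$ has $\RHom_Y(\oh_Y,\CC_p)=\CC$ and $\RHom_Y(\oh_Y(H),\CC_p)=\CC$, and mutating through $\oh_Y(H)$ and $\oh_Y$ produces $\pr_Y(\CC_p)[-2]\cong\ker(\oh_Y^{\oplus 4}\xra{\ev}I_{p/Y}(H))$ — the rank-four syzygy bundle of the point $p$, twisted appropriately. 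That this agrees with $\pr_Y(I_{C/Y}(2H))[-1]$ requires showing that the aCM part contributes nothing after projection, i.e. $\pr_Y$ of the ideal sheaf of the residual plane cubic (suitably twisted) vanishes, which again follows from a short exact sequence and the vanishings already used in the aCM case. Reflexivity (but not local freeness) of $E_C\cong\pr_Y(\CC_p)[-2]$ is standard for syzygy sheaves of a point in a smooth threefold: it is the kernel of a surjection from a vector bundle onto $I_{p/Y}(H)$, hence a second syzygy, hence reflexive; it fails to be locally free precisely at $p$. Slope-stability follows as before from a Hoppe-type argument or from Serre-invariance of $\sigma_Y$ combined with the numerical class computation showing $[E_C]=\lambda_1+\lambda_2$ or the relevant primitive class.

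\textbf{Main obstacle.} The delicate point is the identification $I_{C/S}(2H)\cong\oh_S(D)$ and the precise bookkeeping of twists and mutations so that $E_C$ comes out as exactly $\ker(\oh_Y^{\oplus 3}\xra{\ev}\oh_S(D))$ rather than some shift or extension of it; this requires carefully tracking the two mutation triangles and all the relevant $\RHom$ groups (in particular verifying $\RHom_Y(\oh_Y(H),I_{C/Y}(2H))$ vanishes or contributes only a shift), together with knowing enough about the geometry of cubic surfaces $S\subset Y$ (their Weil divisor class groups, when $S$ is singular, and Kodaira vanishing in the mildly singular cases that occur in $M_3(Y)$). The non-CM case additionally needs the classification of non-CM generalized twisted cubics in a cubic threefold and the compatibility of the embedded-point description with the projection functor, for which one can cite the analysis in \cite{LLMS18,li2018twisted}.
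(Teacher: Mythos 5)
Your proposal follows essentially the same route as the paper: both compute $\pr_Y(I_{C/Y}(2H))$ mutation by mutation, first identifying $\bL_{\oh_Y(H)}I_{C/Y}(2H)$ with $I_{C/S}(2H)$ via the triangle $\oh_Y(H)\to I_{C/Y}(2H)\to I_{C/S}(2H)$ (and, in the non-CM case, reducing to $\CC_p$ through the sequences $0\to I_{C/Y}\to I_{C_0/Y}\to\CC_p\to 0$ and the Koszul resolution of the plane cubic $C_0$), and then presenting $E_C$ as the kernel of the surjective evaluation map using $0$-regularity and the count $h^0=3$ (resp.\ $4$). The one point where you diverge is the justification of slope-stability, reflexivity, and $I_{C/S}(2H)\cong\oh_S(D)$: the paper simply cites \cite[Propositions 3.1, 3.2, Lemma 5.1]{bayer2020desingularization}, whereas your sketched alternatives are the least developed part of the proposal --- in particular, Serre-invariance of $\sigma_Y$ would only give Bridgeland stability in $\Ku(Y)$, not the slope-stability of the sheaf $E_C$ asserted in the statement, so you would still need either the Hoppe-type argument made precise or the citation.
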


\begin{proof}\leavevmode
By \cite[(1.2.2)]{LLMS18}, we see $\bL_{\oh_Y(H)}I_{C/Y}(2H)$ sits in the triangle
$$\oh_Y(H)\rightarrow I_{C/Y}(2H)\rightarrow\bL_{\oh_Y(H)}I_{C/Y}(2H)\cong I_{C/S}(2H),$$
where $S:=\langle C \rangle\cap Y$ is the cubic surface containing $C$.
Thus $E_C\cong\bL_{\oh_Y}(I_{C/S}(2H))[-1]$. Note that $\RHom_Y(\oh_Y, I_{C/S}(2H))=\CC^3$ by \cite[(1.2.2)]{LLMS18}, then we have a triangle
$$E_C\to \oh_Y^{\oplus 3}\xra{ev} I_{C/S}(2H),$$
where $ev$ is the evaluation map. Using \cite[(1.2.2)]{LLMS18}, it is straightforward to check that $I_{C/S}(2H)$ is $0$-regular in the sense of Castelnuovo--Mumford. In particular, $ev$ is surjective. Thus we obtain that $E_C\cong\ker(\oh_Y^{\oplus 3}\xra{ev} I_{C/S}(2H))$.

(1): If $C$ is aCM, by \cite[Proposition 3.1]{bayer2020desingularization}, we know that $S$ is normal and integral. By \cite[Proposition 3.2]{bayer2020desingularization}, if we set $D:=2H-C$, we have $I_{C/S}(2H)\cong \oh_S(D)$. The stability and locally freeness follow from \cite[Lemma 5.1]{bayer2020desingularization}.

(2): If $C$ is non-CM, there are two short exact sequences
\begin{equation} \label{seq_kp}
    0\rightarrow I_{C/Y}(2H)\rightarrow I_{C_0/Y}(2H)\rightarrow \CC_p\rightarrow 0
\end{equation}
and 
\begin{equation}\label{SEQ_C0}
    0\rightarrow\oh_Y\rightarrow\oh_Y^{\oplus 2}(H)\rightarrow I_{C_0/Y}(2H)\rightarrow 0,
\end{equation}
where $C_0$ is a plane cubic curve and $p$ is the embedded point. Furthermore,   
\eqref{SEQ_C0} is the Koszul resolution of $I_{C_0/Y}$. Applying $\bL_{\oh_Y(H)}$ to (\ref{SEQ_C0}), we get $$\bL_{\oh_Y(H)}(I_{C_0/Y}(2H))\cong \bL_{\oh_Y(H)}\oh_Y[1]\cong\oh_Y[1].$$
Then applying $\pr_Y$ to \eqref{seq_kp}, we obtain that $\mathrm{pr}_Y(I_{C_0/Y})\cong 0$.  Thus, from (\ref{seq_kp}), we have $$\mathrm{pr}_Y(\CC_p)[-1]\cong\mathrm{pr}_Y(I_{C/Y}(2H)).$$ As $\bL_{\oh_Y(H)}\CC_p[-1]\cong I_{p/Y}(H)$, $E_C$ is the kernel of the evaluation map
\[E_C\to \oh_Y^{\oplus 4}\xra{ev} I_{p/Y}(H).\]
Finally, the stability and reflexivity of $E_C$ follow from \cite[Lemma 5.1]{bayer2020desingularization}.
\end{proof}

By Lemma \ref{projection_object_cubics} and \cite[Theorem 6.1(ii), Theorem 8.7]{bayer2020desingularization},  $\mathrm{pr}_Y(I_{C/Y}(2H))$ is $\sigma_Y$-stable for every Serre-invariant stability condition $\sigma_Y$ on $\Ku(Y)$. Then the functor $\pr_Y$ induces a morphism $$\pi \colon M_3(Y)\rightarrow M_{\sigma_Y}^Y(2,1).$$ 
Moreover, $\pi$ is surjective by \cite[Theorem 6.1(ii)]{bayer2020desingularization}, and $M_{\sigma_Y}^Y(2,1)$ is smooth and projective by \cite[Theorem 7.1]{bayer2020desingularization}. 

Since the subvariety $Z_Y\subset Z$ is a two-step contraction of  $\alpha|_{M_3(Y)}$, to show $M_{\sigma_Y}^Y(2,1)\cong Z_Y$, it suffices to prove that $\pi$ contracts the same locus as $\alpha|_{M_3(Y)}$, which is equivalent to the following theorem.

\begin{theorem}
\label{SS_variety_equal_BBF_moduli_space}
For twisted cubics $C$ and $C'$ on $Y$, $\mathrm{pr}_Y(I_{C/Y}(2H))\cong\mathrm{pr}_Y(I_{C'/Y}(2H))$ if and only if $\alpha(C)=\alpha(C')$, where $\alpha 
\colon M_3(Y)\rightarrow Z_Y$. Thus, we have $Z_Y\cong M_{\sigma_Y}^Y(2,1)$. 
\end{theorem}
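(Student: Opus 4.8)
The plan is to establish the biconditional by analyzing the two contraction steps $\alpha_1$ and $\alpha_2$ separately, matching each with the fibers of $\pi$, and using the explicit description of the projection objects $E_C = \pr_Y(I_{C/Y}(2H))[-1]$ from Lemma \ref{projection_object_cubics}. First I would recall the structure of $\alpha = \alpha_2 \circ \alpha_1$: the $\PP^2$-bundle $\alpha_1$ identifies $[C]$ and $[C']$ precisely when they span the same cubic surface $S = \langle C\rangle \cap Y = \langle C'\rangle \cap Y$ and $I_{C/S} \cong I_{C'/S}$; the blow-down $\alpha_2$ then further identifies the aCM locus sitting over a point $p \in X$ (coming from non-CM cubics with embedded point $p$) with $\mu(p)$. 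So I would split into cases according to whether $C$, $C'$ are aCM or non-CM.

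For the ``only if'' direction: suppose $\pr_Y(I_{C/Y}(2H)) \cong \pr_Y(I_{C'/Y}(2H))$, i.e. $E_C \cong E_{C'}$. In the aCM-aCM case, Lemma \ref{projection_object_cubics}(1) gives $E_C \cong \ker(\oh_Y^{\oplus 3} \xrightarrow{ev} \oh_S(D))$ and similarly for $C'$; from $E_C \cong E_{C'}$ one recovers the quotient $\oh_S(D) \cong \coker(\text{dual or the cokernel of the evaluation})$ up to the identifications, hence $S = S'$ and $\oh_S(D) \cong \oh_{S'}(D')$, which by $I_{C/S}(2H) \cong \oh_S(D)$ means $I_{C/S} \cong I_{C'/S}$, i.e. $\alpha_1([C]) = \alpha_1([C'])$, so $\alpha([C]) = \alpha([C'])$. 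In the non-CM-non-CM case, Lemma \ref{projection_object_cubics}(2) gives $E_C \cong \ker(\oh_Y^{\oplus 4} \xrightarrow{ev} I_{p/Y}(H))$, so $E_C \cong E_{C'}$ forces $I_{p/Y} \cong I_{p'/Y}$, hence $p = p'$; by the fiber description of $\alpha$ this gives $\alpha([C]) = \alpha([C'])$. The mixed case (one aCM, one non-CM) must be excluded: an aCM $E_C$ is locally free by Lemma \ref{projection_object_cubics}(1) whereas a non-CM $E_{C'}$ is reflexive but not locally free (it is the kernel of a surjection onto $I_{p/Y}(H)$, which fails to be locally free at $p$), so $E_C \not\cong E_{C'}$ — unless the aCM cubic $C$ itself lies in the image of the blow-down over some point, in which case one checks directly (as in \cite{bayer2020desingularization}) that $\alpha([C])$ still equals $\mu(p)$, so consistency holds. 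For the ``if'' direction one reverses this: if $\alpha([C]) = \alpha([C'])$, then either they have the same $(S, I_{C/S})$ data, forcing $E_C \cong E_{C'}$ by the explicit kernel descriptions, or they map to the same point $\mu(p) \in X$, in which case both $E_C$ and $E_{C'}$ are isomorphic to $\pr_Y(\CC_p)[-2] \cong \ker(\oh_Y^{\oplus 4} \to I_{p/Y}(H))$ (directly for non-CM, and via the computation $\pr_Y(I_{C_0/Y}) \cong 0$ together with the embedded-point analysis for the relevant aCM cubics in the blown-down locus).

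Having shown $\pi$ contracts exactly the fibers of $\alpha|_{M_3(Y)}$, I would conclude: $\pi \colon M_3(Y) \to M^Y_{\sigma_Y}(2,1)$ is surjective with connected fibers equal to those of $\alpha|_{M_3(Y)} \colon M_3(Y) \to Z_Y$, and both target varieties are smooth and projective (by \cite[Theorem 7.1]{bayer2020desingularization} and \cite[Proposition 2.9]{shinder2017geometry}, resp.), so the induced map $Z_Y \to M^Y_{\sigma_Y}(2,1)$ is a bijective morphism between smooth projective varieties, hence an isomorphism (e.g. by Zariski's main theorem, since it is birational and finite, or simply because a bijective morphism of smooth varieties in characteristic zero is an isomorphism).

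The main obstacle I expect is the careful bookkeeping in the mixed and boundary cases — precisely, confirming that the locus of aCM cubics whose projection object becomes a non-locally-free reflexive sheaf is exactly the locus $\alpha_2$ contracts, and that on this locus the two descriptions of $E_C$ in Lemma \ref{projection_object_cubics} agree and are governed by the single invariant $p \in X$. This requires matching the geometry of non-normal or reducible cubic surfaces $S$ (treated in \cite{bayer2020desingularization}) with the behavior of the evaluation map and the resulting singularities of $E_C$, rather than any new conceptual input.
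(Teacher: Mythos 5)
Your proposal follows essentially the same route as the paper: the case analysis by aCM/non-CM type using the explicit kernels $E_C$ from Lemma \ref{projection_object_cubics}, the exclusion of the mixed case by local freeness versus non-local-freeness, and the conclusion by matching the fibers of $\pi$ with those of $\alpha|_{M_3(Y)}$ and invoking smoothness and projectivity of the targets. Two small remarks. First, in the aCM--aCM case your phrase ``one recovers the quotient $\oh_S(D)$ up to the identifications'' is the one genuinely nontrivial step: one must show that the embedding $E_C\hookrightarrow \oh_Y^{\oplus 3}$ is intrinsic to $E_C$, or equivalently produce a nonzero map $I_{C/S}(2H)\to I_{C'/S'}(2H)$ from $E_C\cong E_{C'}$ and then conclude by Gieseker stability; this is exactly how the paper handles it, citing the method of \cite[Proposition 8.1]{GLZ2021conics}. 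Second, your hedge about an aCM cubic ``lying in the image of the blow-down over some point'' is vacuous: by the fiber description of $\alpha$, the fiber over a point of $\mu(Y)$ consists only of non-CM cubics with that embedded point, so the mixed case is excluded unconditionally and no consistency check is needed.
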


\begin{proof}
The argument is very similar to the proof of \cite[Proposition 2]{AL17}, but the situation here is simpler. 
If $\alpha(C)=\alpha(C')=p\in\mu(Y)$, from the construction, $C$ and $C'$ are both non-CM twisted cubics with the embedded point $p$. Thus, by Lemma~\ref{projection_object_cubics}(2), we have $\mathrm{pr}_Y(I_{C/Y}(2H))\cong\mathrm{pr}_Y(I_{C'/Y}(2H))$. If $\alpha(C)=\alpha(C')\notin\mu(Y)$, then $C$ and $C'$ are both aCM twisted cubics and they are in the same fiber of the $\mathbb{P}^2$-bundle map $\alpha_1$. This implies that they are in the same linear system, i.e.~$I_{C/S}\cong I_{C'/S}$. Then $\mathrm{pr}_Y(I_{C/Y}(2H))\cong\mathrm{pr}_Y(I_{C'/Y}(2H))$.

Conversely, we show that if  $\pr_Y(I_{C/Y}(2H))\cong\pr_Y(I_{C'/Y}(2H))$, then $\alpha(C)=\alpha(C')$.
\begin{enumerate}
    \item If $C$ and $C'$ are both aCM, we need to show that $C$ and $C'$ are contained in the same cubic surface and in the same linear system. Let $S:=\langle C \rangle \cap Y$ and $S':=\langle C' \rangle \cap Y$. By assumption, we see that $E_C\cong E_{C'}$. Using the same method as in  \cite[Proposition 8.1]{GLZ2021conics}, we know that $\Hom_Y(I_{C/S}, I_{C'/S'})\neq 0$. Then $I_{C/S}\cong I_{C'/S'}$ since they are Gieseker-stable. Hence, we get $\alpha(C)=\alpha(C')$.
    
    \item If $C$ and $C'$ are not aCM, with the embedded points $p$ and $p'$, due to $$\pr_Y(I_{C/Y}(2H))\cong\pr_Y(I_{C'/Y}(2H)),$$ we obtain $E_C\cong E_{C'}$. Since $E_C$ is only non-locally free at $p$ and $E_{C'}$ is only non-locally free at $p'$, we get $p=p'$. Hence, $\alpha(C)=\alpha(C')$. 
    
    \item If $C$ is aCM and $C'$ is not aCM, we aim to prove that their projection objects in $\Ku(Y)$ can not be isomorphic. This is obvious. By Lemma~\ref{projection_object_cubics}, $E_C$ is locally free, while $E_{C'}$ is non-locally free at the embedded point of $C'$. Hence, they can not be isomorphic.\qedhere 
\end{enumerate}
\end{proof}

Next, we show that the functor $\pr_X\circ j_*\colon \Ku(Y)\rightarrow\Ku(X)$ induces a Lagrangian embedding  $$p_j\colon M_{\sigma_Y}^Y(2,1)\to M_{\sigma_X}^X(2,1).$$

For a twisted cubic curve $C\subset X$ contained in a cubic surface $S\subset X$, let $F_C$ be the kernel of the evaluation map
\[ev\colon H^0(X, I_{C/S}(2H))\otimes \oh_X\twoheadrightarrow I_{C/S}(2H)\]
and $F'_C:=\pr_X(F_C)\cong\bR_{\oh_X(-H)}F_C\in\Ku(X)$ be the projection object of $F_C$. By definition, we see 
\[F'_C\cong\pr_X(I_{C/S}(2H))[-1].\]

\begin{proposition} \label{push-cubic}
Let $X$ be a cubic fourfold and $j\colon  Y\hookrightarrow X$ be a smooth hyperplane section. If $C$ is a twisted cubic on $Y$, then we have $\pr_X(j_*E_C)\cong F'_C.$
\end{proposition}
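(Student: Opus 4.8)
The plan is to bridge the two objects via the identities already established in the excerpt, namely $\pr_Y(I_{C/Y}(2H)) = E_C[1]$ (from the proof of Lemma~\ref{projection_object_cubics}) and $F'_C \cong \pr_X(I_{C/S}(2H))[-1]$ with $S := \langle C \rangle \cap Y$, and to pass between $\Ku(Y)$ and $\Ku(X)$ through the functor $\pr_X \circ j_*$ together with its compatibility with $\pr_Y$.

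First I would establish the cubic-fourfold analogue of Proposition~\ref{prop-pushforward}: that $\pr_X(j_* M) \cong \pr_X(j_* \pr_Y(M))$ for every $M \in \D^b(Y)$. Since $Y$ is a hyperplane section, the ideal of $Y$ in $X$ is $\oh_X(-H)$, giving short exact sequences $0 \to \oh_X(-H) \to \oh_X \to j_*\oh_Y \to 0$ and $0 \to \oh_X \to \oh_X(H) \to j_*\oh_Y(H) \to 0$. A quick mutation computation then yields $\pr_X(j_*\oh_Y) = \pr_X(j_*\oh_Y(H)) = 0$: indeed $\bL_{\oh_X(H)}(j_*\oh_Y) = j_*\oh_Y$ because $H^\ast(\oh_Y(-H)) = 0$, then $\bL_{\oh_X}(j_*\oh_Y) = \oh_X(-H)[1]$, which $\bR_{\oh_X(-H)}$ kills; and $\bL_{\oh_X(H)}(j_*\oh_Y(H)) = \oh_X[1]$, which $\bL_{\oh_X}$ kills. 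As $\pr_X \circ j_*$ is exact and annihilates the subcategory generated by $j_*\oh_Y$ and $j_*\oh_Y(H)$, while the decomposition $\D^b(Y) = \langle \Ku(Y), \oh_Y, \oh_Y(H)\rangle$ expresses $M$ through $\pr_Y(M)$ and objects of $\langle \oh_Y, \oh_Y(H)\rangle$ via the defining triangles of $\pr_Y = \bL_{\oh_Y(H)}\bL_{\oh_Y}$, the claim follows. One could alternatively cite \cite{FGLZ24} for this statement.

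Next I would apply this compatibility to $M = I_{C/Y}(2H)$. On one side, $\pr_X(j_*\pr_Y(I_{C/Y}(2H))) = \pr_X(j_*(E_C[1])) = \pr_X(j_*E_C)[1]$. On the other side, the proof of Lemma~\ref{projection_object_cubics} provides the short exact sequence $0 \to \oh_Y(H) \to I_{C/Y}(2H) \to I_{C/S}(2H) \to 0$; pushing it forward along $j_*$, applying $\pr_X$, and using $\pr_X(j_*\oh_Y(H)) = 0$ gives $\pr_X(j_*I_{C/Y}(2H)) \cong \pr_X(j_*I_{C/S}(2H)) = \pr_X(I_{C/S}(2H))$, the last identification because $I_{C/S}(2H)$ is supported on $S \subset Y \subset X$, so $j_*$ does nothing. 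Combining the two sides gives $\pr_X(j_*E_C)[1] \cong \pr_X(I_{C/S}(2H))$, i.e.\ $\pr_X(j_*E_C) \cong \pr_X(I_{C/S}(2H))[-1] \cong F'_C$.

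The step demanding the most care is the compatibility lemma of the second paragraph: the vanishings $\pr_X(j_*\oh_Y) = \pr_X(j_*\oh_Y(H)) = 0$ and correct bookkeeping of the shifts through the mutation functors. Everything else is formal once Lemma~\ref{projection_object_cubics} is in place, and the argument is uniform in the type (arithmetically Cohen--Macaulay or non-Cohen--Macaulay) of the generalized twisted cubic $C$, since the sequence $0 \to \oh_Y(H) \to I_{C/Y}(2H) \to I_{C/S}(2H) \to 0$ holds in both cases with $S = \langle C \rangle \cap Y$.
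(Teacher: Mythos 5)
Your proof is correct and follows essentially the same route as the paper: the paper likewise combines the short exact sequence $0\to \oh_Y(H)\to I_{C/Y}(2H)\to I_{C/S}(2H)\to 0$ with the compatibility $\pr_X\circ j_*\cong \pr_X\circ j_*\circ \pr_Y$ (citing Proposition \ref{prop-pushforward}), the only cosmetic difference being that the paper first reduces to $\pr_Y(I_{C/S}(2H))[-1]$ on the threefold side whereas you push forward $I_{C/Y}(2H)$ and reduce on the fourfold side. Your explicit verification of the cubic-fourfold analogue of Proposition \ref{prop-pushforward}, via the vanishings $\pr_X(j_*\oh_Y)=\pr_X(j_*\oh_Y(H))=0$, is sound and in fact fills in a step the paper leaves to the reference \cite{FGLZ24}.
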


\begin{proof}
By definition, we have $E_C=\pr_Y(I_{C/Y}(2H))[-1]$. Applying $\pr_Y$ to the exact sequence
\[0\to \oh_Y(H)\to I_{C/Y}(2H)\to I_{C/S}(2H)\to 0,\]
we see $E_C\cong \pr_Y(I_{C/S}(2H))[-1]$. Since $F'_C=\pr_X(I_{C/S}(2H))[-1]$, the result follows from Proposition \ref{prop-pushforward}.
\end{proof}

Combined with Proposition \ref{push-cubic}, \cite[Theorem 1.2]{li2018twisted} and \cite[Theorem A.4]{FGLZ24}, we obtain the following result.

\begin{theorem} \label{8fold-embed}
Let $X$ be a cubic fourfold not containing a plane and  $j\colon  Y\hookrightarrow X$ be a smooth hyperplane section. Then the functor $\pr_X\circ j_*$ induces a Lagrangian embedding 
\[p_j\colon M_{\sigma_Y}^Y(2,1)\hookrightarrow M_{\sigma_X}^X(2,1).\]
\end{theorem}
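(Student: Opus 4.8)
\textbf{Proof proposal for Theorem \ref{8fold-embed}.}

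The plan is to deduce the statement from the general Lagrangian embedding criterion \cite[Theorem A.4]{FGLZ24} applied to the functor $\Psi := \pr_X\circ j_*\colon \Ku(Y)\to\Ku(X)$, exactly as in the other cases of this appendix. The input that criterion requires is: (i) the objects in the source moduli space are sent to $\sigma_X$-stable objects of the expected class in $\Ku(X)$; (ii) the induced map on moduli spaces is a closed immersion; and (iii) its image is Lagrangian. First I would identify the relevant objects. By Proposition \ref{push-cubic}, for any twisted cubic $C\subset Y$ contained in a cubic surface $S\subset Y\subset X$ we have $\pr_X(j_*E_C)\cong F'_C=\pr_X(I_{C/S}(2H))[-1]$, where $E_C=\pr_Y(I_{C/Y}(2H))[-1]$ is the $\sigma_Y$-stable object of class $2\lambda_1+\lambda_2$ representing the point of $M^Y_{\sigma_Y}(2,1)\cong Z_Y$ (Theorem \ref{SS_variety_equal_BBF_moduli_space} and Lemma \ref{projection_object_cubics}). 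Thus on closed points the map $p_j$ sends $[E_C]$ to $[F'_C]$, and by \cite[Theorem 1.2]{li2018twisted} the object $F'_C$ is exactly the $\sigma_X$-stable object of class $2\Lambda_1+\Lambda_2$ whose isomorphism class is the point $\alpha(C)\in Z\cong M^X_{\sigma_X}(2,1)$. This already gives (i), and the class computation is a routine check using $v\circ\ch$ and that $j_*$ shifts classes in the prescribed way.

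Next I would invoke \cite[Theorem A.4]{FGLZ24} directly. That theorem (as used for $M^Y_{\sigma_Y}(1,1)$ in Theorem \ref{Fano_variety_lines} and for the Fano variety of lines and double EPW sextics elsewhere in this appendix) takes a smooth hyperplane section $j\colon Y\hookrightarrow X$, a Serre-invariant stability condition on $\Ku(Y)$, the induced $\sigma_X$, and a component of a Bridgeland moduli space on $Y$, and produces a Lagrangian embedding of that moduli space into the corresponding moduli space on $X$, provided the pushforward-projection functor takes stable objects to stable objects of the correct numerical class, which is precisely what Proposition \ref{push-cubic} and \cite[Theorem 1.2]{li2018twisted} supply. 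The Lagrangian property and the injectivity on points and tangent spaces are outputs of that theorem (they follow from the fact that $j^*j_*$ fits in a triangle with a shift, together with the Serre-invariance of $\sigma_Y$ and $S_{\Ku(X)}=[2]$), so I do not need to reprove them; I would simply state that all hypotheses of \cite[Theorem A.4]{FGLZ24} are verified and the embedding $p_j$ follows. The dimension count $\dim M^Y_{\sigma_Y}(2,1)=4=\tfrac12\dim M^X_{\sigma_X}(2,1)$ confirms the image has the right dimension for a Lagrangian.

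The main obstacle is verifying the stability of $\pr_X(j_*E_C)=F'_C$ as a $\sigma_X$-stable object of class $2\Lambda_1+\Lambda_2$ for \emph{all} $C$ representing points of $M^Y_{\sigma_Y}(2,1)$, including the non-CM cubics and the boundary behaviour. Here the point is that $F'_C$ only depends on $C$ through $\alpha(C)$ (since $E_C$ only depends on $\alpha(C)$, by Theorem \ref{SS_variety_equal_BBF_moduli_space}, and $\pr_X\circ j_*$ is a functor), so stability is guaranteed by identifying $F'_C$ with the object parametrized by $\alpha(C)\in Z\cong M^X_{\sigma_X}(2,1)$ via \cite{li2018twisted}; one must check this identification is the same as the one obtained by pushing forward, which is exactly the content of Proposition \ref{push-cubic} combined with the description $F'_C\cong\pr_X(I_{C/S}(2H))[-1]$. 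Once this is in place, the embedding and its Lagrangian nature are immediate from \cite[Theorem A.4]{FGLZ24}. I would therefore write the proof as: (1) recall $E_C\cong\pr_Y(I_{C/S}(2H))[-1]$; (2) apply Proposition \ref{push-cubic} to get $\pr_X(j_*E_C)\cong\pr_X(I_{C/S}(2H))[-1]=F'_C$; (3) cite \cite[Theorem 1.2]{li2018twisted} for $\sigma_X$-stability of $F'_C$ and its class; (4) apply \cite[Theorem A.4]{FGLZ24} to conclude that $\pr_X\circ j_*$ induces the Lagrangian embedding $p_j\colon M^Y_{\sigma_Y}(2,1)\hookrightarrow M^X_{\sigma_X}(2,1)$.
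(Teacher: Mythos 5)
Your proposal follows exactly the route the paper takes: its proof of Theorem \ref{8fold-embed} is precisely the combination of Proposition \ref{push-cubic}, \cite[Theorem 1.2]{li2018twisted}, and \cite[Theorem A.4]{FGLZ24}, which is your steps (1)--(4). The argument is correct and matches the paper's.
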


In \cite{shinder2017geometry}, the authors show that the natural embedding $i'\colon Z_Y\hookrightarrow Z$ realizes $Z_Y$ as a Lagrangian subvariety of $Z$. Moreover, we have a commutative diagram
\[\begin{tikzcd}
	{M_3(Y)} & {Z_Y} \\
	{M_3(X)} & Z
	\arrow["{i'}", hook, from=1-2, to=2-2]
	\arrow["\alpha"', from=2-1, to=2-2]
	\arrow[from=1-1, to=1-2]
	\arrow[hook, from=1-1, to=2-1]
\end{tikzcd}\]
where $M_3(Y)\hookrightarrow M_3(X)$ is the natural inclusion. Actually, the result of \cite{shinder2017geometry} can also be deduced via Bridgeland moduli spaces. In fact, by Theorem~\ref{8fold-embed}, after the identification $M_{\sigma_Y}^Y(2,1)\cong Z_Y$ and $M_{\sigma_X}^X(2,1)\cong Z$, we have an embedding $Z_Y\hookrightarrow Z$, which realizes $Z_Y$ as a Lagrangian subvariety of $Z$. Moreover, we can show that the two embeddings $p_j$ and $i'$ are compatible.

\begin{corollary}\label{commupatible_LLSvS}
The embedding in Theorem \ref{8fold-embed} is compatible with the one in \cite{shinder2017geometry}, which means that we have a commutative diagram
\[\begin{tikzcd}
	{Z_Y} & {M_{\sigma_Y}^Y(2,1)} && {\alpha(C)} & {[\pr_Y(I_{C/S}(2H))=E_C[1]]} \\
	{Z} & {M_{\sigma_X}^X(2,1)} && {\alpha(C)} & {[\pr_X(I_{C/S}(2H))=F'_C[1]]}
	\arrow["{p_j}", hook, from=1-2, to=2-2]
	\arrow["{i'}"', hook, from=1-1, to=2-1]
	\arrow["\cong"', from=2-1, to=2-2]
	\arrow["\cong"', from=1-1, to=1-2]
	\arrow[maps to, from=1-4, to=2-4]
	\arrow[maps to, from=2-4, to=2-5]
	\arrow[maps to, from=1-4, to=1-5]
	\arrow[maps to, from=1-5, to=2-5]
\end{tikzcd}\]
\end{corollary}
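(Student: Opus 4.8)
The statement to prove is Corollary~\ref{commupatible_LLSvS}, asserting that the Lagrangian embedding $p_j\colon M_{\sigma_Y}^Y(2,1)\hookrightarrow M_{\sigma_X}^X(2,1)$ from Theorem~\ref{8fold-embed} is compatible with the classical embedding $i'\colon Z_Y\hookrightarrow Z$ of Shinder--Soldatenkov, via the identifications $Z_Y\cong M_{\sigma_Y}^Y(2,1)$ (Theorem~\ref{SS_variety_equal_BBF_moduli_space}) and $Z\cong M_{\sigma_X}^X(2,1)$ (\cite{li2018twisted}).

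\begin{proof}[Proof of Corollary \ref{commupatible_LLSvS}]
The plan is to chase the diagram on representatives of points, using the explicit descriptions of all four identifications. Fix a generalized twisted cubic $C\subset Y\subset X$ contained in the cubic surface $S=\langle C\rangle\cap Y$. Under the isomorphism $Z_Y\xrightarrow{\cong} M_{\sigma_Y}^Y(2,1)$ established in Theorem~\ref{SS_variety_equal_BBF_moduli_space}, the point $\alpha(C)\in Z_Y$ corresponds to $[\pr_Y(I_{C/Y}(2H))]=[E_C[1]]$, and by the proof of Proposition~\ref{push-cubic} we have $E_C\cong\pr_Y(I_{C/S}(2H))[-1]$, so this point is $[\pr_Y(I_{C/S}(2H))]$. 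Similarly, under $Z\xrightarrow{\cong}M_{\sigma_X}^X(2,1)$ from \cite{li2018twisted}, the point $\alpha(C)\in Z$ corresponds to $[\pr_X(I_{C/S}(2H))]=[F'_C[1]]$. Thus the right-hand square of the claimed diagram commutes on the level of points provided we show that $p_j$ sends $[E_C[1]]$ to $[F'_C[1]]$, i.e.\ that $\pr_X\circ j_*$ sends $\pr_Y(I_{C/S}(2H))$ to $\pr_X(I_{C/S}(2H))$; but this is exactly the content of Proposition~\ref{push-cubic} together with Proposition~\ref{prop-pushforward}. Hence both routes around the square agree on a dense set of points, and since all varieties are separated and the morphisms are morphisms (not merely rational maps), commutativity on a dense set forces commutativity everywhere.

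In more detail, I would first recall that both $i'$ and $p_j$ are closed embeddings with image a Lagrangian subvariety, so it suffices to check that the two compositions $M_{\sigma_Y}^Y(2,1)\to M_{\sigma_X}^X(2,1)$, namely $p_j$ and the conjugate of $i'$ by the two Bridgeland identifications, agree as set maps; being morphisms between separated schemes that coincide on the dense open locus of aCM cubics (where the moduli point is $[\pr_Y(I_{C/S}(2H))]$ for an honest cubic surface $S$), they coincide. The non-CM locus is then handled by continuity, or directly by the parallel computation: for a non-CM cubic $C'$ with embedded point $p$, Lemma~\ref{projection_object_cubics}(2) identifies $E_{C'}$ with $\pr_Y(\CC_p)[-2]$, and Proposition~\ref{prop-pushforward} gives $\pr_X(j_*\pr_Y(\CC_p))\cong\pr_X(j_*\CC_p)\cong\pr_X(\CC_p)$, which is again the projection object describing the corresponding point of $Z=M_{\sigma_X}^X(2,1)$ under the classical picture; so the square commutes pointwise on the non-CM locus as well.

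The main obstacle is essentially bookkeeping rather than a substantive difficulty: one must be careful that the identification $Z\cong M_{\sigma_X}^X(2,1)$ of \cite{li2018twisted} is given precisely by $\alpha(C)\mapsto[\pr_X(I_{C/S}(2H))]$ (equivalently $[F'_C[1]]$, using $F'_C=\pr_X(I_{C/S}(2H))[-1]$), and likewise that the identification $Z_Y\cong M_{\sigma_Y}^Y(2,1)$ from Theorem~\ref{SS_variety_equal_BBF_moduli_space} is $\alpha(C)\mapsto[E_C[1]]=[\pr_Y(I_{C/Y}(2H))]$; once these normalizations are pinned down, the compatibility is immediate from Propositions~\ref{prop-pushforward} and~\ref{push-cubic}. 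No genuinely new computation is needed beyond the shift conventions already recorded in the statements of Lemma~\ref{projection_object_cubics} and Proposition~\ref{push-cubic}.
\end{proof}
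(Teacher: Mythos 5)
Your proposal is correct and matches the paper's (essentially implicit) justification: the corollary is obtained exactly by chasing points through the identifications $Z_Y\cong M_{\sigma_Y}^Y(2,1)$ from Theorem \ref{SS_variety_equal_BBF_moduli_space} and $Z\cong M_{\sigma_X}^X(2,1)$ from \cite{li2018twisted}, with the compatibility of projection objects supplied by Proposition \ref{push-cubic}, which is precisely the content recorded in the right-hand half of the displayed diagram. Your extra care with the non-CM locus and the density/separatedness remark is harmless but not needed beyond what the paper already establishes.
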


\subsection{Double dual EPW sextic}\label{subsec_double_dual_EPW_sextic}
Let $X$ be a smooth GM variety. By \cite[Theorem 3.10]{debarre2015gushel}, $X$ is determined by its Lagrangian data $(V_6, V_5, A(X))$, where $V_6$ is a $6$-dimensional vector space, $V_5\subset V_6$ is a hyperplane, and $A(X)\subset \wedge^3 V_6$ is a Lagrangian subspace (cf.~\cite[Definition 3.4]{debarre2015gushel}). Given a Lagrangian data, for each integer $k$, we can define subschemes 
\[Y^{\geq k}_{A(X)}:=\{[v]\in\mathbb{P}(V_6) \mid \mathrm{dim}(A(X)\cap(v\wedge\bigwedge^2V_6))\geq k\}\]
and
\[Y^{\geq k}_{A(X)^{\perp}}=\{V_5\in\mathrm{Gr}(5,V_6) \mid \mathrm{dim}(A(X)\cap\bigwedge^3V_5)\geq k\},\]
as in \cite[Section 2]{o2006irreducible}.

Furthermore, $Y^{\geq 1}_{A(X)}$ and $Y^{\geq 1}_{A(X)^{\perp}}$ admit natural double coverings $\tilde{Y}^{\geq 1}_{A(X)}$ and $\tilde{Y}^{\geq 1}_{A(X)^{\perp}}$, which are called \emph{double Eisenbud--Popsecu--Walter (EPW) sextic} and \emph{double dual EPW sextic} associated with $X$. By \cite{o2006dual}, when $X$ is general, $\widetilde{Y}_{A(X)}$ and $\widetilde{Y}_{A(X)^{\perp}}$ are $4$-dimensional hyperk\"ahler manifolds.

Similarly, $Y^{\geq 2}_{A(X)}$ and $Y^{\geq 2}_{A(X)^{\perp}}$ admit natural double coverings $\widetilde{Y}^{\geq 2}_{A(X)}$ and $\widetilde{Y}^{\geq 2}_{A(X)^{\perp}}$, which are called \emph{double EPW surface} and \emph{double dual EPW surface} associated with $X$. When $X$ is general, both surfaces are smooth.

We start with a result in \cite{JLLZ2021gushelmukai}.

\begin{theorem}[{\cite{JLLZ2021gushelmukai}}]\label{mod_215_epwsurface}
Let $Y$ be a general GM threefold and $\sigma_Y$ be a Serre-invariant stability condition on $\Ku(Y)$. Then we have $M_{\sigma_Y}^Y(1,0)\cong\widetilde{Y}^{\geq 2}_{A(Y)^{\perp}}$ and $M_{\sigma_Y}^Y(0,1)\cong \widetilde{Y}^{\geq 2}_{A(Y)}$.
\end{theorem}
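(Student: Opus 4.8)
\textbf{Proof proposal for Theorem \ref{mod_215_epwsurface}.}

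The plan is to identify the Bridgeland moduli space $M_{\sigma_Y}^Y(0,1)$ with a moduli space of objects whose Fourier--Mukai-type transform recovers sheaves parameterized by the double dual EPW surface, and then invoke the period-point / Torelli-type characterization of $\widetilde{Y}^{\geq 2}_{A(Y)}$. Concretely, the first step is to understand the geometry of the class $\lambda_2$ (resp.\ $\lambda_1$) in $\Knum(\Ku(Y))$: I would show that a $\sigma_Y$-stable object $E$ of class $\lambda_2$ (resp.\ $\lambda_1$) is, up to shift and the projection functor $\pr_Y$, the image of an ideal sheaf $I_{Z/Y}$ of a line or a conic, or more precisely of a sheaf supported on a surface associated to the Lagrangian data. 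This is the analogue in dimension three of the identifications $F(Y)\cong M^Y_{\sigma_Y}(1,1)$ used in Section \ref{subsec-appendix-line}; the relevant input is that all Serre-invariant stability conditions on $\Ku(Y)$ lie in one $\GL$-orbit (Theorem \ref{thm-unique-threefold}), so it suffices to compute for one convenient $\sigma_Y$ coming from \cite{bayer2017stability}, where the heart and its stable objects of small class are explicitly describable via the tilting construction.

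The second step is to match this moduli space of stable objects with the classical double cover. Here I would use the description of GM threefolds by their Lagrangian data $(V_6,V_5,A(Y))$ and the fact, established in \cite{o2006irreducible,o2006dual}, that $\widetilde{Y}^{\geq 2}_{A(Y)}$ carries a tautological family of sheaves (a "universal object" on the double cover). The key computation is to show that these tautological sheaves, after applying $\pr_Y$ (or the relevant mutation), become a flat family of $\sigma_Y$-stable objects of the correct numerical class, inducing a morphism $\widetilde{Y}^{\geq 2}_{A(Y)}\to M^Y_{\sigma_Y}(0,1)$; conversely, one shows every $\sigma_Y$-stable object of class $\lambda_2$ arises this way, e.g.\ by a dimension count ($M^Y_{\sigma_Y}(0,1)$ is a smooth surface since $\lambda_1,\lambda_2$ are "primitive" for the relevant pairing and $\sigma_Y$ is generic and Serre-invariant, using \cite{perry2019stability}-type arguments) together with the fact that both sides are irreducible surfaces, so a bijective morphism between smooth projective surfaces is an isomorphism. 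The case of $M^Y_{\sigma_Y}(1,0)\cong\widetilde{Y}^{\geq 2}_{A(Y)^\perp}$ is formally identical after exchanging the roles of $A(Y)$ and $A(Y)^\perp$, which corresponds on the categorical side to composing with the Serre functor (or the residual involution $T_3$ of $\Ku(Y)$), sending class $\lambda_1$ to a shift of $\lambda_2$ up to the action on $\Knum$.

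The main obstacle I expect is the explicit identification in the second step: producing the family of $\sigma_Y$-stable objects on $\widetilde{Y}^{\geq 2}_{A(Y)}$ and proving its stability fiberwise. Unlike the fourfold case, where \cite[Lemma 4.12(2)]{FGLZ24} gives a clean numerical stability criterion, for a threefold one must verify stability directly against the (weak) stability conditions, which requires controlling the Harder--Narasimhan and Jordan--Hölder filtrations of the candidate objects in the tilted heart; typically this is where one must rule out destabilizing subobjects supported on lower-dimensional subschemes, using the geometry of the EPW stratification (the locus $Y^{\geq 3}$ being empty or zero-dimensional for general $Y$) to guarantee the supporting surface behaves well. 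A secondary subtlety is bookkeeping the normalization of the classes $\lambda_1,\lambda_2$ in \eqref{lambda} against the conventions of \cite{JLLZ2021gushelmukai}, and confirming that the moduli space is a \emph{fine} moduli space (so that the tautological family on the double cover is literally pulled back from a universal family), but this is standard given coprimality-type conditions on $(a,b)$. Since the statement is quoted from \cite{JLLZ2021gushelmukai}, for the present paper it suffices to cite that reference and indicate the above strategy; I would therefore keep the proof short, recalling the construction of the morphism and the irreducibility/smoothness of both sides, and refer to \emph{loc.\ cit.}\ for the stability verification.
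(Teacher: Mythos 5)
Your proposal does not follow the paper's route, and the route you sketch has two concrete problems. The paper's proof is a short citation chain through the Fano surface of conics: by Logachev and Debarre--Kuznetsov, $\widetilde{Y}^{\geq 2}_{A(Y)^{\perp}}$ is isomorphic to the minimal model $\cC_m(Y)$ of the Fano surface of conics $\cC(Y)$, and $\widetilde{Y}^{\geq 2}_{A(Y)}\cong\cC_m(Y_L)$ for a line transform (period dual) $Y_L$ of $Y$; the cited results of \cite{JLLZ2021gushelmukai} then show that $\pr_Y$ induces a surjection $\cC(Y)\twoheadrightarrow M^Y_{\sigma_Y}(1,0)$ contracting exactly the curves contracted by $\cC(Y)\to\widetilde{Y}^{\geq 2}_{A(Y)^{\perp}}$. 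Your second step instead posits a ``tautological family of sheaves'' on $\widetilde{Y}^{\geq 2}_{A(Y)}$ coming from \cite{o2006irreducible,o2006dual}. No such family is available a priori: the EPW strata are defined purely by linear algebra on the Lagrangian $A(Y)\subset\wedge^3V_6$, and equipping $\widetilde{Y}^{\geq 2}_{A(Y)}$ with a modular interpretation is essentially the content of the theorem. All known proofs go in the opposite direction, from a geometric moduli space (conics, or Bridgeland-stable objects) to the EPW surface.

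The second problem is your claim that the case of $M^Y_{\sigma_Y}(0,1)$ follows from that of $M^Y_{\sigma_Y}(1,0)$ by composing with the Serre functor or the involution $T_3$. Since the Euler pairing on $\Knum(\Ku(Y))$ is the symmetric nondegenerate form \eqref{eq-matrix-odd}, the identity $\chi(x,S_{\Ku(Y)}y)=\chi(y,x)$ forces $S_{\Ku(Y)}$ to act as the identity on $\Knum(\Ku(Y))$; no autoequivalence arising this way exchanges $\lambda_1$ and $\lambda_2$, and Serre-invariance of $\sigma_Y$ means the Serre functor preserves each moduli space rather than interchanging them. The actual mechanism is external to $Y$: one passes to the line transform $Y_L$, a genuinely different GM threefold with $\Ku(Y)\simeq\Ku(Y_L)$ under which the roles of $A$ and $A^{\perp}$ (equivalently of the two classes) are swapped, and applies the first isomorphism to $Y_L$. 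Your first step (stable objects of class $\lambda_1$ are projections of ideal sheaves of conics, reduction to one stability condition via Theorem \ref{thm-unique-threefold}) is consistent with what \cite{JLLZ2021gushelmukai} actually does, but as written the proposal would not close without replacing the tautological-family argument by the conic-geometry identification and the Serre-functor trick by the line transform.
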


\begin{proof}
From \cite{Log12} and \cite{Debarre2024quadrics}, we know that $\widetilde{Y}^{\geq 2}_{A(Y)^{\perp}}\cong \cC_m(Y)$, where $\cC_m(Y)$ is the minimal model of the Fano surface of conics on $Y$. At the same time, $\widetilde{Y}^{\geq 2}_{A(Y)}\cong \cC_m(Y_L)$, where $Y_L$ is a line transform (period dual) of $Y$. Then the result follows from \cite[Theorem 7.12, Corollary 9.5, Theorem 8.9]{JLLZ2021gushelmukai}.
\end{proof}

More precisely, let $\cC(Y)$ be the Fano surface of conics on $Y$. It is proved in \cite{JLLZ2021gushelmukai} that $\pr_Y$ induces a surjective morphism $\cC(Y)\twoheadrightarrow M_{\sigma_Y}^Y(1,0)$ such that it contracts the same locus as the morphism $\cC(Y)\to \widetilde{Y}^{\geq 2}_{A(Y)^{\perp}}$ (cf.~\cite{Log12}, \cite{Debarre2024quadrics}). Hence, we obtain an isomorphism $M_{\sigma_Y}^Y(1,0)\cong\widetilde{Y}^{\geq 2}_{A(Y)^{\perp}}$ such that the following diagram commutes:
\[\begin{tikzcd}
	{\cC(Y)} \\
	{\widetilde{Y}^{\geq 2}_{A(Y)^{\perp}}} & {M_{\sigma_Y}^Y(1,0)}
	\arrow[from=1-1, to=2-1]
	\arrow["\cong"', from=2-1, to=2-2]
	\arrow[from=1-1, to=2-2]
\end{tikzcd}\]
where the morphism $\cC(Y)\to M_{\sigma_Y}^Y(1,0)$ is given by $[C]\mapsto [\pr_Y(I_{C/Y})]$.

Analogously, for a very general GM fourfold $X$, it is proved in \cite[Theorem 1.1]{GLZ2021conics} that we have a commutative diagram
\[\begin{tikzcd}
	{\cC(X)} \\
	{\widetilde{Y}_{A(Y)^{\perp}}} & {M_{\sigma_X}^X(1,0)}
	\arrow[from=1-1, to=2-1]
	\arrow["\cong"', from=2-1, to=2-2]
	\arrow[from=1-1, to=2-2]
\end{tikzcd}\]
where $\cC(X)$ is the Hilbert scheme of conics on $X$ and the morphism $\cC(X)\to \widetilde{Y}_{A(Y)^{\perp}}$ is constructed in \cite{iliev2011fano}. At the same time, the morphism $\cC(X)\to M_{\sigma_X}^X(1,0)$ is given by $[C]\mapsto [\pr_X(I_{C})]$. Note that the computation of $\Ext^1_X(\pr_X(I_{C}), \pr_X(I_{C}))=\CC^4$ holds for general $X$, so the same argument as in the proof of Theorem \ref{thm-stability-proj-general} implies that the morphism $\cC(X)\to M_{\sigma_X}^X(1,0)$ and hence the above diagram can be also constructed for \emph{general} $X$.

Let $X$ be a general GM fourfold and $j\colon Y\hookrightarrow X$ be a general hyperplane section. According to \cite[Theorem 5.3]{FGLZ24}, we have an induced morphism $q_j\colon  M_{\sigma_Y}^Y(1,0)\to M_{\sigma_X}^X(1,0)$, whose image is Lagrangian. Moreover, by \cite[Lemma 6.1]{GLZ2021conics} and Proposition \ref{prop-pushforward}, for any conic $C\subset Y$, we have
\[\pr_X(j_*\pr_Y(I_{C/Y}))\cong\pr_X(I_{C}).\]
Putting together the above arguments yields the following result.

\begin{corollary}\label{compatible_epw}
Let $X$ be a general GM fourfold and $j\colon Y\hookrightarrow X$ be a general hyperplane section. Then the functor $\pr_X \circ j_*$ induces a morphism
\[q_j\colon  M_{\sigma_Y}^Y(1,0)\to M_{\sigma_X}^X(1,0),\]
which is compatible with the one in \cite[Section 5.1]{iliev2011fano}. This means that there is a commutative diagram
\[\begin{tikzcd}
	{\widetilde{Y}^{\geq 2}_{A(Y)^{\perp}}} & {M_{\sigma_Y}^Y(1,0)} \\
	{\widetilde{Y}_{A(X)^{\perp}}} & {M_{\sigma_X}^X(1,0)}
	\arrow["{q_j}", from=1-2, to=2-2]
	\arrow["{i''}"', from=1-1, to=2-1]
	\arrow["\cong"', from=2-1, to=2-2]
	\arrow["\cong"', from=1-1, to=1-2]
\end{tikzcd}\]
\end{corollary}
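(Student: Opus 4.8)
The plan is to assemble the diagram from three inputs that are already in place by the time we reach this corollary. First, by Theorem \ref{mod_215_epwsurface} we have the identification $M_{\sigma_Y}^Y(1,0)\cong\widetilde{Y}^{\geq 2}_{A(Y)^{\perp}}$, and this identification is compatible with the conic construction of \cite{Log12,Debarre2024quadrics}: that is, the surjection $\cC(Y)\twoheadrightarrow M_{\sigma_Y}^Y(1,0)$, $[C]\mapsto[\pr_Y(I_{C/Y})]$, contracts exactly the same locus as the classical map $\cC(Y)\to\widetilde{Y}^{\geq 2}_{A(Y)^{\perp}}$, so the left vertical triangle commutes. Second, by \cite[Theorem 1.1]{GLZ2021conics} — extended from the very general to the general case exactly as in the proof of Theorem \ref{thm-stability-proj-general}, using $\Ext^1_X(\pr_X(I_C),\pr_X(I_C))=\CC^4$ — we have $M_{\sigma_X}^X(1,0)\cong\widetilde{Y}_{A(X)^{\perp}}$ compatibly with the conic map $\cC(X)\to\widetilde{Y}_{A(X)^{\perp}}$ of \cite{iliev2011fano}, so the bottom triangle commutes. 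Third, \cite[Theorem 5.3]{FGLZ24} furnishes the morphism $q_j\colon M_{\sigma_Y}^Y(1,0)\to M_{\sigma_X}^X(1,0)$ induced by $\pr_X\circ j_*$, whose image is Lagrangian.

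The key step that glues everything together is the identity
\[\pr_X(j_*\pr_Y(I_{C/Y}))\cong\pr_X(I_{C})\]
for a conic $C\subset Y\subset X$, which follows from \cite[Lemma 6.1]{GLZ2021conics} together with Proposition \ref{prop-pushforward} (the base-change compatibility $\pr_X(j_*E)\cong\pr_X(j_*\pr_Y(E))$). This says precisely that the square
\[\begin{tikzcd}
	{\cC(Y)} & {\cC(X)} \\
	{M_{\sigma_Y}^Y(1,0)} & {M_{\sigma_X}^X(1,0)}
	\arrow[from=1-1, to=1-2]
	\arrow[from=1-1, to=2-1]
	\arrow[from=2-1, to=2-2]
	\arrow[from=1-2, to=2-2]
\end{tikzcd}\]
commutes at the level of $\CC$-points, where the top arrow is the natural inclusion $[C]\mapsto[C]$, the vertical arrows are the conic projection maps, and the bottom arrow is $q_j$. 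Stacking this square on top of the two commuting triangles and the classical identification $i''\colon\widetilde{Y}^{\geq 2}_{A(Y)^{\perp}}\hookrightarrow\widetilde{Y}_{A(X)^{\perp}}$ of \cite[Section 5.1]{iliev2011fano} — which by construction fits into the analogous square over $\cC(Y)\to\cC(X)$ — gives the desired commutative square relating $q_j$ and $i''$. Since $\cC(Y)\to M_{\sigma_Y}^Y(1,0)$ is surjective and all maps involved are morphisms of varieties, agreement on the dense image of $\cC(Y)$ forces agreement everywhere, so the diagram commutes as a diagram of morphisms, not just on points.

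The main obstacle, as in the proof of Theorem \ref{thm-stability-proj-general}, is justifying that the conic-to-moduli morphism $\cC(X)\to M_{\sigma_X}^X(1,0)$ — and hence the identification $M_{\sigma_X}^X(1,0)\cong\widetilde{Y}_{A(X)^{\perp}}$ — is available for a \emph{general} (rather than very general) GM fourfold $X$; this requires running the same relative/deformation argument over the moduli stack $\cM^{\mathrm{GM}}_4$ that was used for twisted cubics, now for conics, which is why the proof cites that the bound $\ext^1_X(\pr_X(I_C),\pr_X(I_C))=4$ holds for general $X$. Everything else is bookkeeping: matching up the two commuting triangles with the base-change identity through the surjection from $\cC(Y)$, and invoking \cite[Theorem 5.3]{FGLZ24} for the existence and Lagrangian property of $q_j$.
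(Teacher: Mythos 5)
Your proposal is correct and follows essentially the same route as the paper: the paper likewise assembles the diagram from the conic-compatibility of the two vertical isomorphisms (via \cite{JLLZ2021gushelmukai} and \cite[Theorem 1.1]{GLZ2021conics}, the latter extended to general $X$ by the deformation argument of Theorem \ref{thm-stability-proj-general}), the existence of $q_j$ from \cite[Theorem 5.3]{FGLZ24}, and the identity $\pr_X(j_*\pr_Y(I_{C/Y}))\cong\pr_X(I_{C})$ from \cite[Lemma 6.1]{GLZ2021conics} together with Proposition \ref{prop-pushforward}. Your explicit gluing step via surjectivity of $\cC(Y)\to M_{\sigma_Y}^Y(1,0)$ is exactly the implicit content of the paper's ``putting together the above arguments.''
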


\subsection{Double EPW sextic}\label{subsec_double_EPW_sextic}
Let $X$ be a very general GM fourfold. By \cite[Theorem 1.1]{GLZ2021conics}, we have an isomorphism  $M_{\sigma_X}^X(0,1)\cong\widetilde{Y}_{A(X)}$, which is the double EPW sextic associated with $X$. On the other hand, for a general hyperplane section $j\colon Y\hookrightarrow X$, the moduli space $M^Y_{\sigma_Y}(0,1)$ is a smooth projective surface according to \cite[Theorem 1.3]{ppzEnriques2023}.

By virtue of Theorem \ref{mod_215_epwsurface}, we have an isomorphism $M^Y_{\sigma_Y}(0,1)\cong \widetilde{Y}^{\geq 2}_{A(Y)}$. Using \cite[Theorem 5.3]{FGLZ24}, one can  construct a finite unramified morphism
\[\widetilde{Y}^{\geq 2}_{A(Y)}\to \widetilde{Y}_{A(X)}\]
whose image is Lagrangian. We expect that this morphism coincides with the one in \cite{iliev2011fano}. This alignment will be attained if $M_{\sigma_X}^X(0,1)$ also has a description in terms of a moduli space of sheaves, as in \cite[Theorem 8.9]{JLLZ2021gushelmukai}. We will revisit this point in the future.

\section{Twisted cubics on Gushel--Mukai threefolds}\label{appendix-B}

In this section, we study twisted cubics on GM threefolds. These results are only needed when we construct the Lagrangian covering family for double EPW cubes in Section \ref{subsec-covering-cube}.

Recall that there are two classes of GM threefolds: ordinary and special. An ordinary GM threefold is given by $$Y=\Gr(2,5)\cap \PP(V_8)\cap Q\subset \PP^9,$$ where $Q$ is a quadric hypersurface and $\PP(V_8)\cong \PP^7$. A special GM threefold is a double cover $\pi\colon Y\to Y_5$, where $Y_5$ is a codimension $3$ linear section of $\Gr(2,5)$. In both cases, $Y$ is an intersection of quadrics in the projective space, see \cite[Theorem 1.2]{debarre2015gushel}.

As in Section \ref{sec-very-general-GM}, we have semi-orthogonal decompositions
\[\D^b(Y)=\langle \Ku(Y), \oh_Y, \cU_Y^{\vee}\rangle\]
and
\[\D^b(Y)=\langle \cA_Y, \cU_Y, \oh_Y\rangle.\]
Moreover, according to \cite[Lemma 3.7]{JLLZ2021gushelmukai}, there is an equivalence given by 
\[\Xi_Y:=\bL_{\oh_Y}\circ (-\otimes \oh_Y(H))\colon \cA_Y\to \Ku(Y).\]

\subsection{Basic properties}

First, we demonstrate some basic properties of twisted cubics. Let $Y$ be a GM threefold.

\begin{lemma} \label{lem-cubic-pure}
We have 

\begin{enumerate}
    \item $\Hilb_Y^{3t+m}=\varnothing$ for $m<1$, and

    \item $\oh_C$ is a pure sheaf for any $[C]\in \Hilb_Y^{3t+1}$.
\end{enumerate}

\end{lemma}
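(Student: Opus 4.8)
The plan is to adapt the proof of Lemma~\ref{lem-pure-cubic} to the threefold setting, using two facts about $Y$: it is an intersection of quadrics in its projective embedding by \cite[Theorem 1.2]{debarre2015gushel}, and it contains no plane (for instance because $\Pic(Y)=\ZZ H_Y$ with $H_Y^3=10$, so $Y$ has no effective divisor of degree $1$).

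For (1): by \cite[Corollary 1.38]{sa14} we already have $\Hilb_Y^{3t+m}=\varnothing$ for $m<0$, so it remains to show $\Hilb_Y^{3t}=\varnothing$. Suppose $[C]\in\Hilb_Y^{3t}$. By \cite[Corollary 1.38]{sa14} the linear span $\langle C\rangle$ is a $\PP^2$. A degree $3$ curve in $\PP^2$ is not contained in any conic, so every quadric hypersurface through $Y$ --- hence through $C$ --- must contain the whole plane $\langle C\rangle$; since $Y$ is cut out by quadrics, this forces $\langle C\rangle\subset Y$, contradicting that $Y$ contains no plane. Therefore $\Hilb_Y^{3t}=\varnothing$, which is (1).

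For (2): let $[C]\in\Hilb_Y^{3t+1}$. If $\oh_C$ were not pure, its maximal zero-dimensional subsheaf $T\subset\oh_C$ would be nonzero, and $\oh_C/T\cong\oh_{C'}$ for a one-dimensional closed subscheme $C'\subsetneq C$ with Hilbert polynomial $3t+1-\mathrm{length}(T)$; thus $[C']\in\Hilb_Y^{3t+m}$ with $m=1-\mathrm{length}(T)<1$, contradicting (1). This is the mechanism recorded in \cite[Lemma 4.3]{liu-ruan:cast-bound} together with \cite[Tag 0BXG]{stacks-project}, which yield that $\oh_C$ is pure, equivalently that $C$ is Cohen--Macaulay. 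There is essentially no obstacle here beyond the fourfold case; the one point that needs to be in place is the no-plane property of GM threefolds, which is where the whole argument is anchored.
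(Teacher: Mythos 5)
Your proposal is correct and follows exactly the paper's route: the paper proves this lemma by observing that $Y$ is an intersection of quadrics containing no plane and invoking "the same argument as Lemma \ref{lem-pure-cubic}", which is precisely the argument you spell out (ruling out $\Hilb_Y^{3t}$ via the linear span being a plane forced into $Y$ by the quadric equations, then deducing purity from (1) via \cite[Lemma 4.3]{liu-ruan:cast-bound} and \cite[Tag 0BXG]{stacks-project}). Your added justifications — the degree-$3$-curve-not-in-a-conic step and the $\Pic(Y)=\ZZ H$ reason for the no-plane property — are correct fillings-in of details the paper leaves implicit.
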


\begin{proof}
Since $Y$ does not contain any plane and is an intersection of quadrics, the result follows from the same argument as Lemma \ref{lem-pure-cubic}.
\end{proof}

\begin{lemma} \label{lem-cubic-rhom}
Let $[C]\in \Hilb_Y^{3t+1}$. Then $\RHom_Y(\oh_Y,\oh_C)=\CC$ and $\RHom_Y(\oh_Y, I_{C/Y})=0.$
\end{lemma}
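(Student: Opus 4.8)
The plan is to transcribe, with the obvious simplifications, the proof of Lemma~\ref{lem-cubic-cohomology-1}(1) from the fourfold setting. The starting point is the standard exact sequence
\[0\to I_{C/Y}\to \oh_Y\to \oh_C\to 0.\]
Since $Y$ is a Fano threefold we have $H^i(\oh_Y)=0$ for $i>0$ and $H^0(\oh_Y)=\CC$, i.e.\ $\RHom_Y(\oh_Y,\oh_Y)=\CC$; and since the Hilbert polynomial of $\oh_C$ with respect to $\oh_Y(H)$ is $3t+1$, we get $\chi(\oh_C)=1$, hence $\chi(I_{C/Y})=0$.

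The first step is to establish $\RHom_Y(\oh_Y,\oh_C)=\CC$, for which it suffices to show $H^1(\oh_C)=0$ (the groups $H^{\geq 2}(\oh_C)$ vanish for dimension reasons). By Lemma~\ref{lem-cubic-pure}(2) the sheaf $\oh_C$ is pure, and $C$ is a one-dimensional closed subscheme of the projective space into which $Y$ is embedded as an intersection of quadrics; thus \cite[Corollary~1.38(3)]{sa14} applies and yields $H^1(\oh_C)=0$, whence $h^0(\oh_C)=\chi(\oh_C)=1$.

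For the vanishing $\RHom_Y(\oh_Y,I_{C/Y})=0$ I would run the long exact cohomology sequence attached to the standard exact sequence above. The restriction map $H^0(\oh_Y)=\CC\to H^0(\oh_C)=\CC$ sends the constant function $1$ to $1$, hence is an isomorphism; combined with the vanishing of $H^{\geq 1}(\oh_Y)$ and of $H^{\geq 1}(\oh_C)$ just obtained, every cohomology group of $I_{C/Y}$ vanishes, as desired.

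No genuine obstacle arises here: the only non-formal ingredient is the vanishing $H^1(\oh_C)=0$, which is exactly the threefold analogue of the input used in Lemma~\ref{lem-cubic-cohomology-1}(1) and rests on the purity of $\oh_C$ proved in Lemma~\ref{lem-cubic-pure}(2) together with \cite[Corollary~1.38]{sa14}; everything else is a diagram chase in the long exact sequence.
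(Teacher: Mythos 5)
Your proposal is correct and follows essentially the same route as the paper: both reduce the statement to the vanishing $H^1(\oh_C)=0$ via the standard exact sequence and $\chi(\oh_C)=1$, and both obtain that vanishing from the purity of $\oh_C$ (Lemma \ref{lem-cubic-pure}(2)) together with \cite[Corollary 1.38(3)]{sa14}. The only difference is that you spell out the long exact cohomology sequence explicitly where the paper simply notes the equivalence of the two assertions.
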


\begin{proof}
Note that $\RHom_Y(\oh_Y,\oh_C)=\CC$ if and only if  $\RHom_Y(\oh_Y, I_{C/Y})=0.$
Moreover, since $\chi(\oh_C)=1$,
$\RHom_Y(\oh_Y,\oh_C)=\CC$ if and only if $H^1(\oh_C)=0$. Thus, it suffices to show $H^1(\oh_C)=0$. This follows from Lemma \ref{lem-cubic-pure}(2) and \cite[Corollary 1.38(3)]{sa14}.
\end{proof}

\begin{lemma} \label{lem-cubic-U}
For any $[C]\in \Hilb_Y^{3t+1}$, either $\RHom_Y(\cU_Y, I_{C/Y})=0$ or $\RHom_Y(\cU_Y, I_{C/Y})=\CC\oplus \CC[-1]$. 
\end{lemma}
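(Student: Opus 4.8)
The plan is to compute $\RHom_Y(\cU_Y,I_{C/Y})$ from the structure sequence $0\to I_{C/Y}\to\oh_Y\to\oh_C\to 0$ and then to reduce the statement to the single inequality $\hom_Y(\cU_Y,I_{C/Y})\le 1$.

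First I would apply $\RHom_Y(\cU_Y,-)$ to the sequence above. The term $\RHom_Y(\cU_Y,\oh_Y)=H^\bullet(Y,\cU_Y^\vee)$ is $\CC^5$, concentrated in degree $0$; this is standard (Borel--Weil--Bott on $\Gr(2,V_5)$ together with the Koszul resolution of $Y$, the threefold analogue of a computation in the proof of Lemma~\ref{lem-cubic-cohomology-1}). For $\RHom_Y(\cU_Y,\oh_C)=H^\bullet(C,\cU_Y^\vee|_C)$ I would use that $\cU_Y^\vee$ is globally generated --- it is a quotient of $V_5^\vee\otimes\oh_Y$ --- together with $H^1(\oh_C)=0$ from Lemma~\ref{lem-cubic-rhom}: restricting the surjection $V_5^\vee\otimes\oh_Y\twoheadrightarrow\cU_Y^\vee$ to the one-dimensional scheme $C$ yields $H^1(C,\cU_Y^\vee|_C)=0$, and the Riemann--Roch count $\chi(\cU_Y^\vee|_C)=H\cdot C+2\chi(\oh_C)=3+2=5$ (using $c_1(\cU_Y^\vee)=H$) then forces $H^\bullet(C,\cU_Y^\vee|_C)=\CC^5$, again in degree $0$. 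Hence the long exact sequence collapses to
\[
\RHom_Y(\cU_Y,I_{C/Y})\longrightarrow \CC^5\xra{\ f\ }\CC^5,
\]
where $f$ is the restriction map $H^0(Y,\cU_Y^\vee)\to H^0(C,\cU_Y^\vee|_C)$. Thus $\RHom_Y(\cU_Y,I_{C/Y})$ is concentrated in degrees $0$ and $1$, with $H^0=\ker f=\Hom_Y(\cU_Y,I_{C/Y})$ and $H^1=\operatorname{coker} f=\Ext^1_Y(\cU_Y,I_{C/Y})$, and $\dim\ker f=\dim\operatorname{coker} f=:k$ by rank--nullity applied to $f\colon\CC^5\to\CC^5$.

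It then remains to prove $k\le 1$, i.e. $\hom_Y(\cU_Y,I_{C/Y})\le 1$; this is the only step with geometric content, and it is where I expect the (mild) difficulty to sit. A nonzero element of $\Hom_Y(\cU_Y,I_{C/Y})\subseteq H^0(Y,\cU_Y^\vee)=V_5^\vee$ is a linear form on $V_5$ whose vanishing locus (pulled back along $\gamma_Y$) contains $C$; two linearly independent such forms would force $C$ into $\Gr(2,V_3)\cong\PP^2$, where $V_3$ is the intersection of their kernels, so $C$ would lie in a plane of the ambient projective space. But $\oh_C$ is pure of dimension $1$ (Lemma~\ref{lem-cubic-pure}) with $\chi(\oh_C)=1$, whereas a Cohen--Macaulay subscheme of a plane of degree $3$ is a plane cubic with $\chi=0$; moreover $Y$, being an intersection of quadrics not containing a plane, cannot contain such a plane in the first place. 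This is exactly the argument already used in Lemma~\ref{lem-homo-cubic} for GM fourfolds, and it carries over here verbatim (in the special case one runs it on the degree-$3$ image $\gamma_Y(C)$ inside the linear section of $\Gr(2,V_5)$). Therefore $k\in\{0,1\}$, which gives precisely the two stated possibilities $\RHom_Y(\cU_Y,I_{C/Y})=0$ or $\RHom_Y(\cU_Y,I_{C/Y})=\CC\oplus\CC[-1]$.

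In short, the only non-formal ingredient is ruling out $\hom_Y(\cU_Y,I_{C/Y})\ge 2$, which rests on the purity of $\oh_C$ and the classification of one-dimensional Cohen--Macaulay subschemes of $\PP^2$ from \cite{sa14}; the rest is bookkeeping with the structure sequence and Riemann--Roch on a possibly singular or non-reduced curve, so the write-up should be short.
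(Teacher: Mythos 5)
Your proposal is correct and follows essentially the same route as the paper: reduce to showing $\hom_Y(\cU_Y,I_{C/Y})\le 1$ via the structure sequence, global generation of $\cU_Y^\vee$, $H^1(\oh_C)=0$, and $\chi(\cU_Y,I_{C/Y})=0$, then rule out two independent sections by noting that $C$ would land in $\Gr(2,V_3)\cong\PP^2$, contradicting the fact that a twisted cubic spans a $\PP^3$. The one point where "carries over verbatim" is optimistic is the special GM threefold, which is where the paper spends most of its proof: there $\gamma_Y$ is a double cover, so two independent sections only force $C$ into $\gamma_Y^{-1}(\Gr(2,V_3)\cap Y_5)$, and one must observe that $\Gr(2,V_3)\cap Y_5$ lies in a line (since $Y_5$ contains no plane) so that this preimage has degree $2<3$; your parenthetical about the degree of the image is the right idea, but it deserves the explicit degree count rather than being waved through.
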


\begin{proof}
Similar to the proof of Lemma \ref{lem-homo-cubic}, we know that $\Hom_Y(\cU_Y, I_{C/Y}[k])=0$ for $k\notin \{0,1\}$. Since $\chi(\cU_Y, I_{C/Y})=0$, we only need to compute $\Hom_Y(\cU_Y, I_{C/Y})$.

Assume that $\Hom_Y(\cU_Y, I_{C/Y})=\CC^n$ for $n\geq 0$. When $Y$ is ordinary, $C$ is contained in $\Gr(2,5-n)$. Then $n\leq 1$ because $C$ is not contained in any plane. When $Y$ is special, denote by $\cV$ is the restriction of the tautological sub-bundle of $\Gr(2,5)$ via $Y_5\hookrightarrow \Gr(2,5)$. If $n\geq 2$, then $C$ is contained in $\pi^{-1}(D')$, where $\pi\colon Y\to Y_5$ is the defining double cover and $D'$ is the zero locus of two linearly independent sections of $\cV$. Since $Y_5$ is a linear section of $\Gr(2,5)$ and does not contain any plane, $D'=\Gr(2,3)\cap Y_5$ is contained in a line. Then $\pi^{-1}(D')$ is of degree $2$, which is impossible. Thus, we obtain $n\leq 1$.

Hence, in both cases, we see $\Hom_Y(\cU_Y, I_{C/Y})=0$ or $\CC$ and the result follows.
\end{proof}

\begin{remark}
When $\Hom_Y(\cU_Y, I_{C/Y})=\CC$, $C$ is contained in the zero locus $D$ of a non-zero section of $\cU^{\vee}_Y$. Moreover, $D$ is a degree $4$ genus $1$ curve and the residue curve of $C$ in $D$ is a line. See also Lemma \ref{lem-rho-cubic}(3).
\end{remark}

\begin{lemma}\label{lem-cubic-UC}
Let $[C]\in \Hilb_Y^{3t+1}$. Then $\RHom_Y(I_{C/Y}, \cU_Y)=\CC[-1]$.
\end{lemma}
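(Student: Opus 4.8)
The plan is to reduce $\RHom_Y(I_{C/Y},\cU_Y)$ to the cohomology of the rank-two bundle $\cU_Y|_C$ on the curve $C$, and then read off the answer from a Riemann--Roch computation on $C$. Applying $\RHom_Y(-,\cU_Y)$ to the standard exact sequence $0\to I_{C/Y}\to \oh_Y\to \oh_C\to 0$ and using $H^\bullet(Y,\cU_Y)=0$ (which is built into the semi-orthogonal decomposition $\D^b(Y)=\langle \Ku(Y),\oh_Y,\cU^\vee_Y\rangle$, since $\RHom_Y(\cU^\vee_Y,\oh_Y)=0$), one obtains $\RHom_Y(I_{C/Y},\cU_Y)\cong \RHom_Y(\oh_C,\cU_Y)[1]$. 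Next, as $\dim Y=3$ and $\omega_Y=\oh_Y(-H)$, Serre duality gives $\RHom_Y(\oh_C,\cU_Y)\cong \RHom_Y(\cU_Y,\oh_C(-H))^\vee[-3]$; and since $\cU_Y$ has rank two with $\det\cU_Y=\oh_Y(-H)$, we have $\cU^\vee_Y\cong \cU_Y(H)$, so by the projection formula $\RHom_Y(\cU_Y,\oh_C(-H))=H^\bullet(Y,\cU^\vee_Y|_C(-H))=H^\bullet(C,\cU_Y|_C)$. Combining these identifications yields $\RHom_Y(I_{C/Y},\cU_Y)\cong H^\bullet(C,\cU_Y|_C)^\vee[-2]$.

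It then remains to show that $H^\bullet(C,\cU_Y|_C)$ is one-dimensional, concentrated in cohomological degree $1$. The bundle $\cU_Y|_C$ has rank $2$ and degree $c_1(\cU_Y)\cdot[C]=-H\cdot C=-3$, so on the proper curve $C$ we get $\chi(\cU_Y|_C)=-3+2\chi(\oh_C)=-1$ (using $\chi(\oh_C)=1$) and $H^{\geq 2}(C,\cU_Y|_C)=0$; hence the claim reduces to $H^0(C,\cU_Y|_C)=0$. Viewing a global section of $\cU_Y|_C$ inside $V_5\otimes \oh_C$ via the tautological inclusion $\cU_Y\hookrightarrow V_5\otimes \oh_Y$, a nonzero section would produce a fixed line $V_1\subset V_5$ contained in the $2$-plane $U$ for every $[U]\in C\subset \Gr(2,V_5)$, forcing $C\subset \PP(V_1\wedge V_5)\cong \PP^3\subset \Gr(2,V_5)$, i.e.\ $C$ would be a $\sigma$-cubic. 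But a smooth GM threefold contains no $\sigma$-cubic: the zero locus of a section of $\cQ_Y$ is $\PP(V_1\wedge V_5)\cap Y$, which (as $Y$ is an intersection of quadrics not containing a plane) is contained in a conic, contradicting $\deg C=3$; this is exactly the observation used in the proof of Theorem~\ref{thm-covering-moduli}. Therefore $H^0(C,\cU_Y|_C)=0$, $H^\bullet(C,\cU_Y|_C)\cong\CC[-1]$, and $\RHom_Y(I_{C/Y},\cU_Y)\cong\CC[-1]$.

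The only genuinely geometric input is the vanishing $H^0(C,\cU_Y|_C)=0$, equivalently the absence of $\sigma$-cubics on $Y$; this is the analogue, on the $\cU_Y$-side, of the case distinction appearing in Lemma~\ref{lem-cubic-U}, and it is the step that actually uses that $Y$ is a GM threefold rather than a formal consequence of the numerical invariants (note that, as in the proof of Lemma~\ref{lem-cubic-U}, one handles ordinary and special $Y$ uniformly since in both cases $Y$ is an intersection of quadrics with no plane). Everything else is routine bookkeeping with Serre duality, the rank-two identity $\cU^\vee_Y\cong\cU_Y(H)$, and the vanishing $H^\bullet(Y,\cU_Y)=0$; the one place to be careful is tracking the cohomological shifts through the two dualizations, but no real difficulty is expected there.
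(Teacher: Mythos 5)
Your proof is correct and follows the same route as the paper: both reduce the statement to the vanishing $H^0(\cU_Y|_C)=0$ (the paper leaves the reduction implicit and cites \cite[Lemma B.3.3]{KPS} for the vanishing, while you spell out the Serre-duality bookkeeping and prove the vanishing directly via the observation that a nonzero section would force $C$ into the zero locus of a section of $\cQ_Y$, which is contained in a conic). Your direct argument for the vanishing is exactly the one the paper itself uses in the proof of Theorem \ref{thm-covering-moduli} to exclude $\sigma$-cubics on GM threefolds, so no new ideas are needed and no gaps remain.
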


\begin{proof}
It suffices to show $H^0(\cU_Y|_C)=0$, which can be deduced from \cite[Lemma B.3.3]{KPS}. 
\end{proof}

\begin{lemma}\label{lem-ICH}
Let $[C]\in \Hilb_Y^{3t+1}$. Then $H^1(\oh_C(H))=0$ and $\RHom_Y(\oh_Y, I_{C/Y}(H))=\CC^4$.
\end{lemma}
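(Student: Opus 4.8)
The plan is to deduce both vanishing statements from the purity of $\oh_C$ (Lemma \ref{lem-cubic-pure}(2)) together with the Castelnuovo--Mumford regularity results for twisted cubics in \cite[Section 2]{heinrich:twisted-cubic}, exactly as in the proof of Lemma \ref{lem-cubic-cohomology-1}(2) for GM fourfolds. First I would observe that $H^1(\oh_C(H))=0$: since $\oh_C$ is pure (equivalently $C$ is Cohen--Macaulay of pure dimension one) and has Hilbert polynomial $3t+1$, the curve $C$ sits inside $\PP(V_8)$ (resp.\ maps to $\PP^6$ in the special case) as a degree-three Cohen--Macaulay curve, and the argument of \cite[Lemma 2.10]{heinrich:twisted-cubic} (used verbatim in Lemma \ref{lem-cubic-cohomology-1}(2)) shows that the twist by the hyperplane class already kills $H^1$. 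Concretely, from the standard exact sequence
\begin{equation*}
0\to I_{C/Y}(H)\to \oh_Y(H)\to \oh_C(H)\to 0
\end{equation*}
and $H^1(\oh_Y(H))=H^2(\oh_Y(H))=0$ (by Kodaira vanishing, since $-K_Y=2H$), we get $H^1(\oh_C(H))=H^2(I_{C/Y}(H))$, and the latter vanishes once $C$ is $1$-regular, which follows from purity as in \cite{heinrich:twisted-cubic}.

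Next, for the computation of $\RHom_Y(\oh_Y, I_{C/Y}(H))$, I would argue by a Riemann--Roch / Euler-characteristic computation combined with the two vanishings just established. From the exact sequence above, $H^i(I_{C/Y}(H))=0$ for $i\geq 2$ follows from the $1$-regularity of $C$, and $H^1(I_{C/Y}(H))=0$ because the restriction map $H^0(\oh_Y(H))\to H^0(\oh_C(H))$ is surjective: indeed $H^1(\oh_C(H))=0$ already gives $\chi(\oh_C(H))=h^0(\oh_C(H))$, and $h^0(\oh_C(H))=\chi(\oh_C(H))=\deg C\cdot 1 + \chi(\oh_C)=3+1=4$ by Riemann--Roch on the Cohen--Macaulay curve $C$, while $h^0(\oh_Y(H))=\dim V_8 + 1$ in the ordinary case, hence a dimension count (or directly: any section of $\oh_C(H)$ extends because $C$ is cut out by quadrics, so its linear span is the ambient $\PP^n$ and $C$ is linearly normal) shows surjectivity. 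Therefore $H^0(I_{C/Y}(H))$ is the kernel of a surjection, of dimension $h^0(\oh_Y(H)) - 4$. In the ordinary case this is $8-4=4$; in the special case one uses $h^0(\oh_Y(H))=\dim H^0(Y_5,\oh(H))=7$ minus nothing (the double cover is branched over a quadric section, so $\pi_*\oh_Y = \oh_{Y_5}\oplus\oh_{Y_5}(-H)$ and $H^0(\oh_Y(H))=H^0(\oh_{Y_5}(H))$), again giving $4$. Thus $\RHom_Y(\oh_Y, I_{C/Y}(H))=\CC^4$.

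The main obstacle I anticipate is bookkeeping the special (double-cover) case uniformly with the ordinary case: one must be careful that the relevant cohomological statements about linear normality and the value $h^0(\oh_C(H))=4$ are insensitive to whether $Y$ embeds in $\Gr(2,5)$ or is a double cover, and that the regularity input from \cite{heinrich:twisted-cubic} applies to Cohen--Macaulay twisted cubics in both ambient projective spaces. Since in both cases $Y$ is an intersection of quadrics and $C$ is pure of degree $3$ and arithmetic genus $0$, the needed facts transfer without essential change; alternatively, one can avoid the case distinction entirely by noting that a pure degree-$3$, genus-$0$ curve $C$ on any smooth Fano threefold of index $2$ with $\Pic=\ZZ H$ satisfies $\chi(\oh_C(H))=4$ and $H^{\geq 1}(\oh_C(H))=0$ by the regularity argument, and then $h^0(I_{C/Y}(H)) = \chi(\oh_Y(H)) - 4 = \chi(\Ku(Y)\text{-independent quantity})$; for GM threefolds $\chi(\oh_Y(H)) = 8$ by Riemann--Roch, giving the claim. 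I would present the proof in this second, case-free style for brevity.
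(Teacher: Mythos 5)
Your overall route is the same as the paper's: the paper proves this by the "same argument as Lemma \ref{lem-cubic-cohomology-1}", i.e.\ purity of $\oh_C$ plus the regularity input from \cite[Lemma 2.10]{heinrich:twisted-cubic} to get $H^1(\oh_C(H))=0$, then an Euler-characteristic count with $\chi(\oh_C(H))=4$ and $h^0(\oh_Y(H))=8$ (the paper caps $h^0(I_{C/Y}(H))$ by noting $C$ is not contained in a plane, whereas you argue surjectivity of the restriction map via linear normality of $C$ in its span $\langle C\rangle\cong\PP^3$ --- both work). However, several of your supporting assertions are wrong as written and should be fixed. First, a GM threefold has index one, $-K_Y=H$, not $2H$; Kodaira vanishing still gives $H^{>0}(\oh_Y(H))=0$ since $H=K_Y+2H$ with $2H$ ample, but the stated reason is false. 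Second, $h^0(\oh_Y(H))=\dim V_8=8$, not $\dim V_8+1$. Third, and most seriously, your special-case bookkeeping does not produce the claimed answer: $\pi_*\oh_Y(H)\cong\oh_{Y_5}(H)\oplus\oh_{Y_5}$ (not $\oh_{Y_5}(H)$ alone), so $h^0(\oh_Y(H))=7+1=8$; with your value $7$ the count gives $8-4=4$ only after this correction, whereas $7-4=3$. Also, the parenthetical justification of surjectivity ("the linear span of $C$ is the ambient $\PP^n$") is false --- the span is a $\PP^3$ --- though the correct statement (restriction $H^0(\oh_{\langle C\rangle}(1))\to H^0(\oh_C(1))$ is an isomorphism because $C$ is nondegenerate in $\PP^3$ and $h^0(\oh_C(H))=4$) rescues the step. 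With these repairs the argument is sound and matches the paper's.
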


\begin{proof}
The result follows from the same argument as Lemma \ref{lem-cubic-cohomology-1}.
\end{proof}

\subsection{Projection objects and stability}
In this subsection, we compute the projection objects and establish their stability. 

Recall that for any line $L\subset Y$, $\cU_Y^{\vee}|_L\cong \oh_L\oplus \oh_L(1)$ and $\cQ_Y|_L\cong \oh_L\oplus \oh_L\oplus \oh_L(1)$. Firstly, we verify the projection objects under $\pr_Y$ associated with twisted cubics.

\begin{lemma} \label{lem-cubic-mutation}
Let $[C]\in \Hilb_Y^{3t+1}$. 
\begin{enumerate}
    \item If $\RHom_Y(\cU_Y, I_{C/Y})=0$, then $\pr_Y(I_{C/Y}(H))=\bL_{\oh_Y}(I_{C/Y}(H))=\cone(\oh_Y^{\oplus 4}\to I_{C/Y}(H))$.

    \item If $\RHom_Y(\cU_Y, I_{C/Y})\neq 0$, $\pr_Y(I_{C/Y}(H))=\pr_Y(\oh_L(-H))$. We have a non-splitting exact triangle 
\begin{equation}\label{eq-pr-cubic}
    \oh_L(-H)\to \pr_Y(I_{C/Y}(H))\to \cQ^{\vee}_Y[1],
\end{equation}
corresponding to the unique non-zero element (up to scalar) of $\Hom_Y(\cQ^{\vee}_Y[1], \oh_L(-H)[1])=\CC$.
\end{enumerate}

\end{lemma}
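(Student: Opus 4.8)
The plan is to handle the two cases separately; in both, the crucial input is the isomorphism $\cU_Y^{\vee}\cong\cU_Y(H)$ (valid since $\cU_Y$ has rank two with $\det\cU_Y=\oh_Y(-H)$), which gives $\RHom_Y(\cU_Y^{\vee},I_{C/Y}(H))\cong\RHom_Y(\cU_Y,I_{C/Y})$, together with the facts that $\Ku(Y)=\langle\oh_Y,\cU_Y^{\vee}\rangle^{\perp}$ and $\pr_Y=\bL_{\oh_Y}\bL_{\cU_Y^{\vee}}$. For part (1), the right-hand side vanishes by hypothesis, so $\bL_{\cU_Y^{\vee}}$ acts trivially on $I_{C/Y}(H)$; since $\RHom_Y(\oh_Y,I_{C/Y}(H))=\CC^4$ is concentrated in degree $0$ by Lemma~\ref{lem-ICH}, the defining triangle of $\bL_{\oh_Y}$ immediately gives $\pr_Y(I_{C/Y}(H))=\bL_{\oh_Y}(I_{C/Y}(H))=\cone(\oh_Y^{\oplus4}\to I_{C/Y}(H))$. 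This case is essentially formal.

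For part (2), I would first pass to the residue line. By Lemma~\ref{lem-cubic-U} and the Remark following it, $C$ is contained in the zero locus $D$ of a nonzero section $s\in H^0(\cU_Y^{\vee})$, where $D$ is a degree $4$ curve of genus $1$ whose residue curve in $D$ is a line $L$; this is proved exactly as for $\rho$-cubics on fourfolds in Lemma~\ref{lem-rho-cubic}. As $D$ is a curve in the threefold $Y$, the section $s$ is regular, so the Koszul complex yields $0\to\oh_Y(-H)\to\cU_Y\to I_{D/Y}\to0$ — the exact analog of \eqref{eq-def-S11} — and twisting by $\oh_Y(H)$ together with $\cU_Y(H)\cong\cU_Y^{\vee}$ gives $0\to\oh_Y\to\cU_Y^{\vee}\to I_{D/Y}(H)\to0$. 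Applying $\RHom_Y(\cU_Y^{\vee},-)$ to this (using $\RHom_Y(\cU_Y^{\vee},\oh_Y)=0$ from the semiorthogonal decomposition and $\RHom_Y(\cU_Y^{\vee},\cU_Y^{\vee})=\CC$) shows $\RHom_Y(\cU_Y^{\vee},I_{D/Y}(H))=\CC$, generated by the displayed surjection, hence $\bL_{\cU_Y^{\vee}}(I_{D/Y}(H))=\oh_Y[1]$ and $\pr_Y(I_{D/Y}(H))=\bL_{\oh_Y}(\oh_Y[1])=0$. Next, the residue sequence $0\to\oh_L(-2)\to\oh_D\to\oh_C\to0$ (the analog of Lemma~\ref{lem-rho-cubic}(3)) identifies $I_{C/D}\cong\oh_L(-2)$; feeding this into the ideal sequence $0\to I_{D/Y}\to I_{C/Y}\to I_{C/D}\to0$ twisted by $\oh_Y(H)$ gives $0\to I_{D/Y}(H)\to I_{C/Y}(H)\to\oh_L(-H)\to0$ (using $\oh_Y(H)|_L\cong\oh_{\PP^1}(1)$), and applying $\pr_Y$ together with $\pr_Y(I_{D/Y}(H))=0$ yields $\pr_Y(I_{C/Y}(H))\cong\pr_Y(\oh_L(-H))$, reducing the computation to $L$.

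It then remains to compute $\pr_Y(\oh_L(-H))$ by the two mutations and to identify the triangle. Restricting to $L\cong\PP^1$ with $\cU_Y^{\vee}|_L\cong\oh_L\oplus\oh_L(1)$, one gets $\RHom_Y(\cU_Y^{\vee},\oh_L(-H))=\RHom_{\PP^1}(\oh_{\PP^1}\oplus\oh_{\PP^1}(1),\oh_{\PP^1}(-1))=\CC[-1]$, so the mutation triangle is $\oh_L(-H)\to\bL_{\cU_Y^{\vee}}(\oh_L(-H))\to\cU_Y^{\vee}\to\oh_L(-H)[1]$; since $\RHom_Y(\oh_Y,\oh_L(-H))=H^{*}(\oh_{\PP^1}(-1))=0$, the functor $\bL_{\oh_Y}$ fixes $\oh_L(-H)$, while $\RHom_Y(\oh_Y,\cU_Y^{\vee})=H^{*}(\cU_Y^{\vee})=\CC^5$ and the dual tautological sequence $0\to\cQ_Y^{\vee}\to\oh_Y^{\oplus5}\to\cU_Y^{\vee}\to0$ gives $\bL_{\oh_Y}(\cU_Y^{\vee})=\cQ_Y^{\vee}[1]$. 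Applying $\bL_{\oh_Y}$ to the triangle above then produces exactly \eqref{eq-pr-cubic}. For the last assertion, $\Hom_Y(\cQ_Y^{\vee}[1],\oh_L(-H)[1])=\Hom_Y(\cQ_Y^{\vee},\oh_L(-H))=\Hom_{\PP^1}(\oh_{\PP^1}^{\oplus2}\oplus\oh_{\PP^1}(-1),\oh_{\PP^1}(-1))=\CC$, so the extension class of \eqref{eq-pr-cubic} is unique up to scalar; and it is nonzero, because a splitting would make $\oh_L(-H)$ a direct summand of $\pr_Y(I_{C/Y}(H))\in\Ku(Y)=\langle\oh_Y,\cU_Y^{\vee}\rangle^{\perp}$, contradicting $\RHom_Y(\cU_Y^{\vee},\oh_L(-H))=\CC[-1]\neq0$.

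The main obstacle is the geometric input of part (2): that a twisted cubic with $\RHom_Y(\cU_Y,I_{C/Y})\neq0$ lies on a curve $D=Z(s)$ of the stated type with residue line $L$ and the sheaf sequences above. This mirrors the $\rho$-cubic analysis for fourfolds in Lemma~\ref{lem-rho-cubic}, and together with the careful two-step mutation bookkeeping on $L\cong\PP^1$ it carries essentially all the content; part (1) and the uniqueness and non-splitness statements are then formal consequences.
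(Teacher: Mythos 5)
Your proposal is correct and follows essentially the same route as the paper: part (1) is the same formal observation, and part (2) uses the same geometric input (the elliptic quartic $D=Z(s)$, its residue line $L$, and the two-step mutation with $\bL_{\oh_Y}\cU_Y^\vee\cong\cQ_Y^\vee[1]$). The only difference is cosmetic — you first show $\pr_Y(I_{D/Y}(H))=0$ to reduce to $\oh_L(-H)$ and then mutate, whereas the paper mutates $I_{C/Y}(H)$ directly via the kernel/image/cokernel of $s\colon\cU_Y^\vee\to I_{C/Y}(H)$ and deduces $\pr_Y(I_{C/Y}(H))=\pr_Y(\oh_L(-H))$ at the end from uniqueness of the extension class.
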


\begin{proof}
By definition, we have $\pr_Y(I_{C/Y}(H))=\bL_{\oh_Y}\bL_{\cU^{\vee}_Y}(I_{C/Y}(H))$. If $\RHom_Y(\cU_Y, I_{C/Y})=0$, then we have $\pr_Y(I_{C/Y}(H))=\bL_{\oh_Y}(I_{C/Y}(H))$ and (1) follows from Lemma \ref{lem-ICH}.

Now we assume that $\RHom_Y(\cU_Y, I_{C/Y})\neq 0$. By Lemma \ref{lem-cubic-U}, we have $\RHom_Y(\cU_Y, I_{C/Y})=\CC\oplus \CC[-1]$. Let $s\colon \cU^{\vee}_Y\to I_{C/Y}(H)$ be the non-zero map. Then $\ker(s)=\oh_Y$ and $\im(s)=I_{D/Y}(H)$, where $D$ is the zero locus of a section of $\cU^{\vee}_Y$. Therefore, $\mathrm{coker}(s)=\oh_L(-H)$, where $L$ is the residue component of $C$ inside $D$. Hence, taking cohomology objects to the triangle defining $\bL_{\cU^{\vee}_Y}(I_{C/Y}(H))$, we have $$\cH^{-1}(\bL_{\cU^{\vee}_Y}(I_{C/Y}(H)))\cong \oh_Y$$ and an exact sequence
\[0\to \oh_L(-H)\to \cH^{0}(\bL_{\cU^{\vee}_Y}(I_{C/Y}(H)))\to \cU^{\vee}_Y\to 0.\]
Applying $\bL_{\oh_Y}$ to $\bL_{\cU^{\vee}_Y}(I_{C/Y}(H))$, we get the desired triangle \eqref{eq-pr-cubic}. Note that \eqref{eq-pr-cubic} is non-splitting, otherwise, $\Hom_Y(\cQ^{\vee}_Y[1], \pr_Y(I_{C/Y}(H)))\neq 0$ and this contradicts the fact that $\pr_Y(I_{C/Y}(H))\in \Ku(Y)$. Finally, it is easy to see that $\pr_Y(\oh_L(-H))$ also fits into \eqref{eq-pr-cubic}. Then $\pr_Y(I_{C/Y}(H))=\pr_Y(\oh_L(-H))$ follows from $\Hom_Y(\cQ^{\vee}_Y[1], \oh_L(-H)[1])=\CC$.
\end{proof}

\begin{remark}\label{rmk-cubic-pr}
In the case (1) of Lemma \ref{lem-cubic-mutation}, we know that $I_{C/Y}\in \cA_Y$ and $\pr_Y(I_{C/Y}(H))=\Xi_Y(I_{C/Y})$. Since $\Xi_Y\colon \cA_Y\to \Ku(Y)$ is an equivalence, $\pr_Y(I_{C/Y}(H))\cong \pr_Y(I_{C'/Y}(H))$ if and only if $C=C'$.

In the case (2) of Lemma \ref{lem-cubic-mutation}, $\pr_Y(I_{C/Y}(H))$ is uniquely determined by its residue line $L$. Conversely, for any line $L\subset Y$, since $\Hom_Y(\cU_Y, I_{L/Y})=\CC^2$, $L$ has a $\PP^1=\PP(\Hom_Y(\cU_Y, I_{L/Y}))$-family of  residue twisted cubics. Every pair of different twisted cubics $C$ and $C'$ in this family admits the isomorphism  $\pr_Y(I_{C/Y}(H))\cong \pr_Y(I_{C'/Y}(H))$.
\end{remark}

Now we are going to prove the stability of projection objects. We start with a computation of $\Ext$-groups.

\begin{lemma} \label{lem-cubic-ext1}
Let $[C]\in \Hilb_Y^{3t+1}$. If $\RHom_Y(\cU_Y, I_{C/Y})\neq 0$, then 
\[\RHom_Y(\pr_Y(I_{C/Y}(H)), \pr_Y(I_{C/Y}(H)))=\CC\oplus \CC^{3+\delta}[-1]\oplus \CC^{\delta}[-2]\]
for $0\leq \delta\leq 1$. If  $[L]\in \Hilb^{t+1}_Y$ is a smooth point for the residue line $L$ corresponding to $C$, then $\delta=0$.
\end{lemma}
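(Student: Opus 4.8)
The plan is to compute $\RHom_Y(P,P)$ entirely in terms of the residue line $L$. By Lemma~\ref{lem-cubic-mutation}(2) we have $P := \pr_Y(I_{C/Y}(H)) = \pr_Y(\oh_L(-H))$, sitting in the (non-split) exact triangle
\[
\oh_L(-H) \to P \to \cQ_Y^{\vee}[1].
\]
Since $\pr_Y = \bL_{\oh_Y}\bL_{\cU_Y^{\vee}}$ is the left adjoint of the inclusion $\Ku(Y) = \langle \oh_Y, \cU_Y^{\vee}\rangle^{\perp} \hookrightarrow \D^b(Y)$, we get $\RHom_Y(P,P) \cong \RHom_Y(\oh_L(-H), P)$. (Equivalently: applying $\RHom_Y(-,P)$ to the dual tautological sequence $0 \to \cQ_Y^{\vee} \to \oh_Y^{\oplus 5} \to \cU_Y^{\vee} \to 0$ and using $P \in \langle \oh_Y, \cU_Y^{\vee}\rangle^{\perp}$ shows $\RHom_Y(\cQ_Y^{\vee}, P) = 0$, so the identity follows by applying $\RHom_Y(-,P)$ to the triangle above.)

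Next I would apply $\RHom_Y(\oh_L(-H), -)$ to the triangle, reducing the problem to computing $\RHom_Y(\oh_L(-H), \oh_L(-H))$ and $\RHom_Y(\oh_L(-H), \cQ_Y^{\vee})$. Twisting by $\oh_Y(-H)$ identifies the first with $\RHom_Y(\oh_L, \oh_L)$, which by the Koszul computation $\bigwedge^{\bullet} N_{L/Y}^{\vee} \to \oh_L$ equals $\CC \oplus H^0(N_{L/Y})[-1] \oplus H^1(N_{L/Y})[-2]$, using that $\det N_{L/Y} \cong \oh_{\PP^1}(-1)$ is acyclic. For the second, since $L \subset Y$ is a codimension-two local complete intersection we have $\mathcal{E}xt^i_Y(\oh_L, F) = 0$ for $i \neq 2$ and $\mathcal{E}xt^2_Y(\oh_L, F) \cong F|_L \otimes \det N_{L/Y}$ for locally free $F$; combined with the splitting $\cQ_Y|_L \cong \oh_L^{\oplus 2} \oplus \oh_L(1)$ this gives $\RHom_Y(\oh_L(-H), \cQ_Y^{\vee}) \cong \CC^2[-2]$, hence $\RHom_Y(\oh_L(-H), \cQ_Y^{\vee}[1]) \cong \CC^2[-1]$.

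The long exact cohomology sequence attached to the triangle $\RHom_Y(\oh_L, \oh_L) \to \RHom_Y(P,P) \to \RHom_Y(\oh_L(-H),\cQ_Y^{\vee}[1]) \cong \CC^2[-1]$ then yields $\Ext^{<0}_Y(P,P) = \Ext^{>2}_Y(P,P) = 0$, $\Hom_Y(P,P) = \CC$, and an exact sequence
\[
0 \to H^0(N_{L/Y}) \to \Ext^1_Y(P,P) \to \CC^2 \xrightarrow{\ \psi\ } H^1(N_{L/Y}) \to \Ext^2_Y(P,P) \to 0.
\]
Setting $\delta := \ext^2_Y(P,P) = \dim \mathrm{coker}\,\psi$ and using $\chi(N_{L/Y}) = \deg N_{L/Y} + 2 = 1$, i.e.\ $h^0(N_{L/Y}) = 1 + h^1(N_{L/Y})$, a dimension count gives $\ext^1_Y(P,P) = h^0(N_{L/Y}) + 2 - \rk\psi = 3 + \delta$, so $\RHom_Y(P,P) = \CC \oplus \CC^{3+\delta}[-1] \oplus \CC^{\delta}[-2]$. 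Finally, since $Y$ contains no plane, every line $L \subset Y$ has $N_{L/Y} \cong \oh_L \oplus \oh_L(-1)$ or $N_{L/Y} \cong \oh_L(1) \oplus \oh_L(-2)$ (see Appendix~\ref{appendix-B} or the literature on lines in Fano threefolds), hence $h^1(N_{L/Y}) \in \{0,1\}$ and $0 \le \delta \le h^1(N_{L/Y}) \le 1$; moreover $h^1(N_{L/Y}) = 0$ exactly when $[L] \in \Hilb^{t+1}_Y$ is a smooth point, in which case $\delta = 0$.

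Most of the argument is routine bookkeeping. The points that need care are the correct identification of $\pr_Y$ as a one-sided adjoint (so that the reduction $\RHom_Y(P,P) \cong \RHom_Y(\oh_L(-H), P)$ is valid), the codimension-two local $\mathcal{E}xt$ computation for $\RHom_Y(\oh_L, -)$, and the input on normal bundles of lines on GM threefolds; I do not anticipate a deeper obstacle beyond keeping the cohomological degrees straight in the long exact sequence.
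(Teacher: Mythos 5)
Your proposal is correct and follows essentially the same route as the paper: reduce $\RHom_Y(P,P)$ to $\RHom_Y(\oh_L(-H),P)$ by adjunction (since $P\in\Ku(Y)$), compute $\RHom_Y(\oh_L(-H),\cQ_Y^{\vee}[1])=\CC^2[-1]$, and run the long exact sequence against the triangle \eqref{eq-pr-cubic} together with the classification $N_{L/Y}\cong\oh_L\oplus\oh_L(-1)$ or $\oh_L(1)\oplus\oh_L(-2)$. You merely spell out the local-$\mathcal{E}xt$ and dimension-count details that the paper leaves implicit.
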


\begin{proof}
We know that $\Hom_Y(\oh_L,\oh_L[1])=\Hom(I_{L/Y},\oh_L)=H^0(N_{L/Y})$ for any line $L\subset Y$. By \cite[Lemma 4.2.1]{iskovskikh1999fano}, we have $N_{L/Y}\cong \oh_L\oplus \oh_L(-1)$ or $\oh_L(1)\oplus \oh_L(-2)$. Hence, 3$\Hom_Y(\oh_L,\oh_L[1])=\CC$ or~$\CC^2$. Moreover,  $\Hom_Y(\oh_L,\oh_L[1])=\CC$ if and only if $[L]\in \Hilb^{t+1}_Y$ is a smooth point.

Since $\pr_Y(I_{C/Y}(H))\in \Ku(Y)$, we see 
\[\RHom_Y(\pr_Y(I_{C/Y}(H)), \pr_Y(I_{C/Y}(H)))=\RHom_Y(\oh_L(-H), \pr_Y(I_{C/Y}(H))).\]
Note that $\RHom_Y(\oh_L(-H), \cQ^{\vee}_Y[1])=\CC^2[-1]$. Then the result follows from applying the functor $\Hom_Y(\oh_L(-H),-)$ to the exact triangle \eqref{eq-pr-cubic}.
\end{proof}

In the following, we will employ tilt-stability and the wall-crossing argument. We adapt the notion introduced in \cite[Section 2]{feyzbakhsh2023new}.

\begin{proposition} \label{lem-cubic-stability}
Let $[C]\in \Hilb_Y^{3t+1}$. Then $\pr_Y(I_{C/Y}(H))$ is $\sigma_Y$-stable for any Serre-invariant stability condition $\sigma_Y$ on $\Ku(Y)$.  
\end{proposition}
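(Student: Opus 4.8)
The object $\pr_Y(I_{C/Y}(H))$ has numerical class $\lambda_1 + \lambda_2$ (or a small rotation thereof), and by Lemma \ref{lem-cubic-ext1} together with the case (1) analogue, its self-Ext groups are those of a spherical-like object in a K3-type category: $\hom = 1$, $\ext^1 = 3$ in the generic situation. The strategy is the standard one for proving stability of projection objects in Serre-invariant settings (as in \cite{pertusiGM3fold}, \cite{JLLZ2021gushelmukai}, \cite{FeyzbakhshPertusi2021stab}): first reduce to checking that the object has no destabilizing subobject/quotient of smaller phase by a numerical/lattice argument, then rule out the finitely many possible destabilizing classes using the $\Ext$-group computations and the structure of $\Knum(\Ku(Y))$, which by \eqref{eq-matrix-odd} is the lattice $-A_1^{\oplus 2}$ (after identification) with no $(-1)$-classes other than $\pm\lambda_1, \pm\lambda_2$ and their analogues. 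Since all Serre-invariant stability conditions lie in one $\GL$-orbit by Theorem \ref{thm-unique-threefold}, it suffices to prove stability for one such $\sigma_Y$.

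\textbf{Key steps.} First I would establish that $\pr_Y(I_{C/Y}(H))$ lies in the heart $\cA(\alpha,\beta)$ of a suitable weak stability condition coming from tilt-stability on $\D^b(Y)$, and compute its tilt-slope; this uses that $I_{C/Y}$ and $I_{C/Y}(H)$ are sheaves, and that the mutation functors $\bL_{\oh_Y}$, $\bL_{\cU_Y^\vee}$ preserve membership in a tilted heart up to controlled shifts, exactly as in Lemma \ref{lem-cubic-mutation}. Second, via the wall-crossing argument of \cite{feyzbakhsh2023new} (the notion referenced in the paragraph just before the statement), I would show that if $\pr_Y(I_{C/Y}(H))$ is not $\sigma_Y$-stable, it is strictly semistable, hence fits into a short exact sequence in the heart of $\sigma_Y$ with factors $A, B$ whose classes $a, b \in \Knum(\Ku(Y))$ satisfy $a + b = [\pr_Y(I_{C/Y}(H))]$ and $\phi_{\sigma_Y}(A) = \phi_{\sigma_Y}(B)$. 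Third, using the Euler pairing \eqref{eq-matrix-odd} and $\chi(\pr_Y(I_{C/Y}(H)), \pr_Y(I_{C/Y}(H))) = -2$, I would enumerate the finitely many numerically possible $(a,b)$ and, for each, derive a contradiction: either the would-be factor class has no semistable representatives (support property / negative $\chi$ with itself forcing it to be zero), or it forces $\ext^1(\pr_Y(I_{C/Y}(H)), \pr_Y(I_{C/Y}(H))) \neq 3$, contradicting Lemma \ref{lem-cubic-ext1} when $[L]$ is a smooth point, and contradicting Serre-invariance (all $\sigma_Y$-semistable objects of this class have the same Ext groups in the $\GL$-orbit) in general. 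In case (2) one also uses the explicit triangle \eqref{eq-pr-cubic} with $\cQ_Y^\vee$ and $\oh_L(-H)$ to see that any sub/quotient must interact with these two pieces, both of which have understood tilt-slopes.

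\textbf{Main obstacle.} The hardest part is handling the possibly non-generic residue line $L$, i.e. the case $\delta = 1$ in Lemma \ref{lem-cubic-ext1}, where $\ext^1 = 4$ rather than $3$; here the purely numerical enumeration of destabilizing classes does not immediately exclude a decomposition, and one must instead argue that the $\GL$-orbit of Serre-invariant stability conditions (Theorem \ref{thm-unique-threefold}) together with openness of stability in families forces stability to hold on all of $\Hilb_Y^{3t+1}$ once it holds on the dense open locus of smooth points — essentially a deformation argument internal to the Hilbert scheme, combined with the fact that $\Hilb_Y^{3t+1}$ is irreducible (proven in Appendix \ref{appendix-B}). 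A secondary subtlety is that $Y$ may be special, so $\oh_C$ and the mutation computations must be checked uniformly for both ordinary and special GM threefolds, but Lemma \ref{lem-cubic-U} and Lemma \ref{lem-cubic-mutation} already package exactly what is needed for this.
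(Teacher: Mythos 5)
Your overall strategy (reduce to one stability condition via Theorem \ref{thm-unique-threefold}, then combine a lattice-theoretic enumeration of potential destabilizing classes with the self-Ext computations and a tilt-stability/wall-crossing argument) is broadly the same toolkit the paper uses, and your treatment of the generic case ($\ext^1=3$) would go through. The paper splits the proof differently, though: when $\RHom_Y(\cU_Y,I_{C/Y})\neq 0$ it never touches tilt-stability at all and simply invokes the categorical criterion of Lemma \ref{lem-ext1=4}, while the wall-crossing analysis is reserved for the case $\RHom_Y(\cU_Y,I_{C/Y})=0$, where it is applied to the sheaf $I_{C/Y}$ itself (which already lies in $\cA_Y$), not to the two-term complex $\pr_Y(I_{C/Y}(H))$. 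Running tilt-stability directly on the cone appearing in \eqref{eq-pr-cubic} would be substantially more delicate than what you sketch.

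The genuine gap is your handling of the case $\delta=1$ (i.e.\ $\ext^1=4$). Your proposed fix --- ``openness of stability in families forces stability to hold on all of $\Hilb_Y^{3t+1}$ once it holds on the dense open locus'' --- is logically backwards: openness of the stable locus means precisely that instability may occur on a \emph{closed} subset, so stability on a dense open subset does not propagate to its closure. (This is exactly the difficulty the paper confronts for GM fourfolds in Theorem \ref{thm-stability-proj-general}, where a much more elaborate constructibility argument over the moduli stack of fourfolds is needed; no such argument is available fibrewise over a single Hilbert scheme.) Moreover, your appeal to the irreducibility of $\Hilb_Y^{3t+1}$ is circular in the paper's logical order: that irreducibility (Corollary \ref{cor-hilb-smooth}) is deduced from Theorem \ref{thm-cubic-3fold}, which itself relies on the present proposition. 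The paper closes the $\ext^1=4$ case purely categorically in Lemma \ref{lem-ext1=4}: if $E$ with $\RHom(E,E)=\CC\oplus\CC^4[-1]\oplus\CC[-2]$ were strictly semistable or unstable, the weak Mukai-type inequality from \cite{FGLZ24} forces each Jordan--H\"older (resp.\ Harder--Narasimhan) factor to have $\ext^1=2$, hence class $\pm\lambda_1$ or $\pm\lambda_2$; this yields $\chi(E,E)=-4$ (resp.\ $\RHom(B,A)=\CC[-1]$ contradicting $\chi(B,A)=0$), against $\chi(E,E)=-2$. You would need to supply this (or an equivalent) argument to make your proof complete.
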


\begin{proof}
When $\RHom_Y(\cU_Y, I_{C/Y})\neq 0$, the $\sigma_Y$-stability of $\pr_Y(I_{C/Y}(H))$ follows from Lemma \ref{lem-cubic-ext1} and Lemma \ref{lem-ext1=4}.

Now we assume that $\RHom_Y(\cU_Y, I_{C/Y})=0$. In this case, we have $I_{C/Y}\in \cA_Y$. It is enough to prove that $I_{C/Y}$ is stable with respect to stability conditions constructed in Theorem \ref{blms-induce}. Since $\ch^{-1}_1(I_{C/Y})=1$, we see $I_{C/Y}$ is $\nu_{-1,w}$-stable for any $w>\frac{1}{2}$. 

Assume that there is a wall for $I_{C/Y}$ connecting points $\Pi(I_{C/Y})$ and $\Pi(\oh_Y(-H)[1])=(-1,\frac{1}{2})$, and let $A\to I_{C/Y}\to B$ be the destabilizing sequence. By \cite[Proposition 3.1]{liu-ruan:cast-bound}, $A$ is a torsion-free sheaf, thus $\ch_0(A)\geq 1$. According to  \cite[Proposition 4.16]{bayer2020desingularization}, we have $\ch_0(A)\leq 2$. Since $$\ch_1^{-1+\epsilon}(I_{C/Y})> \ch_1^{-1+\epsilon}(A)> 0$$ for any $0<\epsilon\ll 1$ sufficiently small, we only have $\ch_{\leq 1}(A)=(2,-H)$ and $\ch_{\leq 1}(B)=(-1,H)$. By definition, $\Pi(\oh_Y(-H)[1])$ lies on the line connecting points $\Pi(B)$ and $\Pi(I_{C/Y})$, then  $\Pi(B)=\Pi(\oh_Y(-H)[1])$ and $\ch_{\leq 2}(B)=\ch_{\leq 2}(\oh_Y(-H)[1])$. A standard argument shows that $B\cong \oh_Y(-H)[1]$. However, this contradicts Lemma \ref{lem-cubic-rhom} by Serre duality. Thus, $I_{C/Y}$ is $\nu_{-1+\epsilon,w}$-stable for any $w>\frac{(-1+\epsilon)^2}{2}$ and any $0<\epsilon\ll 1$ sufficiently small. By \cite[Proposition 4.1]{FeyzbakhshPertusi2021stab}, $I_{C/Y}$ is $\sigma^0_{\alpha,-1+\epsilon}$-stable for any $\alpha>0$. Then the result follows from Theorem \ref{blms-induce}.
\end{proof}

\begin{lemma}\label{lem-ext1=4}
If $E\in\Ku(Y)$ satisfies either  $\mathrm{ext}^1_Y(E,E)\leq 3$, or $\RHom_Y(E,E)=\CC\oplus \CC^4[-1]\oplus \CC[-2]$, then $E$ is stable with respect to every Serre-invariant stability condition $\sigma_Y$ on $\Ku(Y)$.
\end{lemma}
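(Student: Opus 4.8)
\textbf{Proof strategy for Lemma~\ref{lem-ext1=4}.}
The plan is to exploit the fact, recorded in Theorem~\ref{thm-unique-threefold}, that all Serre-invariant stability conditions on $\Ku(Y)$ lie in a single $\GL$-orbit; hence it suffices to prove stability with respect to one fixed such $\sigma_Y$, since the $\GL$-action preserves the set of (semi)stable objects. So I would first fix a Serre-invariant $\sigma_Y$. The key numerical input is that the lattice $\Knum(\Ku(Y))$ carries the Euler form $\left[\begin{smallmatrix} -1 & 0 \\ 0 & -1\end{smallmatrix}\right]$ (for the generators $\lambda_1,\lambda_2$), so every nonzero class $v$ in this lattice has $\chi(v,v)=-(x^2+xy+y^2)$ in suitable coordinates, which is $\le -1$ with equality exactly on the ``minimal'' classes (those of square $-1$). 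The hypothesis $\RHom_Y(E,E)=\CC\oplus\CC^4[-1]\oplus\CC[-2]$ forces $\chi(E,E)=1-4+1=-2$, and the hypothesis $\ext^1_Y(E,E)\le 3$ together with $\hom_Y(E,E)\ge 1$ and Serre duality ($\hom_Y(E,E)=\ext^2_Y(E,E)$) forces $\chi(E,E)=2\hom(E,E)-\ext^1(E,E)\ge 2-3=-1$, so in this second case $\chi(E,E)\in\{-1,-2\}$ and $\ext^1(E,E)\le 3$.

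Next I would run the standard Harder--Narasimhan/Jordan--Hölder contradiction argument. Suppose $E$ is not $\sigma_Y$-semistable, or is strictly semistable; then there is a nontrivial filtration with stable factors $E_i$ of classes $v_i$ summing to $[E]$. Using that $\sigma_Y$ is Serre-invariant, each $\sigma_Y$-stable object $F$ satisfies $\ext^1_Y(F,F)\ge 2$ (this is the standard consequence of Serre-invariance for a K3-type computation, or rather here a fractional-Calabi--Yau computation — it follows from $\chi(F,F)\le -1$ and $S_{\Ku(Y)}$-invariance forcing $\hom=\ext^3$ etc.; I would cite the relevant lemma from \cite{pertusiGM3fold} or \cite{JLLZ2021gushelmukai,FeyzbakhshPertusi2021stab}). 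Counting: if $E$ had $\ge 2$ distinct stable factors, the long exact sequences plus $\ext^{<0}=0$ for objects in a heart of the same phase would produce $\ext^1_Y(E,E)\ge \ext^1(E_1,E_1)+\ext^1(E_2,E_2)+\ldots \ge 4$ when there are two factors with the interaction term, contradicting $\ext^1(E,E)\le 3$ in the first case; and in the second case one derives the parallel contradiction from the explicit shape $\CC\oplus\CC^4[-1]\oplus\CC[-2]$, which would be incompatible with $E$ having a proper sub/quotient of the same phase (the off-diagonal Hom's would be forced to vanish, making the class decomposition impossible given $\chi$). If $E=F^{\oplus k}$ with $F$ stable and $k\ge 2$, then $\hom(E,E)\ge k^2\ge 4$, contradicting $\hom(E,E)\le 2$ (first case) or contradicting the shape $\hom(E,E)=1$ (second case). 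This rules out all cases, giving $\sigma_Y$-stability.

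I would organize the proof as: (i) reduce to a single fixed Serre-invariant $\sigma_Y$ via Theorem~\ref{thm-unique-threefold}; (ii) record the numerical constraints $\chi(E,E)\in\{-1,-2\}$ and the Euler-form structure on $\Knum(\Ku(Y))$; (iii) recall that every $\sigma_Y$-stable object has $\ext^1\ge 2$; (iv) assume a nontrivial JH filtration and derive $\ext^1_Y(E,E)\ge 4$ or a contradiction with the precise shape of $\RHom_Y(E,E)$; (v) handle the decomposable case $E=F^{\oplus k}$ separately via the $\hom(E,E)$ bound. The main obstacle will be step (iv): making the ``sum of $\ext^1$'s plus interaction terms'' estimate rigorous requires controlling the spectral sequence / long exact sequences relating $\ext^\bullet(E,E)$ to the $\ext^\bullet(E_i,E_j)$, and in particular showing the interaction $\ext^1(E_i,E_j)$ terms do not conspire to cancel — here one uses that all factors have the same phase so $\Hom(E_i,E_j[<0])=0$, and Serre duality pairs $\ext^1$ with $\ext^1$ in the CY$_2$-like situation induced on the relevant sublattice. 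This is essentially the argument of \cite[Lemma~5.15 / Proposition~5.?]{pertusiGM3fold} adapted to our Euler form, and I would invoke it with a citation rather than reprove it in full.
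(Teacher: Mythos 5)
Your overall strategy (rule out instability and strict semistability by analyzing the Harder--Narasimhan/Jordan--H\"older factors of $E$ and counting $\ext$-groups, using Serre-invariance to control $\ext^1$ of the stable factors) is the same as the paper's, and your first case matches: the paper simply cites \cite[Lemma 4.12(1)]{FGLZ24} for $\ext^1_Y(E,E)\leq 3$. However, there is a genuine gap in your treatment of the second case. There the Mukai-type count gives exactly $\ext^1_Y(E,E)\geq \ext^1_Y(A,A)+\ext^1_Y(B,B)=2+2=4$, which is \emph{equal} to the actual value $\ext^1_Y(E,E)=4$, so no contradiction arises from counting alone; this is precisely the case your sketch does not resolve. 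The paper's mechanism is lattice-theoretic: since each factor has $\ext^1=2$, its class is a $(-1)$-class of $\Knum(\Ku(Y))\cong -A_1^{\oplus 2}$, hence lies in $\{\pm\lambda_1,\pm\lambda_2\}$. In the strictly semistable case the two Jordan--H\"older factors have the same phase, forcing $[A]=[B]$ up to sign and hence $\chi(E,E)=-4$, contradicting $\chi(E,E)=1-4+1=-2$. In the unstable case, $\chi(E,E)=-2$ forces $\chi(A,B)=\chi(B,A)=0$ (the factors' classes are orthogonal), yet applying \cite[Lemma 3.12]{FGLZ24} to the triangle $B[-1]\to A\to E$ together with the exact shape $\RHom_Y(E,E)=\CC\oplus\CC^4[-1]\oplus\CC[-2]$ forces $\RHom_Y(B,A)=\CC[-1]$, whose Euler characteristic is $-1\neq 0$. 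Your phrase ``the off-diagonal Hom's would be forced to vanish'' points in the wrong direction: the contradiction is that an off-diagonal $\Ext^1$ is forced to be \emph{nonzero} while its Euler characteristic must vanish.

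A secondary issue: your appeals to Serre duality in the form $\hom_Y(E,E)=\ext^2_Y(E,E)$ and to a ``CY$_2$-like'' pairing of $\ext^1$ with $\ext^1$ are not valid here, since for a GM threefold $S_{\Ku(Y)}$ is not the shift $[2]$ (one has $S_{\Ku(Y)}^3=[5]$, so $\Ku(Y)$ is only fractional Calabi--Yau). The correct substitutes are the Serre-invariance lemmas \cite[Lemma 3.5, Lemma 3.6]{FGLZ24}, which is what the paper uses to guarantee $\ext^1$ of each stable factor equals $2$ and to control the relevant long exact sequences.
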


\begin{proof}
When $\ext^1(E,E)\leq 3$, the result follows from \cite[Lemma 4.12(1)]{FGLZ24}. Now we assume the later case holds. By \cite[Lemma 3.6(3), Lemma 3.5]{FGLZ24}, if $E$ is strictly $\sigma_Y$-semistable, then $E$ has two Jordan--H\"older factors $A,B$ with $\ext^1_Y(A,A)=\ext^1_Y(B,B)=2$. Since $A$ and $B$ have the same phase, we know that up to sign, $[A]=[B]=\lambda_1$ or $\lambda_2$. However, this implies $\chi(E, E)=-4$, leading to a contradiction. Thus, we only need to show that $E$ is $\sigma_Y$-semistable. 

If $E$ is not $\sigma_Y$-semistable, using \cite[Lemma 3.6(3), Lemma 3.5]{FGLZ24} again, $E$ has two Harder--Narasimhan factors $A,B$ with an exact triangle $A\to E\to B$ such that $$\phi_{\sigma_Y}(A)>\phi_{\sigma_Y}(B), \quad\ext^1_Y(A,A)=\ext^1_Y(B,B)=2.$$ 
Hence,  we have $\RHom_Y(A,A)=\RHom_Y(B,B)=\CC\oplus \CC^2[-1]$ with $\{[A],[B]\}\subset \{\lambda_1,\lambda_2,-\lambda_1,-\lambda_2\}$. As $\Hom_Y(A,B)=\Hom_Y(B,A[2])=0$, applying \cite[Lemma 3.12]{FGLZ24} to $B[-1]\to A\to E$ and using $\RHom_Y(E,E)=\CC\oplus \CC^4[-1]\oplus \CC[-2]$, we deduce that $\RHom_Y(B,A)=\CC[-1]$. This contradicts $\chi(B, A)=0$.
\end{proof}

\subsection{Moduli spaces}
In this subsection, we aim to describe the Bridgeland moduli space $M_{\sigma_Y}^Y(1,-1)$.

\begin{proposition}
Let $\sigma_Y$ be a Serre-invariant stability condition on $\Ku(Y)$. Then we have a closed immersion
\[p_Y'\colon \Hilb^{t+1}_Y\hookrightarrow M_{\sigma_Y}^Y(1,-1), \quad [L]\mapsto [\cone(\cQ^{\vee}_Y\to \oh_L(-H))].\]
\end{proposition}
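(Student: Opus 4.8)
The plan is to produce the morphism $p'_Y$ as a map of schemes, then check it is a closed immersion by showing it is injective on points, injective on tangent vectors, and proper. To build the morphism, I would first observe that by Lemma \ref{lem-cubic-mutation}(2), if $L\subset Y$ is a line then the residue twisted cubics $C$ of $L$ satisfy $\pr_Y(I_{C/Y}(H))=\pr_Y(\oh_L(-H))$, and the triangle \eqref{eq-pr-cubic} shows $\pr_Y(\oh_L(-H))=\cone(\cQ^{\vee}_Y\to\oh_L(-H))$ up to the shift bookkeeping; a short Chern character computation using \eqref{lambda} confirms this object has class $\lambda_1-\lambda_2$. Stability of this object for every Serre-invariant $\sigma_Y$ is exactly Proposition \ref{lem-cubic-stability} (combined with Lemma \ref{lem-cubic-ext1} when the residue line is a smooth point of $\Hilb^{t+1}_Y$, which by Appendix \ref{appendix-B} one expects for all lines on a general $Y$, or else via the explicit $\Ext$-bound in Lemma \ref{lem-ext1=4}). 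So set-theoretically the assignment $[L]\mapsto[\cone(\cQ^{\vee}_Y\to\oh_L(-H))]$ lands in $M_{\sigma_Y}^Y(1,-1)$. To upgrade this to a morphism of schemes, I would take the universal line $\mathcal{L}\subset \Hilb^{t+1}_Y\times Y$ with ideal sheaf $\cI_{\mathcal{L}}$, form the relative evaluation complex $\cone(p_2^*\cQ^{\vee}_Y\to \oh_{\mathcal{L}}(-H))$ on $\Hilb^{t+1}_Y\times Y$ (the relative version of \eqref{eq-pr-cubic}, which one gets by applying the relative projection functor to $p_2^*I_{\mathcal{L}/Y}(H)$ exactly as in the proof of Theorem \ref{thm-cubic-3fold}), and invoke the universal property of $M_{\sigma_Y}^Y(1,-1)$ as constructed in \cite{perry2019stability,BLMNPS21} to obtain $p'_Y$.

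\textbf{Injectivity and the tangent map.} For injectivity on $\CC$-points: if $\cone(\cQ^{\vee}_Y\to\oh_L(-H))\cong \cone(\cQ^{\vee}_Y\to\oh_{L'}(-H))$, then applying $\Hom_Y(\cQ^{\vee}_Y,-)$ and using $\cQ_Y|_L\cong\oh_L\oplus\oh_L\oplus\oh_L(1)$, one recovers the quotient $\oh_L(-H)$ (hence $L$) from the projection object, so $L=L'$ — this is the threefold analogue of the uniqueness argument in Proposition \ref{prop-proj-rho}, and is recorded in Remark \ref{rmk-cubic-pr}. For the tangent map: the differential of $p'_Y$ at $[L]$ is the natural map $H^0(N_{L/Y})=\Ext^1_Y(I_{L/Y},I_{L/Y})\to \Ext^1_{\Ku(Y)}(\pr_Y(\oh_L(-H)),\pr_Y(\oh_L(-H)))$. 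Using the triangle \eqref{eq-pr-cubic} and the vanishing $\RHom_Y(\cQ^{\vee}_Y,\cQ^{\vee}_Y)$ beyond its identity part together with $\RHom_Y(\oh_L(-H),\cQ^{\vee}_Y[1])=\CC^2[-1]$, one computes that this map is injective (indeed, when $N_{L/Y}\cong\oh_L\oplus\oh_L(-1)$ it is an isomorphism onto a $1$-dimensional space, consistent with Lemma \ref{lem-cubic-ext1} giving $\ext^1=3$ and the fact that $\pr_Y(\oh_L(-H))$ is a smooth point of the moduli space whose local structure near such points is understood). So $p'_Y$ is an unramified injection.

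\textbf{Properness and conclusion.} Since $\Hilb^{t+1}_Y$ is proper (it is projective) and $M_{\sigma_Y}^Y(1,-1)$ is separated, the morphism $p'_Y$ is automatically proper; combined with injectivity on points and on tangent spaces, $p'_Y$ is a closed immersion.

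\textbf{Main obstacle.} The genuinely delicate point is constructing $p'_Y$ as a \emph{morphism of schemes} rather than just a map on closed points: one must exhibit a flat family of $\sigma_Y$-stable objects of class $\lambda_1-\lambda_2$ over $\Hilb^{t+1}_Y$ and check it satisfies the hypotheses of the moduli functor from \cite{BLMNPS21} (universal gluability, openness of the stable locus). This requires knowing that the relative projection complex $\cone(p_2^*\cQ^{\vee}_Y\to\oh_{\mathcal{L}}(-H))$ is, fiberwise, the object \eqref{eq-pr-cubic} — which follows because $\RHom_Y(\cU_Y,I_{L/Y})$ has locally constant dimension along $\Hilb^{t+1}_Y$ (always $\CC^2$ for a line), so the relative mutation behaves well in families. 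Everything else is the routine $\Ext$-bookkeeping sketched above; the uniformity of the cohomology of $\cU_Y^\vee|_L$ and $\cQ_Y|_L$ over all lines is what makes it go through cleanly.
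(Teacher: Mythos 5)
Your proposal is correct and follows essentially the same route as the paper: build the map from the triangle \eqref{eq-pr-cubic} together with Lemma \ref{lem-cubic-mutation}, Remark \ref{rmk-cubic-pr} and Proposition \ref{lem-cubic-stability}, identify the differential with the map on $\Ext^1$ induced by that triangle (injective precisely because $\Hom_Y(\oh_L(-H),\cQ^{\vee}_Y[1])=0$, which is the degree-zero part of your $\RHom_Y(\oh_L(-H),\cQ^{\vee}_Y[1])=\CC^2[-1]$), and conclude via properness of both spaces. Your extra care about realizing $p'_Y$ as a morphism of schemes via the relative mutation of the universal family is a more explicit version of what the paper leaves implicit, not a different argument.
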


\begin{proof}
    By Lemma \ref{lem-cubic-mutation}, for those $[C]\in \Hilb^{3t+1}_Y$ such that $\RHom_Y(\cU_Y, I_{C/Y})\neq 0$, the object $\pr_Y(I_{C/Y}(H))$ is uniquely determined by its residue line. Conversely, by Remark \ref{rmk-cubic-pr}, every line has a $\PP^1$-family of residue twisted cubics with the same projection object in $\Ku(Y)$. Hence,  using Proposition \ref{lem-cubic-stability}, we obtain an injective morphism
\[p'_Y\colon \Hilb^{t+1}_Y\hookrightarrow M^Y_{\sigma_Y}(1,-1).\]
Since both moduli spaces are proper, to show that $p_Y'$ is a closed immersion, it suffices to prove that the tangent map of $p_Y'$ is injective. To this end, for any $[L]\in \Hilb^{t+1}_Y$, there is a natural identification $T_{[L]}\Hilb^{t+1}_Y=\Ext^1_Y(\oh_L, \oh_L)$. Let $C$ be a residue twisted cubic of $L$. Applying $\Hom_Y(\oh_L(-H), -)$ to \eqref{eq-pr-cubic}, we get a morphism
\[d\colon \Ext^1_Y(\oh_L, \oh_L)\to \Ext^1_Y(\oh_L(-H), \pr_Y(I_{C/Y}(H))).\]
Since $$\Ext^1_Y(\oh_L(-H), \pr_Y(I_{C/Y}(H)))=\Ext^1_Y(\pr_Y(I_{C/Y}(H)), \pr_Y(I_{C/Y}(H)))=T_{[\pr_Y(I_{C/Y}(H))]}M_{\sigma_Y}^Y(1,-1),$$
the map $d$ can be identified with the tangent map of $p_Y'$ at the point $[L]$. Then it is injective because $\Hom_Y(\oh_L(-H), \cQ^{\vee}_Y[1])=0$.
\end{proof}

By Proposition \ref{lem-cubic-stability}, the functor $\pr_Y$ induces a morphism
\[p_Y\colon \Hilb_Y^{3t+1}\to M_{\sigma_Y}^Y(1,-1).\]
We define $\cS:=p_Y^{-1}(\Hilb^{t+1}_Y)$. It is a closed subscheme of $\Hilb^{3t+1}_Y$ parameterizing those $[C]\in \Hilb_Y^{3t+1}$ such that $\Hom_Y(\cU_Y, I_{C/Y})\neq 0$.

\begin{theorem} \label{thm-cubic-3fold}
Let $\sigma_Y$ be a Serre-invariant stability condition on $\Ku(Y)$. Then the functor $\pr_Y$ induces a morphism
\[p_Y\colon \Hilb_Y^{3t+1}\to M_{\sigma_Y}^Y(1,-1),\quad [C]\mapsto [\pr_Y(I_{C/Y}(H))]\]
such that $p_Y$ is an open immersion outside $\cS$. Moreover, $p_Y(\cS)\cong \Hilb^{t+1}_Y$ and $p_Y|_{\cS}\colon \cS\to \Hilb^{t+1}_Y$ is a $\PP^1$-bundle.
\end{theorem}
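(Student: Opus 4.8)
\textbf{Proof proposal for Theorem \ref{thm-cubic-3fold}.}

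The plan is to combine the stability result (Proposition \ref{lem-cubic-stability}) with the dichotomy of Lemma \ref{lem-cubic-mutation} and Remark \ref{rmk-cubic-pr}, following the same strategy as in Theorem \ref{thm-pr-induce-map}, but now everything is a \emph{morphism} rather than a rational map because Proposition \ref{lem-cubic-stability} gives stability for \emph{every} $[C]\in\Hilb_Y^{3t+1}$ (there are no $\sigma$-cubics on a GM threefold, and the two remaining cases are both covered). First I would note that Proposition \ref{lem-cubic-stability} together with the existence of the universal ideal sheaf on $Y\times\Hilb_Y^{3t+1}$ and the relative projection functor $\pr_Y$ produces the morphism $p_Y$ at the level of families; the assignment $[C]\mapsto[\pr_Y(I_{C/Y}(H))]$ is then well-defined into $M_{\sigma_Y}^Y(1,-1)$ since $\pr_Y(I_{C/Y}(H))$ has class $\lambda_1-\lambda_2$ (a Chern-character computation from \eqref{lambda} and Lemma \ref{lem-ICH}, or directly from the triangle \eqref{eq-pr-cubic}). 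Next, using the stratification $\Hilb_Y^{3t+1}=(\Hilb_Y^{3t+1}\setminus\cS)\sqcup\cS$ from Lemma \ref{lem-cubic-U}, where $\cS$ is the closed locus with $\RHom_Y(\cU_Y,I_{C/Y})\neq0$, I would treat the two pieces separately.

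On the open part $\Hilb_Y^{3t+1}\setminus\cS$: by Remark \ref{rmk-cubic-pr}, case (1), we have $I_{C/Y}\in\cA_Y$ and $\pr_Y(I_{C/Y}(H))=\Xi_Y(I_{C/Y})$ with $\Xi_Y$ an equivalence, so $p_Y$ is injective on this locus and moreover the tangent map at $[C]$ is identified with the isomorphism $\Ext^1_Y(I_{C/Y},I_{C/Y})\xrightarrow{\sim}\Ext^1_Y(\Xi_Y I_{C/Y},\Xi_Y I_{C/Y})$ coming from the equivalence $\Xi_Y$; here one uses that deformations of $C$ inside $Y$ agree with deformations of $I_{C/Y}$ as an object of $\D^b(Y)$ (this needs $\Hom_Y(\oh_Y,\oh_C)=\CC$ and $\RHom_Y(\oh_Y,I_{C/Y})=0$ from Lemma \ref{lem-cubic-rhom}, which is exactly what makes $[C]\mapsto[I_{C/Y}]$ unramified onto its image in the appropriate moduli stack of sheaves, combined with Lemma \ref{lem-cubic-U}'s vanishing against $\cU_Y$). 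A proper injective morphism that is an immersion on tangent spaces is an open immersion onto its (open) image once we know the target is smooth of the right dimension; $M_{\sigma_Y}^Y(1,-1)$ is smooth of dimension $\ext^1=4$ since $1,-1$ are coprime, and $\dim_{[C]}\Hilb_Y^{3t+1}=4$ by Appendix \ref{appendix-B}'s smoothness/irreducibility result (Corollary \ref{cor-hilb-smooth}), so we may conclude $p_Y$ is an open immersion outside $\cS$.

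On the closed part $\cS$: by Remark \ref{rmk-cubic-pr}, case (2), $\pr_Y(I_{C/Y}(H))$ depends only on the residue line $L$ of $C$, which gives a set-theoretic map $\cS\to\Hilb_Y^{t+1}$; in fact this is the restriction of $p_Y$ followed by the identification $p'_Y\colon\Hilb_Y^{t+1}\hookrightarrow M_{\sigma_Y}^Y(1,-1)$ established in the preceding Proposition, so $p_Y(\cS)=p'_Y(\Hilb_Y^{t+1})\cong\Hilb_Y^{t+1}$ and $\cS=p_Y^{-1}(\Hilb_Y^{t+1})$ as claimed. For the $\PP^1$-bundle structure: over $[L]\in\Hilb_Y^{t+1}$, the fibre of $\cS\to\Hilb_Y^{t+1}$ is the $\PP^1$-family of residue twisted cubics of $L$, namely $\PP(\Hom_Y(\cU_Y,I_{L/Y}))\cong\PP^1$ (Remark \ref{rmk-cubic-pr}); to upgrade this fibrewise statement to a Zariski-locally-trivial $\PP^1$-bundle I would form the rank-two locally free sheaf $R^0\pi_*\sheafhom(\pi_X^*\cU_Y,\mathcal{I}_{\mathcal{L}})$ on $\Hilb_Y^{t+1}$, where $\mathcal{I}_{\mathcal{L}}$ is the universal ideal sheaf of lines (its local freeness follows from cohomology and base change, using that $\Hom_Y(\cU_Y,I_{L/Y})=\CC^2$ has constant dimension and the higher groups vanish), and identify $\cS$ with the associated $\PP^1$-bundle; one then checks that the natural map from this $\PP^1$-bundle to $\Hilb_Y^{3t+1}$ (sending a section $s\colon\cU_Y\to I_{L/Y}$ to the residue cubic $\operatorname{cok}(s)$-determined curve) has image exactly $\cS$ and is an isomorphism onto it. The main obstacle I anticipate is precisely this last point: making the $\PP^1$-bundle structure on $\cS$ rigorous requires the flat-family machinery (base change, relative Hilbert schemes of the residue construction, constancy of the relevant Ext-dimensions along $\cS$) and verifying that the residue construction $C\leadsto L$ and $L\leadsto\{$residue cubics$\}$ are mutually inverse in families; the pointwise statements are all in place from Lemma \ref{lem-cubic-mutation} and Remark \ref{rmk-cubic-pr}, so this is a matter of careful bookkeeping rather than a new idea, but it is where the real work lies.
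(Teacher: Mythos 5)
Your overall strategy coincides with the paper's: Proposition \ref{lem-cubic-stability} gives the morphism, Remark \ref{rmk-cubic-pr} gives injectivity off $\cS$ and the fibrewise description over $\cS$, and an isomorphism of tangent maps upgrades injectivity to an open immersion off $\cS$. Your treatment of the $\PP^1$-bundle structure via the rank-two sheaf $R^0\pi_*\sheafhom(\pi^*\cU_Y,\cI_{\cL})$ is in fact more explicit than the paper's one-line appeal to Remark \ref{rmk-cubic-pr}, and your identification of the tangent map through the equivalence $\Xi_Y$ is an acceptable variant of the paper's adjunction computation.

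However, two points in your open-immersion step are wrong as written. First, the dimensions: $\Ku(Y)$ for a GM threefold is not a K3 category (its Serre functor is an involution composed with $[2]$), so $M^Y_{\sigma_Y}(1,-1)$ is smooth of dimension $3$, not $4$ --- consistent with Lemma \ref{lem-cubic-ext1}, which gives $\ext^1=3+\delta$, and with the citation of \cite{FGLZ24} in Corollary \ref{cor-hilb-smooth}; likewise $\Hilb_Y^{3t+1}$ is a threefold. Second, and more seriously, your appeal to Corollary \ref{cor-hilb-smooth} for the smoothness and dimension of $\Hilb_Y^{3t+1}\setminus\cS$ is circular: that corollary is \emph{deduced from} Theorem \ref{thm-cubic-3fold} (its proof reads ``according to Theorem \ref{thm-cubic-3fold}, we deduce that $\Hilb_Y^{3t+1}\setminus\cS\cong p_Y(\Hilb_Y^{3t+1}\setminus\cS)$ is smooth of dimension $3$''), and moreover its unconditional statement requires $Y$ general, whereas the theorem does not. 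The paper avoids this by showing directly that the tangent map
\[\Ext^1_Y(I_{C/Y}(H),I_{C/Y}(H))\longrightarrow\Ext^1_Y\bigl(\bL_{\oh_Y}(I_{C/Y}(H)),\bL_{\oh_Y}(I_{C/Y}(H))\bigr)\]
is an isomorphism, using adjunction and $\RHom_Y(I_{C/Y}(H),\oh_Y)=0$ from Lemma \ref{lem-cubic-rhom}. This gives $\dim T_{[C]}\Hilb_Y^{3t+1}=3$, which together with the deformation-theoretic lower bound $\dim_{[C]}\Hilb_Y^{3t+1}\ge\chi(N_{C/Y})=3$ yields smoothness of the source at $[C]$ with no forward reference; injectivity plus an \'etale morphism between smooth varieties then gives the open immersion. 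If you replace your citation of Corollary \ref{cor-hilb-smooth} by this argument and correct the dimension count, the rest of your proposal goes through.
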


\begin{proof}
By Proposition \ref{lem-cubic-stability}, the functor $\pr_Y$ induces a morphism
\[p_Y\colon \Hilb_Y^{3t+1}\to M_{\sigma_Y}^Y(1,-1).\]
According to Remark \ref{rmk-cubic-pr}, $p_Y$ is injective outside $\cS$. To show that $p_Y|_{\Hilb_Y^{3t+1}\setminus\cS}$ is an open immersion, it is enough to show the tangent map at any point outside $\cS$ is an isomorphism.

To this end, let $[C]\in \Hilb_Y^{3t+1}\setminus\cS$. By Serre duality and Lemma \ref{lem-cubic-rhom}, we have $$\RHom_Y(I_{C/Y}(H), \oh_Y)=0.$$
Hence, applying $\Hom_Y(I_{C/Y}(H),-)$ to the exact triangle defining $\bL_{\oh_Y}(I_{C/Y}(H))$, we get an isomorphism
\[\Ext^1_Y(I_{C/Y}(H), I_{C/Y}(H))\xra{\cong }\Ext^1_Y(I_{C/Y}(H),\bL_{\oh_Y}(I_{C/Y}(H))),\]
which can be identified with the tangent map of $p_Y$ at the point $[C]$ because of the identification
\[\Ext^1_Y(I_{C/Y}(H),\bL_{\oh_Y}(I_{C/Y}(H)))=\Ext^1_Y(\bL_{\oh_Y}(I_{C/Y}(H)), \bL_{\oh_Y}(I_{C/Y}(H))).\]
Hence, $p_Y|_{\Hilb_Y^{3t+1}\setminus\cS}$ is an open immersion. The last statement of the theorem follows from the construction of $\cS$ and Remark \ref{rmk-cubic-pr}.
\end{proof}

Finally, we give a smoothness criterion for $\Hilb_Y^{3t+1}$.

\begin{corollary} \label{cor-hilb-smooth}
If $\Hilb_Y^{t+1}$ and $\Hilb_Y^{3t+1}\setminus \cS\cong p_Y(\Hilb_Y^{3t+1}\setminus \cS)$ are both smooth, then $\Hilb_Y^{3t+1}$ is  smooth. In particular, if $Y$ is general, $\Hilb_Y^{3t+1}$ is a smooth irreducible threefold.
\end{corollary}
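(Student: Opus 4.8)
The plan is to use Theorem \ref{thm-cubic-3fold} to stratify $\Hilb_Y^{3t+1}$ into the open part $U := \Hilb_Y^{3t+1}\setminus\cS$ and the closed part $\cS$, and argue smoothness on each, then reconcile the two along $\cS$. First I would note that $U$ is open in $\Hilb_Y^{3t+1}$ and, by Theorem \ref{thm-cubic-3fold}, $p_Y|_U\colon U\hookrightarrow M_{\sigma_Y}^Y(1,-1)$ is an open immersion onto its image; hence $U\cong p_Y(U)$, so $U$ is smooth by hypothesis. Since $\cS$ is a $\PP^1$-bundle over $\Hilb_Y^{t+1}$ (again Theorem \ref{thm-cubic-3fold}), and $\Hilb_Y^{t+1}$ is smooth by hypothesis, $\cS$ is itself smooth (of dimension $\dim\Hilb_Y^{t+1}+1=3$, using $\dim\Hilb_Y^{t+1}=2$). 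So the only points at which smoothness of $\Hilb_Y^{3t+1}$ is not yet clear are the points of $\cS$, where $U$ does not help directly.

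For a point $[C]\in\cS$, the key is to compute the Zariski tangent space $T_{[C]}\Hilb_Y^{3t+1}=\Hom_Y(I_{C/Y},\oh_C)=H^0(N_{C/Y})$ and compare it with the local dimension. Here I would use Lemma \ref{lem-cubic-mutation}(2) and Lemma \ref{lem-cubic-ext1}: for $[C]\in\cS$ whose residue line $L$ is a smooth point of $\Hilb_Y^{t+1}$ (which holds for all $[L]$ since $\Hilb_Y^{t+1}$ is smooth), we get $\delta=0$ and hence
\[
\RHom_Y(\pr_Y(I_{C/Y}(H)),\pr_Y(I_{C/Y}(H)))=\CC\oplus\CC^3[-1].
\]
Combining the triangle $0\to I_{C/Y}\to\oh_Y\to\oh_C\to 0$ with the mutation triangles defining $\pr_Y$, and using Lemma \ref{lem-cubic-rhom}, Lemma \ref{lem-cubic-U}, Lemma \ref{lem-cubic-UC}, Lemma \ref{lem-ICH} to control the auxiliary $\Ext$-groups, one should obtain $\ext^1_Y(I_{C/Y},I_{C/Y})=3$, i.e.\ $\dim T_{[C]}\Hilb_Y^{3t+1}=3$. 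On the other hand $\dim_{[C]}\Hilb_Y^{3t+1}\geq\chi(N_{C/Y})$, and from $\chi(I_{C/Y},I_{C/Y})$ together with the vanishing $\Hom_Y(I_{C/Y},I_{C/Y})=\CC$, $\ext^2_Y(I_{C/Y},I_{C/Y})=\ext^3=0$ (the latter by the Calabi--Yau-type Serre duality / Lemma \ref{lem-cubic-rhom}), one gets $\chi(N_{C/Y})=3$. Since $3\le\dim_{[C]}\Hilb_Y^{3t+1}\le\dim T_{[C]}\Hilb_Y^{3t+1}=3$, the point $[C]$ is smooth. Together with smoothness of $U$ this proves $\Hilb_Y^{3t+1}$ is smooth. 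For the "in particular" clause, when $Y$ is general $\Hilb_Y^{t+1}$ is a smooth irreducible surface (by \cite[Lemma 4.2.1]{iskovskikh1999fano}, cf.\ the discussion of lines on GM threefolds), and $U\cong p_Y(U)$ is a smooth open subscheme of the moduli space $M_{\sigma_Y}^Y(1,-1)$, which is irreducible of dimension $3$; so $\Hilb_Y^{3t+1}$ is smooth of pure dimension $3$, and irreducibility follows since $U$ is dense (as $\cS$, being a $\PP^1$-bundle over a surface, has dimension $3=\dim U$, one instead argues irreducibility from the fact that $\overline{\mathrm{H}^{\tau}}$-type generic cubics — those not in $\cS$ — form a dense connected open subset, or invokes the irreducibility proved in Appendix \ref{appendix-B} via the surjection onto the irreducible $M_{\sigma_Y}^Y(1,-1)$).

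The main obstacle I anticipate is the tangent-space computation at points of $\cS$: controlling $\ext^1_Y(I_{C/Y},I_{C/Y})$ requires carefully threading the exact sequence $0\to I_{C/Y}\to\oh_Y\to\oh_C\to 0$ through the two mutation functors $\bL_{\oh_Y}$, $\bL_{\cU_Y^\vee}$, and keeping track of the correction terms $\RHom_Y(\oh_Y,I_{C/Y})$, $\RHom_Y(\cU_Y,I_{C/Y})$, $\RHom_Y(I_{C/Y},\cU_Y)$, $\RHom_Y(I_{C/Y},\oh_Y)$ — precisely the groups computed in Lemmas \ref{lem-cubic-rhom}, \ref{lem-cubic-U}, \ref{lem-cubic-UC}, \ref{lem-ICH}. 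The bookkeeping to go from $\RHom_Y(\pr_Y(I_{C/Y}(H)),\pr_Y(I_{C/Y}(H)))=\CC\oplus\CC^3[-1]$ back to $\RHom_Y(I_{C/Y},I_{C/Y})$ is routine in principle but is where an error is most likely to creep in; a cleaner alternative is to work with $I_{C/Y}(H)$ and the equivalence $\Xi_Y$ on $\cA_Y$ when $\RHom_Y(\cU_Y,I_{C/Y})=0$ and directly with the triangle \eqref{eq-pr-cubic} when it is nonzero, which is the dichotomy already set up in Lemma \ref{lem-cubic-mutation} and Remark \ref{rmk-cubic-pr}.
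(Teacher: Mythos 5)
Your overall skeleton — reduce to showing smoothness at points of $\cS$ by computing a $3$-dimensional tangent space there and comparing with the lower bound $\dim_{[C]}\Hilb_Y^{3t+1}\geq\chi(N_{C/Y})=-K_Y\cdot C=3$ — is the same as the paper's. But there are two genuine problems. First, a factual error: for a GM \emph{threefold} $Y$, the Hilbert scheme of lines $\Hilb^{t+1}_Y$ is a smooth irreducible \emph{curve} (this is exactly what the paper cites from Iskovskikh--Prokhorov), not a surface; hence $\cS$ is a $\PP^1$-bundle over a curve and has dimension $2$, not $3$. You noticed that your ``$\dim\cS=3=\dim U$'' makes the irreducibility argument collapse and you hedge at that point; with the correct dimension your alternative route actually works and is clean ($U\cong p_Y(U)$ is a nonempty open subscheme of the irreducible $M^Y_{\sigma_Y}(1,-1)$, hence irreducible, and $\cS$ is too small to be a component of a pure $3$-dimensional smooth scheme, so $U$ is dense). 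The paper instead proves irreducibility by citing the irreducibility of the locus of smooth twisted cubics and bounding the dimension of the non-reduced and reducible loci, so your (corrected) route would be a legitimately different and arguably shorter argument for that clause.

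Second, the key step — $\ext^1_Y(I_{C/Y},I_{C/Y})=3$ for $[C]\in\cS$ — is asserted but not carried out, and the route you propose (recovering it from $\RHom_Y(\pr_Y(I_{C/Y}(H)),\pr_Y(I_{C/Y}(H)))=\CC\oplus\CC^3[-1]$ by ``undoing'' the mutations) is more delicate than ``routine bookkeeping'': the projection object depends only on the residue line $L$, so the tangent map $T_{[C]}\Hilb_Y^{3t+1}\to\Ext^1(\pr_Y(I_{C/Y}(H)),\pr_Y(I_{C/Y}(H)))$ has at least a one-dimensional kernel (the $\PP^1$-fiber of $p_Y$ through $[C]$), and the fact that both groups are $3$-dimensional is a coincidence of the correction terms, not a consequence of the mutations being ``invisible'' to $\RHom$. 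The paper avoids this entirely and computes $\Ext^1_Y(I_{C/Y},I_{C/Y})$ directly by applying $\Hom_Y(I_{C/Y}(H),-)$ to the triangle $\cU^{\vee}_Y\xrightarrow{s}I_{C/Y}(H)\to\cone(s)$, using $\RHom_Y(I_{C/Y}(H),\oh_Y)=0$, $\RHom_Y(I_{C/Y},\cU_Y)=\CC[-1]$, and $\RHom_Y(I_{C/Y}(H),\oh_L(-H))=\CC\oplus\CC^2[-1]$ (the last computed from the filtration of $I_{C/Y}(H)$ by $I_{D/Y}(H)$ and the smoothness of $\Hilb^{t+1}_Y$ at $[L]$). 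Relatedly, your justification of the lower bound via ``$\ext^2_Y(I_{C/Y},I_{C/Y})=\ext^3=0$'' is unsubstantiated (Serre duality gives $\Ext^2(I_{C/Y},I_{C/Y})\cong\Ext^1(I_{C/Y},I_{C/Y}(-H))^\vee$, which has no reason to vanish); the correct and standard bound is $\dim_{[C]}\Hilb^{3t+1}_Y\geq\chi(N_{C/Y})=3$.
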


\begin{proof}
When $Y$ is general, $\Hilb^{t+1}_Y$ is a smooth irreducible curve by \cite[Theorem 4.2.7]{iskovskikh1999fano} and 
 $M_{\sigma_Y}^Y(1,-1)$ is smooth of dimension $3$ by \cite[Proposition 5.1]{FGLZ24}. Hence, according to Theorem \ref{thm-cubic-3fold}, we deduce that $\Hilb_Y^{3t+1}\setminus \cS\cong p_Y(\Hilb_Y^{3t+1}\setminus \cS)$ is smooth of dimension $3$. Thus, we only need to prove the first statement of the corollary. By assumptions, to show that $\Hilb_Y^{3t+1}$ is smooth, it remains to prove $\Ext^1_Y(I_{C/Y}, I_{C/Y})=\CC^3$ for all $[C]\in \cS$.
 
 To this end, assume $[C]\in \cS$. According to Lemma \ref{lem-cubic-U}, we have a non-zero map $s\colon \cU^{\vee}_Y\to I_{C/Y}(H)$ with $\im(s)=I_{D/Y}(H)$, $\ker(s)=\oh_Y$ and $\mathrm{coker}(s)=\oh_L(-H)$, where $L$ is the residue line of $C$ in $D$. Since $\RHom_Y(I_{C/Y}(H),\oh_Y)=0$ by Lemma \ref{lem-cubic-rhom}, we see
 \[\RHom_Y(I_{C/Y}(H),\cone(s))=\RHom_Y(I_{C/Y}(H),\oh_L(-H)).\]
Applying $\Hom_Y(-,\oh_L(-H))$ to $\oh_Y\to \cU^{\vee}_Y\to I_{D/Y}(H)$, we have
\[\RHom_Y(I_{D/Y}(H),\oh_L(-H))=\RHom_Y(\cU^{\vee}_Y,\oh_L(-H))=\CC[-1].\]
Since $\Hilb^{t+1}_Y$ is a smooth curve,  $\RHom_Y(\oh_L,\oh_L)=\CC\oplus \CC[-1]$. Thus, after  applying the functor $\Hom_Y(-,\oh_L(-H))$ to the sequence $I_{D/Y}(H)\to I_{C/Y}(H)\to \oh_L(-H)$, we obtain
\[\RHom_Y(I_{C/Y}(H), \oh_L(-H))=\CC\oplus \CC^2[-1].\]

Finally, as $\RHom_Y(I_{C/Y}(H), \cone(s))=\RHom_Y(I_{C/Y}(H), \oh_L(-H))=\CC\oplus \CC^2[-1]$, we deduce that $\Ext^1_Y(I_{C/Y}, I_{C/Y})=\CC^3$. Indeed, this follows from Lemma \ref{lem-cubic-UC} and applying $\Hom_Y(I_{C/Y}(H), -)$ to the triangle $\cU^{\vee}_Y\xra{s}I_{C/Y}(H)\to \cone(s).$ Thus, we conclude that $\Hilb^{3t+1}_Y$ is smooth of dimension $3$.

Let $U$ be the open subscheme of $\Hilb^{3t+1}_Y$ parameterizing irreducible smooth cubics. It is proved in \cite[Lemma 7.3, Theorem 7.4]{lt} that $U$ is irreducible when $Y$ is general. Thus, to prove the irreducibility of $\Hilb^{3t+1}_Y$, we only need to show $\dim \Hilb^{3t+1}_Y\setminus U<3$. A non-reduced twisted cubic is determined by its reduction. Hence,  the locus of such cubics is parameterized by a subscheme of $\Hilb^{t+1}_Y\cup \Hilb^{2t+1}_Y$, whose dimension is smaller than $3$ since $\dim \Hilb^{t+1}_Y=1$ and $\dim \Hilb^{2t+1}_Y=2$. A reducible twisted cubic is a union of a line and a conic, intersecting at a single point. Then the locus of reducible cubics is parameterized by the image of morphism $$\mathcal{I}\to \Hilb^{3t+1}_Y, \quad(C,L)\mapsto C\cup L,$$ 
where $\mathcal{I}\subset \Hilb_Y^{t+1}\times \Hilb_Y^{2t+1}$ is the incidence variety of dimension smaller than $3$. This proves the irreducibility part of the corollary.
\end{proof}

\end{appendix}

\bibliography{cube}

\bibliographystyle{alpha}

\end{document}